\definecolor{darkred}{RGB}{139,0,0}
\definecolor{darkblue}{RGB}{0,0,139}
\definecolor{darkgreen}{RGB}{0,100,0}
   \def\MR#1{}
\setlist[enumerate]{leftmargin=*}
\setlist[itemize]{leftmargin=*}
\newcommand\smallsquare{{\mathbin{\text{\raise0.17ex\hbox{\scalebox{.7}{$\blacksquare$}}}}}}
\newcommand{\circled}[1]{\raisebox{.5pt}{\textcircled{\raisebox{-.9pt} {#1}}}}
\newcommand{\MT}{\ensuremath{\mathbf{MT}}}
\newcommand{\M}{\ensuremath{\mathbf{M}}}
\newcommand{\SO}{\ensuremath{\mathrm{SO}}}
\newcommand{\BO}{\ensuremath{\mathrm{BO}}}
\newcommand{\BSO}{\ensuremath{\mathrm{BSO}}}
\newcommand{\Diff}{{\ensuremath{\mathrm{Diff}}}}
\newcommand{\BDiff}{\ensuremath{\mathrm{BDiff}}}
\newcommand{\Homeo}{\ensuremath{\mathrm{Homeo}}}
\newcommand{\BlockHomeo}{\ensuremath{\widetilde{\mathrm{Homeo}}}}
\newcommand{\BHomeo}{\ensuremath{\mathrm{BHomeo}}}
\newcommand{\hAut}{\ensuremath{\mathrm{hAut}}}
\newcommand{\Map}{\ensuremath{\mathrm{Map}}}
\newcommand{\Emb}{\ensuremath{\mathrm{Emb}}}
\newcommand{\BEmb}{\ensuremath{\mathrm{BEmb}}}
\newcommand{\BAut}{\ensuremath{\mathrm{BAut}}}
\newcommand{\BSAut}{\ensuremath{\mathrm{BSAut}}}
\newcommand{\BTop}{\ensuremath{\mathrm{BTop}}}
\newcommand{\BSTop}{\ensuremath{\mathrm{BSTop}}}
\newcommand{\STop}{\ensuremath{\mathrm{STop}}}
\newcommand{\interior}{\ensuremath{\mathrm{int}}}
\newcommand{\inc}{\ensuremath{\mathrm{inc}}}
\newcommand{\ev}{\ensuremath{\mathrm{ev}}}
\newcommand{\res}{\ensuremath{\mathrm{res}}}
\newcommand{\Fun}{\ensuremath{\mathrm{Fun}}}
\newcommand{\sm}{\ensuremath{\mathrm{sm}}}
\newcommand{\PSh}{\mathrm{PSh}}
\newcommand{\seg}{\mathrm{seg}}
\newcommand{\GT}{\mathrm{GT}}
\newcommand{\tohofib}{\mathrm{tohofib}}
\newcommand{\init}{\mathds{I}}
\DeclareMathAlphabet{\mathpzc}{OT1}{pzc}{m}{it}
\newcommand{\catsingle}[1]{\ensuremath{\mathscr{#1}}}
\newcommand{\cat}[1]{\ensuremath{\mathsf{#1}}}
\newcommand{\icat}[1]{\ensuremath{\mathscr{#1}}}
\newcommand{\half}{{\nicefrac{1}{2}}}
\newcommand{\oH}{\ensuremath{\mathrm{H}}}
\newcommand{\oO}{\ensuremath{\mathrm{O}}}
\newcommand{\oB}{\ensuremath{\mathrm{B}}}
\newcommand{\bfF}{\ensuremath{\mathbf{F}}}
\newcommand{\bfN}{\ensuremath{\mathbf{N}}}
\newcommand{\bfR}{\ensuremath{\mathbf{R}}}
\newcommand{\bfZ}{\ensuremath{\mathbf{Z}}}
\newcommand{\bfQ}{\ensuremath{\mathbf{Q}}}
\newcommand{\bfk}{\ensuremath{\mathbf{k}}}
\newcommand{\cC}{\ensuremath{\catsingle{C}}}
\newcommand{\cD}{\ensuremath{\catsingle{D}}}
\newcommand{\cE}{\ensuremath{\catsingle{E}}}
\newcommand{\cL}{\ensuremath{\catsingle{L}}}
\newcommand{\cM}{\ensuremath{\catsingle{M}}}
\newcommand{\cN}{\ensuremath{\catsingle{N}}}
\newcommand{\cO}{\ensuremath{\catsingle{O}}}
\newcommand{\cP}{\ensuremath{\catsingle{P}}}
\newcommand{\cQ}{\ensuremath{\catsingle{Q}}}
\newcommand{\cR}{\ensuremath{\catsingle{R}}}
\newcommand{\cS}{\ensuremath{\catsingle{S}}}
\newcommand{\cX}{\ensuremath{\catsingle{X}}}
\newcommand{\cY}{\ensuremath{\catsingle{Y}}}
\newcommand{\cAb}{\ensuremath{\catsingle{A}\mathrm{b}}}
\newcommand{\ccMod}{\ensuremath{\catsingle{M}\mathrm{od}}}
\newcommand{\Cosp}{\ensuremath{\mathrm{Cosp}}}
\newcommand{\colim}{\ensuremath{\mathrm{colim}}}
\newcommand{\ra}{\rightarrow}
\newcommand{\lra}{\longrightarrow}
\newcommand{\xra}[1]{\xrightarrow{#1}}
\newcommand{\xsra}[1]{\overset{#1}{\rightarrow}}
\newcommand{\xlra}[1]{\overset{#1}{\longrightarrow}}
\newcommand{\longhookrightarrow}{\lhook\joinrel\longrightarrow}
\newcommand{\longdashrightarrow}[1][1.75em]{\mathrel{
		\tikz[baseline]{\draw[dash pattern=on .25em off .1 em,->](0,.58ex)--(#1,.58ex)}}}
\newcommand{\GL}{\mathrm{GL}}
\newcommand{\Aut}{\mathrm{Aut}}
\newcommand{\SAut}{\mathrm{SAut}}
\newcommand{\End}{\mathrm{End}}
\newcommand{\Hom}{\mathrm{Hom}}
\renewcommand{\Top}{\mathrm{Top}}
\newcommand{\im}{\mathrm{im}}
\newcommand{\id}{\mathrm{id}}
\newcommand{\pr}{\mathrm{pr}}
\newcommand{\fib}{\mathrm{fib}}
\newcommand{\cofib}{\mathrm{cofib}}
\newcommand{\st}{\mathrm{st}}
\newcommand{\op}{\mathrm{op}}
\newcommand{\Pro}{\mathrm{Pro}}
\newcommand{\aug}{\mathrm{aug}}
\newcommand{\un}{\mathrm{un}}
\newcommand{\red}{\mathrm{red}}
\newcommand{\gc}{\mathrm{gc}}
\newcommand{\col}{\mathrm{col}}
\newcommand{\BV}{\ensuremath{\mathrm{BV}}}
\newcommand{\pre}{\ensuremath{\mathrm{pre}}}
\newcommand{\inert}{\ensuremath{\mathrm{inert}}}
\newcommand{\Bifun}{\ensuremath{\mathrm{Bf}}}
\newcommand{\Mal}{\ensuremath{\mathrm{Mal}}}
\newcommand{\fQ}{{\mathrm{f}\bfQ}}
\newcommand{\Latch}{\mathrm{Latch}}
\newcommand{\Match}{\mathrm{Match}}
\newcommand{\BG}{\mathrm{BG}}
\newcommand{\TOP}{{\mathrm{Top}}}
\newcommand{\sSet}{\ensuremath{\cat{sSet}}}
\newcommand{\DiscInf}{\ensuremath{\icat{D}\mathrm{isc}}}
\newcommand{\Fin}{\ensuremath{\mathrm{Fin}}}
\newcommand{\ALG}{\ensuremath{\mathrm{ALG}}}
\newcommand{\Alg}{\ensuremath{\mathrm{Alg}}}
\newcommand{\AlgOpd}{\ensuremath{\underline{\mathrm{A}}\mathrm{lg}}}
\newcommand{\MapInt}{\ensuremath{\underline{\mathrm{M}}\mathrm{ap}}}
\newcommand{\ncBordInf}{\ensuremath{\mathrm{nc}\icat{B}\mathrm{ord}}}
\newcommand{\Cat}{\ensuremath{\icat{C}\mathrm{at}}}
\newcommand{\BMod}{\ensuremath{\mathrm{BMod}}}
\newcommand{\Ass}{\ensuremath{\mathrm{Ass}}}
\newcommand{\CatInf}{\ensuremath{\icat{C}\mathrm{at}_\infty}}
\newcommand{\ModInf}{\ensuremath{\mathpzc{r}\icat{M}\mathrm{od}}}
\newcommand{\Opd}{\ensuremath{{\icat{O}\mathrm{p}}}}
\newcommand{\OpdSet}{\ensuremath{{\cat{O}\mathrm{p}}}}
\newcommand{\Triv}{\ensuremath{{\icat{T}\mathrm{riv}}}}
\newcommand{\Mul}{\ensuremath{\mathrm{Mul}}}
\newcommand{\Tow}{\ensuremath{\mathrm{Tow}}}
\newtheorem{bigthm}{Theorem}
\newtheorem{bigcor}[bigthm]{Corollary}
\newtheorem{thm}{Theorem}[section]
\newtheorem{lem}[thm]{Lemma}
\newtheorem{prop}[thm]{Proposition}
\theoremstyle{definition}
\newtheorem{dfn}[thm]{Definition}
\newtheorem*{nconvention}{Convention}
\theoremstyle{remark}
\newtheorem{ex}[thm]{Example}
\newtheorem{rem}[thm]{Remark}
\newtheorem*{nrem}{Remark}
\newcommand{\ul}[1]{\underline{#1}}
\begin{document}

\title[Pontryagin--Weiss classes and a rational decomposition of spaces of homeomorphisms]{Pontryagin--Weiss classes and a rational \\decomposition of spaces of homeomorphisms}

\author{Manuel Krannich}
\address{Department of Mathematics, Karlsruhe Institute of Technology, 76131 Karlsruhe, Germany}
\email{krannich@kit.edu}

\author{Alexander Kupers}
\address{Department of Computer and Mathematical Sciences, University of Toronto Scarborough, 1265 Military Trail, Toronto, ON M1C 1A4, Canada}
\email{a.kupers@utoronto.ca}

\begin{abstract}We construct a rational homotopy pullback decomposition for variants of the classifying space of the group of homeomorphisms for a large class of manifolds. This has various applications, including a rational section of the stabilisation map $\Top(d)\ra \TOP$ of the space of homeomorphisms of $\bfR^d$ for $d\ge6$, or a new method to construct topological bundles and detect characteristic classes thereof. Some steps in the proofs may be of independent interest, such as the construction of a nullhomotopy of the twice-iterated stabilisation map for the space of orientation-preserving derived automorphisms of the rational $E_d$-operad, results on recovering boundaries of manifolds from the interior in the context of embedding calculus, or a treatment of tensor products of truncated $\infty$-operads.
\end{abstract}

\maketitle

\vspace{-0.25cm}

\tableofcontents

%\vspace{-0.3cm}
\vspace{-0.6cm}
\section{Introduction}
From a homotopy-theoretic perspective, the study of fibre bundles with fibre a compact topological $d$-manifold $M$ and trivialised boundary bundle amounts to the study of the homotopy type of the classifying space $\BHomeo_\partial(M)$ of the topological group of homeomorphisms fixing the boundary. In this work we construct a homotopy pullback decomposition of variants of $\BHomeo_\partial(M)$ for a large class of manifolds. Instead of a stand-alone result, we consider this decomposition rather as a tool in the study of $\BHomeo_\partial(M)$ and thus believe that it is best appreciated through its applications. We begin by explaining three of them, and only then discuss the actual decomposition theorem.

\subsection*{Application I: Pontryagin--Weiss classes} The first application concerns \emph{topological Pontryagin classes}. Recall that the ring of rational characteristic classes for stable vector bundles---the rational cohomology ring of the corresponding classifying space $\BO$---is a polynomial ring $\oH^*(\BO;\bfQ)\cong\bfQ[p_1,p_2,\ldots]$ in the rational Pontryagin classes $p_k$. By construction, the $k$th Pontryagin class $p_k$ is trivial on vector bundles of fixed rank $d$ as long as $\smash{k>\frac{d}{2}}$, i.e.\,$\smash{p_k\in \oH^{4k}(\BO;\bfQ)}$ vanishes in this range when pulled back along the stabilisation map $\BO(d)\ra \BO$ that classifies the underlying stable vector bundle of a vector bundle of rank $d$. It turns out that rational Pontryagin classes can be defined more generally for $d$-dimensional \emph{Euclidean bundles} i.e.\,fibre bundles whose fibres are homeomorphic to $\bfR^d$, without linear structure \cite{Schafer,SiebenmannICM,Kahn}. Indeed, the map $\BO\ra \BTop$ that classifies the underlying stable Euclidean bundle of a stable vector bundle induces an isomorphism on rational cohomology, so Pontryagin classes extend uniquely to characteristic classes $p_k\in  \oH^{4k}(\BTop;\bfQ)$ for stable Euclidean bundles, but with this definition it is not clear whether they have the same vanishing behaviour when evaluated on Euclidean bundles of fixed rank $d$ as in the case of vector bundles, that is, whether the pullback $p_k\in\oH^{4k}(\BTop(d);\bfQ)$ along the stabilisation map $\BTop(d)\ra \BTop$ vanishes for $\smash{k> \frac{d}{2}}$. In fact, Weiss showed that this is \emph{not} the case for many choices of pairs $(d,k)$ in this range \cite[Theorem 1.1.1]{WeissDalian}. He moreover proved that for these $(d,k)$, the class $p_k$ is nontrivial on a $d$-dimensional Euclidean bundle \emph{over a sphere}, i.e.\ that $p_k$ evaluates nontrivially even on the image of the Hurewicz homomorphism $\pi_{4k}(\BTop(d))\ra \oH_{4k}(\BTop(d))$. More recently, Galatius and Randal-Williams \cite[Theorem 1.1]{GRWIndependence} showed via a different method that the stabilisation map $\BTop(d)\ra \BTop$ is injective on rational cohomology in \emph{all} degrees for $d\ge6$, so in particular $p_k\neq 0\in\oH^{4k}(\BTop(d);\bfQ)$ for all $k\ge1$. Their argument however does not prove that these classes are nontrivial on a bundle over a sphere (see Remark 1.3 loc.cit.). Extending Weiss' original approach using the aforementioned pullback decomposition, we show that they are:

\begin{bigthm}\label{bigthm:geometric}For all $d\ge 6$ and $k\ge1$, there exists a fibre bundle over the $4k$-sphere with fibres homeomorphic to $\bfR^d$ whose $k$th rational Pontryagin class is nontrivial.
\end{bigthm}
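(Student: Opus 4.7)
The plan is to deduce Theorem~\ref{bigthm:geometric} from the rational splitting of the stabilisation map $\Top(d) \to \TOP$ for $d \geq 6$, which is another main application of the pullback decomposition announced in the abstract. Delooping this splitting provides a rational homotopy section $s \colon \BTop \to \BTop(d)$ of the stabilisation map $\BTop(d) \to \BTop$.

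Granted $s$, I would produce the bundle as follows. The homotopy fibre of $\BO \to \BTop$ is $\TOP/\oO$, which has finite rational homotopy groups in each degree by smoothing theory (Kirby--Siebenmann together with Borel's computation of $K_*(\bfZ) \otimes \bfQ$). Hence $\BO \to \BTop$ is a rational equivalence, and $\pi_{4k}(\BTop) \otimes \bfQ \cong \bfQ$ with a generator $\beta$ detected by the rational Hurewicz pairing with $p_k$. The image $\alpha := s_*(\beta) \in \pi_{4k}(\BTop(d)) \otimes \bfQ$ then satisfies
\[
    \langle p_k, h(\alpha) \rangle = \langle s^* p_k, h(\beta) \rangle = \langle p_k, h(\beta) \rangle \neq 0,
\]
since $p_k \in \oH^{4k}(\BTop(d); \bfQ)$ pulls back from $\oH^{4k}(\BTop; \bfQ)$ and $s$ is a section. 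Some positive integer multiple $N\alpha$ lifts to an honest homotopy class in $\pi_{4k}(\BTop(d))$, and the corresponding $\bfR^d$-bundle over $S^{4k}$ classified by it has nontrivial $k$th rational Pontryagin class.

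The main obstacle is therefore the construction of the rational section $s$. To produce it, I would apply the paper's pullback decomposition to an appropriate manifold (or to a variant adapted to $\Top(d)$), and analyse the resulting rational homotopy pullback using the paper's results on derived automorphisms of the rational $E_d$-operad and on reconstructing boundaries from interiors within embedding calculus. The decisive technical input, flagged in the abstract, is the nullhomotopy of the twice-iterated stabilisation map on orientation-preserving derived automorphisms of the rational $E_d$-operad: it is precisely the rigidity statement needed to split the rational stabilisation, and I expect it to carry essentially all of the difficulty in the whole argument.
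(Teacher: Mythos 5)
Your reduction of \cref{bigthm:geometric} to \cref{bigcor:homotopy} is correct as far as it goes, but it is circular. Because $\BO\to\BTop$ is a rational equivalence, $\pi_{4k}(\BTop)\otimes\bfQ\cong\bfQ$ is detected by $p_k$ and $\pi_i(\BTop)\otimes\bfQ=0$ for $i$ not a multiple of $4$; so surjectivity of $\BTop(d)\to\BTop$ on rational homotopy groups unwinds exactly to the assertion that each $p_k$ evaluates nontrivially on the Hurewicz image of $\pi_{4k}(\BTop(d))$, which \emph{is} \cref{bigthm:geometric}. The two statements are equivalent by elementary manipulations, and in the paper the implication runs the other way: \cref{bigcor:homotopy} is deduced from \cref{bigthm:geometric} via its precise form \cref{thm:PW-classes}. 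Assuming the rational section $s$ in order to prove the theorem therefore presupposes what must be shown.

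The substantive content is the construction of a $p_k$-detecting homotopy class, which you defer to ``apply the paper's pullback decomposition to an appropriate manifold,'' and that is where the proof actually lives. The paper formulates a criterion (\cref{lem:hurewicz-nontriviality}): given a tangential structure $\theta\colon B\to\BSTop(d)$ along which $c$ vanishes, a compact bounded manifold $M$, and a $\theta$-structure, nontriviality of $\kappa_c$ pulled back to a certain fibre product built from the pullback decomposition produces a finite-type map $\Sigma^{d-1}(X_+)\to\fib(\theta)$ detecting the transgression of $c$, whence the claim by the rational Hurewicz theorem. Verifying this for $c=p_k$, $d=6$, $\theta\colon\tau_{>3}\BSO(4)\to\BSTop(6)$, and $M=W_{g,1}=(S^3\times S^3)^{\sharp g}\setminus\interior(D^6)$ requires two independent ingredients absent from your sketch: \cref{bigthm:pullback}, via \cref{prop:actions} and \cref{ex:action-on-stable-classes}, supplies an action on tautological classes under which decomposable monomials die, allowing one to exchange $\kappa_{p_k}$ for $\kappa_{\cL_k}$; and Galatius--Randal-Williams' theorem on stable moduli spaces shows $\kappa_{\cL_k}$ is nonzero on $\BDiff_\partial^{\theta^{\sm}}(W_{g,1})$ for $g\gg 0$. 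Assigning ``essentially all of the difficulty'' to \cref{bigthm:nullhomotopy} misjudges the structure of the argument: that nullhomotopy is an input to \cref{bigthm:pullback}, but the Galatius--Randal-Williams input and the tautological-class machinery of \cref{sec:tautological-classes}--\cref{sec:modifying-vertical-tangent} are independent and equally indispensable. As written, your proposal reduces the theorem to an equivalent statement and then gestures at its proof; the gap is genuine.
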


 \cref{bigthm:geometric} has the following homotopy-theoretic reformulation:

\begin{bigcor}\label{bigcor:homotopy}For $d\ge6$ the stabilisation map $\BTop(d)\ra \BTop$ admits a rational section after taking loop spaces. Equivalently, it is surjective on rational homotopy groups. 
\end{bigcor}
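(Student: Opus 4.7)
The plan is to derive this corollary formally from \cref{bigthm:geometric} together with the classical description of the rational homotopy type of $\BTop$. I would proceed in two steps: first establish the equivalence of the two formulations, then use \cref{bigthm:geometric} to produce the required surjectivity on rational homotopy groups.

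For the equivalence, the key input is that $\oH^*(\BTop; \bfQ) \cong \bfQ[p_1, p_2, \ldots]$ is a polynomial ring on the Pontryagin classes. Since $\BTop$ is simply connected, this forces $\BTop_\bfQ \simeq \prod_{k \geq 1} K(\bfQ, 4k)$, and hence $\Omega\BTop_\bfQ \simeq \prod_{k \geq 1} K(\bfQ, 4k-1) \simeq \prod_{k \geq 1} S^{4k-1}_\bfQ$, using that odd rational Eilenberg--MacLane spaces are rationally spheres. In particular, $\pi_{4k}(\BTop) \otimes \bfQ \cong \bfQ$ in each degree $4k$, with a generator detected by $p_k$ under the Hurewicz pairing, and these are the only nonvanishing rational homotopy groups. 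Given surjectivity on rational homotopy, for each $k \geq 1$ I would choose a map $f_k : S^{4k-1} \to \Omega\BTop(d)$ whose composite with the stabilisation map represents a nonzero element of $\pi_{4k-1}(\Omega\BTop) \otimes \bfQ$, and then multiply the $f_k$'s using the loop space structure on $\Omega\BTop(d)$ to obtain a single map $\prod_{k \geq 1} S^{4k-1} \to \Omega\BTop(d)$. After rationalising the source, this yields a map $\Omega\BTop_\bfQ \simeq \prod_{k \geq 1} S^{4k-1}_\bfQ \to \Omega\BTop(d)_\bfQ$; its postcomposition with the rationalised stabilisation lands in $\prod_{k} K(\bfQ, 4k-1)$ and, since rationally there are no nonzero maps $K(\bfQ, 4k-1) \to K(\bfQ, 4j-1)$ for $j \neq k$, is a product of rational equivalences factor by factor, hence a rational equivalence. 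This is the desired section. Conversely, any such section visibly implies surjectivity on rational homotopy groups.

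For the surjectivity itself, the generator of $\pi_{4k}(\BTop) \otimes \bfQ$ lies in the image of $\pi_{4k}(\BTop(d)) \otimes \bfQ$ as soon as there is a map $S^{4k} \to \BTop(d)$ on which the pulled-back class $p_k$ evaluates nontrivially against the fundamental class of $S^{4k}$. The classifying map of the fibre bundle furnished by \cref{bigthm:geometric} is exactly such a map, and this is required for every $k \geq 1$.

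The main obstacle is thus \cref{bigthm:geometric} itself; once it is in hand, the present corollary is essentially a reformulation using the polynomial structure of $\oH^*(\BTop; \bfQ)$ in Pontryagin classes and requires no additional geometric input.
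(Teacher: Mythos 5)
Your proof is correct and takes essentially the same route as the paper: in both cases the key input is \cref{bigthm:geometric} (Hurewicz-detection of $p_k$), combined with $\oH^*(\BTop;\bfQ)\cong\bfQ[p_1,p_2,\ldots]$, and the section is assembled from Hurewicz-detecting spherical classes using the loop-space structure; you use the product of odd rational spheres directly, whereas the paper takes $\vee_i S^{4i}_\bfQ$, loops, and observes that the result splits as a product of Eilenberg--MacLane spaces---a cosmetic difference. One small slip: $\BTop$ is \emph{not} simply connected ($\pi_1(\BTop)\cong\bfZ/2$), so one should pass to the rationally equivalent $1$-connected $\BSTop$ (as the paper does) before invoking the product-of-Eilenberg--MacLane-spaces description; this does not affect the substance of your argument.
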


Via Morlet's equivalence $\smash{\BDiff_\partial(D^d)\simeq\Omega^d_0\Top(d)/\oO(d)}$ in dimensions $d\neq 4$ \cite[V.3.4]{KirbySiebenmann}, \cref{bigcor:homotopy} has the following consequence for the classifying space $\BDiff_\partial(D^d)$ of the group of diffeomorphisms of a closed $d$-dimensional disc fixing the boundary pointwise, equipped with the smooth topology (see e.g.\,\cite[Section 8.2.1]{KRWOddDiscs} for a definition of the maps in the statement):

\begin{bigcor}\label{bigcor:PW-classes}For $n\ge3$, the following maps are surjective on rational homotopy groups:
\[
\def\arraystretch{1.5}
\begin{array}{c@{\hskip 0cm} l@{\hskip 0.1cm} c@{\hskip 0.1cm} l@{\hskip 0.1cm} l@{\hskip 0.1cm} }
\big(\Omega^{2n}(p_n-e^2)^\tau\times \sqcap_{k>n}\Omega^{2n}p_k^\tau\big)&\colon&\BDiff_\partial(D^{2n})&\lra&\bigsqcap_{k=n}^\infty K(\bfQ,4k-2n-1)\\
\big(\Omega^{2n+1}(p_n-E)^\tau\times \sqcap_{k>n}\Omega^{2n+1}p_k^\tau\big)&\colon&\BDiff_\partial(D^{2n+1})&\lra&\bigsqcap_{k=n}^\infty K(\bfQ,4k-2n-2).\end{array}
\]
\end{bigcor}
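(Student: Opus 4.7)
The plan is to deduce \cref{bigcor:PW-classes} from \cref{bigcor:homotopy} by combining Morlet's equivalence, the long exact sequence of the Morlet fibration $\Top(d)/\oO(d) \to \BSO(d) \to \BSTop(d)$, and the boundedness of the rational homotopy of $\BSO(d)$. Throughout, I write $d \in \{2n, 2n+1\}$ with $n \geq 3$, so $d \geq 6$ and in particular $d \neq 4$.

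First I would unwind the characteristic classes in the statement. Given a class $\alpha \in H^{4k}(\BSTop(d); \bfQ)$ with vanishing restriction to $\BSO(d)$, transgression in the Morlet fibration yields a descended class $\alpha^\tau \in H^{4k-1}(\Top(d)/\oO(d); \bfQ)$. Applying $\Omega^d$, together with Morlet's identification $\BDiff_\partial(D^d) \simeq \Omega^d_0(\Top(d)/\oO(d))$, produces the map $\Omega^d \alpha^\tau \colon \BDiff_\partial(D^d) \to K(\bfQ, 4k-d-1)$. The three classes of $\alpha$ arising in the statement satisfy $\alpha|_{\BSO(d)} = 0$ for the following reasons: the classical rank bound gives $p_k = 0$ in $H^{4k}(\BSO(d); \bfQ)$ for $k > n$; the identity $p_n = e^2$ holds in $H^{4n}(\BSO(2n); \bfQ)$; and $E$ is constructed in \cite[Section 8.2.1]{KRWOddDiscs} so that $E|_{\BSO(2n+1)} = p_n$.

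Second, I would use the long exact sequence of the Morlet fibration to exhibit the relevant nonzero classes in $\pi_{4k-1}(\Top(d)/\oO(d)) \otimes \bfQ$. Rationally, $\pi_*(\BSO(2n)) \otimes \bfQ$ is concentrated in degrees $\{4, 8, \ldots, 4(n-1), 2n\}$, and $\pi_*(\BSO(2n+1)) \otimes \bfQ$ in $\{4, 8, \ldots, 4n\}$. Thus for $k > n$ (either parity), and for $k = n$ in the case $d = 2n$, both $\pi_{4k}(\BSO(d)) \otimes \bfQ$ and $\pi_{4k-1}(\BSO(d)) \otimes \bfQ$ vanish, so the connecting map
\[
\partial \colon \pi_{4k}(\BSTop(d)) \otimes \bfQ \xra{\cong} \pi_{4k-1}(\Top(d)/\oO(d)) \otimes \bfQ
\]
is an isomorphism. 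Invoking \cref{bigcor:homotopy} yields classes $P_k \in \pi_{4k}(\BSTop(d)) \otimes \bfQ$ with $p_k(P_k) = 1$, and hence nonzero $\widetilde P_k := \partial(P_k)$. The duality between the connecting homomorphism and transgression implies that $\Omega^d \alpha^\tau$ evaluates $\widetilde P_k$ to $\alpha(P_k)$, which equals $1$ in all cases under discussion---for $(k,d) = (n, 2n)$, using that $e^2(P_n) = P_n^*(e)^2 = 0$ since $P_n^*(e) \in H^{2n}(S^{4n}; \bfQ) = 0$.

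The main obstacle lies in the remaining case $k = n$, $d = 2n+1$, where $\pi_{4n}(\BSO(2n+1)) \otimes \bfQ = \bfQ$ is nontrivial and $\partial$ has a $1$-dimensional kernel equal to the $\BSO(2n+1)$-image. Establishing surjectivity here amounts to showing that $\pi_{4n}(\BSTop(2n+1)) \otimes \bfQ$ has dimension at least $2$---equivalently, that the stabilisation map to $\pi_{4n}(\BSTop) \otimes \bfQ$ has nontrivial kernel---and then choosing both $P_n$ outside the $\BSO(2n+1)$-image and the class $E$ so that $E(P_n) = 0$. This requires more than the degree-wise surjectivity of \cref{bigcor:homotopy}; the plan is to use the rational section it provides, together with the construction in \cite{KRWOddDiscs} tailoring $E$ to evaluate as $p_n$ on the $\BSO(2n+1)$-image and to vanish on a chosen complementary class. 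With such a $P_n$, one obtains $(p_n - E)(P_n) = p_n(P_n) = 1$, completing the argument.
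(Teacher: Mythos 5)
Your proposal takes essentially the same route as the paper: pass to the fibration $\Top(d)/\oO(d)\to\BO(d)\to\BTop(d)$ via Morlet, examine the associated long exact sequence, and invoke \cref{thm:PW-classes}/\cref{bigcor:homotopy} to produce the needed classes. The paper packages this as two maps of horizontal fibre sequences (smoothing theory on top, an Eilenberg--MacLane model on the bottom taken from \cite{KRWOddDiscs}) plus the five-lemma; you chase the long exact sequence of a single fibration directly. These are two ways of organising the same argument, and your treatment of the cases $k>n$ (both parities) and $k=n$, $d=2n$ (using that $e^2$ is decomposable and so vanishes on any Hurewicz image) is correct.

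The place your proposal comes apart is exactly where you flag a gap, $k=n$, $d=2n+1$, and the sketched resolution there has a conceptual slip: the class $E\in\oH^{4n}(\BSTop(2n+1);\bfQ)$ is a \emph{fixed} characteristic class (the Hopf class pulled back along $\BSTop(2n+1)\to\BSAut(S^{2n})$), not something one is free to "tailor to vanish on a chosen complementary class". What is actually needed in this degree is the single assertion that $p_n-E$ is a nonzero functional on $\pi_{4n}(\BSTop(2n+1))_\bfQ$ --- equivalently, that $p_n$ and $E$ are linearly independent there. This does not follow from \cref{thm:PW-classes} (which only controls $p_n$) nor from the rational section of \cref{bigcor:homotopy} (which only governs stable Pontryagin classes, whereas $E$ is not pulled back from $\BTop$). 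The paper avoids the issue because the bottom map of fibre sequences is imported wholesale from \cite[Cor.\,8.3]{KRWOddDiscs}, whose proof already contains the required nonvanishing in degree $2n-2$ (it lies well within the range of the homotopy computations there); the genuinely new input from \cref{thm:PW-classes} is the surjectivity in arbitrarily high degrees, where only the classes $p_k$ with $k>n$ are involved. A smaller point: your reduction ``amounts to $\dim\pi_{4n}(\BSTop(2n+1))_\bfQ\geq 2$'' is necessary but not sufficient --- one also needs $p_n-E$ to be nonzero off the image of $\pi_{4n}(\BSO(2n+1))_\bfQ$, which is an extra condition.
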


\begin{nrem}\ 
\begin{enumerate}
\item \cref{bigcor:PW-classes} implies that $\BDiff_\partial(D^d)$ has for $d\ge6$ infinitely many nontrivial rational homotopy groups, most of which are outside of the range where $\pi_*(\BDiff_\partial(D^d))\otimes\bfQ$ has been computed in \cite{KRWEvenDiscs,KRWOddDiscs} and many lie in different degrees than Watanabe's ``graph classes'' detected by configuration space integrals \cite{WatanabeOdd, WatanabeErratum,WatanabeEven}. 
\item From the case of discs, one can obtain nontriviality of infinitely many rational homotopy groups for $\BDiff_\partial(M)$ for many other smooth compact $d$-manifolds $M$, e.g.\,whenever $M$ is closed, parallelisable, and admits a non-negatively curved metric \cite[Lemma 2.1]{BustamanteFarrellJiang}.
\item We do not show that there is a section as in \cref{bigcor:homotopy} before taking loop spaces, so our result is neither stronger nor weaker than the main result of \cite{GRWIndependence} in that it does not show that the stabilisation $\BTop(d)\ra \BTop$ is surjective on rational homology groups, but it does show that it is surjective on rational homotopy groups. For applications such as \cref{bigcor:PW-classes}, the surjectivity on homotopy as opposed to homology groups is essential.
\end{enumerate}
\end{nrem}

\subsection*{Application II: A rationally unconstrained Burghelea--Lashof splitting}
The second application of our pullback decomposition concerns the forgetful map 
\begin{equation}\label{equ:nonblock-to-block}
	\Aut_\partial(M)/\Homeo_\partial(M)\lra\Aut_\partial(M)/\BlockHomeo_\partial(M)
\end{equation}
from the classifying space for fibre homotopically trivialised fibre bundles with a compact $d$-manifold $M$ as fibre and trivialised boundary bundle, to the corresponding classifying space for fibre homotopically trivialised block bundles. Burghelea and Lashof \cite{BurgheleaLashofSplitting} proved that after Postnikov truncation in a range growing with $d$, taking loop spaces, and localising away from $2$, the map \eqref{equ:nonblock-to-block} admits a section. Since the target of the map admits a complete description in terms of algebraic $L$-theory by surgery theory, this gives a convenient method to construct fibre bundles with specified properties (see e.g.\,\cite{KKRW,Frenck} for recent applications of this strategy in the smooth setting). Using the aforementioned pullback decomposition, we show that, when looped once, the forgetful map  \eqref{equ:nonblock-to-block} often admits a section \emph{before} Postnikov truncation as long as one is willing to rationalise:

\begin{bigthm}\label{bigthm:splitting-introduction} For a $2$-connected smoothable compact manifold $M$ of dimension $d \geq 13$ with $2$-connected boundary, the forgetful map \eqref{equ:nonblock-to-block} admits a rational section after taking loop spaces. \end{bigthm}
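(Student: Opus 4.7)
The plan is to reduce the statement to the rational nullhomotopy of the twice-iterated stabilisation map for orientation-preserving derived automorphisms of the rational $E_d$-operad, by exploiting the rational pullback decomposition advertised in the introduction. I would first observe that the map \eqref{equ:nonblock-to-block} shares its homotopy fibre $\BlockHomeo_\partial(M)/\Homeo_\partial(M)$ with the non-block-to-block forgetful map $\BHomeo_\partial(M)\to\BlockBHomeo_\partial(M)$, so a rational section of \eqref{equ:nonblock-to-block} after looping is equivalent to a rational splitting of the looped map $\Omega\BHomeo_\partial(M)\to\Omega\BlockBHomeo_\partial(M)$; equivalently, the connecting map $\Omega\BlockBHomeo_\partial(M)\to \BlockHomeo_\partial(M)/\Homeo_\partial(M)$ must be rationally nullhomotopic.

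Next I would apply the rational pullback decomposition to $\BHomeo_\partial(M)$ and identify an analogous, simpler decomposition of $\BlockBHomeo_\partial(M)$. In both cases one factor should be a ``block-like'' piece depending only on the stable tangential and homotopical data of $M$, while the other is an ``operadic layer'' recording derived automorphisms of the $E_d$-operad twisted by the stable tangent microbundle of $M$. In the block case this operadic layer collapses essentially to its $E_\infty$-version, since block structures see only stable tangential information. The $2$-connectedness of $M$ and $\partial M$ rationally trivialises the tangential twist and lets us reconstruct the boundary data from the interior via the embedding-calculus input stated in the abstract, so the comparison of the two decompositions reduces rationally to the stabilisation map from the $E_d$-derived-automorphism space to its $E_\infty$-version.

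The desired splitting after looping then follows from the stated nullhomotopy of the twice-iterated rational stabilisation: it provides, after looping once, a rational section of the stabilisation map on classifying spaces of operad automorphisms, which via the pullback decomposition translates into a rational section of $\Omega\BHomeo_\partial(M)\to\Omega\BlockBHomeo_\partial(M)$ and hence of \eqref{equ:nonblock-to-block} after looping.

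The main obstacle will be bookkeeping: one has to verify that the two pullback decompositions are identified compatibly with the forgetful map, and that the operadic nullhomotopy is compatible with the tangential and boundary data on the nose, not merely up to higher coherence. The dimension bound $d\geq 13$ should enter precisely here, ensuring both that the embedding-calculus side of the decomposition converges in the required stable range and that the two-step stabilisation used in the operadic nullhomotopy is available without further obstructions; the $2$-connectedness hypotheses on both $M$ and $\partial M$ should be exactly what is needed to keep the comparison free of unstable fundamental-group contributions.
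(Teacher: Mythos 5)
Your proposal has a genuine gap at the final step, and the route you chart does not match the paper's argument.

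The central problem is the claim that Theorem~\ref{bigthm:nullhomotopy} ``provides, after looping once, a rational section of the stabilisation map on classifying spaces of operad automorphisms.'' A nullhomotopy of $\BSAut(E_{d-2,\bfQ})\to\BSAut(E_{d,\bfQ})$ is not a section of anything, and there is no direct passage from it to a splitting of any stabilisation map---for operad automorphisms or otherwise. What the paper actually needs at the corresponding point of the argument is a rational section of the \emph{manifold-level} stabilisation $\STop(d')_\bfQ\to\STop_\bfQ$, which is Corollary~\ref{bigcor:homotopy}. That corollary is not a formal consequence of the operadic nullhomotopy: it is extracted from it through the pullback decomposition (Theorem~\ref{bigthm:pullback}), the Pontryagin--Weiss detection argument on spheres (Theorem~\ref{thm:PW-classes}), and input from Galatius--Randal-Williams' stable moduli spaces to supply the initial nontrivial bundles. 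The operad $\Aut(E_d)$ sits only as a \emph{target} of a map $t\colon\BTop(d)\to\BAut(E_d)$, whose fibre is the nontrivial $\DiscInf$-structure space; knowing a nullhomotopy on the operadic side does not by itself control $\Top(d)$.

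Your proposed pullback decomposition of the \emph{block} space $\BlockBHomeo_\partial(M)$ also has no counterpart in the paper and would require a new argument: there is no claim that block homeomorphism classifying spaces admit such a decomposition, nor that an ``operadic layer'' for them collapses to an $E_\infty$-version. The actual proof of Theorem~\ref{thm:splitting} (the sharper form of Theorem~\ref{bigthm:splitting-introduction}) handles the block side via surgery theory, identifying $(\Aut_\partial(M)/\BlockHomeo_\partial(M))_{\id}$ rationally with a relative mapping space into $\BSTop$; it then fits this into a diagram against the tangential structure $\BSTop(d)\times_{\BSTop(d)_\bfQ}\BSTop(d-2)_\bfQ$, applies the pullback decomposition to the \emph{non-block} side, and reduces the section problem to one for mapping spaces into $\STop(d-2)_\bfQ\to\STop_\bfQ$, solved using Corollary~\ref{bigcor:homotopy}. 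Finally, your attribution of the hypothesis $d\ge13$ to embedding-calculus convergence and the availability of two-step operadic stabilisation is mistaken: the paper's pullback decomposition already works for $d\ge5$, and $d\ge13$ is used only to force the vanishing (or squareness) of the high Pontryagin classes $p_{n-i}(M)$ needed to lift the tangent classifier rationally to $\BSTop(d-2)$ and $\BSTop(d-8)$ in the section construction.
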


\begin{nrem}\ 
\begin{enumerate}[leftmargin=*]
\item The hypothesis on the dimension in \cref{bigthm:splitting-introduction} can often be improved, and under additional assumptions on $M$ the section also exists before taking loop spaces (see \cref{thm:splitting}).
\item In contrast to Burghelea--Lashof's section, the section in \cref{bigthm:splitting-introduction} without Postnikov truncation is a purely topological phenomenon and does not exist in the smooth category in general (this can for instance be deduced from \cite[Proposition 3.1]{RWupperbound}).
\end{enumerate}
\end{nrem}

\subsection*{Application III: Detecting characteristic classes}Our pullback decomposition can also serve as a tool to detect characteristic classes of topological manifold bundles, in particular \emph{tautological classes}, that is, fibre integrals of characteristic classes of the vertical topological tangent bundle. To illustrate the general method, we prove the following as an example:

\begin{bigthm}\label{bigthm:detect-classes}Let $M$ be a $2$-connected smoothable compact manifold of dimension $d\ge8$ with $2$-connected boundary, such that $M$ contains $S^m\times S^{d-m}$ as a connected summand for some $m$.
 \begin{enumerate}[leftmargin=*]
\item\label{enum:smsn-infinite-i} The tautological class $\kappa_{p_i p_j}\in\oH^{4(i+j)-d}(\BHomeo_\partial(M);\bfQ)$ associated to a product of Pontryagin classes is nontrivial for all $i,j > \max(m,d-m)/4$.
\item\label{enum:smsn-infinite-ii} If $m$ and $d-m$ are even, then the total dimension of the cohomology ring $\oH^{*}(\BHomeo_\partial(M);\bfQ)$ in degree $*\le k$ grows with $k$ faster than any polynomial.
\end{enumerate}
\end{bigthm}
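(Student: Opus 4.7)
The strategy is to combine the pullback decomposition with \cref{bigthm:geometric}, using the $S^m\times S^{d-m}$ summand of $M$ as a geometric site into which one can ``insert'' Euclidean bundles with prescribed unstable Pontryagin classes.

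\textbf{Part (i).} By \cref{bigthm:geometric} there is for every $k\ge 1$ a map $\xi_k\colon S^{4k}\to\BTop(d)$ classifying an $\bfR^d$-bundle with $p_k(\xi_k)\neq 0\in \oH^{4k}(S^{4k};\bfQ)$. The hypothesis $i,j>\max(m,d-m)/4$ secures the strict inequalities $4i,4j>m$ and $4i,4j>d-m$, which are exactly what the forthcoming dimension counts require. Writing $M=M_0\#(S^m\times S^{d-m})$, I would construct an $M$-bundle
\[
	\pi\colon E\lra B\coloneqq S^{4i-m}\times S^{4j-(d-m)},
\]
noting that $\dim B=4(i+j)-d$ matches the cohomological degree of $\kappa_{p_ip_j}$. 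The bundle is built by a parametrised connected-sum construction: the $S^m$-factor of the summand is replaced by a family of $m$-spheres over $S^{4i-m}$ whose normal data pulls back $\xi_i$, and the $S^{d-m}$-factor by a family over $S^{4j-(d-m)}$ whose normal data pulls back $\xi_j$. The pullback decomposition is what makes this construction possible: it reduces the problem of realising the prescribed vertical tangent data as an $M$-bundle to a lifting problem that is solved by \cref{bigthm:geometric} on the $\BTop(d)$-side and by standard obstruction theory on the block-bundle side (the $2$-connectedness hypotheses ensure the obstructions actually vanish). A Thom-isomorphism and fibre-integration calculation then identifies $\langle\kappa_{p_ip_j}(\pi),[B]\rangle$ with a nonzero scalar multiple of $\langle p_i(\xi_i),[S^{4i}]\rangle\cdot\langle p_j(\xi_j),[S^{4j}]\rangle$, proving nontriviality.

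\textbf{Part (ii).} When $m$ and $d-m$ are both even, the construction of~(i) can be iterated: for every partition $\lambda=(i_1,\ldots,i_r)$ whose parts all exceed $\max(m,d-m)/4$, a sequence of parametrised connect-sums produces an $M$-bundle on which $\kappa_{p_{i_1}\cdots p_{i_r}}$ is detected. To upgrade pointwise nontriviality to linear (indeed algebraic) independence across $\lambda$, I would compare with the image in $\oH^*(\Omega^\infty_0 \MT\mathrm{Top}(d);\bfQ)$ under the Madsen--Tillmann--Weiss map: that cohomology is a rational polynomial ring in which the $\kappa_c$-classes corresponding to monomials $c$ in the Pontryagin classes appear as polynomial generators in sufficiently high degree. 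Since the number of partitions of $n$ into parts each exceeding a fixed integer grows faster than any polynomial in $n$, this supplies super-polynomially many algebraically independent classes in degrees $\le k$.

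\textbf{Main obstacle.} The crux of the proof is the tangential realisation step in~(i): producing an $M$-bundle whose vertical topological tangent bundle realises the unstable class supplied by \cref{bigthm:geometric}. Generic $M$-bundles have vertical tangent bundles tightly constrained by the stable tangential homotopy type of $M$, and disentangling unstable Pontryagin data from the block structure is precisely what the pullback decomposition is designed to do. The evenness hypothesis in~(ii) is needed to ensure that the iterated construction preserves the orientation and sign data required for the algebraic-independence argument, and is also what allows the full force of the polynomial rational cohomology of $\Omega^\infty_0\MT\mathrm{Top}(d)$ to be brought to bear.
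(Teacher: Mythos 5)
Your proposal takes a genuinely different route from the paper, and unfortunately the route has serious gaps. The paper does not construct any fibre bundles. It instead establishes (in \cref{thm:detect-kappas}) that the map $\Map_{\half\partial}(M,\BSTop(d-2)_\bfQ)_{\ell}\ra \Map_{\half\partial}(M,\BSTop(d)_\bfQ)/\Aut_\partial(M)$ factors through $\BHomeo_\partial(M)_\fQ$, reduces to $M=D^d\sharp(S^m\times S^{d-m})\simeq S^m\vee S^{d-m}$, uses the slant-product formula of \cref{rem:formula-kappa-trivial-bundle} to express the pullback of $\kappa_{p_i\otimes p_j}$ in $\oH^*(\Map_*(M,\BSTop(d-2)_\bfQ)_0;\bfQ)\cong\oH^*(\Omega^m_0\BSTop(d-2)_\bfQ\times\Omega^{d-m}_0\BSTop(d-2)_\bfQ;\bfQ)$, restricts further along a wedge of rational spheres supplied by \cref{thm:PW-classes}, and applies the Milnor--Moore theorem to a free loop-space algebra. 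Everything is a cohomology computation in a mapping space; no bundle is built.

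The first gap in your proposal is the ``parametrised connected-sum'' step. \cref{bigthm:geometric} produces a $d$-dimensional Euclidean bundle $\xi_i$ over $S^{4i}$ with $p_i\neq 0$; it does \emph{not} produce an $m$-sphere bundle over $S^{4i-m}$ whose vertical tangent data carries $p_i$, and there is no general desuspension principle that would yield one. Indeed, for a rank-$m$ vector bundle one has $p_i=0$ once $4i>2m$, and nothing guarantees the topological analogue can be realised at the required rank over a sphere of dimension $4i-m<4i$. Moreover, the base $B=S^{4i-m}\times S^{4j-(d-m)}$ only has top cohomology in degree $4(i+j)-d$, not in degrees $4i$ or $4j$, so there is no room to pull back $\xi_i$ or $\xi_j$ along maps into $S^{4i}$ or $S^{4j}$. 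The pullback decomposition cannot rescue this: its force is to lift a cohomology class already known to be nontrivial on the mapping space, not to realise arbitrary unstable tangent-bundle data on a fibre bundle of your choosing. The second gap is in Part~(ii): a comparison with $\oH^*(\Omega^\infty_0\MT\mathrm{Top}(d);\bfQ)$ would require a Madsen--Tillmann--Weiss/Galatius--Randal-Williams--type theorem for families of \emph{topological} manifolds, which is not available. The paper instead passes to a polynomial algebra $\bfQ[z_i]$ and bounds the growth below by the partition function; the polynomiality comes from $\oH^*(\Omega^m_0\bigvee_k S^{4k};\bfQ)$ via Milnor--Moore, not from a cobordism category. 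The evenness of $m$ and $d-m$ enters only to make the target of the auxiliary algebra map an honest commutative polynomial ring, not for any orientation reason.
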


\subsection*{The pullback decomposition}\label{sec:pullback-intro}
We now explain the pullback decomposition on which the above applications rely. For a topological $d$-manifold $M$, taking topological derivatives yields a factorisation
\begin{equation}\label{equ:nondelooped-comp-before-theta}
	\Homeo_\partial(M)\lra \Aut_\partial(TM)\lra\Aut_\partial(M)
\end{equation}
of the forgetful map to the topological monoid $\Aut_\partial(M)$ of homotopy equivalences of $M$ fixed on $\partial M$, through the topological monoid $\Aut_\partial(TM)$ of homotopy equivalences of $M$ fixed on $\partial M$ covered by a bundle map of the topological tangent bundle $TM$ of $M$ fixed on $TM|_{\partial M}$. We show that the delooping of \eqref{equ:nondelooped-comp-before-theta} can ``often'' be ``partially'' completed to a homotopy pullback square
\[
\begin{tikzcd}[row sep=0.4cm,column sep=0.4cm]
\BHomeo_\partial(M)\rar\dar& Z_M\dar\\
\BAut_\partial(TM)\rar &\BAut_\partial(M)
\end{tikzcd}
\]
for some space $Z_M$. ``Often'' in that we assume $d\ge5$ and that $M$ is smoothable, $2$-connected, and has $2$-connected boundary, and ``partially''  in that we replace the sequence \eqref{equ:nondelooped-comp-before-theta} by a variant explained below, involving a \emph{weaker  boundary condition}, \emph{tangential structures}, and certain \emph{rationalisations}.

To introduce the \emph{weaker boundary condition}, we assume that $M$ has nonempty boundary, fix an embedded disc $D^{d-1}\subset \partial M$, set $\half\partial M\coloneq \partial M\backslash \interior(D^{d-1})$, and replace $\Aut_\partial(TM)$ in \eqref{equ:nondelooped-comp-before-theta} by the topological monoid $\smash{\Aut_\partial(TM,\half \partial)}$ defined in the same way as $\smash{\Aut_\partial(TM)}$, except that the bundle map is only required to fix $TM|_{\half\partial M}$ instead of $TM|_{\partial M}$. Regarding \emph{tangential structures}, for a map $\theta\colon B\ra \BTop(d)$ and a $\theta$-structure $\ell_{\half\partial}$ on $TM|_{\half\partial M}$ (a lift of its classifying map along $\theta$), the space of $\theta$-structures on $TM$ extending $\ell_{\half\partial}$ is acted upon by $\Homeo_\partial(M)$ and $\smash{\Aut_\partial(TM,\half \partial)}$, so by taking homotopy orbits we obtain a variant
$\smash{\BHomeo^\theta_\partial(M;\ell_{\half\partial})\ra \BAut^\theta_\partial(TM;\ell_{\half\partial})\ra\BAut_\partial(M)}$ of the sequence \eqref{equ:nondelooped-comp-before-theta}. Finally, with respect to \emph{rationalisations}, we replace the final map in the latter sequence by the initial map in its Moore--Postnikov $1$-factorisation $\smash{\BAut^\theta_\partial(TM,\half \partial;\ell_{\half\partial})\ra \BAut_{\partial}(M)^{\ell_{\half \partial}} \ra\BAut_\partial(M)}$ and replace $\smash{\BHomeo^\theta_\partial(M;\ell_{\half\partial})}$ as well as $\smash{ \BAut^\theta_\partial(TM;\ell_{\half\partial})}$ by their fibrewise rationalisation over $\smash{\BAut_{\partial}(M)^{\ell_{\half \partial}}}$, indicated by a $(-)_{\fQ}$-subscript. In total, we arrive at a sequence of the form
\begin{equation}\label{equ:introduction-compl-seq}
	\BHomeo^\theta_\partial(M;\ell_{\half\partial})_\fQ\lra \BAut^\theta_\partial(TM;\ell_{\half\partial})_\fQ\lra\BAut_{\partial}(M)^{\ell_{\half \partial}}
\end{equation}
which is the variant of \eqref{equ:nondelooped-comp-before-theta} that our homotopy pullback decomposition result is about:

\begin{bigthm}\label{bigthm:pullback}Let $M$ be a smoothable compact $d$-manifold $M$ with $d\ge5$ and $\partial M \neq\varnothing$, $\theta\colon B\ra \BSTop(d)$ an oriented tangential structure such that $B$ is nilpotent, and $\ell_{\half\partial}$ a $\theta$-structure on $TM|_{\half\partial}$. If
\begin{enumerate}
	\item \label{enum:pullback-i} $\theta$ factors through the stabilisation map $\BSTop(d-2)\ra \BSTop(d)$ after rationalisation,
	\item  \label{enum:pullback-ii} $M$ and $\partial M$ are both $2$-connected,
\end{enumerate}
then the sequence \eqref{equ:introduction-compl-seq} can be completed to a homotopy pullback square.
\end{bigthm}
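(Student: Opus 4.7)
\emph{Proof plan.} My plan is to argue fibrewise over $\BAut_\partial(M)^{\ell_{\half\partial}}$: to complete the sequence to a pullback square, I must exhibit a space $Z_M$ with a map to $\BAut_{\partial}(M)^{\ell_{\half \partial}}$ whose fibres are equivalent to the fibres of the left vertical map. Choosing a basepoint over $(\id_M,\ell)$, the fibre of the middle term is (the rationalisation of) the section space of $\theta$-structures on $TM$ extending $\ell_{\half\partial}$, while the fibre of the left term enhances this by the data of a homeomorphism of $M$ fixed on $\partial M$ realising the relevant homotopy class. So the candidate $Z_M$ is an intrinsic ``structure space'' encoding the difference, and the substance of the theorem is that this difference descends to $\BAut_\partial(M)^{\ell_{\half\partial}}$.

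First, using that $M$ is smoothable and $d\ge 5$, I would invoke smoothing theory to compare $\Homeo_\partial(M)$ to $\Diff_\partial(M)$ up to controlled correction, translating the problem into a setting where embedding-calculus technology applies. Second, I would realise $\BAut^\theta_\partial(TM;\ell_{\half\partial})$ as the first-stage approximation of $\BHomeo^\theta_\partial(M;\ell_{\half\partial})$ in a Goodwillie--Weiss-style tower, so that the homotopy fibre of the forgetful map between them records the higher-stage contributions. These contributions are governed by the rational derived automorphisms of the $E_d$-operad twisted by $\theta$-structures, and the $2$-connectedness of $M$ and $\partial M$ (hypothesis (ii)) ensures that the tower converges rationally and that the ``half-boundary'' reduction is homotopically harmless.

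Third, I would use hypothesis (i) that $\theta$ factors rationally through $\BSTop(d-2)\ra\BSTop(d)$, combined with the nullhomotopy of the \emph{twice-iterated} stabilisation map for the space of orientation-preserving derived automorphisms of the rational $E_d$-operad (one of the advertised auxiliary results of the paper), to trivialise the $\theta$-dependence of these higher-stage contributions after rationalisation. Once this $\theta$-dependence is trivialised, the correction term collapses to a pullback from $\BAut_\partial(M)^{\ell_{\half\partial}}$, and I take this pullback to define $Z_M$.

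The main obstacle is the third step: proving that, rationally, the higher Goodwillie--Weiss layers are insensitive to the $\theta$-structure and depend only on the homotopy type of $M$ relative to $\half\partial M$. This is precisely where hypothesis (i) meets the operadic nullhomotopy; the two stabilisations correspond to the codimension-$2$ gap $d-2\to d$, and the $2$-connectedness of $\partial M$ allows the operadic action on the boundary data to be stabilised twice in a compatible way. Showing that the resulting collapse is natural enough to assemble into a well-defined map $Z_M\ra\BAut_\partial(M)^{\ell_{\half\partial}}$, rather than to a more refined $\theta$-dependent target, is where the bulk of the technical work must go.
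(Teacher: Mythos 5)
Your overall picture is close to the paper's: embedding calculus gives a tower under $\BHomeo^\theta_\partial(M;\ell_{\half\partial})$ whose first stage is $\BAut^\theta_\partial(TM;\ell_{\half\partial})$, and the rational nullhomotopy of the twice-iterated stabilisation $\BSAut(E_{d-2,\bfQ})\to\BSAut(E_{d,\bfQ})$ (Theorem~\ref{bigthm:nullhomotopy}) is exactly what is fed by hypothesis~\ref{enum:pullback-i} to collapse the $\theta$-dependence of the remaining layers after rationalisation. That said, two steps in your plan diverge from what actually works.

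First, the reduction via ``smoothing theory to compare $\Homeo_\partial(M)$ to $\Diff_\partial(M)$'' is not how smoothability enters. The paper works with \emph{topological} embedding calculus throughout; smoothability is used only to guarantee convergence of that tower (via the convergence theorems for topological embedding calculus quoted in Lemma~\ref{lem:convergent-boundary} and \cref{lem:reduction-to-tk}), not to transport the problem to the smooth category and back. Passing through $\Diff$ would reintroduce exactly the smoothing-theoretic correction terms that this decomposition is designed to avoid, and in particular would be in tension with the remark after Theorem~\ref{bigthm:splitting-introduction} that the looped splitting is a genuinely topological phenomenon.

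Second, and more seriously, your outline omits the boundary-recovery machinery of Section~4. The pullback decomposition is proved for the self-embedding space $\Emb_{\half\partial}(M)$ (which equals $\Homeo_\partial(M)$ by the Alexander trick), and the key point is that one must factor the forgetful map $\Emb_{\partial^v}(M)^\times\to\Aut_{\partial^v}(M,\partial^h)$ through the \emph{particle} embedding calculus tower $T_\bullet\Emb^p_{\partial^v}(M)^\times$ while remembering the boundary condition $\partial^h M$. Theorem~\ref{thm:fake-boundary-2type} and its truncated version Theorem~\ref{thm:fake-boundary-truncated}, which reconstruct $\partial^h M$ from the embedding calculus approximations using the $2$-connectivity hypotheses, are precisely what make the auxiliary tower $T_\bullet\Emb_{\partial^v}(M,\partial^h)^\times$ in \eqref{equ:defining-pullback-bdy-tk} well-defined. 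Without this, there is no way to write down the pullback square \eqref{equ:pm-square-2} relating topological and particle embedding calculus over the correct base, and hence no way to isolate the tangent-bundle data as the ``free'' factor. Your description of what the $2$-connectedness of $\partial M$ is for (``stabilise the operadic action on boundary data twice'') conflates the two roles it actually plays: convergence of the tower, and the tangential $2$-type condition for boundary recovery. The double stabilisation lives entirely in $\BAut(E_{d,\bfQ})$ and is fed by hypothesis~\ref{enum:pullback-i}, not by the connectivity of $\partial M$.

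Finally, the mechanism by which the $\theta$-dependence is trivialised is slightly more structured than ``the higher layers are insensitive to $\theta$.'' What one shows (Proposition~\ref{prop:factorisation}) is that the bottom map of the fibrewise-rationalised square, $\BAut^\theta_{\partial^v}(TM,\partial^h;\ell^v)_{\fQ}\to\BAut^{/\BAut_{\le\bullet}(E_d)}_{\partial^v}(M,\partial^h)^{\ell^v}_{\fQ}$, factors through $\BAut_{\partial^v}(M,\partial^h)^{\ell^v}$, because the rationalised composition $B\to\BSTop(d)\to\BAut^{\id}_{\le\bullet}(E_{d})$ is nullhomotopic. That nullhomotopy is exactly where hypothesis~\ref{enum:pullback-i} and Theorem~\ref{bigthm:nullhomotopy} combine. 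Pasting the resulting factorisation against the pullback square then produces $Z_M$ as the total space of a fibration over $\BAut_\partial(M)^{\ell_{\half\partial}}$; this is sharper than the ``take the collapsed correction term as $Z_M$'' you describe, and is where the actual construction happens.
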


Informally speaking and neglecting tangential structures, rationalisation, and boundary conditions, \cref{bigthm:pullback} says that specifying a topological fibre bundle with fibre $M$ is equivalent to specifying a fibration with fibre $M$ together with two pieces of data on it \emph{that are independent from each other}: a ``vertical tangent bundle'' i.e.~a $d$-dimensional Euclidean bundle on the total space that is equivalent to $TM$ on each fibre, and whichever extra data on a fibration the additional space in the pullback decomposition of \cref{bigthm:pullback} classifies. Since the two pieces of data can be chosen independently, this allows one for instance to manipulate the vertical tangent bundle of a fibre bundle without changing the underlying fibration.

\begin{nrem}\ 
\begin{enumerate}[leftmargin=*]
\item We prove a stronger version of \cref{bigthm:pullback} that allows more general boundary conditions and also applies to certain spaces of self-embeddings (see \cref{thm:pullback-decomp}).
\item The proof of \cref{bigthm:pullback} in principle also describes the additional space in the pullback extension of \eqref{equ:introduction-compl-seq}, but we prefer to state the result as above to emphasise that the mere existence of such a pullback decomposition suffices for many applications, in particular for Theorems \ref{bigthm:geometric}, \ref{bigthm:splitting-introduction}, and \ref{bigthm:detect-classes}. Roughly speaking, the additional space is constructed out of the homotopy types of $\Aut_\partial(M)$ and the limit of fibrewise rationalisations of the truncated particle embedding calculus approximations to a certain self-embedding space of $M$, in the sense of \cite{KKoperadic} and closely related to Boavida de Brito--Weiss' theory of configuration categories \cite{BoavidaWeissConfiguration}.
\end{enumerate}
\end{nrem}

\subsection*{Twice-iterated stabilisation of the rationalised $E_d$-operad}The proof of \cref{bigthm:pullback} is inspired by Weiss' work on topological Pontryagin classes \cite{WeissDalian} and relies on our earlier work \cite{KKoperadic}. One of the crucial additional ingredients is a result on the classifying space $\BAut(E_{d,\bfQ})$ of the derived automorphism group of the rationalisation of the operad $E_d$ of little $d$-discs. The space of $2$-ary operations of $E_{d,\bfQ}$ is homotopy equivalent to a rational $(d-1)$-sphere, so acting on its top homology yields a morphism $\Aut(E_{d,\bfQ})\ra \GL(\bfQ)$ whose kernel we denote by $\SAut(E_{d,\bfQ})$. A rational version of the additivity theorem of Dunn and Lurie yields a stabilisation map $\BSAut(E_{d-1,\bfQ})\ra \BSAut(E_{d,\bfQ})$ which we show to be nullhomotopic when iterated twice: 
 
\begin{bigthm}\label{bigthm:nullhomotopy}The stabilisation map $\BSAut(E_{d-2,\bfQ})\to \BSAut(E_{d,\bfQ})$ is nullhomotopic for all $d \geq 2$.
\end{bigthm}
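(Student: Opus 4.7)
My plan is to begin with a rational form of the Dunn--Lurie additivity theorem, which gives an equivalence of $\infty$-operads $E_{d,\bfQ}\simeq E_{d-2,\bfQ}\otimes E_{2,\bfQ}$. Under this identification, the twice-iterated stabilisation map corresponds to $\phi\mapsto \phi\otimes \id_{E_{2,\bfQ}}$. The tensor product of (rational) $\infty$-operads refines the collection $\{\BSAut(E_{\bullet,\bfQ})\}_\bullet$ with an $E_\infty$-monoidal structure, and my strategy is to exploit this extra commutative structure together with a concrete model for $\BSAut(E_{d,\bfQ})$.

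For the model, the Kontsevich--Tamarkin formality of $E_{d,\bfQ}$ for $d\ge 2$ and a Fresse--Willwacher-style identification of the derived automorphism space of the rational $E_d$-operad present $\BSAut(E_{d,\bfQ})$ as a Maurer--Cartan space of a graph-complex-type dg Lie algebra $\mathfrak{g}_d$. In this model the stabilisation map is realised by a natural degree-shifted map $\mathfrak{g}_{d-2}\to\mathfrak{g}_d$, and the task reduces to exhibiting a nullhomotopy of this map which is sufficiently natural to lift to the space level.

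The nullhomotopy should originate from the $E_2$-factor: the $\SO(2)$-action on $E_2$ (from rotation of the little discs) produces a canonical loop at $\id$ in $\SAut(E_{2,\bfQ})$, and pairing this loop with elements of $\SAut(E_{d-2,\bfQ})$ via the $E_\infty$-multiplication---an Eckmann--Hilton-type manoeuvre---should deliver a chain-level homotopy trivialising $\mathfrak{g}_{d-2}\to\mathfrak{g}_d$ by factoring it through a contractible object built from two commuting $E_1$-structures.

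The main obstacle will be twofold. First, making this rigorous requires the delicate theory of tensor products of truncated rational $\infty$-operads flagged in the abstract, as well as fine control over the Fresse--Willwacher equivalence functorially in the stabilisation direction. Second, a subtle feature of the statement is that a single stabilisation need \emph{not} be nullhomotopic---only the twice-iterated one is---so the argument must use two distinct copies of the $E_1$-structure in an essential way, paralleling the Eckmann--Hilton interchange that turns two commuting $E_1$-structures into an $E_2$-structure; pinning down where exactly this ``$E_2$-fication'' produces the contracting homotopy is where the real work lies.
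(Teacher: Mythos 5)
Your proposal shares the paper's opening move---use a tensor‑product/additivity structure to rewrite the double stabilisation as $\phi \mapsto \phi \otimes \id_{E_{2,\bfQ}}$---but it diverges in a way that leaves a genuine gap.

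The first issue is that the paper does \emph{not} have, and explicitly flags that it does not know, an equivalence $E_{d,\bfQ} \simeq E_{d-2,\bfQ} \otimes E_{2,\bfQ}$; the tensor product of operads need not commute with rationalisation. This is precisely why the paper builds the category of rational pro-operads (Appendix B) to obtain the commutative square \eqref{equ:filler-rational-bv}. Your starting point assumes away a real obstacle.

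The more serious gap is the proposed source of the nullhomotopy. You point to the $\SO(2)$-loop in $\SAut(E_{2,\bfQ})$ and an Eckmann--Hilton-type interchange. But this is a loop at the identity, not a path to a basepoint, and the interchange argument does not convert such a loop into a contraction of the stabilisation map; moreover, the $\SO$-classes are exactly what Remark \ref{rem:nullhomotopy-optimal} shows are \emph{not} killed in general (the single stabilisation detects $p_n = e^2$, via the $\SO$-action). The mechanism the paper actually uses is entirely absent from your sketch: Drinfel'd's theorem (\cref{thm:cyc-surj}), i.e.\ surjectivity of $\chi\colon \Aut(E_{2,\bfQ}) \to \GL(\bfQ)$, which supplies an element $\Lambda$ with $|\chi(\Lambda)| \neq 1$. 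The double stabilisation is shown to factor through the fixed points $\SAut(E_{d,\bfQ})^{\langle \Lambda\rangle}$, and \cref{prop:trivial-invariants} kills these fixed points via a weight analysis (the $\GT$-action on the Bousfield--Kan layers is gr-cyclotomic of nontrivial weight, \cref{prop:auted-cyclotomic}). This also answers precisely why two stabilisations are needed and one is not: $\Aut(E_{1,\bfQ})$ has $\chi$-image only $\{\pm 1\}$ (since $E_1(2) \simeq S^0$), so no scaling element $\Lambda$ is available there. The Eckmann--Hilton picture does not see this arithmetic distinction. Your instinct to use the Fresse--Willwacher graph complex is a legitimate alternative (the paper's Remark \ref{rem:graph-complex-relation} acknowledges it could shorten the argument), but even on that route one would need the cyclotomic-weight input from $\GT$, not the $\SO(2)$-loop.
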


\begin{nrem}\ 
\begin{enumerate}[leftmargin=*]
\item \cref{bigthm:nullhomotopy} is optimal in the sense that passing to the subgroup $\SAut(E_{d-2,\bfQ})\le \Aut(E_{d-2,\bfQ})$, rationalising, and stabilising twice instead of once are all necessary in general (see \cref{rem:nullhomotopy-optimal}).
\item \cref{bigthm:nullhomotopy} is related to work of Fresse--Willwacher \cite{FresseWillwacher} and Khoroshkin--Willwacher \cite{KhoroshkinWillwacher}, but our proof is independent of their results and methods (see \cref{rem:graph-complex-relation}).
\item The strategy of proof of \cref{bigthm:nullhomotopy} is inspired by work of Boavida de Brito and Horel \cite{BoavidaHorel} on a proof of formality for the $E_d$-operads in positive characteristic.
\item As an input to the proof of \cref{bigthm:nullhomotopy}, we establish various results on tensor products of truncated $\infty$-operads that may be of independent interest (see \cref{sec:truncation-of-operads}).
\end{enumerate}
\end{nrem}

\subsection*{Acknowledgments}
We thank Gijs Heuts and Ieke Moerdijk for answers to some questions on $\infty$-operads, Pedro Boavida de Brito for general comments, Geoffroy Horel for exchanges regarding \cref{sec:pro-operads}, John Klein for a conversation on $T_\infty$-boundaries, Oscar Randal-Williams for general comments and discussions on tautological classes and $T_\infty$-boundaries, Thomas Willwacher for a conversation on graph complexes, and Søren Galatius for spotting an oversight in an earlier version.

\noindent Above all, we owe a significant intellectual debt to Michael Weiss whose work on topological Pontryagin classes inspired several key ideas in this paper. 

\medskip

\noindent AK acknowledges the support of the Natural Sciences and Engineering Research Council of Canada (NSERC) [funding reference number 512156 and 512250]. AK was supported by an Alfred P.~Sloan Research Fellowship.

\section{Preliminaries}We begin with some preliminaries on  rationalisation (\cref{sec:rationalisation}), operads (\cref{sec:operad-conventions}), and the Bousfield--Kan spectral sequence (\cref{sec:bousfield-kan-ss}).

\begin{nconvention}
We work in the setting of $\infty$-categories as in \cite{LurieHTT,LurieHA} throughout. In particular, a \emph{category} is an $\infty$-category, an \emph{operad} is an $\infty$-operad, a \emph{space} is an object in the $\infty$-category of spaces $\cS$, and a \emph{pointed space} is an object in the undercategory $\cS_{\ast/}$, etc.. Operations that we perform with (pointed) spaces---such as various (co)limits---are formed within these $\infty$-categories. An \emph{$E_1$-space} is a monoid object in $\cS$ and an \emph{$E_1$-group} is a group object in $\cS$. An action of an $E_1$-group $G$ on a space $X$ is a functor $BG\ra \cS$ whose values are equivalent to $X$, and the (co)invariants $X_G$ and $X^G$ are the (co)limit of this functor.
\end{nconvention}

\subsection{Rationalisation}\label{sec:rationalisation}
Throughout this work, by \emph{rationalisation of spaces}, we mean Bousfield--Kan $\bfQ$-completion, viewed as an endofunctor $(-)_\bfQ\colon \cS\ra \cS$ of the category of spaces, together with a natural transformation $\id_\cS\ra (-)_\bfQ$. It arises by considering the adjunction between $\cS$ and the category $\smash{\ccMod^{\aug}_\bfQ}$ of augmented of $\mathrm{H}\bfQ$-module spectra whose left adjoint is given by mapping a space $X$ to the $\mathrm{H}\bfQ$-module $\Sigma_+^\infty X\otimes \mathrm{H}\bfQ$ equipped with the augmentation induced by $X\ra \ast$ and whose right adjoint maps an augmented $\mathrm{H}\bfQ$-module $E\ra \mathrm{H}\bfQ$ to the fibre $\fib_1(\Omega^\infty E\ra \Omega^\infty \mathrm{H}\bfQ\simeq \bfQ)$. The natural transformation $\smash{\id_\cS\ra (-)_\bfQ}$ is then obtained as the limit of the coaugmented canonical resolution of the monad associated to this adjunction. A classical reference for this is \cite[I.4.2]{BousfieldKan}, where the functor $(-)_\bfQ$ is denoted $\bfQ_\infty(-)$. We call this functor simply \emph{rationalisation}.

\subsubsection{Fibrewise rationalisation}\label{sec:fibrewise-rat}
For a fixed space $B$, by \emph{fibrewise rationalisation over $B$}, we mean the endofunctor on the overcategory $\cS_{/B}$ given as the composition $\cS_{/B}\simeq \Fun(B,\cS)\ra\Fun(B,\cS)\simeq \cS_{/B}$ induced by postcomposition with $(-)_\bfQ$. We denote this functor by $(-)_{\fQ}$; the space $B$ will be clear from the context. The natural transformation $\id_\cS\ra (-)_\bfQ$ of endofunctors on $\cS$ induces an analogous natural transformation  $\id_{\cS/B}\ra (-)_\fQ$ of endofunctors on $\cS_{/B}$.

\subsubsection{Nilpotent spaces} \label{sec:nilpotent-completion}A space $X$ is \emph{nilpotent} if for all $x\in X$ the action of $\pi_1(X,x)$ on $\pi_k(X,x)$ is nilpotent for all $k\ge1$ in the sense of II.4.1 loc.cit.. For $k=1$ this is equivalent to the group $\pi_1(X,x)$ being nilpotent for all $x\in X$. On the full subcategory of nilpotent spaces, the functor $(-)_\bfQ$ agrees with usual rationalisation of nilpotent spaces, so the map $X\ra X_\bfQ$ induces rationalisations in the algebraic sense on all homotopy and homology groups of each path component in positive degrees, see V.3.1 loc.cit.~(see V.2 loc.cit. for what it means to rationalise a nilpotent group).

\subsubsection{Non-nilpotent spaces} \label{sec:non-nilpotent-completion} 
If $X$ is not nilpotent, the map $X\ra X_\bfQ$ need not induce isomorphisms on rational homotopy or homology groups, but the rational (co)homology of $X_\bfQ$ still contains the rational (co)homology of $X$ as a natural summand. This because the coaugmented cosimplicial object in $\cS$ arising from the monad associated to the adjunction between $\cS$ and $\smash{\ccMod^{\aug}_\bfQ}$ admits an extra codegeneracy after applying the left adjoint $\Sigma_+^\infty (-)\otimes \mathrm{H}\bfQ$; see I.5.4 loc.cit.. Working over a base space $B$, the rational (co)homology of the total space of the fibrewise rationalisation $X_{\fQ}\ra B$ of a map $X\ra B$ similarly contains the rational (co)homology of $X$ as a natural summand. This is because taking vertical fibrewise rationalisation of the square
\[
\begin{tikzcd}[column sep=0.4cm,row sep=0.4cm]
X\dar\arrow[r,equal]&X\dar\\
B\rar &*
\end{tikzcd}
\]
gives a map $\smash{X_\fQ\ra X_\bfQ}$ from the fibrewise to the nonfibrewise rationalisation of $X$ whose precomposition with the fibrewise rationalisation $X\ra X_\fQ$ gives the nonfibrewise rationalisation $\smash{X\ra X_\bfQ}$. Since the latter has a cosection after applying  $\Sigma_+^\infty (-)\otimes \mathrm{H}\bfQ$, so does the former.

\subsubsection{Inheritance properties}\label{sec:rat-inheritance}
The functor $(-)_\bfQ\colon \cS\ra \cS$ preserves coproducts, finite products, and connectivity of maps by I.7.1, I.7.2., IV.5.1 loc.cit.. In addition, as the following lemma shows, it preserves pullbacks along $1$-connected maps $f\colon X\ra Y$ that are \emph{nilpotent} in that for all $x\in X$, the action of $\pi_1(X,x)$ on $\pi_k(\fib_{f(x)}(f),x)$ for every $k\ge 1$ is nilpotent in the sense of II.4.1 loc.cit..

\begin{lem}\label{lem:Q-completion-pullback}
For a pullback square of spaces
\[
\begin{tikzcd}[column sep=0.4cm,row sep=0.4cm]
X\dar{}\rar{}&Y\dar{}\\
Z\rar{}&W.
\end{tikzcd}
\]
whose bottom row is $1$-connected and nilpotent, the rationalisation of the square remains a pullback. The nilpotency assumption on the bottom row is automatic if $Z$ and $W$ are both nilpotent.
\end{lem}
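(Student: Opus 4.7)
The plan is to apply the Bousfield--Kan fibre lemma for $\bfQ$-completion to the fibration obtained from the right vertical map of the square. Replacing $Z\to W$ by a fibration without loss of generality, the strict pullback $Y\times_W Z$ computes $X$, and $X\to Y$ is again a fibration sharing the same fibre $F$ with $Z\to W$ over compatibly chosen basepoints.

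First I would invoke the fibre lemma (II.5 loc.cit.): for a fibration $F\to E\to B$ with $B$ connected and $\pi_1(B)$ acting nilpotently on $\oH_*(F;\bfQ)$, the rationalised sequence $F_\bfQ\to E_\bfQ\to B_\bfQ$ remains a fibration. I would verify the nilpotency input for $F\to Z\to W$ as follows. The action of $\pi_1(Z)$ on $\pi_k(F)$ appearing in the definition of a nilpotent map factors through the fibration monodromy $\pi_1(W)\to\pi_0(\Aut(F))$, since actions of $\pi_1$ of the total space of a fibration on the homotopy groups of the fibre factor through $\pi_1$ of the base. The $1$-connectedness of $f$ makes $\pi_1(Z)\to\pi_1(W)$ surjective, so nilpotency of the $\pi_1(Z)$-action forces the corresponding $\pi_1(W)$-action on each $\pi_k(F)$ to be nilpotent. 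A standard Postnikov-tower and Serre-spectral-sequence argument then upgrades this to nilpotency of the $\pi_1(W)$-action on $\oH_*(F;\bfQ)$.

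Second, I would deduce the pullback property. The monodromy of the pulled-back fibration $X\to Y$ factors through $\pi_1(Y)\to\pi_1(W)\to\pi_0(\Aut(F))$, so $\pi_1(Y)$ also acts nilpotently on $\oH_*(F;\bfQ)$. Applying the fibre lemma to both $F\to Z\to W$ and $F\to X\to Y$ produces compatible fibre sequences $F_\bfQ\to Z_\bfQ\to W_\bfQ$ and $F_\bfQ\to X_\bfQ\to Y_\bfQ$ sitting over the rationalised square, so the canonical comparison $X_\bfQ\to Z_\bfQ\times_{W_\bfQ}Y_\bfQ$ is a fibrewise equivalence and hence an equivalence.

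The main obstacle I expect is the careful bookkeeping of the various $\pi_1$-actions on $F$: separating the action coming from base monodromy from the one coming from total-space holonomy and checking their compatibility under a map of fibrations, all in the $\infty$-categorical setting rather than the simplicial setting of loc.cit. For the ``moreover'' statement, a connected nilpotent space has $\pi_1$ acting nilpotently on each homotopy group, and the long exact sequence of a fibration between connected nilpotent spaces then forces the $\pi_1$-action on each homotopy group of the fibre to be nilpotent as well (II.2.2 loc.cit.); hence a map between nilpotent spaces is automatically nilpotent in the required sense.
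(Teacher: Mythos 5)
Your overall plan matches the paper's: view the square as a pullback of a fibration $Z\to W$ and invoke the Bousfield--Kan fibre lemma (II.5.3, II.5.4 in loc.\,cit.) for the resulting map of fibre sequences. The paper simply cites those results after reducing to the connected case; you try to reprove the nilpotency input, and that is where a genuine gap appears.

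The claim ``actions of $\pi_1$ of the total space of a fibration on the homotopy groups of the fibre factor through $\pi_1$ of the base'' is false in general. For a fibration $F\to E\to B$, the subgroup $\mathrm{im}(\pi_1 F\to\pi_1 E)$ acts on $\pi_n(F)$ by the ordinary change-of-basepoint action of $\pi_1 F$, which is nontrivial whenever $F$ is not simple. (Take $E=F=\mathbf{R}P^2$, $B=\ast$: then $\pi_1 B=1$ but $\pi_1 E=\mathbf{Z}/2$ acts by $-1$ on $\pi_2 F$.) So the $\pi_1(Z)$-action on $\pi_k(F)$ does \emph{not} factor through the monodromy $\pi_1(W)\to\pi_0(\Aut(F))$; it only does so after killing the $\pi_1(F)$-action, i.e.\ when the fibre is simple. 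The implication you want --- from nilpotency of the $\pi_1(Z)$-action on $\pi_*(F)$ to nilpotency of the $\pi_1(W)$-action on $\oH_*(F;\bfQ)$ --- is nevertheless true, but this is exactly the nontrivial content of \cite[II.4]{BousfieldKan} and has to be argued by comparing the two actions on the Postnikov filtration rather than by a naive factoring. You flag this bookkeeping as an anticipated obstacle, but as written the step does not go through.

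Secondly, the fibre lemma in loc.\,cit.\ is stated for connected spaces, and the lemma as stated makes no such assumption. The paper handles this explicitly: since $(-)_\bfQ$ preserves connectivity, the rationalised square still has $1$-connected rows, and a square with $1$-connected rows is a pullback iff all the restrictions to components induced by points in the top-left corner are, which reduces to the connected case. You implicitly gesture at ``compatibly chosen basepoints'' but do not carry out this reduction, and without it the cited fibre lemma does not directly apply. Finally, note that the statement you call the ``right vertical map'' is actually the bottom horizontal map $Z\to W$; this is only a naming slip but worth correcting, as the nilpotency hypothesis is on the bottom row.
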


\begin{proof}
In the case where all four spaces are connected, the claim follows from II.5.3, II.5.4 loc.cit.. To deduce the general case from this, note that since rationalisation preserves connectivity, the rationalised square has still $1$-connected rows. A square with $1$-connected rows is a pullback square if and only if the squares of connected spaces obtained by restriction to the components induced by points in the top-left corner are again pullback squares, so we may indeed assume that all spaces are connected. The addendum of the claim follows from II.4.5 loc.cit..
\end{proof}

\subsection{Operads}\label{sec:operad-conventions} \label{sec:operad-truncation}
We adopt the conventions and notation for operads from \cite[Section 1.4]{KKoperadic}. As in that reference, we consider the category $\Opd^\un$ of unital operads (i.e.\,operads $\cO$ with $\Mul_\cO(\varnothing;c)\simeq\ast$ for all colours $c$) and its truncated variants $\Opd^{\le k,\un}$ for $1\le k<\infty$, fitting into a tower of categories 
\begin{equation}\label{equ:tower-of-operads}
	\Opd^\un=\Opd^{\le \infty,\un}\ra\cdots\ra\Opd^{\le 2,\un}\ra\Opd^{\le 1,\un}\simeq\Cat
\end{equation} 
with $\Opd^\un\simeq \lim_k\Opd^{\le k,\un}$; see Section 1.4.4 loc.cit.. The composition $\Opd^\un\ra \Cat$ sends an operad to its \emph{category of colours} $\cO^\col$. All functors in the tower $\tau\colon \Opd^{\le j,\un}\ra \Opd^{\le k,\un}$ for $1\le k\le j\le \infty$ admit fully faithful right and left adjoints $\smash{\tau_*\colon \Opd^{\le k,\un}\ra \Opd^{\le j,\un}}$ and $\smash{\tau_!\colon \Opd^{\le k,\un}\ra \Opd^{\le j,\un}}$ (see  \cite[Thm 1.2]{DubeyLiu} or \cite[Thm A.4]{{KKoperadic}} loc.cit.). We are mostly interested in the full subcategories $\smash{\Opd^{\le k,\red}\subset \Opd^{\le k,\un}}$ of \emph{reduced operads}, which are those (truncated) unital operads $\cO$ such that $\cO^\col$ is trivial, i.e.\ $\Mul_{\cO}(c;d)\simeq\ast$ for all colours $c,d$. Alternatively, the category $\smash{\Opd^{\le k,\red}}$ is the fibre $\smash{\fib_*(\Opd^{\le k,\un}\ra\Opd^{\le 1,\un}\simeq\Cat)}$. In particular, we have $\Opd^{\le 1,\red}\simeq\ast$. The category $\smash{\Opd^{\le 2,\red}}$ admits a simple description as well: taking spaces of $2$-ary operations yields an equivalence $\Opd^{\le 2,\red}\simeq \cS^{\Sigma_2}=\Fun(\Sigma_2,\cS)$ where $\Sigma_2$ is the symmetric group in two letters (see \cref{lem:2-truncated-reduced} below).

\subsubsection{Tensor products of (truncated) operads}\label{sec:operads-tensor-product} The category of all operads $\Opd$ admits a symmetric monoidal structure $\otimes$ which can be thought of as the $\infty$-categorical version of the classical Boardman--Vogt product for ordinary operads in sets \cite[2.2.5.13]{LurieHA}. By 2.1.4.6, 2.2.5.7 loc.cit., it turns $\Opd$ into a presentable symmetric monoidal category in the sense of 3.4.4.1 loc.cit., so in particular the tensor products of operads preserves colimits in both variables. The unit for the monoidal structure is the \emph{trivial operad} $\Triv$ (the initial object in $\Opd$) whose category of colours is the trivial category (which has a single object $*$ with contractible endomorphism space) and all whose spaces of operations are empty except $\Mul_{\Triv}(*;*)\simeq \ast$, so in particular $\Triv$ is not unital. There is a closely related operad $E_0$ which only differs from $\Triv$ in that there is a unary operation $\Mul_{E_0}(\varnothing;*)\simeq *$. The functor $(-)\otimes E_0\colon \Opd \ra\Opd$ takes values in the full subcategory $\Opd^\un\subset \Opd$ and the resulting corestriction $(-)\otimes E_0\colon \Opd \ra\Opd^\un$ is left adjoint to the inclusion $\Opd^\un\subset \Opd$ (see p.\,246 loc.cit.), so in particular $\Opd^\un$ is a localisation of $\Opd$ (it is also a colocalisation; see p.\,246 loc.cit.). Since the localisation is given by taking tensor product with an object, it is compatible with the symmetric monoidal structure of $\Opd$ in the sense of 2.2.1.7 loc.cit.\,, so an application of 2.2.1.9 loc.cit.\,yields a symmetric monoidal structure on $\Opd^\un$ such that $(-)\otimes E_0\colon \Opd\ra \Opd^\un$ can be lifted to a symmetric monoidal functor. As $E_0\otimes E_0\simeq E_0$ by 2.3.1.6 loc.cit., it follows that the tensor product of two objects in $\Opd^\un$ is simply given by the tensor product in $\Opd$, and that the monoidal unit in $\Opd^\un$ is $E_0$ instead of $\Triv$ as in $\Opd$. In \cref{sec:truncation-of-operads}, we will show that there is an extension of this symmetric monoidal structure to symmetric monoidal structure on the category of $k$-truncated operads $\Opd^{\le k,\un}$ for all $k$ which (a) agrees for $k=1$ with the cartesian structure on $\Opd^{\le 1,\un}\simeq \Cat$, (b) is closely related to the join of spaces for $k=2$, and (c) for which \eqref{equ:tower-of-operads} lifts to a tower of symmetric monoidal categories.

\subsubsection{Dunn--Lurie additivity}\label{sec:dl-add}Recall the operad $E_d$ of \emph{little $d$-discs} for $d\ge0$ which has a single colour $*$ and whose space of $k$-ary operations $E_d(k)\coloneq \Mul_{E_d}(\{1,\ldots ,k\}\times *;*)$ is homotopy equivalent to the space of ordered configurations of $k$ points in $\bfR^d$ \cite[5.1.0.3, 5.1.0.4, 5.1.1.3]{LurieHA}. In particular, $E_d(k)$ is contractible for $k \le 1$, so $E_d$ is a reduced operad. These operads are additive under taking tensor products, meaning that there are preferred equivalences $E_n\otimes E_m\simeq E_{n+m}$ \cite[5.1.1.2]{LurieHA} for $n,m\ge0$. Moreover, there is a preferred map $t\colon \BTop(d)\ra \BAut(E_d)$ where $\BTop(d)$ is the classifying space of the topological group of homeomorphisms of $\bfR^d$ \cite[Section 5.1.1]{KKoperadic} which turns out to be compatible with additivity in that there exists a commutative square of spaces
\begin{equation}\label{equ:additivity-direct-product}
\begin{tikzcd}[column sep=0.8cm, row sep=0.5cm]
\BTop(n)\times \BTop(m)\dar{\times}\rar{(t,t)}&\BAut(E_n)\times \BAut(E_m)\dar{\otimes}\\
\BTop(n+m)\rar{t}&\BAut(E_{n+m})
\end{tikzcd}
\end{equation}
whose vertical arrows are induced by taking direct products and tensor products respectively.

\begin{lem}There exists a commutative square as in \eqref{equ:additivity-direct-product}.
\end{lem}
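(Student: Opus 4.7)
The approach is to realise both instances of the map $t$ in \eqref{equ:additivity-direct-product} as arising from a single naturally symmetric monoidal construction, so that commutativity of the square is a formal consequence.

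First, I would recall that $t\colon \BTop(d) \ra \BAut(E_d)$ comes from an action of $\Top(d)$ on a concrete model of $E_d$ built from spaces of little cubes/rectilinear self-embeddings of $\bfR^d$, as constructed in \cite[Section~5.1.1]{KKoperadic}, with $\Top(d)$ acting by conjugation. Second, the Dunn--Lurie equivalence $E_n \otimes E_m \simeq E_{n+m}$ of \cite[5.1.1.2]{LurieHA} is implemented at the topological-operad level by the natural product-of-embeddings map
\[
\Emb\bigl(\textstyle\bigsqcup_k \bfR^n, \bfR^n\bigr) \times \Emb\bigl(\textstyle\bigsqcup_k \bfR^m, \bfR^m\bigr) \lra \Emb\bigl(\textstyle\bigsqcup_k \bfR^{n+m}, \bfR^{n+m}\bigr),\quad (f,g) \longmapsto f \times g,
\]
which is manifestly equivariant for the direct-product homomorphism $\Top(n) \times \Top(m) \ra \Top(n+m)$ acting by conjugation on each side, since conjugations and products of maps commute: $(h_1 f h_1^{-1}) \times (h_2 g h_2^{-1}) = (h_1 \times h_2)(f \times g)(h_1 \times h_2)^{-1}$. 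Passing to classifying spaces of induced operad automorphisms then yields the desired square.

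The main obstacle will be identifying the vertical map $\otimes \colon \BAut(E_n) \times \BAut(E_m) \ra \BAut(E_{n+m})$ in \eqref{equ:additivity-direct-product}, which is a priori defined purely $\infty$-operadically via Lurie's tensor product of operads, with the classifying map of the above topological product-of-embeddings construction. This amounts to naturality of Dunn--Lurie additivity with respect to operad automorphisms of the tensor factors, which I expect to follow from the fact that the equivalence of \cite[5.1.1.2]{LurieHA} is an instance of a more general natural equivalence of endomorphism operads in Cartesian symmetric monoidal $\infty$-categories. Once this identification is in place, the commutativity of \eqref{equ:additivity-direct-product} becomes the formal statement that products of conjugations are conjugations of products, already verified at the topological level.
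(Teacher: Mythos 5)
The step you flag as ``the main obstacle'' is not an obstacle to be deferred; it is the entire content of the lemma. The observation that conjugations commute with direct products of embeddings at the point-set level is elementary and true, but it does not prove anything about the square \eqref{equ:additivity-direct-product}, because the vertical map $\otimes\colon\BAut(E_n)\times\BAut(E_m)\ra\BAut(E_{n+m})$ is defined via Lurie's $\infty$-categorical tensor product of operads and the abstract additivity equivalence of \cite[5.1.1.2]{LurieHA}, not via the explicit product-of-rectilinear-embeddings construction you describe. Comparing the $\infty$-categorical $\otimes$ with an explicit topological model, with enough naturality to carry along the $\Top(n)\times\Top(m)$-action, is a genuinely nontrivial comparison --- the proof of additivity in \cite{LurieHA} does not proceed by writing down such an explicit map --- and saying you ``expect'' it to follow from ``a more general natural equivalence of endomorphism operads in Cartesian symmetric monoidal $\infty$-categories'' is not a proof but a restatement of what needs to be shown (and it is not clear such a general statement is available off the shelf).

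The paper takes a different route that avoids ever having to produce an explicit model for the Dunn--Lurie equivalence. It uses the unstraightening equivalence of \cite[Theorem 2.2]{KKoperadic} to translate the commutativity of the square into an equivalence of two colimits of functors $\BTop(n)\times\BTop(m)\ra\Opd$. Since $\otimes$ preserves colimits in each variable, the left-hand colimit splits as $\colim(\BTop(n)\ra\Opd)\otimes\colim(\BTop(m)\ra\Opd)$; these colimits are then identified with the operads $\mathbb{E}^\otimes_{\BTop(n)}$ and $\mathbb{E}^\otimes_{\BTop(m)}$ of \cite[5.4.2.10]{LurieHA}, and the right-hand colimit with $\mathbb{E}^\otimes_{\BTop(n)\times\BTop(m)}$. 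The required equivalence $\mathbb{E}^\otimes_{\BTop(n)}\otimes\mathbb{E}^\otimes_{\BTop(m)}\simeq\mathbb{E}^\otimes_{\BTop(n)\times\BTop(m)}$ is then exactly \cite[5.4.2.14]{LurieHA}, which is the precise form of ``naturality of additivity in the $\Top$-action'' that your argument leaves as a gap. So: your intuition for why the square should commute is sound, but the proposal does not fill the hole it itself identifies, and the paper's colimit/$\mathbb{E}^\otimes_B$ argument is what actually closes it.
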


\begin{proof}Recall from \cite[Theorem 2.2]{KKoperadic} that taking colimits in $\Opd$ of functors $\theta\colon X\ra \Opd$ where $X$ is a groupoid and $\theta$ has values in reduced operads, induces an equivalence between the cartesian unstraightening of the functor $\Fun(-,\Opd^\red)\colon \cS^\op\ra \Cat$ and the pullback $(-)^\col\colon \Opd^\gc\ra \cS$ of $(-)^\col\colon \Opd^\un\ra\Cat$ along $\cS\subset \Cat$. Using this equivalence and viewing $\BAut(E_n)$ as a subcategory of $\Opd$, it suffices to construct an equivalence of operads
\[\hspace{-0.15cm}\colim\Big(\BTop(n)\times \BTop(m)\xra{(t,t)}\Opd\times \Opd \xra{\otimes}\Opd\Big)\simeq \colim\Big(\BTop(n)\times \BTop(m)\xra{\times}\BTop(n+m)\xra{t} \Opd\Big)\]
which upon applying $(-)^\col$ yields $\id_{\BTop(n)\times \BTop(m)}$. Since the tensor product preserves colimits in both variables, the left-hand side agrees with $\colim(\BTop(n)\ra\Opd)\otimes \colim(\BTop(m)\ra \Opd)$. Moreover, $\colim(\BTop(n)\ra\Opd)$ agrees, by definition of the map $t\colon \BTop(n)\ra \BAut(E_n)$, with the operad denoted $\smash{\mathbb{E}_{\BTop(n)}^\otimes}$ in \cite[5.4.2.10]{LurieHA} (see \cite[Section 5.1.1]{KKoperadic} where this operad is denoted $E_n^t$), and by a similar argument as in the proof of Lemma 2.7 loc.cit.\,the right-hand colimit in the asserted equivalence agrees with $\smash{\mathbb{E}_{\BTop(n)\times \BTop(m)}^\otimes}$, so it suffices to establish an equivalence \[\smash{\mathbb{E}_{\BTop(n)}^\otimes\otimes \mathbb{E}_{\BTop(m)}^\otimes\simeq \mathbb{E}_{\BTop(n)\times \BTop(m)}^\otimes}\] that yields $\id_{\BTop(n)\times \BTop(m)}$ on categories of colours. This follows from \cite[5.4.2.14]{LurieHA}.
\end{proof}

\subsubsection{Dendroidal models}\label{sec:prelim-dendroidal} In addition to Lurie's model for operads, we will also rely on the \emph{dendroidal model} for operads. We now summarise the relevant points. Writing $\OpdSet$ for the ordinary $1$-category of coloured operads in sets in the classical sense, there is a fully faithful inclusion \cite[2.1.1.7]{LurieHA}
\[
	\ell\colon \OpdSet\longhookrightarrow \Opd.
\] 
The category $\OpdSet$ has a full subcategory $\Omega\subset \OpdSet$, the \emph{dendroidal tree category}, which is spanned by operads encoded by finite rooted trees (see \cite[Sections 1.3 and 3.2]{HeutsMoerdijk}). There is also an intermediate full subcategory $\Omega\subset \Phi\subset \OpdSet$, obtained from $\Omega$ by formally adjoining finite coproducts, which can be thought of as finite forests of finite rooted trees (see \cite[Section 2.2.2]{HinichMoerdijk}). The Yoneda embedding of $\Opd$ followed by restriction along $\ell|_{\Phi}$ yields a functor $\delta_\Phi\colon \Opd\ra \PSh(\Phi)$. By \cite[Theorem 3.1.4]{HinichMoerdijk} this functor is fully faithful and its essential image $\PSh(\Phi)^{\seg,c}\subset \PSh(\Phi)$ (denoted \texttt{DOp} in loc.cit.) is the category of \emph{complete dendroidal Segal spaces}, which is the full subcategory on those presheaves that satisfies three conditions: a completeness one, a dendroidal Segal one, and a forest Segal one (see (D1)--(D3) in Section 2.2.3 loc.cit.). We obtain a left-hand equivalence in
\[\smash{\delta_\Phi\colon \Opd\xlra{\simeq}\PSh(\Phi)^{\seg,c},\quad \quad \quad\delta_\Omega\colon \Opd\xlra{\simeq}\PSh(\Omega)^{\seg,c}.}\]
As indicated by the right-hand equivalence, one can avoid using forests and only work with $\Omega$: this right-hand equivalence is given similarly by the Yoneda embedding of $\Opd$ followed by restriction along $\ell|_{\Omega}$. It differs from the right-hand equivalence by postcomposition with the restriction $\PSh(\Phi)\ra \PSh(\Omega)$ along $\Omega\subset\Phi$, which restricts---by design of the forest Segal condition---to an equivalence $\PSh(\Phi)^{\seg,c}\simeq\PSh(\Omega)^{\seg,c}$ to the full subcategory of presheaves on $\Omega$ that only satisfy the completeness and dendroidal Segal condition (see (D1) and (D2) in loc.cit.). Note that, since $\ell$ is fully faithful, the compositions $\delta_\Phi\circ\ell|_{\Phi}$ and $\delta_\Omega\circ\ell|_{\Omega}$ agree with the respective Yoneda embeddings.

To model the subcategory of unital operads $\Opd^\un\subset \Opd$ and the tower \eqref{equ:tower-of-operads}, one considers the full subcategory $\overline{\Omega}\subset \Omega$ of \emph{closed trees} (see \cite[p.\,92, 97]{HeutsMoerdijk}). Imposing a dendroidal Segal and a completeness condition yields a full subcategory $\PSh(\overline{\Omega})^{\seg,c}\subset \PSh(\overline{\Omega})$ which is equivalent to $\Opd^\un$ such that the inclusion $\Opd^\un\subset\Opd$ corresponds via the equivalence $\delta_\Omega$ to the functor $\PSh(\overline{\Omega})^{\seg,c}\ra \PSh(\Omega)^{\seg,c}$ induced by left Kan extension along $\overline{\Omega}\subset \Omega$ (see \cite[Sections A.1]{KKoperadic} for a summary). For the tower \eqref{equ:tower-of-operads}, one considers the sequence of full subcategories $\smash{\overline{\Omega}_{\le 1}\subset \overline{\Omega}_{\le 2}\subset\cdots \overline{\Omega}_{\le \infty}=\overline{\Omega}}$ where $\smash{\overline{\Omega}_{\le k}\subset \overline{\Omega}}$ are those closed trees whose vertices have all $\le k$ incoming edges. Imposing the Segal and completeness condition gives full subcategories $\smash{\Opd^{\le k,\un}\coloneq \PSh(\overline{\Omega}_{\le k})^{\seg,c}\subset \PSh(\overline{\Omega}_{\le k})}$ which are preserved by restriction along the inclusions $\overline{\Omega}_{\le k-1}\subset \overline{\Omega}_{\le k}$ as well as left and right Kan extension along them (see \cite{DubeyLiu} and \cite[Sections A.2]{KKoperadic}). The resulting tower yields \eqref{equ:tower-of-operads} and the fully faithful left and right adjoints are given by the left and right Kan extensions, respectively. Since left Kan extension preserves representables, this in particular implies that the inclusion $\smash{\ell|_{\overline{\Omega}_{\le k}}\colon \overline{\Omega}_{\le k}\hookrightarrow \Opd^\un}$ factors over the left adjoint $\smash{\tau_!\colon  \Opd^{\le k,\un}\hookrightarrow  \Opd^\un}$ to truncation. 

\subsubsection{Rationalisation of reduced operads}\label{sec:rationalisation-operads}In terms of the dendroidal model, the full subcategory of reduced operads $\smash{\Opd^{\le k,\red}\subset \Opd^{\le k,\un}}\subset \PSh(\overline{\Omega}_{\le k})$ corresponds to those presheaves on $\overline{\Omega}_{\le k}$ that satisfy the dendroidal Segal condition and whose values at the closed $n$-corolla $\smash{\overline{C}_n}$ (the unique closed tree with $1$ internal and $n$ external vertices) is the terminal object for $n=0,1$. The completeness condition is automatic in this case and the pullback that appears in the dendroidal Segal condition becomes a product. In particular, for any endofunctor $\varphi\colon \cS\ra\cS$ that preserves finite products, the functor $\varphi_*\colon \PSh(\overline{\Omega}_{\le k})\ra \PSh(\overline{\Omega}_{\le k})$ given by postcomposition preserves the full subcategory of reduced operads, so it induces an endofunctor $\varphi_*\colon \Opd^{\le k,\red}\ra \Opd^{\le k,\red}$ by restriction, compatible with truncation. Moreover, if it $\varphi$ receives a natural transformation $\id_{\cS}\ra \varphi$ from the identity then so does $\varphi_*$, compatible with truncation. The component $\cO\ra\varphi_*(\cO)$ of this natural transformation is on spaces of multi-operations given by the respective component of the natural transformation $\id_{\cS}\ra \varphi$. Applying this to the rationalisation functor $\varphi=(-)_\bfQ$ from \cref{sec:rationalisation}, this allows us to rationalise reduced $k$-truncated operads for any $1\le k\le\infty$, compatible with truncation.

\subsubsection{Decomposing maps of operads}\label{sec:goppl-weiss} 
In view of the equivalence $\Opd^\un\simeq \lim_k\Opd^{\le k,\un}$, the mapping space $\Map(\cO,\cP)\coloneqq \Map_{\Opd^\un}(\cO,\cP) $ for unital operads $\cO,\cP\in \Opd^\un$ agrees with the limit of the tower of mapping spaces
\[\cdots\lra\Map_{\le k}(\cO,\cP)\lra \Map_{\le k-1}(\cO,\cP)\lra \cdots \lra \Map_{\le 1}(\cO,\cP)\simeq \ast\]
for the corresponding truncated operads; here $\Map_{\le k}(\cO,\cP)\coloneqq \Map_{\Opd^{\le k,\un}}(\cO,\cP)$. In the case where $\cO$ and $\cP$ are reduced, Göppl and Weiss \cite{Goppl} gave a convenient description of the fibres 
\begin{equation}\label{equ:fibre-Goppl}
	L^f_k(\cO,\cP)\coloneq \fib_f\big(\Map_{\le k}(\cO,\cP)\ra \Map_{\le k-1}(\cO,\cP)\big)\quad\text{for }f\in\Map_{\le k-1}(\cO,\cP).
\end{equation}
To state a suitable version of their result, we consider the commutative square
\[
\begin{tikzcd}[column sep=4cm,row sep=0.6cm]
	\PSh(\overline{\Omega}_{\le k})\dar{\res}\rar{\big(\Latch_k(-)\ra (-)(k)\ra 	\Match_k(-)\big)}&\PSh(\Sigma_k)^{[2]}\dar{(0\le 2)^*}\\
	 \PSh(\overline{\Omega}_{\le k-1})\rar{\big(\Latch_k(-)\ra \Match_k(-)\big)}&\PSh(\Sigma_k)^{[1]}
\end{tikzcd}
\]
of categories. Here the left vertical arrow is given by restriction. The category $\PSh(\Sigma_k)^{[2]}$ is the category of sequences $X\ra Y\ra Z$ of spaces with an action of the symmetric group $\Sigma_k$. The latter maps to the category $\PSh(\Sigma_k)^{[1]}$ of maps $X\ra Z$ of $\Sigma_k$-spaces by composition; this is the right vertical functor. For $\cO\in \PSh(\overline{\Omega}_{\le k-1})$, the $k$th \emph{latching object} $\Latch_k(\cO)$ is the $\Sigma_k$-space given by left Kan extending $\cO$ to $\overline{\Omega}_{\le k}$ and then evaluating at the $k$-corolla $\overline{C}_k\in \overline{\Omega}_{\le k}$ with its $\Sigma_k$-action by permuting the $k$ leaves. The \emph{matching object} $\Match_k(X)$ is defined analogously using right Kan extensions. The universal property of Kan extensions gives a map $\Latch_k(X)\ra \Match_k(X)$ of $\Sigma_k$-spaces; this gives the bottom horizontal functor. If $X$ is the restriction of a presheaf of $\PSh(\overline{\Omega}_{\le k})$ then the universal property of Kan extensions gives a factorisation of the map of $\Sigma_k$-spaces $\Latch_k(X)\ra \Match_k(X)$ through the value $\cO(k)\coloneq \cO(\overline{C}_k)$ at the $k$-corolla (which corresponds to the space of $k$-ary operations if $\cO$ is a reduced operad); this defines the top horizontal functor. 

By Theorem 3.2.7 loc.cit., this square of categories induces pullbacks on mapping spaces between reduced operads. By taking vertical fibres in the pullback on mapping spaces, it follows that the fibre \eqref{equ:fibre-Goppl} is for $\cO,\cP\in \Opd^{\le k,\red}$ naturally equivalent to the space of $\Sigma_k$-equivariant dashed fillers in
\[
\begin{tikzcd}[row sep=0.3cm,column sep=0.6cm]
\Latch_k(\cO)\rar{f_*}\dar& \Latch_k(\cP)\dar\\
\cO(k)\arrow[d]\arrow[r,dashed]&\cP(k)\dar\\
\Match_k(\cO)\rar{f_*}&\Match_k(\cP),
\end{tikzcd}
\] 
so it fits into a fibre sequence of the form
\begin{equation}\label{eqref:fibre-sequence-layers}
	L^f_k(\cO,\cP)\lra\Map_{\Latch_k(\cO)}(\cO(k),\cP(k))^{\Sigma_k}\lra \Map_{\Latch_k(\cO)}(\cO(k),\Match_k(\cP))^{\Sigma_k}
\end{equation}
where $\Map_{\Latch_k(\cO)}(\cO(k),\Match_k(\cP))$ is the space of maps $\cO(k)\ra \Match_k(\cP)$ extending the map $\Latch_k(\cO)\ra \Match_k(\cP)$ (based at the composition of $\cO(k) \to \Match_k(\cO)$ with $f_* \colon \Match_k(\cO) \to \Match_k(\cP)$), the space $\smash{\Map_{\Latch_k(\cO)}(\cO(k),\cP(k))}$ is the space of maps $\cO(k)\ra \cP(k)$ extending the map $\Latch_k(\cO)\ra \cP(k)$, and $(-)^{\Sigma_k}$ denotes taking $\Sigma_k$-invariants.

\subsubsection{Matching and latching objects}\label{sec:latching-matching}For reduced operads $\cO$, Göppl and Weiss also gave a simpler description of $\Latch_k(\cO)$ and $\Match_k(\cO)$: For the \emph{latching object}, one considers the poset $(\Psi_k,\le )$ of finite rooted trees $T$ with $k$ leaves labeled by $\underline{k}=\{1,\ldots,k\}$ and no vertex of valence $2$, where $T\le T'$ if $T'$ can be obtained from $T$ by collapsing an interior forest. This has a subposet $(\Psi_k^{-},\le )$ on those labeled trees with more than one interior vertex (i.e. those that are not equivalent to the $k$-corolla). There is an evident way of how to identify $(\Psi_k,\le )$ with a subcategory of the overcategory $\overline{\Omega}_{/\overline{C}_k}$  (the labelings of the leaves describe maps to $\smash{\overline{C}_k}$ using the identification of the leaves of $\smash{\overline{C}_k}$ with $\underline{k}$), so we get a map $\smash{\colim_{T \in \Psi^-_k}\cO(T)\ra \cO(\overline{C}_k)=\cO(k)}$
which the argument in \cite[Example 3.2.6]{Goppl} shows to be naturally equivalent to $\Latch_k(\cO)\ra \cO$. Note that $\Psi^-_k$ is empty for $k\le 2$, so $\Match_k(\cO)=\varnothing$ for $k\le 2$. For the \emph{matching object}, one uses that any $S\subseteq \underline{k}$ defines a map $\smash{\overline{C}_{|S|}\ra \overline{C}_k}$ in $\smash{\overline{\Omega}}$, so there is a $\underline{k}$-cubical diagram $\smash{\underline{k}\supseteq S\mapsto \cO(\overline{C}_{|S|})}$. By the argument of \cite[Lemma 3.4.7]{WeissDalian}, the resulting map $\smash{\cO(k)=\cO(\overline{C}_k)\ra \lim_{S\subsetneq \underline{k}}\cO(\overline{C}_{|S|})}$
is equivalent to the map $\cO(k)\ra\Match_k(\cO)$.

\begin{ex}\label{ex:latching-object-Ed}Using this, Göppl and Weiss \cite[Examples 2.1.6, 3.2.6]{Goppl} showed that for the little $d$-discs operad $\cO=E_d$, the map $\Latch_k(\cO)\ra \cO(k)$ is equivalent to the boundary inclusion of an explicit compact manifold of dimension $(k-1)d-1$ whose boundary is empty if and only if $k\le 2$.
\end{ex}

We conclude this section by justifying the aforementioned description of the category $\Opd^{\le 2,\red}$.
 \begin{lem}\label{lem:2-truncated-reduced}The functor $(-)(2)\colon \Opd^{\le 2,\red}\ra \cS^{\Sigma_2}$ that takes $2$-ary operations is an equivalence.
\end{lem}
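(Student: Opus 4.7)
My plan is to use the dendroidal model from \cref{sec:prelim-dendroidal}, in which $\Opd^{\le 2,\red}$ is identified with the full subcategory of $\PSh(\overline{\Omega}_{\le 2})$ consisting of presheaves $\cO$ satisfying the dendroidal Segal condition together with $\cO(\overline{C}_0) \simeq * \simeq \cO(\overline{C}_1)$; the completeness condition is automatic in the reduced case as observed in \cref{sec:rationalisation-operads}. Under this identification, the functor $(-)(2)$ corresponds to restriction along the inclusion of the full subcategory on $\overline{C}_2$ into $\overline{\Omega}_{\le 2}$; since $\Aut(\overline{C}_2) = \Sigma_2$, this subcategory is equivalent to the one-object groupoid $\Sigma_2$ and presheaves on it are precisely $\Sigma_2$-spaces.

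To prove $(-)(2)$ is fully faithful I would invoke the Segal decomposition: for any closed tree $T \in \overline{\Omega}_{\le 2}$, the dendroidal Segal condition expresses $\cO(T)$ as a product over the internal vertices $v$ of $T$ of the values $\cO(\overline{C}_{a(v)})$, where $a(v) \in \{0,1,2\}$. Reducedness collapses the factors with $a(v) \le 1$ to a point, leaving a natural $\Aut(T)$-equivariant equivalence
\[
	\cO(T) \simeq \cO(\overline{C}_2)^{\times v_2(T)},
\]
where $v_2(T)$ denotes the set of $2$-valent internal vertices of $T$. Consequently a morphism $\cO \to \cP$ in $\Opd^{\le 2,\red}$ is determined by a single $\Sigma_2$-equivariant map $\cO(\overline{C}_2) \to \cP(\overline{C}_2)$ and uniquely recovered on arbitrary $T$ by taking Cartesian powers.

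For essential surjectivity I would associate to each $X \in \cS^{\Sigma_2}$ the presheaf $\cO_X$ with $\cO_X(T) := X^{\times v_2(T)}$, functoriality being induced by the action of morphisms in $\overline{\Omega}_{\le 2}$ on the sets of $2$-valent vertices; these morphisms comprise edge contractions adjacent to vertices of arity $\le 1$ (since contracting an edge between two $2$-valent vertices would produce an arity-$3$ vertex, leaving $\overline{\Omega}_{\le 2}$), degeneracies inserting unary vertices, and tree isomorphisms. Because reducedness forces arity-$0$ and arity-$1$ operations to be trivial, each of these morphisms acts on the $X$-labellings only by permutations and projections of the factors at $2$-valent vertices, so $\cO_X$ is a well-defined reduced 2-truncated operad with $\cO_X(\overline{C}_2) = X$ as a $\Sigma_2$-object.

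The main obstacle I anticipate is the combinatorial bookkeeping required to verify the functoriality of $\cO_X$, namely that the described maps on $X$-labellings really do assemble into a presheaf on $\overline{\Omega}_{\le 2}$ compatible with composition of face, degeneracy, and isomorphism morphisms. Once this is in place, the two displayed formulas show directly that the two constructions are mutually inverse, establishing the claimed equivalence.
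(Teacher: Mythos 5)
Your proof is in the same dendroidal spirit as the paper's but diverges in both halves, and the essential-surjectivity half has a real gap that the paper's argument is designed to avoid.

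For full faithfulness the paper doesn't re-derive the Segal decomposition by hand; it invokes the Göppl--Weiss layer description from \cref{sec:goppl-weiss}. Since $\Opd^{\le 1,\red}\simeq \ast$, the whole mapping space in $\Opd^{\le 2,\red}$ is the layer $L^f_2(\cO,\cP)$, and the fibre sequence \eqref{eqref:fibre-sequence-layers} together with $\Latch_2(\cO)=\varnothing$ and $\Match_2(\cP)=\ast$ immediately yields $L^f_2(\cO,\cP)\simeq \Map_{\cS^{\Sigma_2}}(\cO(2),\cP(2))$. Your Segal-decomposition argument is morally the same but would still need to be promoted from a pointwise statement about the values $\cO(T)$ to a statement about the full mapping space in the localisation; the latching/matching machinery does this for you.

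The more serious issue is essential surjectivity. You propose $\cO_X(T)\coloneq X^{\times v_2(T)}$ with functoriality ``induced by the action of morphisms on the sets of $2$-valent vertices'', but morphisms in $\overline{\Omega}_{\le 2}$ are arbitrary operad maps between these trees, not just the inner face maps adjacent to low-arity vertices, degeneracies and isomorphisms that you list (outer face maps removing external vertices are already missing). More to the point, a morphism $\sigma\colon \overline{C}_2\to T'$ need not pick out a single vertex: it can select a composite subtree of arity $2$ containing several $2$-valent vertices with their extra inputs capped by arity-$0$ vertices, and then the induced map $X^{v_2(T')}\to X$ is not a projection in the naive sense. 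One can argue that for a reduced operad all but one factor collapses up to homotopy, but turning that into an honest, strictly functorial presheaf on $\overline{\Omega}_{\le 2}$ and then verifying the Segal condition is exactly the ``combinatorial bookkeeping'' you flag as the obstacle, and it is not done. This is not a cosmetic point; it is the reason the paper does something different. The paper first observes that essential surjectivity may be checked before truncation, and then constructs a concrete strict one-coloured operad in Kan complexes with $n$-ary operations $\prod_{J\subseteq\ul{n},\,|J|=2}X$ (after rigidifying $X$ to a Kan complex with strict $\Sigma_2$-action), whose operadic nerve is a reduced operad with $\cO(2)\simeq X$. That construction hands you a genuine operad with all functoriality for free, entirely avoiding the tree-level combinatorics you would otherwise have to settle.
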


\begin{proof}Using $\smash{\Opd^{\le1,\red}\simeq\ast}$, $\smash{\Latch_2(\cO)=\varnothing}$, and $\Match_2(\cO)=*$, the description of the fibres of the map on mapping spaces induced by $\smash{\Opd^{\le2,\red}\ra \Opd^{\le1,\red}}$ from \cref{sec:goppl-weiss} shows that the functor in the claim is fully faithful. Essential surjectivity is equivalent to the functor $\smash{(-)(2)\colon \Opd^{\red}\ra \cS^{\Sigma_2}}$ on nontruncated operads being essentially surjective, i.e.\,that any $\Sigma_2$-space $X$ arises as the space of $2$-ary operations of a reduced operad. Modelling $X$ by a Kan-complex with a strict $\Sigma_2$-action, there is a $1$-coloured operad in Kan complexes in the classical sense whose $n$-ary operations are given by $\prod_{J \subseteq \ul{n},|J|=2} X$. Its operadic nerve (see \cite[2.1.1.27]{LurieHA}) yields an operad as required.
\end{proof}

\subsection{The Bousfield--Kan spectral sequence of a tower}\label{sec:bousfield-kan-ss}
Given a tower of pointed spaces
\[
	\cdots \ra X_s\ra X_{s-1}\ra\cdots\ra X_1\ra X_0\lra X_{-1}\simeq \ast,
\]
the homotopy groups of the limit $X\coloneq \lim_s X_s$ can be studied by a spectral sequence known as the  \emph{Bousfield--Kan spectral sequence} \cite[IX.4]{BousfieldKan}, which is natural in maps of towers of pointed spaces. Its entries are defined for $t\ge s\ge0$, starting on the first page $E_1^{s,t}$. The differentials are of the form $\smash{d_r\colon E_r^{s,t}\ra E_r^{s+r,t+r-1}}$, so decrease $t-s$ by $1$. The first page is given by 
\[E_1^{s,t}=\pi_{t-s}(F_s)\quad\text{where}\quad F_s\coloneq\fib(X_s\ra X_{s-1}).\] 
Note that these entries are not always abelian groups: they are possibly non-abelian groups for $t-s=1$, and only pointed sets for $t-s=0$. In general, the spectral sequence is an \emph{extended spectral sequence} in the sense of IX.4.2 loc.cit., which means that:

\begin{enumerate}[leftmargin=*]
\item $E^{s,t}_{r}$ is a pointed set for $t-s=0$, a group for $t-s\ge1$, and an abelian group for $t-s\ge2$.
\item There is an action of the group $E_r^{s-r,s-r+1}$ on the pointed set $E_r^{s,s}$.
\item For $t-s\ge2$ the differential $d_r\colon E_r^{s,t}\ra E_r^{s+r,t+r-1}$ is a group homomorphism which lands in the centre of the target for $t-s=2$. For $t-s=1$, the differential is given by acting on the basepoint using the action of  $E_r^{s-r,s-r+1}$ on $E_r^{s,s}$.
\item We have $E_r^{s,t}=\ker(d_r)/\im(d_r)$ for $t-s\ge1$, and $E_{r+1}^{s,s}\subset E_{r}^{s,s}/(\text{action of }E_r^{s-r,s-r+1})$.
\end{enumerate}
General convergence properties of this spectral sequence are discussed in IX.5 loc.cit.. We will only need a special case which we describe now. Note that there are only finitely many differentials into each entry since $E_r^{s,t}$ is only defined for $t\ge s\ge 0$, so we have $E_{r+1}^{s,t}\subset E_{r}^{s,t}$ for fixed $0\le s\le t$ and all $r\gg0$. If for a fixed $i\ge 1$ we have $\smash{E_r^{s,s+i}= E_{r+1}^{s,s+i}}$ for each $s\ge0$ and $r\ge N(s)$ for some $N(s)\ge0$, then one says that the spectral sequence \emph{Mittag-Leffler converges in degree $i$} (see IX.5.5 loc.cit.). In this case, the tower of surjections of groups
\begin{equation}\label{eqref:limit-tower}
	\cdots\twoheadrightarrow \pi_i(X)_1\twoheadrightarrow \pi_i(X)_0\twoheadrightarrow \pi_i(X)_{-1}=\ast\quad \text{given by}\quad \pi_i(X)_s\coloneq\im\big(\pi_i(X)\ra \pi_i(X_s)\big)
\end{equation} satisfies 
\[\pi_i(X)\cong \lim_s\pi_i(X)_s\quad\text{and}\quad E_\infty^{s,s+i}\cong\ker\big(\pi_i(X)_s\twoheadrightarrow \pi_i(X)_{s-1}\big)\quad \text{for }s\ge0\] 
where $E_\infty^{s,s+i}\coloneq E_{r}^{s,s+i}$ for $r\ge N(s)$ (see IX.5.3 loc.cit.).

\begin{rem}\label{rem:stronger-convergence}
In practice, for fixed $i\ge1$ the entries $E_1^{s,s+i}$ and $E_1^{s,s+i-1}$ are often zero for $s\gg0$, and hence the same holds for $E_r^{s,s+i}$ and $E_r^{s,s+i-1}$ for all $1\le r\le \infty$. This in particular implies Mittag-Leffler convergence in degree $i$ and that the tower of surjections \eqref{eqref:limit-tower} is eventually constant. In particular, by setting $\pi_i(X)^s\coloneq \ker(\pi_i(X)\twoheadrightarrow\pi_i(X)_s)$, this property implies that $\pi_i(X)$ has a finite  normal series $\{e\}=\pi_i(X)^N\triangleleft\cdots \triangleleft \pi_i(X)^{-1}=\pi_i(X)$ with quotients $\pi_i(X)^{s-1}/\pi_i(X)^s\cong E_\infty^{s,s+i}$.
\end{rem}

We now discuss three examples of this spectral sequence that will play a role in this work.

\begin{ex}\label{ex:homotopy-fixed-points}
Given an $E_1$-group $G$ acting on a pointed space $X$, the Postnikov tower $(\cdots\ra X_{\le 1}\ra X_{\le 0}\ra X_{\le -1}\simeq \ast)$ of $X$ gives a limit decomposition of the fixed points $X^G\simeq \lim_k (X_{\le k})^{G}$. In this case $\smash{F_s\simeq K(\pi_s(X),s)^G}$, so the $E_1$-page of the associated spectral sequence satisfies
\[E_1^{s,t}\cong \oH^{2s-t}\big(\mathrm{B}G;\pi_s(X)\big)\quad\text{for }t\ge s\ge0.\]
These cohomology groups (and pointed sets) are interpreted to be $0$ for $2s<t$. In the case $s=0,1$ where the coefficients need not be abelian groups, the possibly nontrivial entries are $\oH^{1}(\mathrm{B}G;\pi_1(X))$,  $\oH^{0}(\mathrm{B}G;\pi_1(X))$, and $\oH^{0}(\mathrm{B}G;\pi_0(X))$, which are defined as the first nonabelian cohomology of $\pi_0(G)$ acting on $\pi_1(X)$, the fixed point group of this action, and the fixed point set of the action on the pointed set $\pi_0(X)$ respectively. There are two situations in which we are going to apply this spectral sequence: in the first $\BG$ is finite dimensional and in the second $G$ is discrete and finite and $\pi_s(X)$ admits for $s\ge1$ a $G$-invariant finite  normal series whose consecutive quotients are $\bfQ$-vector space. In both cases  $\oH^k(\BG;\pi_s(X))$ vanishes for large enough $k$ and $s$ (by an induction on the length of the  normal series in the second case), so the spectral sequences in particular converge in all degrees $i\ge1$ in the strong sense of \cref{rem:stronger-convergence}.
\end{ex}

\begin{ex}\label{ex:BKSS-mapping-space}
Given a sequence of maps $A\ra X\ra Y$ between nonempty spaces, we consider the space $\Map_A(X,Y)$ of maps $X\ra Y$ that extend the composition $A\ra Y$, i.e.\,the fibre at $A\ra Y$ of the map $\Map(X,Y)\ra \Map(A,Y)$ induced by precomposition with $A\ra X$. This is pointed by the given map $X\ra Y$. Applying $\Map_A(X,-)$ to the Postnikov tower $(\cdots\ra Y_{\le 1}\ra Y_{\le 0}\ra Y_{\le -1}\simeq \ast)$ of $Y$ provides a limit decomposition $\Map_A(X,Y)\simeq\lim\big(\cdots \ra \Map_A(X,Y_{\le 0})\ra\Map_A(X,Y_{\le -1})\simeq \ast\big)$ as pointed spaces. For simplicity, we assume that $Y$ is $1$-connected. In this case, by obstruction theory, the $E_1$-page of the Bousfield--Kan spectral sequence is given by
\[E_1^{s,t}=\pi_{t-s}\big(\fib(\Map_A(X,Y_{\le s})\ra \Map_A(X,Y_{\le s-1}))\big)\cong\oH^{2s-t}(X/A;\pi_s(Y))\quad\text{for }t\ge s\ge0.\]
Here $X/A$ is the cofibre of $A\ra X$ and the groups $\oH^{2s-t}(X/A;\pi_s(Y))$ are interpreted to be $0$ for $s\le 1$ or $2s-t<0$. This identification can be seen by using the fibre sequence $Y_{\le s}\ra Y_{\le s-1}\ra K(\pi_{s}(Y),s+1)$ involving the $s$th $k$-invariant. In particular, if $X/A$ is homologically finite-dimensional, then the spectral sequence converges in all degrees $i\ge1$ in the strong sense of \cref{rem:stronger-convergence}.
\end{ex}

\begin{ex}\label{ex:BKSS-automorphism-space}
Given a map $A\ra X$ to a $1$-connected space $X$, we consider the $E_1$-group $\Aut_A(X)$ of automorphisms of $X$ under $A$, i.e.\,the fibre of the map $\Aut(X)\ra \Map(A,X)$ at the given map $A\ra X$. Postnikov truncating $X$ gives a limit decomposition as $E_1$-groups $\Aut_A(X)\simeq\lim_k\Aut_A(X_{\le k})$ which deloops to a limit decomposition $\BAut_A(X)\simeq(\lim_k\BAut_A(X_{\le k}))_\id$
where $(-)_\id$ denotes the basepoint component. To identify the $E_1$-page of the associated Bousfield--Kan spectral sequence, it is convenient to pass to the subgroup $\SAut_A(X)\subset\Aut_A(X)$ of those homotopy automorphisms that induce the identity on all homotopy groups of $X$, which has a similar limit decomposition as pointed spaces $\BSAut_A(X)\simeq\lim(\cdots\ra  \BSAut_A(X_{\le 0})\ra \BSAut_A(X_{\le -1})\simeq \ast)_\id$. The $E_1$-page of the Bousfield--Kan spectral sequence of this tower satisfies
\begin{equation}\label{eqref:e1-page-aut}E_1^{s,t}\cong \begin{cases}\text{pointed subset of }\oH^{s+1}(X_{\le s-1}/A;\pi_{s}X)&\text{for }t-s=0,\text{ as pointed sets}\\
\oH^{2s-t+1}(X_{\le s-1}/A;\pi_{s}X)&\text{for }t-s\ge1, \text{ as groups.}
\end{cases}\end{equation}
This identification becomes clearest when being deduced from a more general statement: fixing a space $A$ and an integer $s\ge2$, we write $\smash{\cS^{\le s,1\text{-ctd},\simeq }_{A/}\subset \cS^{\simeq}_{A/}}$ for the full subcategory of the core of the category of spaces under $A$ on those maps $A\ra X$ for which $X$ is $s$-truncated and $1$-connected. Writing $\cAb^\simeq$ for the core of the category of abelian groups, there is a fibre sequence $\Map_A(Z,K(M,s+1))\ra \cS^{\le s,1\text{-ctd},\simeq}_{A/}\ra \cS^{\le s-1,1\text{-ctd},\simeq}_{A/}\times\cAb^\simeq$
in which the fibre is taken over $\smash{(A\ra Z,M)\in  \cS^{\le s-1,1\text{-ctd}}_{/ A}\times\cAb}$. Here $\Map_A(Z,K(M,s+1))$ is the space of maps $Z\ra K(M,s+1)$ that agree with the constant map on $A$, the first map is given by taking fibres, and the second map sends $A\ra Y$ to the composition $A\ra Y\ra Y_{\le s-1}$ in the first argument and to $\pi_s(Y)$ in the second (see \cite[Remark 3.9]{BlancDwyerGoerss} for a non-$\infty$-categorical reference in the case $A=\ast$ and \cite[Section 2]{Pstragowski} for an $\infty$-categorical reference in the case $A=\varnothing$; the argument for general $A$ is similar). Restricting components and taking fibres at $(\pi_i(X))_{i\ge1}$ of the functors to $\cAb^{\times s}$ given by taking homotopy groups, we obtain  a fibre sequence $\Map_A(X_{\le s-1},K(\pi_s(X),s+1))_{X_{\le s}}\ra \BSAut_A(X_{\le s})\ra \BSAut_A(X_{\le s-1})$ where $\Map_A(X_{\le s-1},K(\pi_s(X),s+1))_{X_{\le s}}\subset \Map_A(X_{\le s-1},K(\pi_s(X),s+1))$ is the collection of those components of maps $f\colon X_{\le s-1}\ra K(\pi_s(X),s+1)$ that are trivialised on $A$ and are such that the fibre inclusion $\fib(f)\ra X_{\le s-1}$ is equivalent, as a map under $A$, to the truncation $X_{\le s}\ra X_{\le s-1}$. Taking homotopy  groups gives the identification \eqref{eqref:e1-page-aut}. Now note that $E_1^{s,s+k}=\oH^{s-k+1}(X_{\le s-1}/A;\pi_{s}(X))$ injects for $k\ge1$ into $\oH^{s-k+1}(X/A;\pi_{s}(X))$ since $X\ra X_{\le s-1}$ is $s$-connected. Consequently, if $X/A$ is homologically finite-dimensional, these groups vanish for $s\gg0$, so the spectral sequence converges in degrees $i\ge2$ in the strong sense of \cref{rem:stronger-convergence}. In dimension $i=1$, the situation is more subtle since the pointed sets $E_1^{s,s}\subset \oH^{s+1}(X_{\le s-1}/A;\pi_{s}X)$ could be nontrivial for infinitely many $s$. However, one still gets that $\pi_0(\SAut_A(X_{\le s}))=\pi_1(\BSAut_A(X_{\le s}))$ admits for \emph{each fixed finite $s\ge 1$} a finite normal series with quotients isomorphic to $\oH^{s}(X_{\le s-1}/A;\pi_{s}(X))\le  \oH^{s}(X/A;\pi_{s}(X))$ for varying $s$. Often this is good enough, since $\pi_0(\SAut_A(X))\ra \pi_0(\SAut_A(X_{\le s}))$ is injective for $s\gg0$ by obstruction theory if $X/A$ is homologically finite-dimensional.\end{ex}

\section{Twice-iterated stabilisation for $\Aut(\smash{E_{d,\bfQ}})$}\label{sec:doublestab}
As mentioned in the final part of the introduction, a crucial ingredient for the proof of our main results is that the twice-iterated stabilisation map for the group of orientation preserving automorphisms of the rationalised $E_d$-operad is nullhomotopic. Let us now be more precise about this. Recall that the space of $k$-ary operations of the little $d$-discs operad $E_d$ is equivalent to the ordered configuration space $F_k(\bfR^d)$ (see \cref{sec:dl-add}). The latter is contractible for $k\le 1$, so $E_d$ is a reduced operad and we may apply the rationalisation procedure described in \cref{sec:rationalisation-operads} to define $E_{d,\bfQ}\coloneq (E_d)_\bfQ$. As $F_2(\bfR^d)$ is equivalent to a $(d-1)$-sphere, the space of binary operations in $E_{d,\bfQ}$ is equivalent to a rational $(d-1)$-sphere, so acting on its top homology gives a morphism
\[
	\chi\colon \Aut(E_{d,\bfQ})\lra \Aut(\widetilde{\oH}_{d-1}(E_{d,\bfQ}(2)))\cong\GL(\bfQ),
\] 
whose kernel we denote as $\smash{\SAut(E_{d,\bfQ})\le \Aut(E_{d,\bfQ})}$. We will see shortly that there is a stabilisation $E_1$-map $\Aut(E_{d-1,\bfQ})\ra \Aut(E_{d,\bfQ})$ which is compatible with the map $\chi$, so it restricts to an $E_1$-map $\SAut(E_{d-1,\bfQ})\ra \SAut(E_{d,\bfQ})$. The purpose of this section is to prove \cref{bigthm:nullhomotopy}, which says that its twice-iterate $\SAut(E_{d-2,\bfQ})\ra \SAut(E_{d,\bfQ})$ is nullhomotopic as an $E_1$-map.

\begin{rem}\label{rem:nullhomotopy-optimal} \cref{bigthm:nullhomotopy} is optimal in the following three senses.
\begin{enumerate}[leftmargin=*]
	\item The restriction to the kernel $\SAut(E_{d,\bfQ}) \le \Aut(E_{d,\bfQ})$ is necessary since the morphism $\chi \colon \Aut(E_{d,\bfQ}) \to \GL(\bfQ)$ is compatible with stabilisation and surjective (see \cref{sec:e2-strategy}).
	\item The \emph{single} $E_1$-stabilisation map $\SAut(E_{d-1,\bfQ})\ra \SAut(E_{d,\bfQ})$ is often \emph{not} null: for any odd $d=2n+1\ge3$ the composition (involving the evident action of $\oO(d)$ on $E_{d}$)
	\[\hspace{0.7cm}\quad\BSO(2n)\ra \BSAut(E_{2n,\bfQ})\ra \BSAut(E_{2n+1,\bfQ})\ra \BSAut(E_{2n+1,\bfQ}(2))\simeq \BSAut(S^{2n})_\bfQ\]
	pulls back the Hopf class $E\in \oH^{4n}(\BSAut(S^{2n});\bfQ)$ to the (nontrivial) square of the Euler class $p_n=e^2\in \oH^{4n}(\BSO(2n);\bfQ)$ (see e.g.\,\cite[p.\,95]{Sullivan}), so the second map is not null.
	\item There is a version $\SAut(E_{d-2}) \to \SAut(E_d)$ of the double stabilisation map between the unrationalised $E_d$-operads which is compatible with the version for the rationalised operads. The nonrationalised version can be seen to often be \emph{not} nullhomotopic, by precomposing it with the map $\BSO(d-2)\ra \BSAut(E_{d-2})$ and postcomposing it with the composition $\BSAut(E_{d})\ra \BSAut(S^{d-1})\ra \BSAut_*(S^{d})$ induced by restriction to arity 2 and unreduced suspension. The resulting map $\BSO(d-2)\ra\BSAut_*(S^{d})$ induces the appropriate unstable $J$-homomorphism on homotopy groups, so it is often nontrivial.
\end{enumerate}
\end{rem}

\subsection{Strategy of proof}\label{sec:e2-strategy}
The first step to proving \cref{bigthm:nullhomotopy} is to construct a dashed map in 
\begin{equation}\label{equ:filler-rational-bv}
\begin{tikzcd}[column sep=1cm,row sep=0.6cm]
	\Aut(E_{n})\times \Aut(E_{m})\rar{(-)_\bfQ}\arrow[d,"{\otimes}",swap]&\Aut(E_{n,\bfQ})\times \Aut(E_{m,\bfQ})\rar{(\chi,\chi)}\arrow[d,"{\otimes_\bfQ}",swap,dashed]& \dar{\otimes}\GL(\bfQ)\times \GL(\bfQ)\\
	\Aut(E_{n+m})\rar{(-)_\bfQ}&\Aut(E_{n+m,\bfQ}) \rar{\chi}&\GL(\bfQ),
\end{tikzcd}
\end{equation}
making this a commutative diagram of $E_1$-groups for $n,m\ge1$. Here the maps labeled $(-)_\bfQ$ are induced by the rationalisation functor from \cref{sec:rationalisation-operads}, the leftmost vertical map is induced by the tensor product of operads from \cref{sec:operads-tensor-product} and the additivity equivalence $E_n\otimes E_m\simeq E_{n+m}$ from \cref{sec:dl-add}, and the right vertical map by $\bfQ\otimes_{\bfQ}\bfQ\cong\bfQ$, so is given by multiplication in the group $\GL(\bfQ)\cong\bfQ^\times$. The construction of the dashed filler is somewhat subtle, since we do not know whether one has $E_{n,\bfQ}\otimes E_{m,\bfQ}\simeq E_{n+m,\bfQ}$, so the dashed filler will \emph{not} just be given by taking tensor products in $\Opd$, but will instead use a tensor product in a certain category of \emph{rational pro-operads}.

\begin{rem}\label{rem:Pedro-Geoffroy}A similar diagram was constructed by Boavida de Brito--Horel when replacing the leftmost vertical map with a certain tensor product for configuration categories from \cite{BoavidaWeissProduct} (see \cite[1.5, 5.5., 6.3, 7.2]{BoavidaHorel}; by the discussion in \cref{sec:the-rational-filler} below our definition of $\Aut(E_{d,\bfQ})$ agrees with theirs). Their construction inspired ours, and it seems likely that the resulting diagrams are in fact equivalent. Showing this would involve proving that the tensor product for configuration categories from \cite{BoavidaWeissProduct} is compatible with the one for operads from \cite{LurieHA}. We learned from Boavida de Brito that there is a natural comparison map between them, but it has not yet been shown to be an equivalence. An attempt in this direction was made in \cite{Collins}.
\end{rem}

Once \eqref{equ:filler-rational-bv} is established, we can specialise the dashed map to $n=d-2$ and $m=2$ and restrict to $\SAut$-subgroups to obtain an $E_1$-map $(-)\otimes_\bfQ(-)\colon \SAut(E_{d-2,\bfQ})\times \SAut(E_{2,\bfQ})\ra \SAut(E_{d,\bfQ})$ which is $\Aut(E_{2,\bfQ})$-equivariant with respect to the action on the source by acting trivially on  $\SAut(E_{d-2,\bfQ})$ and by conjugation on $\SAut(E_{2,\bfQ})$, and on the target by stabilisation followed by conjugation. Restricting the source to $\SAut(\smash{E_{d-2,\bfQ}})\times \{\id\}$ and taking invariants, it follows that the stabilisation map $(-)\otimes_\bfQ\id_{E_{2,\bfQ}}\colon \SAut(\smash{E_{d-2,\bfQ}})\ra \SAut(\smash{E_{d,\bfQ}})$ factors through the $\Aut(E_{2,\bfQ})$-invariants $\smash{\SAut(E_{d,\bfQ})^{\Aut(E_{2,\bfQ})}}$, and thus also through the invariants $\smash{\SAut(E_{d,\bfQ})^G}$ of any $E_1$-group $G$ that acts on $\SAut(E_{d,\bfQ})$ through $\Aut(E_{2,\bfQ})$. In order to prove \cref{bigthm:nullhomotopy}, it will thus suffice to find such a $G$ for which the invariants $\SAut(E_{d,\bfQ})^G$ are trivial. Our choice of $G$ relies the following result going back to Drinfel'd (see \cite[Theorem 7.1]{BoavidaHorel} and \cite{Drinfeld}), which can also be used to give a proof of rational formality of the $E_d$-operads (see \cite{PetersenGT,BoavidaHorel}).

\begin{thm}[Drinfel'd]\label{thm:cyc-surj}The map $\chi\colon \Aut(E_{2,\bfQ})\ra \GL(\bfQ)$ is surjective.
\end{thm}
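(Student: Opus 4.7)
The plan is to match $\chi$ with the $\lambda$-character of the rational Grothendieck--Teichmüller group $\GT(\bfQ)$ and then invoke Drinfel'd's surjectivity theorem for that character. Concretely, the goal is to exhibit a map of $E_1$-groups $\rho\colon\GT(\bfQ)\ra \Aut(E_{2,\bfQ})$ such that the composition $\chi\circ \rho$ recovers the homomorphism $(\lambda,f)\mapsto \lambda$ built into the definition of $\GT(\bfQ)$. Given such a map, surjectivity of $\chi$ follows immediately from Drinfel'd's theorem that for every $t\in\bfQ^\times$ there exists an associator in $M(\bfQ)_t$ (obtained, for instance, by rescaling the Knizhnik--Zamolodchikov associator after descent from $\bfC$ to $\bfQ$).

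For the main step, I would follow the approach of Boavida de Brito and Horel cited in \cref{rem:Pedro-Geoffroy}: one constructs an action of $\GT(\bfQ)$ on a suitable rational model of the configuration category of $\bfR^2$, and then transports this action along the comparison between configuration categories and operads to obtain an action on $E_{2,\bfQ}$. The subtle point is to match this $\GT(\bfQ)$-action with our definition of $\Aut(E_{2,\bfQ})$ based on the Bousfield--Kan rationalization of reduced operads from \cref{sec:rationalisation-operads}. Since both constructions produce reduced operads whose arity-$k$ spaces are equivalent to the Bousfield--Kan rationalization of the nilpotent space $F_k(\bfR^2)$, such a comparison should be automatic once the natural map between the two models (alluded to in \cref{rem:Pedro-Geoffroy}) is available; pinning this down is the main technical obstacle.

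Assuming that step, the identification $\chi\circ \rho=\lambda$ is a matter of unwinding definitions: $\chi$ records the action of an automorphism on a generator of $\smash{\widetilde{\oH}_{1}(E_{2,\bfQ}(2))}\cong \oH_{1}(S^1;\bfQ)$, which is represented by the half-braid in $F_2(\bfR^2)$, and the $\GT$-action on this half-braid is by scaling by $\lambda$ essentially by definition of the pair $(\lambda,f)$. The hard part is thus not producing the surjective supply of elements in $\GT(\bfQ)$---this is Drinfel'd's input---but rather the model-comparison in the preceding paragraph; once it is in place, the conclusion is exactly the content of Boavida de Brito--Horel's Theorem 7.1.
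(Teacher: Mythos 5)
The paper gives no proof of this theorem; it is stated as a direct citation of \cite[Theorem 7.1]{BoavidaHorel} together with \cite{Drinfeld}, and your proposal essentially reconstructs what that reference proves, so the overall approach matches. Two corrections, though. First, the spaces $F_k(\bfR^2)$ are \emph{not} nilpotent for $k\geq 2$: they are $K(\pi,1)$'s for the pure braid groups, which are far from nilpotent. So the model comparison for $d=2$ cannot go through nilpotency as your sketch asserts; the paper instead uses the Malcev-goodness of $PB_k$ via the alternative branch of \cref{lem:rat-map-comparison} (see \cref{ex:spaces-that-are-Q-good}). Second, the ``main technical obstacle'' you flag is misplaced: the open comparison problem in \cref{rem:Pedro-Geoffroy} concerns compatibility of \emph{tensor products} between Boavida de Brito--Weiss's configuration categories and Lurie's $\infty$-operads, which is not needed for this theorem. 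What this theorem requires is only that the automorphism space of the single object $E_{2,\bfQ}$ in the paper's sense agrees with the one Boavida de Brito--Horel act on, and that identification is supplied by \cref{sec:the-rational-filler} through the pro-rationalisation comparison of \cref{lem:rat-map-comparison}, not by the unresolved tensor-product compatibility.
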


Fixing an automorphism $\Lambda\in \Aut(E_{2,\bfQ})$, we get a morphism $\langle\Lambda\rangle\ra \Aut(E_{2,\bfQ})$ from the free $E_1$-group generated by $\Lambda$. By the above discussion, \cref{bigthm:nullhomotopy} would follow if we could choose $\Lambda$ so that the invariants $\SAut(\smash{E_{d,\bfQ}})^{\langle \Lambda\rangle}$ are trivial. We will show that this is indeed possible: 

\begin{prop}
\label{prop:trivial-invariants}For $d\ge3$ and $\Lambda\in\Aut(E_{2,\bfQ})$ with $\chi(\Lambda)\neq\pm1$, we have $\SAut(\smash{E_{d,\bfQ}})^{\langle \Lambda\rangle}\simeq\ast$.
\end{prop}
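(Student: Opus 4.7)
The plan is to reduce $\BSAut(E_{d,\bfQ})^{\langle\Lambda\rangle}$ to pure weight-scaling actions on $\bfQ$-vector space layers and exploit that the only roots of unity in $\bfQ^\times$ are $\pm 1$. First, the $\langle\Lambda\rangle$-equivariant equivalence $\Opd^\un\simeq \lim_k\Opd^{\le k,\un}$ gives
\[
	\BSAut(E_{d,\bfQ})^{\langle\Lambda\rangle}\simeq \lim_k\BSAut(E_{d,\bfQ,\le k})^{\langle\Lambda\rangle},
\]
so it suffices to show by induction on $k$ that each $\BSAut(E_{d,\bfQ,\le k})^{\langle\Lambda\rangle}$ is contractible. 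The base case $k=1$ is immediate from $\Opd^{\le 1,\red}\simeq \ast$. For the inductive step I would take $\langle\Lambda\rangle$-fixed points of the fibre sequence $F_k\ra \BSAut(E_{d,\bfQ,\le k})\ra \BSAut(E_{d,\bfQ,\le k-1})$ and check that $F_k^{\langle\Lambda\rangle}\simeq\ast$.

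To describe $F_k$, I would combine the Göppl--Weiss analysis from \cref{sec:goppl-weiss} of the layers of the operadic truncation tower with the analysis of $\BSAut$ via Postnikov truncation in \cref{ex:BKSS-automorphism-space}. Running the iterated Bousfield--Kan spectral sequence for $\pi_*(F_k)$, the $E_1$-entries are $\Sigma_k$-equivariant $\bfQ$-vector spaces built from the rational cohomology of $E_{d,\bfQ}(k)/\Latch_k$ (and its $\Match_k$-variant) with coefficients in $\pi_*(E_{d,\bfQ}(k))$. Since the $\langle\Lambda\rangle$-invariants functor is exact on $\bfQ$-vector spaces, the inductive step reduces to showing that $\langle\Lambda\rangle$ has no nonzero fixed vector on any such $E_1$-entry.

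The $\langle\Lambda\rangle$-action on $\BSAut(E_{d,\bfQ})$ is conjugation by the stabilised automorphism $\id\otimes_\bfQ\Lambda\in \Aut(E_{d,\bfQ})$, whose $\chi$-value is $\chi(\Lambda)$ by the commutativity of \eqref{equ:filler-rational-bv}. Using Drinfel'd's theorem (\cref{thm:cyc-surj}) one chooses a lift $\sigma(\chi(\Lambda))\in\Aut(E_{2,\bfQ})$ of $\chi(\Lambda)\in\bfQ^\times$ realising a weight-grading operator, in the sense that under the Arnold presentation of $\oH^*(E_2(k);\bfQ)$ it acts on the degree-$w$ part by $\chi(\Lambda)^w$. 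Stabilising this lift and using that inner automorphisms by elements of $\SAut(E_{d,\bfQ})$ act as the identity on $\BSAut(E_{d,\bfQ})$, one can replace conjugation by $\id\otimes_\bfQ\Lambda$ with the weight-scaling action when computing the induced action on the $E_1$-entries; this is essentially the argument of Boavida de Brito--Horel~\cite{BoavidaHorel} adapted to our setting. A minimal Sullivan model argument propagates the same weight decomposition to the rational homotopy groups $\pi_*(E_{d,\bfQ}(k))$, where every weight is $\ge 1$ because $E_{d,\bfQ}(k)$ is $(d-2)$-connected and its rational cohomology is generated in degree $d-1$.

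Consequently, every $E_1$-entry carries a pure weight scaling by $\chi(\Lambda)^w$ with $w\ge 1$; since $\chi(\Lambda)\neq\pm 1$ and $\bfQ$ admits no nontrivial roots of unity, the scalar $\chi(\Lambda)^w\neq 1$ has no nonzero fixed vector. Hence $F_k^{\langle\Lambda\rangle}\simeq\ast$, the induction closes, and $\SAut(E_{d,\bfQ})^{\langle\Lambda\rangle}\simeq\ast$. The principal technical obstacle is the weight-scaling identification: one must propagate Drinfel'd's lift on $E_{2,\bfQ}$ to $E_{d,\bfQ}$ compatibly with the $\Sigma_k$-equivariance and the spectral sequence filtration, and verify that the discrepancy between $\id\otimes_\bfQ \Lambda$ and the stabilised lift, which lies in $\SAut(E_{d,\bfQ})$, acts trivially on each layer via the inner-automorphism triviality on $\BSAut$.
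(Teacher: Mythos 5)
Your strategy matches the paper's: reduce along the truncation tower, analyse the Göppl--Weiss layers via Bousfield--Kan spectral sequences, establish that the $\langle\Lambda\rangle$-action on the resulting $\bfQ$-vector spaces is cyclotomic of nontrivial weight, and conclude by observing that $\chi(\Lambda)^w\neq 1$ for $w\neq 0$ because $\bfQ^\times$ has only $\pm 1$ as roots of unity. The paper packages this via the ``gr-cyclotomic of nontrivial weight'' formalism (\cref{lem:cyclotomic-inherits}, \cref{lem:weight-lemma}, \cref{prop:auted-cyclotomic}); you are running essentially the same computation.

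However, the claim ``every $E_1$-entry carries a pure weight scaling by $\chi(\Lambda)^w$ with $w\ge 1$'' is false as stated, and this is where the argument would break. The coefficient $\pi_j(E_{d,\bfQ}(k))$ indeed has weight $\ge 1$, but the relative cohomology factor $\oH^i(E_{d,\bfQ}(k),\Latch_k(E_{d,\bfQ});\bfQ)$ has weight $n-k+1\le 0$ by \cref{lem:weight-lemma}~\ref{enum:weight-iv}, so the total weight can be $0$. As \cref{lem:weight-lemma}~\ref{enum:weight-v} makes explicit, this happens exactly for $k=2$ in bidegree $i=j=d-1$, where the relevant group is $\oH^{d-1}(S^{d-1}_\bfQ;\pi_{d-1}(S^{d-1}_\bfQ))\cong\bfQ$ with weight $0$ (it records the degree of a self-map of $S^{d-1}_\bfQ$). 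Your induction needs the separate observation that this weight-$0$ piece only contributes to $\pi_0(\Aut(E_{d,\bfQ}(2)))\cong\GL(\bfQ)$ and is therefore killed when passing to $\SAut$, i.e.\ $\pi_0(\SAut(E_{d,\bfQ}(2)))=0$; for $i\ge1$ the degree constraints exclude the exceptional case. Without this the base case $k=2$ of your induction simply fails.

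Two further points need tightening. First, ``the $\langle\Lambda\rangle$-invariants functor is exact on $\bfQ$-vector spaces'' is not a correct statement; what you need, and what the paper proves via the finite normal series in the definition of gr-cyclotomic, is that $\oH^{*}(B\bfZ;V)=0$ for every $\bfQ[\langle\Lambda\rangle]$-module $V$ which is filtered with cyclotomic subquotients of nonzero weight, which reduces by the long exact sequence to the case of a single subquotient where $\Lambda$ acts by $\chi(\Lambda)^n\neq 1$, and here both $\ker(\Lambda-1)$ and $\coker(\Lambda-1)$ vanish. The entries of the iterated spectral sequence are not themselves the homotopy groups: the latter only carry a filtration with those groups as subquotients, so you cannot skip the filtration step. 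Second, the detour via Drinfel'd's theorem to ``choose a lift $\sigma(\chi(\Lambda))$ realising a weight-grading operator'' is unnecessary: $\Lambda$ itself already acts on $\oH^{*}(E_2(k);\bfQ)$ with the weight-scaling property (this is the content of \cref{lem:weight-lemma}~\ref{enum:weight-i}), and the discrepancy-by-an-element-of-$\SAut$ argument you sketch is not needed once the gr-cyclotomic framework is in place. The paper only uses Drinfel'd's theorem in the proof of \cref{lem:cyclotomic-inherits} to establish divisibility. Finally, in low degrees the spectral sequence entries are pointed sets and possibly nonabelian groups, not $\bfQ$-vector spaces; the paper's gr-cyclotomic notion is phrased for groups with a normal series with abelian cyclotomic quotients precisely to cope with this.
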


To summarise the discussion, in order to prove \cref{bigthm:nullhomotopy}, it suffices to (a) construct the dashed filler in \eqref{equ:filler-rational-bv} , and (b) prove \cref{prop:trivial-invariants}. Part (a) will be taken care of in \cref{sec:the-rational-filler}, so the remainder of this section deals with (b), the proof of \cref{prop:trivial-invariants}.

\subsection{Cyclotomic actions} The key ingredient to the proof of \cref{prop:trivial-invariants} is to establish a certain qualitative property of the action of $\Aut(E_{2,\bfQ})$ on the homotopy groups of the automorphism spaces $\SAut_{\le k}(E_{d,\bfQ})$ of the truncations of $E_{d,\bfQ}$. For brevity, we write 
\[
	\GT\coloneq \pi_0(\Aut(E_{2,\bfQ})).
\]
This notation is motivated by a result of Fresse \cite[Theorem A]{Fresse} that says that  $\pi_0(\Aut(E_{2,\bfQ}))$ is isomorphic to Drinfel'ds pro-unipotent rational version of the Grothendieck--Teichmüller group, though we will not rely on this. The qualitative property of the $\GT$-action on the homotopy groups of $\SAut_{\le k}(E_{d,\bfQ})$ we prove is that they are \emph{gr-cyclotomic of nontrivial weight} in the following sense:

\begin{dfn}Let $G$ be a group with a $\GT$-action.
\begin{enumerate}[leftmargin=*]
	\item The $\GT$-action is \emph{cyclotomic} if it factors through the morphism $\chi\colon \GT\ra \GL(\bfQ)$. 
	\item A cyclotomic action is \emph{of weight $n$} for an integer $n\in \bfZ$ if $G$ is abelian, torsion-free, and divisible (equivalently: a $\bfQ$-vector space) and the action is given by $\varphi(g)=\chi(\varphi)^n\cdot g$ for all $\varphi\in\GT$ and $g\in G$. It is of \emph{nontrivial weight} if it is of weight $n$ for \emph{some} $n\neq0$.
	\item The action is \emph{gr-cyclotomic} if there exists a $\GT$-invariant finite  normal series $1=G_0\triangleleft \cdots\triangleleft G_k=G$ such that the action on the subquotients $G_i/G_{i-1}$ is cyclotomic. Such a gr-cyclotomic action is of \emph{nontrivial weight} if the action on the subquotients is of nontrivial weight (the weights may be different for the different subquotients).
\end{enumerate}
\end{dfn}

Being gr-cyclotomic of nontrivial weight enjoys various inheritance properties:

\begin{lem}\label{lem:cyclotomic-inherits}\ 
\begin{enumerate}[leftmargin=*]
	\item \label{enum:cyclotomic-inherits-i} Being gr-cyclotomic of nontrivial weight is preserved under taking $\GT$-invariant subgroups, quotients by $\GT$-invariant normal subgroups, and $\GT$-invariant extensions.
	\item \label{enum:cyclotomic-inherits-ii} Let $Q$ be a group and $G$ a group with $(Q\times \GT)$-action. If $G$ is gr-cyclotomic of nontrivial weight then so is $\oH^k(\mathrm{B}Q;G)$ for $0\le k\le1$. If $G$ is abelian, then this also holds for $k\ge2$. 
\end{enumerate}
\end{lem}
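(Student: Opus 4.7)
My plan is to treat part (i) as essentially formal, and then derive part (ii) by induction on the length of a $\GT$-invariant normal series for $G$, using the long exact sequence in group cohomology at each step to reduce back to (i).

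For (i), the main observation is that any $\GT$-invariant subgroup (or $\GT$-invariant pointed subset) $H$ of a cyclotomic $\bfQ$-vector space $V$ of weight $n\ne 0$ is automatically a $\bfQ$-subspace, and hence cyclotomic of the same weight $n$ under the inherited action. This uses Drinfel'd's surjectivity $\chi\colon\GT\twoheadrightarrow\bfQ^\times$ from \cref{thm:cyc-surj}: the formula $\varphi\cdot v=\chi(\varphi)^n v$ together with $\chi(\GT)^n=\bfQ^\times$ (for $n\ne 0$) forces $\GT$-stability of $H$ to upgrade to closure under $\bfQ^\times$-scalars, and the subspace axiom follows from the additivity of $H$. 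The analogous statement for quotients by $\GT$-invariant subgroups is dual. Given these two observations, the three inheritance properties in (i) follow by intersecting, projecting, or concatenating the $\GT$-invariant normal series witnessing gr-cyclotomicity of $G$; in each case one checks that the resulting subquotients are $\GT$-invariant subgroups or quotients of the original cyclotomic subquotients, hence again cyclotomic of nontrivial (though generally different) weight.

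For (ii), I induct on the length $N$ of a $\GT$-invariant normal series $1=G_0\triangleleft\cdots\triangleleft G_N=G$ with cyclotomic subquotients of nontrivial weight. In the base case $N=1$, the group $G$ is itself a cyclotomic $\bfQ$-vector space of some weight $n\ne 0$, so the $\GT$-action on $G$ is by $Q$-equivariant $\bfQ$-linear multiplication by $\chi(\varphi)^n$, which automatically extends to multiplication by $\chi(\varphi)^n$ on $\oH^k(\mathrm{B}Q;G)$; the latter is still a $\bfQ$-vector space since $G$ is, so is cyclotomic of weight $n$ for every $k\ge 0$ (this also handles the $k=1$ case as $G$ is abelian here). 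For the inductive step, I apply the long exact sequence associated with the $\GT$-equivariant extension $1\to G_{N-1}\to G\to G/G_{N-1}\to 1$: in the abelian range $k\ge 2$ this is the standard sequence, while for $k\in\{0,1\}$ it is the nonabelian six-term exact sequence of pointed sets. Either way, $\oH^k(\mathrm{B}Q;G)$ acquires a two-step $\GT$-invariant filtration whose consecutive subquotients are $\GT$-invariant subgroups (respectively pointed subsets) of $\oH^k(\mathrm{B}Q;G_{N-1})$ and $\oH^k(\mathrm{B}Q;G/G_{N-1})$, both of which are gr-cyclotomic of nontrivial weight by the inductive hypothesis and the base case; part (i) then assembles these into a gr-cyclotomic structure of nontrivial weight on $\oH^k(\mathrm{B}Q;G)$.

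The main obstacle is the nonabelian $k=1$ case, where the long exact sequence is only exact as pointed sets, so the filtration step has to be argued by hand. Here, the fibres of the map $\oH^1(\mathrm{B}Q;G)\to\oH^1(\mathrm{B}Q;G/G_{N-1})$ over a class $[S]$ in the image are torsors under $\oH^1(\mathrm{B}Q;{}_S G_{N-1})$, the cohomology of $G_{N-1}$ twisted by the $1$-cocycle $S$. Since the $\GT$- and $Q$-actions commute, the twist ${}_S G_{N-1}$ carries the original $\GT$-action and remains gr-cyclotomic of nontrivial weight; each nonempty fibre is therefore, as a pointed $\GT$-set, gr-cyclotomic of nontrivial weight by induction. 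Combined with the fact that the image in $\oH^1(\mathrm{B}Q;G/G_{N-1})$ is itself a $\GT$-invariant pointed subset of a gr-cyclotomic pointed set and hence gr-cyclotomic of nontrivial weight by (i), this produces the required $\GT$-invariant filtration on $\oH^1(\mathrm{B}Q;G)$ and closes the induction.
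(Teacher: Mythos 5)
Your structure matches the paper's: for (i) you modify the given $\GT$-invariant normal series of $G$ by intersecting, projecting, or concatenating, reducing to an observation about $\GT$-invariant subgroups and quotients of cyclotomic $\bfQ$-vector spaces of nontrivial weight; for (ii), which the paper leaves implicit (it only spells out the subgroup case of (i) and declares the rest similar), your long-exact-sequence induction is a reasonable and natural way to fill in the argument.

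However, your key supporting claim for (i)---that $\chi(\GT)^n=\bfQ^\times$ for $n\neq 0$---is false whenever $|n|\ge 2$: the $n$th power map on $\bfQ^\times$ is not surjective (for $n=2$ its image consists of positive rational squares and does not contain $2$). So $\GT$-invariance of a subgroup $H\le V$ only gives closure under multiplication by scalars of the form $q^n$ with $q\in\bfQ^\times$, not under arbitrary rational scalars. The conclusion you want is nevertheless correct, but you have to argue as the paper does: given a positive integer $m$, use \cref{thm:cyc-surj} to pick $\varphi\in\GT$ with $\chi(\varphi)=m^{-n/|n|}$; then $\varphi(x)=\chi(\varphi)^n x = m^{-|n|}x\in H$ for every $x\in H$, which shows $x$ is divisible by $m^{|n|}$ and hence by $m$. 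This makes $H$ divisible, and as a subgroup of a $\bfQ$-vector space it is also torsion-free and abelian, so $H$ is a $\bfQ$-subspace and thus cyclotomic of the same weight $n$. With that repair, the rest of (i) and your treatment of (ii) go through; your sketch of the nonabelian $k=1$ case via twisted cohomology is in the right spirit, though note that for nonabelian coefficients $\oH^1(\mathrm{B}Q;G)$ is only a pointed set, so the sense in which ``gr-cyclotomic'' applies there is already informal in the lemma statement and not something you need to resolve more precisely than the paper does.
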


\begin{proof}We prove the part of \ref{enum:cyclotomic-inherits-i} regarding subgroups; the other properties are similar. Fix a group $G$ with $\GT$-action that is gr-cyclotomic of nontrivial weight, i.e.\,there is a  $\GT$-invariant finite normal series $1 = G_0 \triangleleft \cdots \triangleleft G_k = G$  such that $G_i/G_{i-1}$ is cyclotomic of weight $n_i\neq0$. Given a $\GT$-invariant subgroup $H\le G$ we consider the normal series $1 = H_0 \triangleleft \ldots \triangleleft H_k = H$ with $H_i\coloneq H\cap G_i$,  and claim that the $\GT$-invariant subgroup $H_i/H_{i-1}\le G_i/G_{i-1}$ is cyclotomic of nontrivial weight. That $H_i/H_{i-1}$ is cyclotomic, abelian, torsion free, and that the action is given by $\varphi(x)=\chi(\varphi)^{n_i}\cdot x$ follows from the corresponding properties for $G_i/G_{i-1}$, so it remains to show that any $x\in H_i/H_{i-1}$ is divisible by any $m>0$. For this use \cref{thm:cyc-surj} to find an $\varphi \in \GT$ with $\chi(\varphi) = m^{-{n_i}/{|n_i|}}$, then for $x\in H_i/H_{i-1}$, we have by $\GT$-invariance and the fact that $G_i/G_{i-1}$ has weight $n_i$ that $\varphi(x)=\chi(\varphi)^{n_i}\cdot x= m^{-|n_i|}\cdot x \in H_i/H_{i-1} $, so $x$ is divisible by $m^{|n_i|}$ and thus also by $m$.
\end{proof}

Many of the $\GT$-actions related to $E_{d,\bfQ}$ turn out to be gr-cyclotomic of nontrivial weight:

\begin{lem}\label{lem:weight-lemma}Fix $d\ge3$ and $k\ge2$.
\begin{enumerate}[leftmargin=*]
	\item\label{enum:weight-i} The groups $\oH_\ast(\smash{E_{d,\bfQ}}(k))$ are trivial except possibly in degrees $\ast=n(d-1)$ for $0\le n\le k-1$ in which case the $\GT$-action is cyclotomic of weight $n$.
	\item\label{enum:weight-ii} The groups $\pi_\ast(\smash{E_{d,\bfQ}}(k))$ and $\pi_\ast(\Match_k(\smash{E_{d,\bfQ}}))$ are trivial except possibly in degrees $\ast=n(d-2)+1$ for $n\ge1$ in which case the $\GT$-action is cyclotomic of weight $n$.
	%\item\label{enum:weight-ii} When replacing $n(d-1)$  by $n(d-2)+1$, the statement of \ref{enum:weight-i} also applies to the homotopy groups $\pi_\ast(\smash{E_{d,\bfQ}}(k))$ and $\pi_\ast(\Match_k(\smash{E_{d,\bfQ}}))$.
	\item\label{enum:weight-iii} The group $\oH_{(k-1) d-1}(\smash{E_{d,\bfQ}}(k),\Latch_k(\smash{E_{d,\bfQ}}))$ is cyclotomic of weight $(k-1)$.
	\item\label{enum:weight-iv} The groups $\oH^\ast(\smash{E_{d,\bfQ}}(k),\Latch_k(\smash{E_{d,\bfQ}}))$ are trivial except possibly in degrees $\ast=(k-1)d-n(d-1)-1$ for $0\le n\le k-1$ in which case the $\GT$-action is cyclotomic of weight $n-k+1$.
	\item\label{enum:weight-v} For $k\ge3$ and all $j\ge i\ge0$, the $\GT$-action on the groups \[\hspace{0.9cm}\oH^i\big(\smash{E_{d,\bfQ}}(k),\Latch_k(\smash{E_{d,\bfQ}});\pi_j(\smash{E_{d,\bfQ}}(k))\big)\ \ \ \text{and}\ \ \ \oH^i\big(\smash{E_{d,\bfQ}}(k),\Latch_k(\smash{E_{d,\bfQ}});\pi_j(\Match_k(\smash{E_{d,\bfQ}}))\big)\]
	is cyclotomic of nontrivial weight. For $k=2$, the same holds except in degrees $i=j=d-1$.
\end{enumerate}
\end{lem}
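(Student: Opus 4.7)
The guiding principle is that every $\GT$-module appearing in the lemma is built, via $\Aut(E_{d,\bfQ})$-equivariant operations, from the basic weight-$1$ module $\widetilde{\oH}_{d-1}(E_{d,\bfQ}(2);\bfQ)\cong \bfQ$ (weight $1$ holding by the very definition of $\chi$). The plan is to track weights through these operations, using \cref{lem:cyclotomic-inherits} for inheritance under subgroups, quotients, extensions, and group cohomology. Since $F_k(\bfR^d)$ is simply connected for $d\ge 3$, its rational (co)homology and homotopy are preserved by rationalisation.

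For (i), $\oH^*(E_{d,\bfQ}(k);\bfQ)$ is the classical Arnold algebra, multiplicatively generated in degree $d-1$ by classes $\omega_{ij}$ and concentrated in degrees $n(d-1)$ for $0\le n\le k-1$; each $\omega_{ij}$ pulls back along an $\Aut(E_{d,\bfQ})$-equivariant forgetful map $F_k(\bfR^d)\to F_2(\bfR^d)$ from the generator of $\oH^{d-1}(E_{d,\bfQ}(2);\bfQ)$, so a monomial of length $n$ carries the weight of $n$ iterated binary operations. Dualising yields (i). For (ii), intrinsic rational formality of $E_d$ gives a Quillen model of $E_{d,\bfQ}(k)$ whose underlying graded Lie algebra is dual to the Arnold algebra; this places the rational homotopy in degrees $n(d-2)+1$ for $n\ge 1$ with weight $n$, by multiplicativity of weights under iterated Whitehead brackets, which correspond to operadic compositions of binary operations. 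For $\Match_k(E_{d,\bfQ})$, use the description of \cref{sec:latching-matching} as a finite homotopy limit of the spaces $E_{d,\bfQ}(|S|)$ for $S\subsetneq \underline{k}$: the associated Bousfield--Kan spectral sequence assembles $\pi_*(\Match_k(E_{d,\bfQ}))$ from the groups $\pi_*(E_{d,\bfQ}(m))$ with $m<k$, and \cref{lem:cyclotomic-inherits} propagates the weight-$n$ cyclotomic structure in degree $n(d-2)+1$.

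Parts (iii) and (iv) rest on \cref{ex:latching-object-Ed}: the pair $(E_d(k),\Latch_k(E_d))$ is a compact orientable topological manifold with boundary of dimension $(k-1)d-1$, so Poincaré--Lefschetz duality provides an $\Aut(E_{d,\bfQ})$-equivariant isomorphism
\[
\oH^i(E_{d,\bfQ}(k),\Latch_k(E_{d,\bfQ});\bfQ)\otimes \mathcal{O} \cong \oH_{(k-1)d-1-i}(E_{d,\bfQ}(k);\bfQ),
\]
where $\mathcal{O}$ is the $1$-dimensional orientation $\GT$-module of the pair. Combined with (i), both (iii) and (iv) reduce to the single calculation that $\mathcal{O}$ has weight $k-1$; I would pin this down by identifying the fundamental class $[E_d(k),\Latch_k(E_d)]$ operadically as (the image of) a $(k-1)$-fold iterated binary composite of copies of $E_d(2)$, modulo the weight-zero affine $\bfR^d\rtimes\bfR^\times$-action that reduces the dimension from $kd$ to $(k-1)d-1$, so that the orientation weight $1$ of each factor $E_d(2)\simeq S^{d-1}_\bfQ$ multiplies to $k-1$. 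Part (v) then follows by universal coefficients: for $V=\pi_j(E_{d,\bfQ}(k))$ or $V=\pi_j(\Match_k(E_{d,\bfQ}))$, one has $\oH^i(\text{pair};V)\cong \oH^i(\text{pair};\bfQ)\otimes_\bfQ V$ as $\GT$-modules, which is cyclotomic of weight $(n_1-(k-1))+n_2$ with $0\le n_1\le k-1$ and $n_2\ge 1$; the constraint $j\ge i\ge 0$ together with the degree formulas $i=(k-1)d-n_1(d-1)-1$ and $j=n_2(d-2)+1$ eliminate all combinations with $n_1+n_2=k-1$ except $k=2$, $(n_1,n_2)=(0,1)$, which is precisely the stated exception $i=j=d-1$.

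The main obstacle is the weight computation of $\mathcal{O}$ in (iii)--(iv): Poincaré--Lefschetz duality reduces the problem to the weight of a single $\GT$-module, but pinning down that weight requires a careful operadic description of the fundamental class of the Fulton--MacPherson-type manifold $(E_d(k),\Latch_k(E_d))$ together with an accounting of the $\bfR^d\rtimes\bfR^\times$-quotient that intervenes in reducing the ambient dimension. Everything else is either elementary weight bookkeeping or follows from formality of $E_d$ combined with \cref{lem:cyclotomic-inherits}.
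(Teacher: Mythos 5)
Your treatment of parts (i), (ii)-for-$E_{d,\bfQ}(k)$, and (v) follows essentially the same route as the paper (Arnold/Poisson description, formality, universal coefficients plus arithmetic), so those are fine. But there are two genuine gaps.

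First, for $\pi_*(\Match_k(E_{d,\bfQ}))$ in part (ii), the Bousfield--Kan spectral sequence for the limit over $S\subsetneq\underline{k}$ does not do what you need: it assembles $\pi_i(\Match_k)$ from subquotients of $\pi_{i+s}(E_{d,\bfQ}(m))$ for varying $s\ge 0$, so contributions in degree $i=n(d-2)+1$ can come from $\pi_{i+s}$ of the inputs, which lie in degrees $n'(d-2)+1$ with $n'\ge n$ and hence carry weight $n'\neq n$. At best this gives a filtration with cyclotomic pieces of \emph{mixed} weight, which is strictly weaker than the claimed single-weight statement, and it also fails to show $\pi_*(\Match_k)$ vanishes outside the special degrees. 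The paper sidesteps this entirely: it proves that $E_{d,\bfQ}(k)\to\Match_k(E_{d,\bfQ})$ is \emph{surjective} on homotopy groups (by showing the relevant cube is split up to homotopy, adding points near $\infty$), and then both the vanishing pattern and the exact weight are inherited along a $\GT$-equivariant surjection from the already-established case.

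Second, your reduction of (iii)--(iv) to computing a single orientation $\GT$-module $\mathcal{O}$ via Poincaré--Lefschetz duality is a legitimate reorganisation, but it has two holes. You apply the manifold-with-boundary structure of the unrationalised pair $(E_d(k),\Latch_k(E_d))$, yet the lemma concerns the rationalised pair; since $\Latch_k$ is a colimit and rationalisation does not commute with colimits in general, one must check that $\Latch_k(E_d)\to\Latch_k(E_{d,\bfQ})$ is a $\bfQ$-homology equivalence before duality transfers. The paper does this by comparing bar spectral sequences for the two latching colimits. More seriously, the entire weight content now rests on identifying the weight of $\mathcal{O}$, which you yourself flag as unresolved; the heuristic about iterated binary composites modulo the $\bfR^d\rtimes\bfR^\times$-action is plausible but is not a proof. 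The paper instead proves (iii) directly: for $k=2$ the latching object is empty and the group is $\oH_{d-1}(E_{d,\bfQ}(2))$ of weight $1$; for $k\ge 3$ it observes that $\oH_{(k-1)d-1}(E_{d,\bfQ}(k))=0$ (the manifold has nonempty boundary), so the connecting map to $\oH_{(k-1)d-2}(\Latch_k)$ is injective, and then a bar spectral sequence for $\colim_{T\in\Psi_k^-}E_{d,\bfQ}(T)$ with the filtration bound $p\le k-3$ and the homological bound $q\le(k-1)(d-1)$ isolates a single corner entry $E^1_{k-3,(k-1)(d-1)}$ of weight $k-1$. Part (iv) then follows by capping with the now-known fundamental class of weight $k-1$. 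This is exactly the ``careful operadic accounting'' you anticipate being needed, but carried out concretely via the latching poset rather than via a geometric description of the fundamental class.
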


\begin{proof}Throughout the proof, we will make use of two facts: (a) the map $\Latch_k(E_d)\ra E_d(k)$ is for each $k$ equivalent to the boundary inclusion of a compact $((k-1) d-1)$-dimensional manifold (see \cref{ex:latching-object-Ed}), and (2) the space $E_d(k)$ is $1$-connected for all $k$. The reason for the latter is that $\smash{E_{d}}(k)$ is equivalent to the space $\Emb(\underline{k},\bfR^d)$ of ordered configurations of $k$ points in $\bfR^d$ which is $1$-connected by transversality since we assumed $d\ge3$. In particular, the spaces $\smash{E_{d}}(k)$ are nilpotent, so their rationalisations $\smash{E_{d,\bfQ}}(k)$ are well-behaved in the sense of \cref{sec:nilpotent-completion}.

For \ref{enum:weight-i}, we use that the operad $\oH_*(\smash{E_{d,\bfQ}})=\oH_*(\smash{E_d};\bfQ)$ in graded vector spaces is the $d$-Poisson operad (see e.g.\,\cite[Theorem 6.3, Section 5]{Sinha}) which is generated under operadic compositions in arity $2$. This reduces the claim to showing that the arity $2$ part is concentrated in degree $0$ and $d-1$ and acted upon by $\GT$ cyclotomically with weight $0$ in degree $0$ and weight $1$ in degree $d-1$. But $\oH_{*}(\smash{E_{d,\bfQ}}(2))\cong \oH_{*}(S^{d-1};\bfQ)$, so this holds by definition of the morphism $\chi$.

The first part of \ref{enum:weight-ii} follows from \ref{enum:weight-i} together with the fact that the homotopy groups $\pi_\ast(\smash{E_{d,\bfQ}}(k))=\pi_\ast(E_d(k))\otimes\bfQ$ are generated under taking Whitehead brackets by $\pi_{d-1}(E_d(k))\otimes\bfQ\cong \oH_{d-1}(E_d(k);\bfQ)$ which follows e.g.\,from \cite[Theorem 2.3]{CohenGitler} together with the Milnor--Moore theorem. The second part follows from the first part by showing that the map $E_{d,\bfQ}(k)\ra\Match_k(E_{d,\bfQ})$ is surjective on homotopy groups. From the discussion in \cref{sec:latching-matching}, it follows that the map $E_{d,\bfQ}(k)\ra\Match_k(E_{d,\bfQ})$ is equivalent to the map $\smash{\Emb(\underline{k},\bfR^d)_\bfQ\ra \lim_{S\subsetneq\ul{k}}(\Emb(S,\bfR^d)_\bfQ)}$ induced by forgetting points, so it suffices to show that the fibre inclusion $\smash{\tohofib_{S\subseteq \ul{k}}(\Emb(S,\bfR^d)_\bfQ)\ra \Emb(\underline{k},\bfR^d)_\bfQ}$ is injective on homotopy groups. Adding new points to the configuration near $\infty$ shows that the cubical diagram $\ul{k}\supseteq S\mapsto \Emb(S,\bfR^d)_\bfQ$ is split up to homotopy in the sense of \cite[Definition 5.24]{KRWEvenDiscs}, so the claim follows from Lemma 5.25 loc.cit..

To show \ref{enum:weight-iii}, we distinguish two cases. If $k=2$, then $\Latch_k(\smash{E_{d,\bfQ}})=\varnothing$ (see \cref{sec:latching-matching}), so the group in question is $H_{d-1}(\smash{E_d}(2);\bfQ)$ on which $\GT$ acts indeed cyclotomically of weight $1=k-1$. If $k\ge 3$, then $\Latch_k(\smash{E_{d}})$ is nonempty (see \cref{ex:latching-object-Ed}), so $\oH_{(k-1) d-1}(\smash{E_d}(k);\bfQ)$ vanishes by Poincaré duality and hence the connecting map $\oH_{(k-1) d-1}(\smash{E_{d,\bfQ}(k)},\Latch_k(\smash{E_{d,\bfQ}}))\ra \oH_{(k-1) d-2}(\Latch_k(\smash{E_{d,\bfQ}}))$ is injective, so it suffices to prove that $\oH_{(k-1) d-2}(\Latch_k(\smash{E_{d,\bfQ}}))$ is cyclotomic of weight $(k-1)$. For this we use the colimit decomposition $\colim_{T\in\Psi^-_k}E_{d,\bfQ}(T)\simeq \Latch_k(\smash{E_{d,\bfQ}})$ from \cref{sec:latching-matching}. Filtering the bar-construction model for colimits by skeleta, we get a spectral sequence of $\GT$-modules \begin{equation}\label{equ:bar-ss}
	\textstyle{E_{p,q}^1=\bigoplus_{T_0\le \ldots\le T_p}\oH_q(\smash{E_{d,\bfQ}}(T_0))\implies \oH_{p+q}(\Latch_k(\smash{E_{d,\bfQ}}))}
\end{equation}
where the sum runs through non-degenerate chains of length $p$ in the poset $\Psi^-_k$. Note that any such chain has  length ${\le}(k-3)$ since rooted trees with $k$ leaves that have no interior vertex of valence $2$ and have at least one interior vertex differ from each other by collapsing at most $k-3$ edges, so $E_{p,q}^1=0$ for $p>k-3$. By the dendroidal Segal property we have $\smash{E_{d,\bfQ}(T_0)\simeq \sqcap_{i=1}^lE_{d,\bfQ}(k_i)}$ with $\smash{\sum_{i=1}^l(k_i-1)=k-1}$, and as $\smash{E_{d,\bfQ}}(n)$ has homology concentrated in degrees in degrees $*\le (n-1)(d-1)$, the homology of $\smash{E_{d,\bfQ}}(T_0)$ is concentrated in degrees $*\le (k-1)(d-1)$, so $E^1_{p,q}=0$ for $q>(k-1)(d-1)$. As $(k-3) + (k-1)(d-1)$ agrees with $(k-1)d-2$, the only entry that can contribute to $\oH_{(k-1)d-2}(\Latch_k(\smash{E_{d,\bfQ}}))$ is $\smash{E^1_{k-3,(k-1)(d-1)}}$. Since the action of $\GT$ on the latter is cyclotomic of weight $k-1$ by \ref{enum:weight-i}, Item \ref{enum:weight-iii} follows.

For \ref{enum:weight-iv}, we first compare the spectral sequence \eqref{equ:bar-ss} with the corresponding spectral sequence for $E_d$ to see that the map $ \Latch_k(\smash{E_{d}})\ra \Latch_k(\smash{E_{d,\bfQ}})$ is a $\bfQ$-homology isomorphism, from which it follows that the pair $(E_{d,\bfQ}(k),\Latch_k(\smash{E_{d,\bfQ}}))$ has rational Poincaré duality with relative fundamental class in degree $(k-1)d-1$. Capping with this class thus gives an isomorphism $\oH^\ast(\smash{E_{d,\bfQ}}(k),\Latch_k(\smash{E_{d,\bfQ}}))\cong\oH_{(k-1) d-1-i}(\smash{E_{d,\bfQ}}(k))$ which together with \ref{enum:weight-i} implies the first part of \ref{enum:weight-iv}. The second part follows by observing that since the right-hand side of the Poincaré duality isomorphism is cyclotomic of weight $n$ by \ref{enum:weight-i} and the fundamental class has weight $(k-1)$ by \ref{enum:weight-iii}, so the left-hand side is cyclotomic of weight $n-(k-1)$.

For \ref{enum:weight-v} we note that the universal coefficient theorem provides a $\GT$-equivariant isomorphism
\[\smash{\oH^i\big(\smash{E_{d,\bfQ}}(k),\Latch_k(\smash{E_{d,\bfQ}});\pi_j(\smash{E_{d,\bfQ}}(k))\big) \cong \oH^i\big(\smash{E_{d,\bfQ}}(k),\Latch_k(\smash{E_{d,\bfQ}});\bfQ\big) \otimes \pi_j(\smash{E_{d,\bfQ}}(k))}\]
and similarly for $\oH^i\big(\smash{E_{d,\bfQ}}(k),\Latch_k(\smash{E_{d,\bfQ}});\pi_j(\Match_k(\smash{E_{d,\bfQ}}))\big)$; here the tensor products are acted upon diagonally. By combining \ref{enum:weight-ii} and \ref{enum:weight-iv} it thus follows that the groups in  \ref{enum:weight-v} are $0$ unless $(i,j)=((k-1)d-n(d-1)-1,m(d-2)+1)$ for some $n\ge0$ and $m\ge1$ and in this case the $\GT$-action is cyclotomic of weight $w \coloneq n-k+1+m$. It remains to show that $w\neq 0$. As $0\le j-i=w(d-1)-m-k+3$, the only case in which the weight could be $0$ is when $m+k \leq 3$. As $m \geq 1$ this does not happen for $k\ge3$ and it happens for $k=2$ exactly when $j-i=0$ and $m=1$. For $m=1$ and $k=2$ we have $n=0$ if $w=0$, and thus $i=j=d-1$, as claimed.
\end{proof}

\begin{prop}\label{prop:auted-cyclotomic}For $d\ge3$, the $\GT$-action on the groups
	$\pi_i(\SAut_{\le k}(\smash{E_{d,\bfQ}}))$
is gr-cyclotomic of nontrivial weight for all $i\ge0$ and $2\le k<\infty$.
\end{prop}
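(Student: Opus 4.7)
\emph{Strategy.} I would proceed by induction on $k \ge 2$. The inductive step uses the fibre sequence induced by the truncation functor $\Opd^{\le k,\red} \to \Opd^{\le k-1,\red}$, and the base case $k=2$ is handled directly via \cref{lem:2-truncated-reduced}. Throughout, the $\GT$-action on $\SAut_{\le k}(E_{d,\bfQ})$ is the conjugation action of $\Aut(E_{2,\bfQ})$ on $\Aut(E_{d,\bfQ})$ coming from the pairing constructed in \cref{sec:e2-strategy}.

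\emph{Inductive step.} Assume the statement for $k-1 \ge 2$. Truncation yields a $\GT$-equivariant map $\BSAut_{\le k}(E_{d,\bfQ}) \to \BSAut_{\le k-1}(E_{d,\bfQ})$; let $L$ be the basepoint component of its homotopy fibre. The long exact sequence of homotopy groups is $\GT$-equivariant and, together with the inductive hypothesis and the extension closure part of \cref{lem:cyclotomic-inherits}, reduces the problem to showing that $\pi_i(L)$ is gr-cyclotomic of nontrivial weight for every $i \ge 0$. The Göppl--Weiss description recalled in \cref{sec:goppl-weiss} (cf.\ \eqref{eqref:fibre-sequence-layers}) provides a $\GT$-equivariant fibre sequence
\[
L \lra \Map_{\Latch_k(E_{d,\bfQ})}(E_{d,\bfQ}(k), E_{d,\bfQ}(k))^{\Sigma_k} \lra \Map_{\Latch_k(E_{d,\bfQ})}(E_{d,\bfQ}(k), \Match_k(E_{d,\bfQ}))^{\Sigma_k},
\]
so it suffices to establish the same gr-cyclotomic property for each of the two mapping spaces on the right.

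\emph{Spectral sequence analysis.} For each such mapping space I would stack two Bousfield--Kan spectral sequences. First, \cref{ex:BKSS-mapping-space} applied to $\Latch_k(E_{d,\bfQ}) \hookrightarrow E_{d,\bfQ}(k)$ with target $Y \in \{E_{d,\bfQ}(k),\Match_k(E_{d,\bfQ})\}$ yields a $\GT$-equivariant spectral sequence with $E_1^{s,t} = \oH^{2s-t}(E_{d,\bfQ}(k), \Latch_k(E_{d,\bfQ}); \pi_s Y)$, and by the last part of \cref{lem:weight-lemma} these entries are cyclotomic of nontrivial weight (the exceptional case $k=2,\, i=j=d-1$ does not arise here since $k \ge 3$). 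Second, $\Sigma_k$-invariants are computed via \cref{ex:homotopy-fixed-points}, whose $E_1$-page is $\oH^*(\BG\Sigma_k; -)$ of the output of the first spectral sequence; the second part of \cref{lem:cyclotomic-inherits} preserves gr-cyclotomicity of nontrivial weight under such group cohomology, and its first part propagates the property through all filtration quotients. Strong convergence (\cref{rem:stronger-convergence}) of both spectral sequences is guaranteed because $E_{d,\bfQ}(k)/\Latch_k(E_{d,\bfQ})$ is homologically finite-dimensional by \cref{ex:latching-object-Ed}, and because the homotopy groups of $E_{d,\bfQ}(k)$ and $\Match_k(E_{d,\bfQ})$ carry $\GT$-invariant finite filtrations with $\bfQ$-vector space quotients by \cref{lem:weight-lemma}.

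\emph{Base case and main obstacle.} For $k=2$, \cref{lem:2-truncated-reduced} identifies $\BSAut_{\le 2}(E_{d,\bfQ})$ with the appropriate basepoint component of $\mathrm{B}\SAut^{\Sigma_2}((S^{d-1})_\bfQ)$, and restricting from $\Aut$ to $\SAut$ corresponds to the kernel of the $\chi$-induced $\Sigma_2$-action on top rational homology. An analogous two-step argument applies: start from \cref{ex:BKSS-automorphism-space} for the Postnikov tower of $(S^{d-1})_\bfQ$, then pass to $\Sigma_2$-invariants via \cref{ex:homotopy-fixed-points}. The only potentially problematic $E_1$-entry is $\oH^{d-1}((S^{d-1})_\bfQ; \pi_{d-1}((S^{d-1})_\bfQ))$, of weight $0$, which is precisely the contribution cut off by the passage to $\SAut$. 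The principal technical hurdle throughout is to verify that each ingredient---the Göppl--Weiss fibre sequence, both Bousfield--Kan spectral sequences, and the fixed-point spectral sequence---carries a genuinely $\GT$-equivariant structure compatible with the $\Aut(E_{2,\bfQ})$-action built in \cref{sec:the-rational-filler}; a secondary subtlety in the base case is to confirm that passing to $\SAut$ removes exactly the weight-$0$ obstruction flagged by the exception in the last part of \cref{lem:weight-lemma}, and no more.
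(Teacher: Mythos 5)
Your proposal follows essentially the same approach as the paper's proof: reduce along the tower $\SAut_{\le k}\to\SAut_{\le k-1}$, identify the layers via the Göppl--Weiss fibre sequence \eqref{eqref:fibre-sequence-layers}, run a Bousfield--Kan spectral sequence for the relative mapping/automorphism spaces, stack a second one for $\Sigma_k$-invariants, and feed everything into \cref{lem:weight-lemma}\ref{enum:weight-v} and \cref{lem:cyclotomic-inherits}. One small bookkeeping point worth fixing: the object you call $L$ (the basepoint component of $\fib(\BSAut_{\le k}\to\BSAut_{\le k-1})$) is not literally the fibre $L^{\id}_k$ appearing in \eqref{eqref:fibre-sequence-layers} --- the latter is $\fib_{\id}(\Map_{\le k}\to\Map_{\le k-1})$, and the two differ by a Puppe rotation and a delooping. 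The paper resolves this by taking group-like components of \eqref{eqref:fibre-sequence-layers} and looping once, which yields the fibre sequence $\Omega(\Map_2^{\Sigma_k})\to (L^{\id}_k)^\times\to \Aut_{\Latch_k}(\cdot)^{\Sigma_k}$ and then treats the two outer pieces with \cref{ex:BKSS-mapping-space} and \cref{ex:BKSS-automorphism-space} respectively (rather than using \cref{ex:BKSS-mapping-space} for both, as you do). Since the degree shifts involved are uniformly absorbed by the statement of \cref{lem:weight-lemma}\ref{enum:weight-v}, your argument still lands in the right place, but the identification of $L$ should be stated more carefully before the spectral sequence step is applied.
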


\begin{proof}During the proof, we will repeatedly use the fact that any automorphism in $\SAut_{\le k}(E_{d,\bfQ})=\ker(\chi_{\le k}\colon \Aut_{\le k}(E_{d,\bfQ})\ra\GL(\bfQ))$ for $k\ge2$ acts as the identity on all homotopy and homology groups of $E_{d,\bfQ}(\ell)$ for all $\ell\le k$. This follows from the proof of \cref{lem:weight-lemma} \ref{enum:weight-i} and \ref{enum:weight-ii}.

By the long exact sequence in homotopy groups and \cref{lem:cyclotomic-inherits}, it suffices to show that $\GT$ acts on the homotopy groups of $\SAut_{\le 2}(\smash{E_{d,\bfQ}})$ and of $\smash{L^{\times}_k(E_{d,\bfQ})\coloneq \fib_{\id}(\Aut_{\le k}(\smash{E_{d,\bfQ}})\ra  \Aut_{\le k-1}(\smash{E_{d,\bfQ}}))}$ for $k\ge3$ gr-cyclotomically of nontrivial weight. 

We begin with $L^{\times}_k(E_{d,\bfQ})$. Taking group-like components of the fibre sequence \eqref{eqref:fibre-sequence-layers} in the case $\cP=\cO=\smash{E_{d,\bfQ}}$ and looping once gives a fibre sequence of $E_1$-groups with $\GT$-action
\[
	\Omega \big(\Map_{\Latch_k(\smash{E_{d,\bfQ}})}\big(\smash{E_{d,\bfQ}}(k),\Match_k(\smash{E_{d,\bfQ}})\big)^{\Sigma_k}\big) \ra L^{\id}_k(E_{d,\bfQ})^\times \ra \Aut_{\Latch_k(\smash{E_{d,\bfQ}})}(\smash{E_{d,\bfQ}}(k))^{\Sigma_k},
\]
Since any automorphism in $\Aut_{\le k}(\smash{E_{d,\bfQ}})$ that lifts to the fibre $L^{\id}_k(E_{d,\bfQ})^\times$ lies in the kernel of $\chi_{\le k}$, the second map lands in the components $\smash{\SAut_{\Latch_k(\smash{E_{d,\bfQ}})}(\smash{E_{d,\bfQ}}(k))^{\Sigma_k}\le \Aut_{\Latch_k(\smash{E_{d,\bfQ}})}(\smash{E_{d,\bfQ}}(k))^{\Sigma_k}}$. By the long exact sequence in homotopy groups, it thus suffices to show that $\GT$ acts on \begin{equation}\label{equ:induction-groups}\hspace{-0.2cm}\smash{
	\pi_{i}\big(\Map_{\Latch_k(\smash{E_{d,\bfQ}})}(\smash{E_{d,\bfQ}}(k),\Match_k(\smash{E_{d,\bfQ}}))^{\Sigma_k}\big)\ \ \text{and}\ \  \pi_i\big(\BSAut_{\Latch_k(\smash{E_{d,\bfQ}})}(\smash{E_{d,\bfQ}}(k))^{\Sigma_k}\big)\ \text{for }i\ge1}
\end{equation}
gr-cyclotomically of nontrivial weight. We first show this for the homotopy groups of the corresponding spaces without the $\Sigma_k$-superscripts. For the first group, we apply the Bousfield--Kan spectral sequence as in \cref{ex:BKSS-mapping-space}. We have seen in the proof of \cref{lem:weight-lemma}  \ref{enum:weight-ii} that $E_{d,\bfQ}(k)\ra\Match_k(E_{d,\bfQ})$ is surjective on homotopy groups, so $\Match_k(E_{d,\bfQ})$ is $1$-connected. Moreover, since $\Latch_k(\smash{E_{d,\bfQ}})\ra \smash{E_{d,\bfQ}}(k)$ is $\bfQ$-homology equivalent to the boundary inclusion of a compact manifold (see the proof of \cref{lem:weight-lemma} \ref{enum:weight-iv}), the homology of the cofibre vanishes in large degrees, so by the discussion in \cref{ex:BKSS-mapping-space} and \cref{rem:stronger-convergence}, the group $\pi_{i}(\Map_{\Latch_k(\smash{E_{d,\bfQ}})}(\smash{E_{d,\bfQ}}(k),\Match_k(\smash{E_{d,\bfQ}})))$ admits for $i\ge1$ a $\GT$-invariant  normal series whose quotients are subquotients of the groups  $\oH^{s-i}(\smash{E_{d,\bfQ}}(k)/\Latch_k(\smash{E_{d,\bfQ}});\pi_{s}(\Match_k(\smash{E_{d,\bfQ}})))$, so the claim follows from the fact that $\GT$ acts gr-cyclotomically with nontrivial weight on these groups for $k\ge3$ by \cref{lem:weight-lemma} \ref{enum:weight-v}. Similarly, to deal with the second group in \eqref{equ:induction-groups} (without the $\Sigma_k$-superscript) we apply the Bousfield--Kan spectral sequence as in \cref{ex:BKSS-automorphism-space}. Arguing as before, this shows the claim as a consequence of the fact that $\GT$ acts for $k\ge3$ on $\oH^{s-i+1}(\smash{E_{d,\bfQ}}(k)/\Latch_k(\smash{E_{d,\bfQ}});\pi_{s}(\smash{E_{d,\bfQ}}(k)))$ gr-cyclotomically with nontrivial weight by \cref{lem:weight-lemma} \ref{enum:weight-v}. Finally, to show that $\GT$ acts gr-cyclotomically with nontrivial weights on the groups \eqref{equ:induction-groups} \emph{with} the $\Sigma_k$-superscript, we apply the Bousfield--Kan spectral sequence as in \cref{ex:homotopy-fixed-points} to see that these groups admit finite normal series whose quotients are subquotients of $\Sigma_k$-cohomology groups with coefficients in the groups without the $\Sigma_k$-superscripts which are gr-cyclotomic of nontrivial weight, so the claim follows from \cref{lem:cyclotomic-inherits}.

It remains to show that the $\GT$-action on the homotopy groups of $\SAut_{\le 2}(\smash{E_{d,\bfQ}})$ is gr-cyclotomic of nontrivial weight. Since $\Match_2(\smash{E_{d,\bfQ}})$ and $\Aut_{\le 1}(E_{d,\bfQ})$ are contractible and $\Latch_2(\smash{E_{d,\bfQ}})$ empty (see \cref{sec:latching-matching}), we have $\SAut_{\le 2}(\smash{E_{d,\bfQ}})\simeq  \SAut(\smash{E_{d,\bfQ}}(2))^{\Sigma_2}$ by \eqref{eqref:fibre-sequence-layers}. By the same argument as in the final step above, it suffices to show that the $\GT$-action on $\pi_i(\SAut(\smash{E_{d,\bfQ}}(2)))$ is cyclotomic of nontrivial weight. As $\smash{E_{d,\bfQ}}(2)$ is the rationalisation of a $(d-1)$-sphere, the morphism $\chi\colon \pi_0(\Aut(\smash{E_{d,\bfQ}}(2)))\ra \GL(\bfQ)$ is an isomorphism, so $\pi_i(\SAut(\smash{E_{d,\bfQ}}(2)))$ vanishes for $i=0$. For $i\ge1$, we have $\pi_i(\SAut(\smash{E_{d,\bfQ}}(2)))=\pi_i(\Map(\smash{E_{d,\bfQ}}(2),\smash{E_{d,\bfQ}}(2)))$, so applying the Bousfield--Kan spectral sequence as in \cref{ex:BKSS-mapping-space}, the claim follows from the final part of \cref{lem:weight-lemma} \ref{enum:weight-v}.
\end{proof}

\begin{proof}[Proof of \cref{bigthm:nullhomotopy} and \cref{prop:trivial-invariants}] We already explained in \cref{sec:e2-strategy} how \cref{prop:trivial-invariants} implies \cref{bigthm:nullhomotopy}. To show the former, note that since $\lim_k\SAut_{\le k}(E_{d,\bfQ})\simeq \SAut(E_{d,\bfQ})$ and taking invariants commutes with limits, it suffices to show $\smash{\SAut_{\le k}(E_{d,\bfQ})^{\langle \Lambda\rangle}\simeq\ast}$ for $2\le k<\infty$. We have $\smash{\SAut_{\le k}(E_{d,\bfQ})^{\langle \Lambda\rangle}\simeq \Omega \big(\BSAut_{\le k}(E_{d,\bfQ})^{\langle \Lambda\rangle})}$, so it suffices to show that $\pi_i(\BSAut_{\le k}(\smash{E_{d,\bfQ}})^{\langle \Lambda\rangle})$ vanishes for $i\ge1$. From the Bousfield--Kan spectral sequence as in \cref{ex:homotopy-fixed-points}, we see that it suffices to show that $\oH^{*}(\mathrm{B}(\langle \Lambda\rangle);\pi_s(\BSAut_{\le k}(\smash{E_{d,\bfQ}})))$ vanishes for all $s$. As $\pi_s(\BSAut_{\le k}(\smash{E_{d,\bfQ}}))$ is gr-cyclotomic of nontrivial weight, it admits a $\GT$-invariant finite normal series whose quotients that are cyclotomic of nontrivial weight, so an induction over the length of the series shows that it suffices to show that $\oH^{\ast}(B(\langle \Lambda\rangle);A)$ is trivial for any cyclotomic $\GT$-module $A$ of fixed weight $n\neq0$. Since  $B(\langle \Lambda\rangle)\simeq B\bfZ$, the only potentially nontrivial cohomology groups occur for $*=0,1$ where they are given by the (co)invariants of $\langle\Lambda\rangle$ acting on $A$. But by assumption, $A$ is a rational vector space and $\Lambda$ acts by multiplication with $\chi(\Lambda)^n\neq 1$, so the (co)invariants vanish.
\end{proof}

\begin{rem}\label{rem:graph-complex-relation}\ 
\begin{enumerate}[leftmargin=*]
\item The proof of \cref{bigthm:nullhomotopy} can likely be shortened by relying on the description of $\BAut(E_{d,\bfQ})$ in terms of a graph complex \cite{FresseWillwacher} to show that the $\GT$-action on the homotopy groups of $\SAut(E_{d,\bfQ})$ is cyclotomic of nontrivial weight (the weight should correspond to the loop order in the graph complex). Such an argument was suspected by Willwacher. We decided against pursuing this route when we discovered the more self-contained argument given above.
\item \cref{bigthm:nullhomotopy} in particular implies that the map $\BSO(d-2)\ra \BSAut(E_{d,\bfQ})$ induced by the $\SO(d)$-action on $ \BSAut(E_{d,\bfQ})$ is nullhomotopic, because it factors over $\BAut(E_{d-2,\bfQ}) \to \BAut(E_{d,\bfQ})$ as a result of \eqref{equ:additivity-direct-product} and \eqref{equ:filler-rational-bv}. This corollary of \cref{bigthm:nullhomotopy} ought to be closely related to work of Khoroshkin--Willwacher \cite{KhoroshkinWillwacher} which suggests that the map $\BSO(d)\ra \BSAut(E_{d,\bfQ})$ factors  for even $d=2n$ through the Euler class $e\colon \BSO(2n)\ra K(2n,\bfQ)$ and for odd $d=2n+1$ through the Pontryagin class $p_n\colon \BSO(2n+1)\ra K(4n,\bfQ)$.
\end{enumerate}
\end{rem}

\subsubsection{Alternative nontriviality of  $S^{\DiscInf}_\partial(D^d)$}
We briefly mention an application of \cref{bigthm:nullhomotopy} that is independent of the remainder of this work. There is an equivalence (see \cite[Remark 5.39]{KKoperadic}) \begin{equation}\label{equ:topaut-sdisc}
	\Omega^{d+1}\Aut(E_d)/\Top(d)\simeq S^{\DiscInf}_\partial(D^d)\quad\text{for }d\neq 4
\end{equation}
between the $(d+1)$st loop space of the fibre $\Aut(E_d)/\Top(d)$ of the map $t\colon \BTop(d) \to \BAut(E_d)$ from \cref{sec:dl-add} and the $\DiscInf$-structure space $S^{\DiscInf}_\partial(D^d)$ in the sense of \cite{KKDisc}. In Theorems E and 8.13 loc.cit.\ we proved that $\Aut(E_d)/\Top(d)$ and $\smash{S^{\DiscInf}_\partial(D^d)}$ are nontrivial for most values of $d$:
\begin{thm}\label{thm:sdiscnontrivial}We have $\Aut(E_d)/\Top(d)\not\simeq \ast$ for $d\ge3$, and $S^{\DiscInf}_\partial(D^{d'})\not\simeq\ast$ for $d'=3$ and $d'\ge5$.
\end{thm}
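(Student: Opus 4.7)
The plan is to combine \cref{bigthm:nullhomotopy} with the nontriviality of rational Pontryagin classes on $\BSO(d-2)$ to produce nontrivial rational classes in $\Aut(E_d)/\Top(d)$. By Remark \ref{rem:graph-complex-relation}\,(ii) the composition
\[
	\BSO(d-2) \to \BSO(d) \to \BSTop(d) \xrightarrow{t} \BSAut(E_d) \to \BSAut(E_{d,\bfQ})
\]
is nullhomotopic. I would next argue that the natural comparison $\BSAut(E_d) \to \BSAut(E_{d,\bfQ})$ is injective on rational homotopy groups in positive degrees, exploiting that $E_d \to E_{d,\bfQ}$ is a levelwise rationalisation in the sense of \cref{sec:rationalisation-operads} together with a Bousfield--Kan analysis as in \cref{ex:BKSS-automorphism-space} along the tower of truncations \eqref{equ:tower-of-operads}. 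Granting this, the composition $\BSO(d-2) \to \BSTop(d) \xrightarrow{t} \BSAut(E_d)$ is rationally nullhomotopic.

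On the other hand, for $d \ge 5$ and each $k \ge 1$ such that $p_k$ is an independent polynomial generator of $H^*(\BSO(d-2);\bfQ)$, the class $p_k \in H^{4k}(\BSTop(d);\bfQ)$ pulls back nontrivially to $H^{4k}(\BSO(d-2);\bfQ)$; here $p_k$ is nontrivial on $\BSTop(d)$ by Kirby--Siebenmann in the classical range or by Galatius--Randal-Williams \cite{GRWIndependence} in general. Hence the induced map $\pi_{4k}(\BSO(d-2)) \otimes \bfQ \to \pi_{4k}(\BSTop(d)) \otimes \bfQ$ is nonzero, so the long exact sequence in rational homotopy for the fibration $\Aut(E_d)/\Top(d) \to \BSTop(d) \xrightarrow{t} \BSAut(E_d)$ forces $\pi_{4k}(\Aut(E_d)/\Top(d)) \otimes \bfQ \ne 0$, which establishes the first statement for $d \ge 5$. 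For the $S^{\DiscInf}$-statement, the equivalence \eqref{equ:topaut-sdisc} reduces (for $d \ne 4$) to finding a class in $\pi_k(\Aut(E_{d})/\Top(d))$ for some $k \ge d+1$; the argument delivers one in degree $4k$ whenever $k$ is in the admissible range and $4k \ge d+1$, a condition that holds for many but not all $d \ge 5$.

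The cases $d \in \{3,4\}$ and the low-dimensional values of $d'$ missed by the preceding ranges lie outside the direct reach of this strategy, since $\BSO(d-2)$ either vanishes or has too little rational homotopy in the required degrees. Those cases require separate inputs (for instance, from the low-dimensional smoothing theory used in the original argument of Theorems E and 8.13 of \cite{KKDisc}). The main obstacle I anticipate in the plan above is the rational injectivity claim for $\BSAut(E_d) \to \BSAut(E_{d,\bfQ})$ on positive-degree homotopy: since $\BAut(E_d)$ need not be nilpotent, standard rationalisation theory does not apply, and verifying the claim will most likely require comparing the Bousfield--Kan spectral sequences of \cref{ex:BKSS-automorphism-space} associated to the two towers $\{\BSAut_{\le k}(E_d)\}_k$ and $\{\BSAut_{\le k}(E_{d,\bfQ})\}_k$ stage by stage, using that $\pi_*(E_d(n)) \otimes \bfQ \cong \pi_*(E_{d,\bfQ}(n))$ on each space of operations.
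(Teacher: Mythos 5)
Your plan has the same skeleton as the paper's alternative proof: Theorem~\ref{bigthm:nullhomotopy} kills the composition into $\BSAut(E_{d,\bfQ})$, and the nontriviality of Pontryagin classes on $\BSTop(d)$ supplies elements to push into the fibre $\Aut(E_d)/\Top(d)$. However, the step you yourself flag as the main obstacle — rational injectivity of $\pi_*(\BSAut(E_d)) \to \pi_*(\BSAut(E_{d,\bfQ}))$ on positive-degree homotopy — is a genuine gap, and the Bousfield--Kan comparison you propose is unlikely to close it. The towers $\{\SAut_{\le k}(E_d)\}_k$ and $\{\SAut_{\le k}(E_{d,\bfQ})\}_k$ are indeed levelwise rationally equivalent (this is essentially Theorems~7.9--7.10 of \cite{KKDisc}), but $(-)\otimes\bfQ$ does not commute with $\lim_k$, and $\lim^1$-type phenomena can produce uncountable kernels and cokernels in the limit that obstruct any injectivity statement on rational homotopy. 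The paper deliberately avoids claiming such injectivity. Instead it invokes a dichotomy from \cite[Section~8.3.1]{KKDisc}: given a class in $\pi_i(\BSTop(d-2))_\bfQ$ that survives to $\pi_i(\BSTop(d))_\bfQ$ and whose image in $\pi_i(\BSAut(E_{d,\bfQ}))_\bfQ$ vanishes, \emph{either} the class already dies in $\pi_i(\BSAut(E_d))_\bfQ$ (and then the long exact sequence of the fibration yields $\pi_i(\Aut(E_d)/\Top(d))_\bfQ\neq 0$), \emph{or} $\pi_*(\Aut(E_d)/\Top(d))$ is uncountable in degree $i-1$ or $i-2$. Both horns of this dichotomy give nontriviality of the fibre, so the injectivity you are trying to establish is never proved nor needed. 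You should replace your injectivity claim with this dichotomy; then the rest of your argument goes through essentially unchanged.

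Two secondary remarks. First, the paper's alternative proof is intentionally partial, covering only $d\ge6$ and $d'\ge8$ and citing \cite{KKDisc} for the remaining cases; your observation that the low-dimensional cases need separate input is accurate, though the paper gives no alternative argument for them here. Second, the paper feeds in the fixed class coming from $\BSO(4)\to\BSTop(d-2)$ (hence the restriction $d\ge6$), whereas you propose $\BSO(d-2)$; this is a legitimate variant, but you should be explicit that nontriviality of $p_k$ in $\oH^{4k}(\BSTop(d);\bfQ)$ detected on the Hurewicz image is what is needed to produce the rational homotopy class (for $d\ge6$ this is \cref{thm:PW-classes} of the present paper; for $d=5$ and $k=1$ it reduces to Novikov), and in any case the dichotomy from \cite{KKDisc} is still required.
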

 The proof in \cite{KKDisc} relied on a computation of the homotopy groups of $\smash{\Aut(E_{d,\bfQ})}$ in terms of a graph complex due to Fresse--Turchin--Willwacher (see Theorem 8.3 loc.cit.). Using \cref{bigthm:nullhomotopy}, we can offer (for most values of $d$ and $d'$) a simpler proof which is independent of their work:
\begin{proof}[Alternative proof of \cref{thm:sdiscnontrivial} for $d\ge6$ and $d'\ge8$]
Consider the commutative diagram 
\[\begin{tikzcd}[row sep=0.4cm] 
	\BSTop(d-2) \rar \dar &  \BSAut(E_{d-2}) \rar \dar & \BSAut(E_{d-2,\bfQ}) \dar{\simeq \ast} \\
	\BSTop(d) \rar & \BSAut(E_d) \rar & \BSAut(E_{d,\bfQ})
\end{tikzcd}\] 
whose left and right squares are induced by \eqref{equ:additivity-direct-product} and \eqref{equ:filler-rational-bv} respectively. Assume there is a class in $\pi_i(\BSTop(d-2))_\bfQ$ that maps nontrivially to $\pi_i(\BSTop(d))_\bfQ$. Since the rightmost column is nullhomotopic by \cref{bigthm:nullhomotopy}, the class is trivial in $\pi_i(\BSAut(E_{d,\bfQ}))_ \bfQ$. The argument in \cite[Section 8.3.1]{KKDisc} shows that the class is either zero in $\pi_i(\BSAut(E_d))_\bfQ$ and thus $\pi_i(\Aut(E_{d,\bfQ})/\Top(d))_\bfQ\neq 0$, or $\pi_*(\Aut(E_{d,\bfQ})/\Top(d))$ is uncountable for $*=i-1$ or $*=i-2$. Thus, as soon as there is an element that is not in the kernel of  $\pi_i(\BSTop(d-2))_\bfQ\ra \pi_i(\BSTop(d))_\bfQ$, then  $\pi_*(\Aut(E_{d})/\Top(d))$ is nontrivial in at least one of the degrees $*=i,i-1,i-2$. For $d\ge6$, the composition $\BSO(4)\ra \BSTop(d-2)\ra \BSTop(d)\ra \BTop$ is nontrivial on rational homotopy groups, so there is indeed such an element and the claim regarding $\Aut(E_{d})/\Top(d)$ follows. To apply the same strategy to show that $S^{\DiscInf}_\partial(D^{d'})\simeq \Omega^{d'+1}\Aut(E_{d'})/\Top(d')$ is nontrivial, we need a class that is not in the kernel $\pi_i(\BSTop(d'-2))_\bfQ\ra \pi_i(\BSTop(d'))_\bfQ$ for some $i-2\ge d'+1$. Such classes are known to exist as long as $d'-2\ge6$ (see \cite[Theorem 8.10]{KKDisc}), so $S^{\DiscInf}_\partial(D^{d'})\not\simeq \ast$ for $d'\ge8$. 
\end{proof}

\section{Boundary conditions in embedding calculus}
In this section, as a further ingredient to the proof of our main result \cref{bigthm:pullback}, we discuss how the homotopy type of the boundary of a manifold $M$ can often be recovered in a natural way from the interior of $M$ in the context of Goodwillie--Weiss' embedding calculus and variants of it. We first fix some notation. Fix a compact $d$-dimensional manifold triad $(M,\partial^vM,\partial^hM)$, by which we mean a compact topological $d$-manifold $M$ together with a decomposition $\partial M=\partial^vM\cup \partial^hM$ of its boundary into two codimension zero submanifolds that intersect in their common boundary $\partial^{vh}M\coloneq\partial(\partial^vM)=\partial^vM\cap \partial^hM=\partial(\partial^hM)$  (the $v$ stands for \emph{vertical}, the $h$ for \emph{horizontal}; this choice of terminology will become clear later). We consider the commutative square of $E_1$-groups 
\begin{equation}\label{equ:initial-forgetful-square-bdy}
\begin{tikzcd}[column sep=0.4cm,row sep=0.4cm]
	\Homeo_{\partial^v}(M)\dar\rar&\Emb_{\partial^v}(M)^\times\dar\arrow[dl,dashed]\\
	\Aut_{\partial^v}(M,\partial^h)\rar&\Aut_{\partial^v}(M)
\end{tikzcd}\quad \mathrm{where}
\end{equation}
\begin{itemize}[leftmargin=*]
	\item $\Homeo_{\partial^v}(M)$ is the space of homeomorphisms of $M$ that fix a neighbourhood of $\partial^vM$ pointwise, viewed as an $E_1$-group by composition,
	\item $\Emb_{\partial^v}(M)^\times$ is space of topological self-embeddings of $M$ that are the identity in a neighbourhood of $\partial^vM$ and are invertible up to isotopy fixed on $\partial^vM$, viewed as an $E_1$-group by composition (these are the group-like elements of the $E_1$-space $\Emb_{\partial^v}(M)$ of all self-embeddings),
	\item $\Aut_{\partial^v}(M,\partial^h)$ is the $E_1$-group of homotopy self-equivalences of $M$ that ``fix $\partial^vM$ pointwise and $\partial^hM$ setwise'', i.e.\,the automorphism group of the square induced by the inclusions
	\begin{equation}\label{equ:bdy}
	\begin{tikzcd}[column sep=0.4cm,row sep=0.4cm]
		\partial^{vh}M\rar\dar&\partial^hM\dar{}\\
		\partial^vM\rar& M
	\end{tikzcd}
	\end{equation}
	considered as an object in the undercategory $(\cS^{[1]})_{(\partial^{vh}M\ra \partial^vM)/}$ where $\cS$ is the category of spaces,
	\item $\Aut_{\partial^v}(M)$ is the $E_1$-group of self-equivalences of $M$ that  only ``fix $\partial^vM$ pointwise'', i.e.\,the automorphism group of $(\partial^vM\ra M)\in \cS_{\partial^vM/}$, and
	\item the solid maps in \eqref{equ:initial-forgetful-square-bdy} are the evident forgetful maps. For the left vertical map, this uses that homeomorphisms of a manifold always preserve the boundary setwise, by invariance of domain.
\end{itemize}
We will explain below that there exists a dashed filler in \eqref{equ:initial-forgetful-square-bdy}, which informally says that a self-embedding that fixes $\partial^vM$ pointwise also preserve $\partial^hM$ setwise ``in a homotopical sense'' even though it does not need to actually preserve it in a point-set sense, unlike for homeomorphism. The main point of this section will then be to show that after replacing $\partial^hM$ and $\partial^{vh}M$ with certain approximations resulting from variants of Goodwillie--Weiss' embedding calculus \cite{WeissEmbeddings}, the dashed map in \eqref{equ:initial-forgetful-square-bdy} admits a further factorisation 
\[
	\Emb_{\partial^v}(M)^\times\lra T_\infty\Emb_{\partial^v}(M)^\times \lra T_\infty\Emb^{p}_{\partial^v}(M)^\times \longdashrightarrow  \Aut_{\partial^v}(M,\partial^h),
\] 
through the composition of the topological embedding calculus approximation followed by the map to particle embedding calculus, both in the sense of \cite{KKoperadic} (we omit the $t$-superscripts from the notation in loc.cit.\,since we will only consider topological embeddings and topological embedding calculus in this work). If (i) the triad $(M,\partial^vM,\partial^hM)$ is smoothable, (ii) $d\ge5$, (iii) the inclusion $\partial^h M\subset M$ is an equivalence on tangential $2$-types (i.e.\,it induces an equivalence on fundamental groupoids and for all components the  Stiefel--Whitney class $w_2\colon\pi_2(M)\ra\bfZ/2$ is nontrivial if and only if $w_2\colon\pi_2(\partial^hM)\ra\bfZ/2$ is), and (iv) the same holds for the inclusion $\partial^{vh} M\subset \partial^vM$, then we will see that the approximations to $\partial^hM$ and $\partial^{vh} M$ recover $\partial^hM$ and $\partial^{vh} M$, which will lead to a proof of:

\begin{thm}\label{thm:fake-boundary-2type}Let $(M,\partial^vM,\partial^hM)$ be a compact smoothable $d$-manifold triad with $d\ge5$. If $\partial^h M\subset M$ and $\partial^{vh} M\subset \partial^vM$ are equivalences on tangential $2$-types, then there exists a dashed filler in the diagram
\[\begin{tikzcd}[column sep=0.3cm,row sep=0.6cm]
	\Homeo_{\partial^v}(M)\dar\rar&\Emb_{\partial^v}(M)^\times\rar&T_\infty\Emb_{\partial^v}(M)^\times\rar&T_\infty\Emb^{p}_{\partial^v}(M)^\times\dar\arrow[dlll,dashed]\\
	\Aut_{\partial^v}(M,\partial^h)\arrow[rrr]&&&\Aut_{\partial^v}(M).
\end{tikzcd}\]
of $E_1$-groups. 
\end{thm}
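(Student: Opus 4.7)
The plan is to build, from the particle embedding calculus datum $T_\infty\Emb^{p}_{\partial^v}(M)^\times$, a homotopical model $(\partial^{h,\infty}M\to M,\partial^{vh,\infty}M\to\partial^v M)$ of the boundary pair, and then, under the tangential $2$-type assumption (together with smoothability and $d\ge5$), identify this model with the genuine pair $(\partial^h M\subset M,\partial^{vh}M\subset \partial^v M)$. The map of $E_1$-groups $T_\infty\Emb^{p}_{\partial^v}(M)^\times\to \Aut_{\partial^v}(M,\partial^h)$ will then arise from the evident action on the approximate square, and the composition with the maps out of $\Emb_{\partial^v}(M)^\times$ will be the dashed map already present in \eqref{equ:initial-forgetful-square-bdy} by naturality.

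First I would construct the ``$T_\infty$-boundary'' functorially from particle embedding calculus, in the spirit of the $T_\infty$-boundaries referenced in the acknowledgments. The configuration/particle data underlying $T_\infty\Emb^{p}_{\partial^v}(M)^\times$ (as developed in \cite{KKoperadic}, and closely related to the configuration categories of Boavida de Brito--Weiss) remembers, in addition to the interior of $M$ rel $\partial^v M$, a ``germ at infinity'' detecting the horizontal boundary. Extracting this germ yields the desired pair $\partial^{h,\infty}M\to M$ together with its restriction $\partial^{vh,\infty}M\to \partial^v M$, and by construction the $E_1$-group $T_\infty\Emb^{p}_{\partial^v}(M)^\times$ acts on the resulting square as an object in $(\cS^{[1]})_{(\partial^{vh,\infty}M\to\partial^v M)/}$. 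This produces the required map into an \emph{approximate} version of $\Aut_{\partial^v}(M,\partial^h)$ and is compatible, by naturality, with the known factorisation through $\Emb_{\partial^v}(M)^\times$.

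The heart of the argument is to show that, under the hypotheses of the theorem, the canonical comparison maps $\partial^{h,\infty}M\to \partial^h M$ over $M$ and $\partial^{vh,\infty}M\to \partial^{vh}M$ over $\partial^v M$ are equivalences. The comparison map is automatically an equivalence of tangential $2$-types by the construction of the $T_\infty$-boundary. Combining the tangential $2$-type equivalence hypotheses with smoothability and $d\ge 5$, classical embedding/surgery-theoretic input (smoothing theory together with the convergence of topological embedding calculus on smooth $d$-manifolds for $d\ge 5$, which becomes sharp once one is in the $2$-connected-on-$2$-types range) upgrades this to a genuine equivalence. This is the main obstacle: one needs a careful analysis both of the intrinsic construction of the $T_\infty$-boundary from the operadic data and of why the $2$-type hypotheses are sufficient for the comparison to promote to an equivalence.

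Once this identification is in place, the map $T_\infty\Emb^{p}_{\partial^v}(M)^\times\to \Aut_{\partial^v}(M,\partial^h_\infty)\simeq \Aut_{\partial^v}(M,\partial^h)$ is the desired dashed filler. Commutativity of the outer diagram follows from naturality of the $T_\infty$-boundary under self-embeddings, and compatibility with the map to $\Aut_{\partial^v}(M)$ is automatic because forgetting $\partial^h$ from the approximate square gives back $(\partial^v M\to M)$ on the nose. The whole construction is $E_1$-equivariant because the $T_\infty$-boundary is built entirely out of functorial data on the particle embedding calculus.
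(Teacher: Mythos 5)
Your high-level strategy matches the paper's: build an approximate boundary pair $(\partial^{h,\infty}M\to M,\ \partial^{vh,\infty}M\to\partial^v M)$ out of the embedding-calculus data, prove under the tangential $2$-type hypotheses that it agrees with the genuine boundary pair, and then use the tautological $E_1$-action on the approximate square to get the dashed filler. However, the two steps you flag as ``the main obstacle'' are left as sketches, and the sketch you give does not match what actually makes the argument go.

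On the construction itself: the ``germ at infinity'' idea is not what the paper uses, and I do not see how to make it precise from the particle data alone. The actual construction (Sections 4.1--4.3) takes a very different route. One first shows, using the Kister--Mazur theorem and isotopy extension, that the $\Homeo_{\partial^v}(W)$-equivariant homotopy type of $\partial^h W\subset W$ is modelled by the map
\[
\fib_{\id_W}\bigl(\Emb_{\partial^v}(W\sqcup\bfR^d,W)\to\Emb_{\partial^v}(W)\bigr)_{\Emb(\bfR^d)}\longrightarrow \Emb_{\partial^v}(\bfR^d,W)_{\Emb(\bfR^d)},
\]
which visibly extends along $\Homeo_{\partial^v}(W)\to\Emb_{\partial^v}(W)^\times$. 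One then abstracts this formula to arbitrary symmetric monoidal double categories satisfying two axioms on initial objects in mapping categories (Section 4.2), so that it can be pushed down the tower $\ncBordInf^t(d)\to\ModInf^\un_\bullet(E_d^t)\to\ModInf^\un_\bullet(E^p_{d,\le\bullet})$. This is what produces a $T_\bullet\Emb^p_{\partial^v}(M)^\times$-action on an approximate boundary square, and it is the bulk of the proof.

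On the comparison: you assert that the comparison map is ``automatically an equivalence of tangential $2$-types by the construction of the $T_\infty$-boundary,'' and then invoke smoothing theory to upgrade it. That is not the mechanism. The tangential $2$-type hypothesis on $\partial^h M\subset M$ is an input to the \emph{convergence} theorem for topological embedding calculus (cf.\ Lemma~\ref{lem:convergent-boundary}\,\ref{item:convergence-bdy-better}, which relies on Theorem~6.3 of \cite{KKoperadic}); it is exactly what guarantees that $\Emb_{\partial^v}(W\sqcup\bfR^d,W)\to T_k\Emb_{\partial^v}(W\sqcup\bfR^d,W)$ has diverging connectivity, and hence that $\partial^h M\to\partial^h_{\le\infty}M$ is an equivalence. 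You also need the separate comparison $\partial^h_{\le\infty}M\simeq\partial^h_{\le\infty,p}M$ between topological and particle embedding calculus boundaries (Lemma~\ref{lem:embcalc-and-particle-bdy-agree}), which is what lets the action factor through $T_\infty\Emb^p_{\partial^v}(M)^\times$; your proposal does not address this at all. In short: the plan is correct in outline, but the two steps where the actual content sits are not carried out, and the one concrete idea offered (germs at infinity) points in a different direction than the proof that works.
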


\begin{rem}\label{rem:extends-Weiss-construction}
\,
\begin{enumerate}[leftmargin=*]
\item In the smooth setting and under slightly more restrictive assumptions on the triad, a variant of this result was proved by Weiss as the main result of \cite{WeissConAut}. The \emph{statement} of \cref{thm:fake-boundary-2type} is strongly inspired by Weiss' work, but the \emph{proofs} are considerably different. The main results of \cite{TillmannWeiss} can be recovered and slightly extended by a similar approach. One advantage of our construction is that it has better naturality properties, and we believe that (extensions of) this construction will see further applications in the future (see also \cref{rem:presheaf-boundaries}). 
\item Under the hypothesis of \cref{thm:fake-boundary-2type} on $\partial^hM\subset M$, the map $\Emb_{\partial^v}(M)\ra T_\infty\Emb_{\partial^v}(M)$ is an equivalence by convergence of topological embedding calculus \cite[Theorem 6.4]{KKoperadic}, so \eqref{equ:initial-forgetful-square-bdy} already gives a dashed filler with domain $T_\infty\Emb_{\partial^v}(M)^\times$. The nontrivial part of \cref{thm:fake-boundary-2type} is that it can be extended to $T_\infty\Emb^{p}_{\partial^v}(M)^\times$ under the additional assumption on $\partial^{vh}M\subset \partial^{v}M$.
\end{enumerate}
\end{rem}

\subsection{Embeddings}\label{sec:emb-boundary-square}We begin by constructing the dashed filler in \eqref{equ:initial-forgetful-square-bdy}. To do so, instead of considering compact manifold triads $(M,\partial^vM,\partial^hM)$ as above, we work in the slightly more general setting of compact bordisms $W\colon O\leadsto Q$ of manifolds with boundary, i.e.\,compact manifold triads $(W,\partial^vW, \partial^hW)$ with an ordered decomposition $\partial^vW=O\sqcup Q$ of the vertical boundary as a disjoint union of submanifolds; see \cref{fig:bordism-notation}. We view manifold triads as bordisms of the form $\partial^vM\leadsto \varnothing$. It will be convenient to fix a closed collar $\partial W\times I\subset W$ of $\partial W=\partial W\times\{0\}$ in $W$ and replace $\Homeo_{\partial^v}(W)$ by the equivalent $E_1$-group of homeomorphisms $\phi$ with $\phi|_{\partial^vW\times I}=\id$ and $\phi|_{\partial^hW\times I}=\phi|_{\partial^hW}\times\id_I$. The strategy for obtaining the filler in \eqref{equ:initial-forgetful-square-bdy} is to replace the square \eqref{equ:bdy} of inclusions $\Homeo_{\partial^v}(M)$-equivariantly by an equivalent square of $\Homeo_{\partial^v}(M)$-spaces whose $\Homeo_{\partial^v}(M)$-action visibly factors over $\Emb_{\partial^v}(M)^\times$, in a way that is compatible with the a priori action of $\Emb_{\partial^v}(M)^\times$ on the bottom row (which factors through $\Aut_{\partial^v}(M)$). In doing so, we will repeatedly use the Kister--Mazur theorem \cite{Kister} which says that the inclusion $\Emb(\bfR^d)\subset \Top(d)=\Homeo(\bfR^d)$ is an equivalence, so in particular the $E_1$-space $\Emb(\bfR^d)$ of self-embeddings of $\bfR^d$ under composition is an $E_1$-group. 

\begin{figure}
	\begin{tikzpicture}[scale=1.3]
		\draw[fill=black!5!white] (0,0) rectangle (3,2);
		\node at (0,1) [left,darkgreen] {$O$};
		\fill[pattern=north west lines, pattern color=darkgreen!40!white] (0,0) -- (.3,.3) -- (.3,1.7) -- (0,2);
		\node at (3,1) [right,darkred] {$Q$};
		\fill[pattern=north west lines, pattern color=black!40!white] (0,0) -- (.3,.3) -- (2.7,.3) -- (3,0);
		\fill[pattern=north west lines, pattern color=black!40!white] (0,2) -- (.3,1.7) -- (2.7,1.7) -- (3,2);
		\fill[pattern=north west lines, pattern color=darkred!40!white] (3,0) -- (2.7,.3) -- (2.7,1.7) -- (3,2);
		\draw[very thick] (0,0) rectangle (3,2);
		\draw[very thick,darkgreen] (0,0) -- (0,2);
		\draw[very thick,darkred] (3,0) -- (3,2);
		\node at (.7,2.7) {$\partial^h W$};
		\draw [->,darkgray,dashed] (1.1,2.68) to[in=90,out=0] (1.5,2.1);
		\draw [->,darkgray,dashed] (.7,2.4) to[in=90,out=270] (1.5,.1);
		\node at (2.3,-.7) {$\partial^v W$};
		\draw [->,darkgray,dashed] (1.8,-.7) to[in=0,out=180] (.1,1);
		\draw [->,darkgray,dashed] (2.3,-.4) to[in=180,out=90] (2.9,1);
		\node at (1.5,1) {$W$};
	\end{tikzpicture}
	\caption{A bordism $W \colon O \leadsto Q$ of manifolds with boundary. A collar is dashed. }
	\label{fig:bordism-notation}
\end{figure}

\subsubsection{Recovering horizontal boundaries from embeddings}\label{sec:recover-hor-bdy-emb}
The first observation leading to the dashed filler in \eqref{equ:initial-forgetful-square-bdy} is that the $\Emb(W)^\times$-equivariant homotopy type of $W$ can, as a consequence of the Kister--Mazur theorem, be recovered as the orbits $\smash{\Emb(\bfR^d,W)_{\Emb(\bfR^d)}}$ of the space of topological embeddings $\bfR^d\hookrightarrow W$ with respect to the $\Emb(\bfR^d)$-action by precomposition (cf.\,\cite[Proof of Proposition 5.10]{KKoperadic}). Here $\Emb(W)$ acts on the orbits by postcomposition. More precisely, we have equivalences $\smash{\Emb(\bfR^d,W)_{\Emb(\bfR^d)} \simeq \smash{\Map_{\cS}(\bfR^d,W)_{\Aut_{\cS}(\bfR^d)}} \simeq  W}$ where the first equivalence is induced by the forgetful map and the second by the inclusion $W \to \smash{\Map_{\cS}(\bfR^d,W)}$ as the constant maps. The second observation is that, as consequence of the isotopy extension theorem, there is a homotopy fibre sequence of the form
\[\begin{tikzcd}
	\Emb(\bfR^d,\partial^h W\times I)\lra \Emb_{\partial^v}\big(W\sqcup \bfR^d,W\big)\lra \Emb_{\partial^v}(W),
\end{tikzcd}\]
where the left-hand map is induced by taking disjoint union with $W$, followed by postcomposition with an embedding $\partial^h W\times I\sqcup W\hookrightarrow W$ which ``pushes in the collar''. The right-hand map is given by restriction, and the fibre is taken at $\id_W$. This is equivariant with respect to the $\Homeo_{\partial^v}(W)$-action induced by conjugation. Combining these observations, we see that the $\Homeo_{\partial^v}(W)$-equivariant homotopy type of $\partial^hW\subset W$ can be described as the following map induced by restriction
\begin{equation}\label{equ:recover-boundary-embeddings}\hspace{-1.2cm}
\begin{tikzcd}
	\partial^h_EW&[-1cm]\coloneq&[-1cm]
	\fib_{\id_W}\big(\Emb_{\partial^v}(W\sqcup \bfR^d,W)\ra \Emb_{\partial^v}(W)\big)_{\Emb(\bfR^d)}\dar\\[-0.3cm]
	{}_EW&\coloneq& \Emb_{\partial^v}(\bfR^d,W)_{\Emb(\bfR^d)}.
 \end{tikzcd}
\end{equation}
Note that, written like this, the action on $\partial^h W\subset W$ in the arrow category $\cS^{[1]}$ of the $\infty$-category of spaces visibly factors through $\Homeo_{\partial^v}(W)\ra \Emb_{\partial^v}(W)^\times$, in a way that is compatible with the a priori action of $\Emb_{\partial^v}(W)^\times$ on ${}_EW\simeq W$. Moreover, this construction is natural with respect to gluing bordisms: if $W$ arises a composition $ W=U\cup V$ of two bordisms $U\colon O\leadsto P$ and  $V\colon P\leadsto Q$, then we have two equivalent squares in $\cS$ 
\begin{equation}\label{equ:embedding-fake-squares}
	\left(\begin{tikzcd}[row sep=0.25cm, column sep=0.4cm]
	\partial_E^hU\dar\rar &\partial_E^hW\dar\\
	{}_EU\rar&{}_EW
	\end{tikzcd}\right)\hspace{1cm}\simeq\hspace{1cm}\left( 
	\begin{tikzcd}[row sep=0.4cm, column sep=0.4cm]
	\partial^hU\arrow[d]\arrow[r] &{\partial^hW}\arrow[d]\\
	U\arrow[r]&W
	\end{tikzcd}\right)
\end{equation}
where the left square induced by extension of embeddings along $U \subset W$ and the right square by inclusion. The equivalence between these two  squares is equivariant with respect to the $E_1$-group $\Homeo_{\partial^v}(U)\times \Homeo_{\partial^v}(V)\ra \Homeo_{\partial^v}(W)$ in $\cS^{[1]}$, where the action on the left column is through the projection to $\Homeo_{\partial^v}(U)$. It is then clear that the action on the left square factors through the forgetful map  from $\Homeo_{\partial^v}(U)\times \Homeo_{\partial^v}(V)\ra \Homeo_{\partial^v}(W)$ to $\Emb_{\partial^v}(U)^\times \times \Emb_{\partial^v}(V)^\times\ra \Emb_{\partial^v}(W)^\times$, compatibly with the a priori action of the latter on the bottom row.

\subsubsection{Recovering vh-boundaries from embeddings}\label{sec:recover-triad-emb}
To construct the filler in \eqref{equ:initial-forgetful-square-bdy} from this, we consider the situation of compact triads $(M,\partial^vM,\partial^hM)$, so $W=M$, $O=\partial^v M$, $Q=\varnothing$. We replace $M$ up to homeomorphism relative to $\partial^vM$ with the composition $M^c=\partial^vM\times I\cup_{\partial^v}M$ of the trivial bordism $\partial^vM\times I \colon \partial^vM\leadsto \partial^vM$ with the nullbordism $M\colon \partial^vM \leadsto \varnothing$. Since $\partial^h(\partial^vM\times I)=\partial^{vh}M\times I\simeq \partial^{vh}M$ (see \cref{fig:bordism-triad}), the equivalence \eqref{equ:embedding-fake-squares} specialises to
\begin{equation}\label{equ:embedding-fake-squares-triad-situation}
	\left(\begin{tikzcd}[row sep=0.3cm, column sep=0.4cm]
	\partial_E^{vh}M\coloneq \partial_E^h(\partial^vM\times I)\dar\rar &\partial_E^hM\dar\\
	\partial_E^{v}M\coloneq {}_E(\partial^vM\times I)\rar&{}_EM
	\end{tikzcd}\right)\hspace{1cm}\simeq\hspace{1cm} \left(
	\begin{tikzcd}[row sep=0.4cm, column sep=0.4cm]
	\partial^{vh}M\arrow[d]\arrow[r] &{\partial^hM}\arrow[d]\\
	\partial^vM\arrow[r]&M
	\end{tikzcd}\right)
\end{equation}
of squares. Moreover, the action by $\Homeo_{\partial^v}(M^c)\simeq \Homeo_{\partial^v}(M) $ on the right column of the right-square extends to an $\Homeo_{\partial^v}(M^c)$-action on the square considered as an object in the undercategory $\smash{\cS^{[1]}_{/(\partial^{vh}M\ra \partial^vM)}}$, since the restriction ${\{\id\}\times \Homeo_{\partial^v}(M)\ra \Homeo_{\partial^v}(M^c)}$ of the gluing morphism $\smash{\Homeo_{\partial^v\times\{0,1\}}(\partial^vM\times I)\times \Homeo_{\partial^v}(M)\ra \Homeo_{\partial^v}(M^c)}$ is an equivalence. The same holds compatibly for the $\Emb_{\partial^v}(M)^\times$-action on the left square, in a way that is compatible with the a priori action on the bottom row, so the equivariant equivalence \eqref{equ:embedding-fake-squares-triad-situation}  yields the filler in \eqref{equ:initial-forgetful-square-bdy}.

\begin{figure}
	\begin{tikzpicture}[scale=1.2]
		\clip (-6,-.5) rectangle (4.5,2.5);
		\draw[fill=darkgreen!5!white] (-2,0) rectangle (0,2);
		\fill[darkred!5!white] (0,0) rectangle (1.5,2);
		\fill[darkred!5!white] (1.5,1) circle (1cm);
		\draw[thick,darkgreen] (-2,0) -- (-2,2);
		\draw[thick,darkred] (0,0) -- (0,2);
		\draw (0,0) -- (1.5,0) arc(90:270:-1) (1.5,2) -- (0,2);
		\node at (-2.5,1) [left] {$\partial^h(\partial^v M \times I) = \partial^{vh} M \times I$};
		\draw [->,darkgray,dashed] (-2.9,1.4) to[in=135,out=60] (-1,2.1);
		\draw [->,darkgray,dashed] (-2.9,0.6) to[in=225,out=-60] (-1,-.1);
		\node at (2.5,1) [right] {$\partial^hM$};
		\node at (1.25,1) {$M$};
		\node at (-1,1) {$\partial^v M \times I$};
	\end{tikzpicture}
	\caption{The bordism $M^c \colon \partial^v M \leadsto \varnothing$ is obtained by composing the left-hand bordism $\partial^v M \times I \colon \partial^v M \leadsto \partial^v M$ with the right-hand bordism $M \colon \partial^v M \leadsto \varnothing$.}
	\label{fig:bordism-triad}
\end{figure}

\subsection{A generalisation to double categories}\label{sec:dbl-cat-boundary-square}
We now explain an abstraction of the previous constructions to the situation of a symmetric monoidal double category $\cM$ satisfying two conditions. We refer to \cite[Section 1.1]{KKoperadic} for a recollection on double categories and an explanation of the notation and terminology we use. To state the conditions on $\cM$ that we impose, note that the unit of the symmetric monoidal structure yields an object $\varnothing\in \cM_{[0]}$ in the category of objects of $\cM$, and an object $\varnothing_m\in 
\cM_{\varnothing,\varnothing}$ in the mapping category from $\varnothing$ to itself. The two conditions on $\cM$ are:
\begin{enumerate}
	\item\label{enum:condition-i-doublecat} the mapping category $\cM_{A,B}$ admits an initial object $\init_{A,B}$ for all $A,B\in\cM_{[0]}$, and
	\item\label{enum:condition-ii-doublecat} the object $\varnothing_m\in \cM_{\varnothing,\varnothing}$ is an initial object of $\cM_{\varnothing,\varnothing}$, so $\varnothing_m\simeq \init_{\varnothing,\varnothing}$.
\end{enumerate}
In particular, for objects $A,B\in\cM_{[0]}$, we have functors
\vspace{-0.1cm}\[{\hspace{-0.4cm}(-)_\varnothing\coloneq \big(\cM_{A,B}\xra{(-)\cup \init_{B,\varnothing}}\cM_{A,\varnothing}\big)\ \ {}_\varnothing(-)_\varnothing\coloneq\big( \cM_{A,B}\xra{ \init_{\varnothing,A}\cup(-)\cup \init_{B,\varnothing}}\cM_{A,\varnothing}\big)\ \  {}_\varnothing(-)\coloneq \big(\cM_{A,B}\xra{\init_{\varnothing,A}\cup(-)}\cM_{\varnothing,B}\big)}\]
where $\cup$ denotes the composition functors in $\cM$ (the notation is motivated by \cref{ex:manifold-example-of-doubl-cat} below).

To explain the analogue of \eqref{equ:recover-boundary-embeddings}, we fix an object $R\in \cM_{\varnothing,\varnothing}$ that will play the role of $\bfR^d$, and objects $O,Q\in \cM_{[0]}$ as well as $W\in \cM_{O,Q}$ that will play the role of the same-named manifolds and the bordism between them. Abbreviating $\Aut(R)\coloneq \Aut_{ \cM_{\varnothing,\varnothing}}(R,R)$, the analogue of \eqref{equ:recover-boundary-embeddings} is
\begin{equation}\label{equ:recover-boundary-double-cat}\hspace{-1.2cm}
\begin{tikzcd}
	\partial_{\cM}^hW&[-1cm]\coloneq&[-1cm]
	\fib_{\id_W}\big(\Map_{\cM_{O,Q}}(W\sqcup R,W)\ra \Map_{\cM_{O,Q}}(W,W)\big)_{\Aut(R)}\dar\\[-0.3cm]
	{}_{\cM}{W}&\coloneq& \Map_{\cM_{\varnothing,\varnothing}}(R,
	 {}_{\varnothing}W_{\varnothing})_{\Aut(R)},
\end{tikzcd}
\end{equation}
where $\sqcup$ denotes the symmetric monoidal structure. The horizontal map we take fibres of is induced by precomposition with $\id_W\sqcup (\varnothing_m\ra R)$ and the vertical map is induced by the composition of the inclusion of the fibre inclusion into $\smash{\Map_{\cM_{O,Q}}(W\sqcup R,W)}$ followed by\vspace{-0.1cm}\begin{equation}\label{equ:auxiliary-comp-double-cat}
	\Map_{\cM_{O,Q}}(W\sqcup R,W)\xra{{}_{\varnothing}
 	(-)_\varnothing}\Map_{\cM_{\varnothing,\varnothing}}( {}_{\varnothing}W_{\varnothing}\sqcup R, {}_{\varnothing}W_{\varnothing})\xra{((\varnothing_m\ra {}_{\varnothing}W_{\varnothing})\sqcup\id_R)^*}\Map_{\cM_{\varnothing,\varnothing}}(R, {}_{\varnothing}W_{\varnothing})
\end{equation} 
where the superscript $(-)^*$ indicates precomposition. Conjugation induces an $\Aut_{\cM_{O,Q}}(W)$-action on the map \eqref{equ:recover-boundary-double-cat}. If the bordism $W$ arises as a composition $W = U \cup V$ of $U\in\cM_{O,P}$ and $V\in\cM_{P,Q}$, then there is a commutative square of the same form as the left square in \eqref{equ:embedding-fake-squares}, with the $E$-subscripts replaced by $\cM$-subscripts, together with an action of the gluing morphism $\Aut_{\cM_{O,P}}(U)\times\Aut_{\cM_{P,Q}}(V)\ra \Aut_{\cM_{O,Q}}(W)$ considered as a $E_1$-group in $\smash{\cS^{[1]}}$. This square is induced by the following commutative diagram whose unlabeled arrows are instances of the composition \eqref{equ:auxiliary-comp-double-cat}
 \[\begin{tikzcd}[column sep=2cm]
	 \Map_{\cM_{O,P}}(U,U)\dar{(-)\cup V}&  \Map_{\cM_{P,Q}}(U\sqcup R,U)\dar{(-)\cup V}\arrow[l,swap,"{(\id_{U}\sqcup (\varnothing_m\ra R))^*}"]\rar &[-1cm]\Map_{\cM_{\varnothing,\varnothing}}(R, {}_{\varnothing}U_{\varnothing})\arrow[d,"{( {}_{\varnothing}U\cup(\init_{P,\varnothing}\ra V_{\varnothing}))_*}"]\\
 	\Map_{\cM_{O,Q}}(W,W) &\Map_{\cM_{\varnothing,\varnothing}}( {}_{\varnothing}W_{\varnothing}\sqcup R, {}_{\varnothing}W_{\varnothing})\lar{(\id_W\sqcup (\varnothing_m\ra R))^*} \rar&\Map_{\cM_{\varnothing,\varnothing}}(R, {}_{\varnothing}W_{\varnothing}).
\end{tikzcd}\]

By design, this construction is natural in functors $\varphi\colon \cM\ra \cN$ of symmetric monoidal double categories that preserve the initial objects $\init_{A,B}$ of the mapping categories. In particular, there is a map of squares from the analogue of the left square in \eqref{equ:embedding-fake-squares} for a composition $W=U\cup V$ in $\cM$ to that for the composition $\varphi(W)=\varphi(U)\cup \varphi(V)$ in $\cN$ resulting from applying $\varphi$, and this map of squares comes with an action of the square of $E_1$-groups 
\[\begin{tikzcd}[row sep=0.6cm,column sep=0.7cm]
	\Aut_{\cM_{O,P}}(U)\times\Aut_{\cM_{P,Q}}(V)\rar{\cup}\dar{\varphi}& \Aut_{\cM_{O,Q}}(W)\dar{\varphi}\\
	\Aut_{\cN_{\varphi(O),\varphi(P)}}(\varphi(U))\times\Aut_{\cN_{\varphi(P),\varphi(Q)}}(\varphi(V))\rar{\cup}& \Aut_{\cN_{\varphi(O),\varphi(Q)}}(\varphi(W)).
\end{tikzcd}\]

\begin{ex}\label{ex:recover-triad-emb-double-cat}The analogue in the context of double categories of the special case discussed in \cref{sec:recover-triad-emb} is the following: given objects $\partial^vM\in \cM_{[0]}$ and $M\in \cM_{\partial^vM,\varnothing}$, we can view $M$ as a composition $M^c\coloneq \partial^vM\times I\cup M\simeq M$ where  $\partial^vM\times I\in \cM_{\partial^vM,\partial^vM}$ now stands for the unit with respect to the composition $\cup$ in $\cM$. By the same argument as in \cref{sec:recover-triad-emb} and setting 
\[
	\big(\partial^{vh}_\cM M\ra \partial^v_{\cM}M\big)\coloneq \big(\partial^h_\cM(\partial^vM\times I)\ra {}_\cM(\partial^vM\times I)\big),
\] 
the $\Aut_{\cM_{\partial^vM,\varnothing}}(M)$-action on the right-column of the square resulting from the construction, extends to an action in the overcategory $\smash{\cS^{[1]}_{/(\partial^{vh}_\cM M\ra \partial^v_{\cM}M)}}$ since $\smash{\Aut_{\cM_{\partial^vM,\varnothing}}(M)\simeq \Aut_{\cM_{\partial^vM,\varnothing}}(M^c)}$.\end{ex}

\smallskip

\begin{ex}\label{ex:manifold-example-of-doubl-cat}
 An example of a symmetric monoidal double category $\cM$ satisfying \ref{enum:condition-i-doublecat} and \ref{enum:condition-ii-doublecat} is the topological noncompact bordism double category $\ncBordInf^t(d)$ introduced in \cite[Section 5.3]{KKoperadic}. Its category of objects $\ncBordInf^t(d)_{[0]}$ has as objects $(d-1)$-dimensional (potentially noncompact) topological manifolds without boundary and mapping spaces are embeddings between them. Its mapping categories have as objects bordisms and mapping spaces are spaces of embeddings between bordisms fixing the boundary, the composition functors are induced by gluing bordisms, the composition in the mapping categories is given by by composing embeddings, and the symmetric monoidal structure is given by taking disjoint unions. The unit $\varnothing $ is given by the empty manifold, the object $\varnothing_m$ by the empty bordism, the initial objects in condition \ref{enum:condition-i-doublecat} are given by bordisms $(A\times[0,1)\sqcup (-1,0]\times B)\colon A\leadsto B$, and \ref{enum:condition-ii-doublecat} is satisfied since $\varnothing=\varnothing\times[0,1)\sqcup (-1,0]\times \varnothing$.

Given a compact bordism of manifolds with boundary $W\colon O\leadsto Q$ as in \cref{sec:emb-boundary-square}, we may cut out the horizontal part of the boundary to obtain a bordism $W^\circ\coloneq (W\backslash\partial^hW)\coloneq \interior(O)\leadsto \interior(Q)$ in $\ncBordInf^t(d)$. Note that the left-hand square in \eqref{equ:embedding-fake-squares} only depends on $W^\circ$ and its decomposition $W^\circ=U^\circ\cup_{\interior(P)}V^\circ$ since $U$, $V$, and $W$ are compatibly isotopy equivalent (relative to the interior of their vertical boundary) to the respective versions with a $(-)^\circ$ subscript. Moreover, going through the construction, one sees that the constructions from this section in the case $\cM=\ncBordInf^t(d)$ and $R=\bfR^d\coloneq \varnothing\leadsto \varnothing$ specialise to those in Sections~\ref{sec:recover-hor-bdy-emb} and \ref{sec:recover-triad-emb}.\end{ex}

\begin{ex}\label{ex:cospan-example}A second example of a symmetric monoidal double category that satisfies \ref{enum:condition-i-doublecat} and \ref{enum:condition-ii-doublecat} is the cospan double category $\Cosp(\cC)$ of a category $\cC$ admitting finite colimits: its category of objects is $\cC$, the mapping category between objects $A$ and $B$ is the overcategory $\cC_{A\sqcup B/}$, the composition functor are given by taking pushouts, and the symmetric monoidal structure by taking coproducts (see \cite[Section 1.7.1]{KKoperadic} and \cite[Section 2.10]{KKDisc} for more on $\Cosp(\cC)$). The initial object $\init_{A,B}$ of  $\Cosp(\cC)_{A,B}\simeq \cC_{A\sqcup B/}$ is $\id_{A\sqcup B}$. In the case $\cM=\Cosp(\cC)$, we have \[\smash{\Map_{\cM_{O,Q}}(W\sqcup R,W)=\Map_{\cC_{/O\sqcup Q}}(W\sqcup R,W)\simeq \Map_{\cC}(R,W)\times \Map_{\cC_{/O\sqcup Q}}(W,W)},\] so it follows that the map $\smash{\partial^h_{\Cosp(\cC)}(W)\ra {}_{\Cosp(\cC)}W=\Map_{\cC}(R,W)_{\Aut_\cC(R)}}$ is always an equivalence.

If $\cC=\cS_{/B}$ is the category of spaces over a connected space $B$ and we choose for $R=(\ast\ra B)$ the inclusion of a point, then $\Aut(R)\simeq \Omega B$ and for any space $W\ra B$ over $B$, we get 
\[
	\partial^h_{\Cosp(\cC)}(W)\simeq {}_{\Cosp(\cC)}W=\Map_{\cC}(R,W)_{\Aut_\cC(R)}\simeq \fib(W\ra B)_{\Omega B}\simeq W.
\]
\end{ex}

\begin{ex}\label{ex:morita-cat-example} Generalising \cref{ex:cospan-example}, one may consider any symmetric monoidal category $\cC$ whose tensor product is compatible with geometric realisations in the sense of \cite[Section 1.6.4]{KKoperadic} and which is \emph{unital}, i.e.\,the monoidal unit $\varnothing$ is an initial object. Its Morita category $\ALG(\cC)$ in the sense of Section 1.7 loc.cit.\,is then a symmetric monoidal double category which satisfies  \ref{enum:condition-i-doublecat} and \ref{enum:condition-ii-doublecat}, since the free $(A,B)$-bimodule $F_{A,B}(\varnothing)\in \BMod_{A,B}(\cC)=\ALG(\cC)_{A,B}$ on the monoidal unit $\varnothing$ for associative algebras $A,B\in\Ass(\cC)=\ALG(\cC)_{[1]}$ is an initial object, using that $\varnothing$ is initial in $\cC$. Given another symmetric monoidal category $\cD$ satisfying the above conditions, and a symmetric monoidal functor $\varphi\colon \cC\ra \cD$ compatible with geometric realisations in the sense of Section 1.6.4 loc.cit.\,the induced functor $\varphi\colon \ALG(\cC)\ra \ALG(\cD)$ preserves the initial objects in the mapping categories, since $\varphi(F_{A,B}(\varnothing))\simeq F_{\varphi(A),\varphi(B)}(\varnothing)$; see Lemma 1.10 loc.cit. for an explanation.
\end{ex}

\subsection{Embedding calculus}\label{sec:bdy-emb-calc} In \cite[Section 5.3.2]{KKoperadic}, we introduced a version of Goodwillie--Weiss embedding calculus for spaces of topological embeddings. It arose as the special case of a construction for any unital operad $\cO$---namely the case when $\cO=E^t_d$ is the $\BTop(d)$-framed $E_d$-operad in the sense of Section 5.1.1 loc.cit.---and it took the form of a tower of symmetric monoidal double categories under the bordism double category $\ncBordInf^t(d)$ from \cref{ex:manifold-example-of-doubl-cat}
\begin{equation}\label{equ:tower-bordism-truncated-morita}
	\smash{\ncBordInf^t(d)\xra{E} \ModInf^\un_\infty(E_d^t)\ra\cdots \ra \ModInf^\un_2(E_d^t)\ra \ModInf^\un_1(E_d^t)\simeq \Cosp(\cS_{/\BTop(d)})}
\end{equation}
which yields the topological embedding calculus tower upon taking mapping spaces in the mapping categories. The double categories in this sequence all satisfy conditions \ref{enum:condition-i-doublecat} and \ref{enum:condition-ii-doublecat} by Examples \ref{ex:manifold-example-of-doubl-cat} and \ref{ex:morita-cat-example} since $\ModInf^\un_k(E_d^t)$ is, by definition, the Morita category of a unital symmetric monoidal category, namely the category $\smash{\PSh^\un(\DiscInf^t_{d,\le k})}$ of unital space-valued presheaves on a symmetric monoidal category $\smash{\DiscInf^t_{d,\le k}}$ of topological $d$-manifolds homeomorphic to $\sqcup^n\bfR^d$ for some $n\le k$ and topological embeddings between those, equipped with a Day convolution symmetric monoidal structure induced by disjoint union. Here unital means that the value at the presheaf at $\varnothing\in\DiscInf_{d,\le k}$ is contractible. Moreover, all functors in \eqref{equ:tower-bordism-truncated-morita} preserve initial objects in mapping categories: for all but the first functor this was explained in \cref{ex:morita-cat-example} since the functors are by definition induced by symmetric monoidal functors compatible with geometric realisations, and for the first functor it follows from the argument in the proof of Lemma 5.7 loc.cit.. As a result, combining the discussion in \cref{ex:manifold-example-of-doubl-cat} with the naturality of the construction in \cref{sec:dbl-cat-boundary-square} and adopting the notation regarding towers of Section 1.2 loc.cit., given a compact $d$-dimensional bordism $W\colon O\leadsto Q$, we obtain a tower $\smash{(\partial_{\le \bullet}^hW\ra {}_{\le \bullet}{W})\in\Tow(\cS^{[1]}_{/\partial^hW\ra W})}$ in $\cS^{[1]}$ under $\partial^hW\ra W$ 
 \begin{equation}\label{equ:bdy-approx-tower}
\begin{tikzcd}[row sep=0.4cm]
	\partial^hW\arrow[d]&[-1cm]\simeq&[-1cm] \partial_E^hW\dar\rar&\partial_{\le \infty}^hW\dar\rar&\cdots\rar&\partial^h_{\le 2}W\rar\dar&\partial_{\le 1}^hW\dar{\simeq}\\
	W&\simeq& {}_E{W}\rar{\simeq}&{}_{\le \infty}{W}\rar{\simeq}&\cdots\rar{\simeq} &{}_{\le 2}{W}\rar{\simeq}&{}_{\le 1} {W}
\end{tikzcd}
\end{equation}
where the column involving the ${\le k}\text{-}$subscripts is the instance of \eqref{equ:recover-boundary-double-cat} for the image of $W^\circ\in\ncBordInf^t(d)_{\interior(O),\interior(Q)}$ in $\ModInf^\un_k(E_d^t)$. The tower \eqref{equ:bdy-approx-tower} is acted upon by the tower $\Emb_{\partial^v}(W)^\times\ra T_\bullet\Emb_{\partial^v}(W)^\times$ of $E_1$-groups under $\Emb_{\partial^v}(W)^\times$ consisting of the   automorphism groups of the images of $W^\circ$ in $\ModInf^\un_\bullet(E_d^t)$. The fact that the rightmost vertical arrow in \eqref{equ:bdy-approx-tower} is an equivalence was explained in \cref{ex:cospan-example} since $\ModInf^\un_1(E_d^t)\simeq \Cosp(\cS_{/\BTop(d)})$. Regarding the horizontal maps, we can deduce the following lemma from convergence of embedding calculus:

\begin{lem}\label{lem:convergent-boundary} Fix a compact $d$-dimensional bordism $W\colon O\leadsto Q$ of manifolds with boundary.
\begin{enumerate}[leftmargin=*]
	\item\label{item:convergence-lem-i} The map $W\ra {}_{\le k}W$ is an equivalence for all $1\le k\le \infty$.
	\item\label{item:convergence-bdy} If $d\ge5$, the bordism $W$ is smoothable, and $\partial^hW\subset W$ is $2$-connected, then the map $\smash{\partial^hW\ra \partial^h_{\le k}W}$ is $(k-d+3)$-connected for $1\le k\le\infty$. In particular, it is an equivalence if $k=\infty$.
	\item\label{item:convergence-bdy-better} For the final conclusion in \ref{item:convergence-bdy}, the $2$-connectivity assumption can be weakened to only assuming that the inclusion $\partial^hW\subset W$ is an equivalence on tangential $2$-types.
\end{enumerate}
\end{lem}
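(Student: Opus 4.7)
The plan is to reduce all three claims to the convergence of topological embedding calculus, using naturality of the construction \eqref{equ:recover-boundary-double-cat} along the truncation tower \eqref{equ:tower-bordism-truncated-morita}.

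For \ref{item:convergence-lem-i}, I would unwind ${}_{\le k}W$ using that the mapping category $\ModInf^\un_k(E_d^t)_{\varnothing,\varnothing}$ is equivalent to the category $\PSh^\un(\DiscInf^t_{d,\le k})$ of unital presheaves (cf.\ \cref{sec:bdy-emb-calc} and \cref{ex:morita-cat-example}): the mapping space $\Map(\bfR^d,{}_\varnothing W^\circ{}_\varnothing)$ becomes, by Yoneda applied to the representable presheaf $\bfR^d$, the value at $\bfR^d$ of the unital presheaf classifying $W^\circ$. This is $\Emb(\bfR^d,W^\circ)\simeq \Emb(\bfR^d,W)$ independently of $k$. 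Taking $\Top(d)$-orbits and invoking Kister--Mazur then recovers $W$, exactly as at the ``$E$-level'' in \cref{sec:recover-hor-bdy-emb}.

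For \ref{item:convergence-bdy}, naturality of \eqref{equ:recover-boundary-double-cat} along the truncation functor presents $\partial^hW\to \partial^h_{\le k}W$ as the $\Top(d)$-orbit of the induced map on vertical homotopy fibres at $\id_W$ in the square
\[
\begin{tikzcd}[row sep=0.3cm, column sep=0.5cm]
\Emb_{\partial^v}(W\sqcup \bfR^d,W)\arrow[r]\arrow[d] & T_k\Emb_{\partial^v}(W\sqcup \bfR^d,W)\arrow[d]\\
\Emb_{\partial^v}(W,W)\arrow[r] & T_k\Emb_{\partial^v}(W,W).
\end{tikzcd}
\]
The $2$-connectivity of $\partial^hW\subset W$ dualises to $W$ admitting a handle decomposition relative to $\partial^vW$ with handles of index $\le d-3$, and \cite[Theorem 6.4]{KKoperadic} then makes both horizontal maps $(k-d+3)$-connected. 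This connectivity propagates to the induced map on vertical fibres at $\id_W$ and descends to $\Top(d)$-orbits. Item \ref{item:convergence-bdy-better} follows identically, invoking the version of \cite[Theorem 6.4]{KKoperadic} that requires only an equivalence of tangential $2$-types to obtain $\Emb\simeq T_\infty\Emb$; this guarantees the $k=\infty$ conclusion though not a finite-$k$ connectivity bound.

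The main obstacle is not mathematical depth but organisational: verifying that the truncation functor in \eqref{equ:tower-bordism-truncated-morita} genuinely intertwines the fibre-taking in \eqref{equ:recover-boundary-double-cat}, so that the fibre of the $T_k$-restriction map at $\id_W$ is a bona fide embedding-calculus approximation to $\Emb(\bfR^d,\partial^hW\times I)$. Both this point and the passage from horizontal to fibrewise connectivity are built into the naturality packaged in \cref{sec:dbl-cat-boundary-square}, after which the statement is a direct invocation of the convergence theorem.
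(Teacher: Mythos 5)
Your approach is the paper's: identify $W\to{}_{\le k}W$ and $\partial^hW\to\partial^h_{\le k}W$ with the $\Emb(\bfR^d)^\times\simeq T_k\Emb(\bfR^d)^\times$-orbits of the comparison maps from \eqref{equ:recover-boundary-embeddings} to its $T_k$-analogue, and then invoke the connectivity estimate for embedding calculus for manifolds with a handle decomposition of low relative index. That said, your connectivity bookkeeping in part~\ref{item:convergence-bdy} is off by one in a way that happens to cancel. The relevant input (Theorem~6.3 and Remark~6.11 of \cite{KKoperadic}, not Theorem~6.4, which is the $k=\infty$ convergence statement) gives that both horizontal maps $\Emb_{\partial^v}(W\sqcup\bfR^d,W)\to T_k\Emb_{\partial^v}(W\sqcup\bfR^d,W)$ and $\Emb_{\partial^v}(W)\to T_k\Emb_{\partial^v}(W)$ are $(k-d+4)$-connected, and passing to vertical homotopy fibres \emph{loses} one degree of connectivity, yielding the asserted $(k-d+3)$-connectedness. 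Writing that the horizontal maps are $(k-d+3)$-connected and that this connectivity ``propagates'' to the map on fibres therefore double-counts, and should be corrected even though the final number comes out right. The rest is fine: the reduction of $2$-connectivity of $\partial^hW\subset W$ to a handle-index condition, the observation that taking orbits preserves connectivity, and the remark that part~\ref{item:convergence-bdy-better} only yields the $k=\infty$ conclusion (via the weaker tangential-$2$-type hypothesis) are all in line with the paper's proof.
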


\begin{proof}Adopting the notation from \cite[Section 5.3.2]{KKoperadic}, the map $\Emb(S\times \bfR^d,N)\ra T_k\Emb(S\times \bfR^d,N)$ is an equivalence for any finite set $S$ or cardinality $\le k$, and all topological $d$-manifolds $N$, e.g.\, by combining Theorem 5.3 (iii) loc.cit.\,with the Yoneda lemma. This implies that the map 
\[
	W\simeq \smash{{}_EW=\Emb(\bfR^d,W)_{\Emb(\bfR^d)^\times}\lra T_k\Emb(\bfR^d,W)_{T_k\Emb(\bfR^d)^\times}={}_{\le k}W}
\] 
from Part \ref{item:convergence-lem-i} is an equivalence. Under the hypothesis of Part \ref{item:convergence-bdy}, the maps $\Emb_{\partial^v}(W\sqcup \bfR^d,W)\ra T_k\Emb_{\partial^v}(W\sqcup \bfR^d,W)$ and $\Emb_{\partial^v}(W,W)\ra T_k\Emb_{\partial^v}(W,W)$ are $(k-d+4)$-connected by Theorem 6.3 and Remark 6.11 loc.cit., so the map between fibres
\[
	\fib_{\id_W}\big(\Emb_{\partial^v}(W\sqcup \bfR^d,W)\ra \Emb_{\partial^v}(W,W)\big)\ra \fib_{\id_W}\big(T_k\Emb_{\partial^v}(W\sqcup \bfR^d,W)\ra T_k\Emb_{\partial^v}(W,W)\big)
\]
is $(k-d+3)$-connected. After taking orbits (which preserves connectivity) with respect to the action by $\Emb(\bfR^d)^\times\simeq T_k\Emb(\bfR^d)^\times$, this is precisely the map ${\partial^hW\simeq \partial^h_EW\ra \partial^h_{\le k}W}$ in the claim, so Part \ref{item:convergence-bdy} follows. The final Part \ref{item:convergence-bdy-better} follows from the same argument using Theorem 6.3 loc.cit..
\end{proof}

\subsection{Particle embedding calculus}
As explained in \cite[Section 5.7]{KKoperadic}, there is a map of towers of symmetric monoidal double categories $\ModInf^\un_\bullet(E_d^t)\ra \ModInf^\un_\bullet(E_{d,\le \bullet}^p)$ such that the composition $\smash{\ncBordInf^t(d)\ra \ModInf^\un_\bullet(E_d^t)\ra \ModInf^\un_\bullet(E_{d,\le \bullet}^p)}$ yields the particle embedding calculus tower in the sense of loc.cit.\,upon taking  mapping spaces in mapping categories. This induces a map of towers in $\smash{\cS^{[1]}_{\partial^hW\ra W/}}$ from $\smash{(\partial_{\le \bullet}^hW\ra {}_{\le \bullet}W)}$ in \eqref{equ:bdy-approx-tower} to the analogous tower $\smash{(\partial_{\le \bullet,p}^hW\ra {}_{\le \bullet,p}W)}$ obtained by replacing $\smash{\ModInf^\un_\bullet(E_d^t)}$ with $\smash{\ModInf^\un_\bullet(E_{d,\le \bullet}^p)}$. This map turns out to consist of equivalences:

\begin{lem}\label{lem:embcalc-and-particle-bdy-agree}
The maps $\partial_{\le k}^hW\ra \partial_{\le k,p}^hW$ and ${}_{\le k}{W}\ra {}_{\le k,p}{W}$ are equivalences for all $1\le k\le \infty$.
\end{lem}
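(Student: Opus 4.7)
The plan is to deduce the lemma from the naturality of the construction of \cref{sec:dbl-cat-boundary-square} in symmetric monoidal double category functors preserving initial objects in mapping categories, applied to the functor $\varphi\colon \ModInf^\un_\bullet(E_d^t) \to \ModInf^\un_\bullet(E_{d,\le\bullet}^p)$ from \cite{KKoperadic}. Before invoking naturality, I would first verify that $\varphi$ preserves the initial objects in the mapping categories: by the discussion in \cref{ex:morita-cat-example}, this follows from the underlying symmetric monoidal functor between the unital presentable symmetric monoidal categories being compatible with geometric realisations, which is built into its construction.

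Once naturality is in place, the assertion reduces to showing that $\varphi$ induces equivalences on the three mapping spaces entering the construction \eqref{equ:recover-boundary-double-cat}, applied to the image of $W^\circ \in \ncBordInf^t(d)_{\interior(O),\varnothing}$. Unwinding the Morita category definitions of mapping spaces as explained in \cref{sec:bdy-emb-calc}, these mapping spaces are identified with the truncated topological embedding calculus approximations $T_k\Emb_{\partial^v}(W \sqcup \bfR^d, W)$, $T_k\Emb_{\partial^v}(W, W)$, and $T_k\Emb(\bfR^d, W)$ on the topological side, and with their $T_k\Emb^p$-analogues on the particle side; the $\Aut(R)$-actions are induced by precomposition with $T_k\Emb(\bfR^d)$ and $T_k\Emb^p(\bfR^d)$ respectively, which are both equivalent to $\Top(d)$ by the Kister--Mazur argument used in the proof of \cref{lem:convergent-boundary}\ref{item:convergence-lem-i}.

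The main step, and the only non-formal ingredient, is that the natural transformation $T_k\Emb \to T_k\Emb^p$ is an equivalence on these specific mapping spaces. This is precisely the compatibility of particle with topological embedding calculus that motivates the construction of the particle tower in \cite{KKoperadic}: by design, the functor $\varphi$ is a stagewise equivalent repackaging of the topological tower, so the induced map $T_k\Emb_\partial \to T_k\Emb^p_\partial$ is a levelwise equivalence. Granting this, fibres at $\id_W$ and orbits by the comparison-compatible $E_1$-groups are preserved, yielding both asserted equivalences. The main obstacle is therefore simply to locate and cite the precise statement from \cite{KKoperadic} that $T_k\Emb \to T_k\Emb^p$ is an equivalence on the mapping spaces at hand; once that is in place, the rest of the argument is purely formal manipulation of fibres and homotopy orbits.
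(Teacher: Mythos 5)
Your proposal rests on the claim that the natural transformation $T_k\Emb_{\partial^v} \to T_k\Emb^p_{\partial^v}$ is a levelwise equivalence on the relevant mapping spaces, calling $\varphi$ ``a stagewise equivalent repackaging of the topological tower.'' This is false, and it is precisely the point of the lemma that it is false: topological and particle embedding calculus genuinely differ. The relationship established in \cite{KKoperadic} (and used later in this paper, see the left square in \eqref{equ:initialdiagramfordecom}) is that $\ModInf^\un_k(E_d^t)$ is the \emph{pullback} of $\ModInf^\un_k(E_{d,\le k}^p)$ along $\Cosp(\cS_{/\BAut_{\le k}(E_d)})$ over $\Cosp(\cS_{/\BTop(d)})$. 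Concretely, already for $k=1$ one has $T_1\Emb_{\partial^v}(W)^\times \simeq \Aut_{\partial^v}(TW)$ while $T_1\Emb^p_{\partial^v}(W)^\times \simeq \Aut_{\partial^v}(W)$, which are not equivalent. The ``precise statement from \cite{KKoperadic}'' you say remains to be located does not exist, so your argument cannot be completed as written.

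The content of the lemma is that even though the two towers differ, the \emph{boundary} and \emph{underlying-space} constructions extracted from them agree. The paper's proof does use the naturality framework you set up (and it is good that you verified $\varphi$ preserves initial objects), but the key step is different: since the construction $\partial^h_\cM(-)$ of \eqref{equ:recover-boundary-double-cat} is built from mapping spaces, fibres and orbits, it carries the pullback square of Morita double categories to pullback squares of spaces. The desired equivalences $\partial^h_{\le k}W \to \partial^h_{\le k,p}W$ and ${}_{\le k}W \to {}_{\le k,p}W$ are then pullbacks of the corresponding comparison maps for the cospan double categories $\Cosp(\cS_{/\BTop(d)}) \to \Cosp(\cS_{/\BAut_{\le k}(E_d)})$, and those are equivalences for the trivial reason explained in \cref{ex:cospan-example}: for any cospan double category over spaces, both $\partial^h_{\Cosp(\cS_{/B})}W$ and ${}_{\Cosp(\cS_{/B})}W$ are canonically equivalent to $W$ itself, independent of $B$. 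So the non-triviality of $T_k\Emb \to T_k\Emb^p$ is absorbed by the pullback square and never needs to be an equivalence.
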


\begin{proof}Recall from \cite[Section 5.7]{KKoperadic} that there is a pullback square 
\[
\begin{tikzcd}[column sep=0.4cm, row sep=0.4cm]
	\ModInf^\un_k(E_d^t)\dar\rar&\Cosp(\cS_{/\BTop(d)})\dar\\
	\ModInf^\un_k(E_{d,\le k}^p)\rar&\Cosp(\cS_{/\BAut_{\le k}(E_{d})})
\end{tikzcd}
\]
of symmetric monoidal double categories whose right vertical row is induced by postcomposition with the composition of the map $t\colon \BTop(d)\ra \BAut(E_d)$ from \cref{sec:dl-add} with the map $\BAut(E_d)\ra \BAut_{\le k}(E_d)$ induced by truncation (see \cref{sec:operad-truncation}). This induces pullback squares 
\[
\begin{tikzcd}[column sep=0.5cm, row sep=0.5cm]
	\partial_{\le k}^hW\dar\rar&\partial^h_{\Cosp(\cS_{/\BTop(d)})}W\dar{\circled{1}}\\
	\partial_{\le k,p}^hW\rar&\partial^h_{\Cosp(\cS_{/\BAut_{\le k}(E_{d}})}W
\end{tikzcd}\quad\text{and}\quad 
\begin{tikzcd}[column sep=0.5cm, row sep=0.65cm]
	_{\le k}W\dar\rar&_{\Cosp(\cS_{/\BTop(d)})}W\dar{\circled{2}}\\
	_{\le k,p}W\rar&_{\Cosp(\cS_{/\BAut_{\le k}(E_{k}})}W,
\end{tikzcd}
\]
so it suffices to show that the maps $\circled{1}$ and $\circled{2}$ are equivalences. By the discussion in \cref{ex:cospan-example}, the map $\circled{1}$ is equivalent to $\circled{2}$ which is in turn equivalent to $\id_W$, so it is in particular an equivalence.
\end{proof}

\cref{lem:embcalc-and-particle-bdy-agree} implies that the action of the tower of $E_1$-groups $T_\bullet\Emb_{\partial^v}(W)^\times$ on $\smash{(\partial_{\le \bullet}^hW\ra _{\le \bullet}W)}$ factors through to the map of towers of $E_1$-groups $T_\bullet\Emb_{\partial^v}(W)^\times\ra T_\bullet\Emb^p_{\partial^v}(W)^\times$ induced by the map of towers of symmetric monoidal double categories $\ModInf^\un_\bullet(E_d^t)\ra \ModInf^\un_\bullet(E_{d,\le \bullet}^p)$. 

\subsection{Proof of \cref{thm:fake-boundary-2type}}
Applying the above discussion to the case of triads $M\colon \partial^vM\leadsto \varnothing$ from \cref{sec:recover-triad-emb} and \cref{ex:recover-triad-emb-double-cat}, we obtain map of towers of squares under \eqref{equ:embedding-fake-squares-triad-situation}
\begin{equation}\label{equ:triad-structure-towers}
\begin{tikzcd}[column sep=0.3cm, row sep=0cm]
	\partial^{vh}M\arrow[dd]\arrow[dr]\arrow[rr]&&\partial^{vh}_{\le \bullet}M\arrow[dd]\arrow[dr]\arrow[rr,"\simeq"]&&\partial^{vh}_{\le \bullet,p}M\arrow[dd]\arrow[dr]\\
	&\partial^{h}M\arrow[dd]\arrow[rr,crossing over]&&\partial^{h}_{\le \bullet}M\arrow[dd]\arrow[rr,crossing over,"\simeq",near start]&&\partial^{h}_{\le \bullet,p}M\arrow[dd]\\
	\partial^vM\arrow[dr]\arrow[rr,"\simeq",near end]&&\partial^v_{\le \bullet}M\arrow[dr]\arrow[rr,"\simeq",near end]&&\partial^v_{\le \bullet,p}M\arrow[dr]\\
	&M\arrow[rr,"\simeq",near end]&&_{\le \bullet}M\arrow[rr,"\simeq"]&&_{\le \bullet,p}M,
\end{tikzcd}
\end{equation}
acted upon by 
$\Emb_{\partial^v}(M)^\times\ra T_\bullet\Emb_{\partial^v}(M)^\times\ra T_\bullet\Emb^p_{\partial^v}(M)^\times$, compatibly with the a priori action of $T_\bullet\Emb^p_{\partial^v}(M)^\times$ on $\partial^vM\ra M$ induced by the composition of symmetric monoidal double categories $\smash{\ModInf^\un_\bullet(E_{d,\le \bullet}^p)\ra \Cosp(\cS_{/\BAut_{\le \bullet}(E_{d})})\ra \Cosp(\cS)}$. The arrows labeled by $\simeq$ are equivalences as a result of Lemmas \ref{lem:embcalc-and-particle-bdy-agree}, and \ref{lem:convergent-boundary} \ref{item:convergence-lem-i} in the cases $W=\partial^v M$ or $W=M$. Moreover, by \cref{lem:convergent-boundary} \ref{item:convergence-bdy-better}, if the hypothesis of \cref{thm:fake-boundary-2type} is satisfied, the two unlabeled horizontal maps are for $\bullet=\infty$ also equivalences, so we obtain the asserted filler in \cref{thm:fake-boundary-2type}.

\subsubsection{Truncated variant of \cref{thm:fake-boundary-2type}} \label{sec:bdy-truncated-variant} One might think that \cref{thm:fake-boundary-2type} ought to, under suitable hypothesis, be extendable to a factorisation of maps \emph{of towers} when replacing $T_\infty\Emb_{\partial^v}(M)^\times$,  $T_\infty\Emb^p_{\partial^v}(M)^\times$, and $\Aut_{\partial^v}(M,\partial^h)$ by towers $T_\bullet\Emb_{\partial^v}(M)^\times$,  $T_\bullet\Emb^p_{\partial^v}(M)^\times$, and $\Aut_{\partial^v}(M,\partial_{\le \bullet}^h)$. Note, however, that the final tower $\Aut_{\partial^v}(M,\partial_{\le \bullet}^h)$ is not even defined: the existence of the tower of squares given as the middle diagonal face of \eqref{equ:triad-structure-towers} does not yield a tower on automorphism spaces. However, there is the following workaround to this issue: Given $k\in \bfN\cup\{\infty\}$, we may consider the endofunctor $\rho_k\colon \cS^{[1]}\ra \cS^{[1]}$ that sends a map $X\ra Y$ to the first map $X\ra \rho_{k}X$ in the Moore--Postnikov $k$-factorisation of $X\ra Y$ (the unique factorisation into a $k$-connected map followed by a $k$-coconnected map). This defines a localisation of the category $\cS^{[1]}$  \cite[5.2.8.16,5.2.8.19]{LurieHTT}, so we obtain a localisation $\cS^{[1]\times [1]}=\Fun([1],\cS^{[1]})\ra \Fun([1],\cS^{[1]})=\cS^{[1]\times [1]}$ of the category of squares $\cS^{[1]\times [1]}$, given by postcomposition with $\rho_k$. This satisfies $\rho_n\circ \rho_k\simeq \rho_{\min(n,k)}$. In particular, for any monotonically increasing function $c\colon \bfN\cup\{\infty\}\ra \bfN\cup\{\infty\}$, we obtain a tower of $E_1$-groups $\Aut_{\partial^v}(M,\partial^h)\ra \Aut_{\partial^v}(M,\rho_{\alpha(\bullet)}\partial^h)$ under $\Aut_{\partial^v}(M,\partial^h)$ whose $k$th stage is the space of automorphisms of the leftmost diagonal square in \eqref{equ:triad-structure-towers} when applying $\rho_k$ to the two vertical maps, considered as an object in $\smash{{\cS^{[1]}}_{/(\rho_k\partial^{vh}M)\ra \partial^vM)}}$. Using this, a minor extension of the proof of \cref{thm:fake-boundary-2type} in which the role of \cref{lem:convergent-boundary} \ref{item:convergence-bdy-better} is replaced by \cref{lem:convergent-boundary} \ref{item:convergence-bdy} shows:

\begin{thm}\label{thm:fake-boundary-truncated}Let $(M,\partial^vM,\partial^hM)$ be a compact smoothable $d$-dimensional manifold triad with $d\ge5$ and  $c\colon \bfN\cup\{\infty\}\ra \bfN\cup\{\infty\}$ monotonically increasing function. If
\begin{enumerate}
	\item $c(k)\le k-d+2$ for all $k$ and
	\item the inclusions  $\partial^h M\subset M$ and $\partial^{vh} M\subset \partial^vM$ are  $2$-connected,
\end{enumerate}
then there exists a dashed filler in the diagram
\[\begin{tikzcd}[column sep=0.3cm,row sep=0.6cm]
	\Homeo_{\partial^v}(M)\dar\rar&\Emb_{\partial^v}(M)^\times\rar&T_\bullet\Emb_{\partial^v}(M)^\times\rar&T_\bullet\Emb^{p}_{\partial^v}(M)^\times\dar\arrow[dlll,dashed]\\
	\Aut_{\partial^v}(M,\rho_{c(\bullet)}\partial^h)\arrow[rrr]&&&\Aut_{\partial^v}(M).
\end{tikzcd}
\]
of towers of $E_1$-groups. 
\end{thm}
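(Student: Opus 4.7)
The plan is to repeat the proof of \cref{thm:fake-boundary-2type} stage-by-stage along the embedding calculus tower, using the Moore--Postnikov $c(k)$-localization to absorb the finite defect in connectivity of the stage-$k$ approximations. The key general fact I will invoke is that, for any fixed $Y$, the localization $\rho_m\colon \cS^{[1]} \to \cS^{[1]}$ inverts $m$-connected maps over $Y$: given $g\colon X \to X'$ over $Y$ with $g$ being $m$-connected, the composite $X \to X' \to \rho_m X'$ is $m$-connected and $\rho_m X' \to Y$ is $m$-coconnected, so by uniqueness of the Moore--Postnikov $m$-factorization of $X \to Y$ one obtains $\rho_m X \simeq \rho_m X'$ over $Y$.

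I would start with the tower of squares \eqref{equ:triad-structure-towers} together with its action by $\Emb_{\partial^v}(M)^\times \to T_\bullet \Emb_{\partial^v}(M)^\times \to T_\bullet \Emb^p_{\partial^v}(M)^\times$, exactly as in the proof of \cref{thm:fake-boundary-2type}. By \cref{lem:convergent-boundary}\ref{item:convergence-lem-i}, all horizontal maps in the bottom rows of \eqref{equ:triad-structure-towers} are equivalences at every stage $k$. By \cref{lem:convergent-boundary}\ref{item:convergence-bdy} applied separately to $W = M$ and $W = \partial^v M$ (whose $2$-connectivity hypotheses are supplied by the assumption of the present theorem), the horizontal comparison maps $\partial^h M \to \partial^h_{\le k} M$ over $M$ and $\partial^{vh} M \to \partial^{vh}_{\le k} M$ over $\partial^v M$ are $(k{-}d{+}3)$-connected. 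Since $c(k) \le k{-}d{+}2$, both comparison maps are in particular $c(k)$-connected, so by the general fact above $\rho_{c(k)}$ applied to the original vertical maps $(\partial^{vh} M \to \partial^v M)$ and $(\partial^h M \to M)$ is canonically equivalent to $\rho_{c(k)}$ applied to the stage-$k$ vertical maps $(\partial^{vh}_{\le k} M \to \partial^v M)$ and $(\partial^h_{\le k} M \to M)$ (using \cref{lem:convergent-boundary}\ref{item:convergence-lem-i} to identify $\partial^v_{\le k}M \simeq \partial^v M$ and ${}_{\le k}M\simeq M$).

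Transporting the $T_k \Emb^p_{\partial^v}(M)^\times$-action across this equivalence, and observing that it fixes the source $\rho_{c(k)} \partial^{vh} M \to \partial^v M$ as in \cref{ex:recover-triad-emb-double-cat}, produces the stagewise map $T_k \Emb^p_{\partial^v}(M)^\times \to \Aut_{\partial^v}(M, \rho_{c(k)} \partial^h)$; its compatibility with $\Homeo_{\partial^v}(M) \to \Aut_{\partial^v}(M, \rho_{c(k)} \partial^h)$ is inherited from the corresponding compatibility for the untruncated squares. The main obstacle is the bookkeeping required to assemble these stagewise maps into a morphism of towers of $E_1$-groups: one must verify that the equivalences between $\rho_{c(k)}$ of the original square and $\rho_{c(k)}$ of the stage-$k$ square are natural in $k$ along the connecting maps $\rho_{c(k+1)} \to \rho_{c(k)}$ (which are available because $c$ is monotonically increasing), and that this naturality is compatible with the $E_1$-group actions. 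This follows from the functoriality of $\rho_{c(k)}$ as a localization of $\cS^{[1]}$ and from the naturality of the tower \eqref{equ:triad-structure-towers} itself, but spelling it out is where essentially all of the work in the ``minor extension'' beyond \cref{thm:fake-boundary-2type} is concentrated.
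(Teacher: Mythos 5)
Your proposal is correct and follows essentially the same route the paper intends (and describes only tersely, as a ``minor extension'' of the proof of \cref{thm:fake-boundary-2type} with \cref{lem:convergent-boundary} \ref{item:convergence-bdy-better} replaced by \ref{item:convergence-bdy}): the connectivity estimate from \cref{lem:convergent-boundary} \ref{item:convergence-bdy} applied to $W=M$ and to the bordism $\partial^v M \times I$, combined with your correctly-stated observation that $\rho_{c(k)}$ inverts $c(k)$-connected maps over a fixed base, is exactly what supplies the stagewise identifications, and the assembly into a tower map rests on the functoriality of the localisations $\rho_{c(\bullet)}$ together with the monotonicity of $c$, just as you say. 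The only cosmetic slip is writing ``$W=\partial^v M$'' rather than the trivial bordism $\partial^v M\times I$ (which is what \cref{ex:recover-triad-emb-double-cat} uses), but the paper itself commits the same abuse in the proof of \cref{thm:fake-boundary-2type}, so this is not a genuine issue.
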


\subsection{Digression: $T_\infty$-boundaries}\label{sec:digression} We conclude this section with some remarks on the embedding calculus approximation $\partial^hM\ra \partial^h_{\le \infty}M$ to the ``moving'' part of the boundary of a compact $d$-dimensional manifold triad. \cref{lem:convergent-boundary} \ref{item:convergence-bdy-better} implies that this map is an equivalence if $M$ is smoothable, $d\ge5$, and the inclusion $\partial^hM\subset M$ is a tangential $2$-type equivalence, but it would be interesting to determine $\smash{\partial^h_{\le \infty}M}$ when these assumptions are not satisfied. Here are some observations regarding this, for simplicity phrased in the case $\smash{\partial^hM=\partial M}$ where we write $\partial^h_{\le \infty}M\coloneq \partial_{\le \infty}M$:
\begin{enumerate}[leftmargin=0.6cm]
	\item For $d \leq 2$, we have $\partial M\simeq \partial_{\leq \infty} M$ as a consequence of the main result of \cite{KKSurfaces}.
	\item Comparing the definition of $\partial M \simeq \partial_E M$ with that of $\partial_{\leq \infty} M$ and using that $T_\infty\Emb(-,M)$ satisfies descent for complete Weiss $\infty$-covers \cite[Lemma 5.9]{KKoperadic}, it follows that the map $\partial M\ra \partial_{\leq \infty} M$ is equivalent to the map
	\begin{equation}\label{equ:explicit-fake-boundary}
		\textstyle{\partial M\lra \lim_{D\subset \interior(M)}M\backslash D}
	\end{equation}
	induced by inclusion, where the (homotopy) limit runs is taken over the opposite of the poset of submanifolds $D\subset \interior(M)$ homeomorphic to $\sqcup^k\bfR^d$ for some $k\ge0$, ordered by inclusion. 

	\item\label{enum:homology-spheres} There are examples for which \eqref{equ:explicit-fake-boundary} \emph{is not} an equivalence: Fix a homology $(d-1)$-sphere $\Sigma$ with nontrivial fundamental group and $d \geq 3$ which bounds a contractible manifold $W_\Sigma$ (e.g.~$\Sigma$ could be the Poincar\'e homology sphere) and consider the commutative square
	\[\begin{tikzcd}[column sep=0.4cm, row sep=0.4cm]
		\Sigma\dar \rar&\lim_{D\subset \interior(W_\Sigma)}W_\Sigma\backslash D\dar\\
		\Sigma^+\rar&\lim_{D\subset \interior(W_\Sigma)}(W_\Sigma\backslash D)^+
	\end{tikzcd}\]
	whose vertical arrows are induced by taking Quillen's plus-construction. Since $d\ge3$, the inclusion $W_\Sigma\backslash D\ra W_\Sigma$ is $2$-connected by transversality, so $W_\Sigma\backslash D$ is simply connected which implies that the right vertical map is an equivalence. In particular, as $\Sigma^+\simeq S^{d-1}$, the upper horizontal map is trivial on fundamental groups, so it cannot be an equivalence.
	
	\item\label{enum:closed-manifolds} There are also examples for which \eqref{equ:explicit-fake-boundary} \emph{is} an equivalence but that are not covered by \cref{lem:convergent-boundary} \ref{item:convergence-bdy-better}, for instance any nonempty closed manifold $M$. To see this, it suffices to prove that $\partial_{\le \infty}M$ is empty if $M$ is closed. Assume for a contradiction that it is not, so there is an element of $T_\infty \Emb(M\sqcup \bfR^d,M)$ that hits the identity component of $T_\infty \Emb(M,M)$. By isotopy extension for embedding calculus \cite[Section 5.4.2]{KKoperadic}, we have a fibre sequence $T_\infty \Emb(M,M \backslash \bfR^d)\ra T_\infty \Emb(M\sqcup\bfR^d,M)\ra T_\infty \Emb(\bfR^d,M)$, so we conclude that there is an element of $T_\infty \Emb(M,M \backslash \bfR^d)$ that maps to the identity component of $T_\infty \Emb(M,M)$. By considering the underlying maps of spaces, this shows that the identity on $M$ factors up to homotopy over $M\backslash \bfR^d\subset M$. This is impossible if $M$ is closed (consider the top $\bfF_2$-homology).

	\item The map $\partial M \to \partial_{\le \infty} M$ is a retract after applying double suspension $\Sigma^2_+(-)$ if $M$ has no closed components and $d \geq 3$. The argument for this uses the \emph{double $M$-suspension} of a map $X\ra M$, defined as the pushout $\Sigma^2_M X\coloneq M\times S^1\cup_{X\times S^1}X\times D^2$. For the inclusion $Y\subset M$ of a reasonable subspace of the interior (e.g.\,a submanifold inclusion), the map $\Sigma^2_M (M\backslash Y)\ra (M \times D^2)\backslash(Y \times \{0\})$ is an equivalence. Using this, one checks that the composition $\Sigma^2_M \partial M \ra \Sigma^2_M \lim_{D \subset \interior(M)} M \backslash D \ra \lim_{D \subset \interior(M)} \Sigma^2_M(M \backslash D)$ agrees with the map $\partial(M \times D^2) \to \partial^h_{\le \infty}(M \times D^2)$, which is an equivalence as a result of \cref{lem:convergent-boundary} since $\partial(M\times D^2)\subset M\times D^2$ is $2$-connected. This implies that $\Sigma^2_M \partial M \to \Sigma^2_M \partial_{\le \infty} M$ admits a cosection compatible with the inclusions of $M \times S^1$. Collapsing  $M \times S^1$ then gives the statement. We suspect that $\partial M \to \partial_{\le \infty} M$ is an equivalence after applying $\Sigma^2_+(-)$, not just a retract, and it also seems possible that suspending once suffices.
\end{enumerate}

\begin{rem}[Boundaries and complements of $\DiscInf$-presheaves]\label{rem:presheaf-boundaries}
The construction of the embedding calculus approximation $\smash{\partial^hM\ra \partial^h_{\le k}M}$ to the moving part of the manifold can be generalised in various directions. We briefly outline two of them, for simplicity in the case $\smash{\partial^hM=\partial M}$ and $k=\infty$:
\begin{enumerate}[leftmargin=*]
	\item Given a presheaf $X\in \PSh(\DiscInf^t_{d})$, its \emph{underlying homotopy type} is the orbit space $X(\bfR^d)_{\Emb(\bfR^d)^\times}$. With this terminology, $\partial_{\le \infty}M$ arises as the underlying homotopy type of a presheaf $\partial E_{M}$, namely $\DiscInf_{d}^t\ni D\mapsto \fib_{\id}\big(T_\infty\Emb(M\sqcup D,M) \to T_\infty \Emb(M,M)\big)$. The latter arises as a fibre
	\[
		\partial E_{M}\simeq \fib_\id\big(\MapInt(E_M,E_M)\ra \Map(E_M,E_M))\in \PSh(\DiscInf^t_{d})
	\] 
	where $\MapInt(-,-)$ is the internal mapping object in $\smash{\PSh(\DiscInf^t_{d})}$ and the map to the constant presheaf on the mapping space  in $\smash{\PSh(\DiscInf^t_{d})}$ from $E_M$ to itself  is induced by the fact that $\varnothing\in \DiscInf^t_{d}$ is initial. The latter formula also makes sense when $E_{M}$ is replaced by an arbitrary presheaf $X$ on $\DiscInf^t_{d}$, not necessarily induced by a manifold. The maps $\Emb(-,-)\ra T_\infty\Emb(-,-)$ induce to a map of presheaves $E_{\partial M \times I}\ra \partial E_{M}$ which yields the map $\smash{\partial M\ra \partial_{\le \infty}M}$ considered above on underlying homotopy types. Moreover, the internal composition in $\MapInt(-,-)$ equips $\partial E_{M}$ (more generally: $\partial X$) with an associative algebra structure such that $E_{\partial M \times I}\ra \partial E_{M}$ is an algebra map through which the $E_{\partial M \times I}$-module structure on $E_M$ given by ``stacking'' factors (more generally: any presheaf $X$ is a module over $\partial X$). Much of the discussion in this chapter on $\partial_{\le \infty} M$ can be generalised to the level of presheaves, e.g.\,the conditions we discussed under which $\partial M\ra \partial_{\le \infty}M$ is an equivalence in fact imply that the map of presheaves $E_{\partial M \times I}\ra \partial E_{M}$ is an equivalence. One can also define particle embedding calculus versions of this ``presheaf boundary''. There are further variants of the construction, e.g.\,when part of the boundary is fixed.
	\item More generally, given a map of $\DiscInf_d^t$-presheaves $\varphi\colon X\ra Y$, one can define a ``complement'' $\smash{Y\backslash^\varphi X\coloneq \fib_{\varphi}(\MapInt(X,Y)\ra \Map(X,Y))\in \PSh(\DiscInf_d^t)}$ which generalises $\partial X$ in that $\partial X\simeq X\backslash^{\id}X$. This complement admits a module structure over $\partial X$, and is the analogue on the level of presheaves of taking the complement of a codimension $0$-embedding of manifolds. In fact, given a codimension $0$ embedding $e\colon M\hookrightarrow \interior(N)$ between compact manifolds, there is a map of presheaves $E_{M\backslash e(\interior(N))}\ra E_{N}\backslash^{E_e}E_{M}$ which is a module map with respect to the algebra map $E_{\partial M\times I}\ra \partial E_M$ and an equivalence in favourable circumstances.
\end{enumerate} 
These ``boundaries'' and ``complements'' of presheaves, as well as extensions of these constructions, allow one to mimic various constructions for manifolds on the level of presheaves. We expect this to find further applications in future work.
\end{rem}

\section{A pullback decomposition of spaces of self-embeddings} 
This section serves to prove a version of the pullback decomposition as in the introduction for certain spaces of self-embeddings instead of homeomorphisms. \cref{bigthm:pullback} will be a special case.

\subsection{The pullback decomposition}For a compact $d$-dimensional topological manifold triad $(M,\partial^vM,\partial^hM)$, we saw in \eqref{equ:initial-forgetful-square-bdy} that the forgetful map $\Homeo_{\partial^v}(M)\ra \Aut_{\partial^v}(M,\partial^h)$ factors through the group-like components $\Emb_{\partial^v}(M)^\times $ of the space of self-embeddings of $M$ that fix a neighbourhood of $\partial^vM$. By taking derivatives, the latter maps to the $E_1$-group of self-equivalences $\Aut_{\partial^v}(TM)$ of the topological tangent bundle that fix $TM|_{\partial^v}$. More precisely, the latter is defined as the automorphisms of a tangent classifier $M\ra\BTop(d)$ in the under-/overcategory $(\cS_{/\BTop(d)})_{\partial^vM/}$. This induces a factorisation 
\begin{equation}\label{equ:emb-derivative}
	\Emb_{\partial^v}(M)^\times\lra \Aut_{\partial^v}(TM,\partial^h)\lra \Aut_{\partial^v}(M,\partial^h)
\end{equation}
through the pullback $\smash{\Aut_{\partial^v}(TM,\partial^h)\coloneq  \Aut_{\partial^v}(TM)\times_{\Aut_{\partial^h}(M)}\Aut_{\partial^v}(M,\partial^h)}$ which can be thought as the space of self-equivalences of $TM$ as an Euclidean bundle that fix $TM|_{\partial^vM}$ pointwise and $TM|_{\partial^hM}$ setwise. The general version of \cref{bigthm:pullback} will show that the delooping of this sequence can often completed to a pullback square once we add appropriate tangential structures and rationalise. We will now make these modifications precise.

\subsubsection{Tangential structures and rationalisations}\label{sec:tangential-structures}
A tangential structure is a map $\theta\colon B\ra \BTop(d)$, and a $\theta$-structure $\ell^v$ on $TM|_{\partial^vM}$ is a lift of the restriction $TM|_{\partial^vM}\colon \partial^vM\ra \BTop(d)$ along $\theta$. Fixing such $B$ and $\ell^v$, the first two $E_1$-groups in \eqref{equ:emb-derivative} act on the space of $\theta$-structures on $TM$ that agree with $\ell^v$ on $TM|_{\partial^v M}$, that is, the space of lifts $\ell \colon M\ra B$ of $TM\colon M\ra \BTop(d)$ along $\theta$ that extend $\ell^v \colon TM|_{\partial^vM} \to B$. Taking orbits, we obtain a sequence of the form
\begin{equation}\label{equ:emb-derivative-tangential}
	\BEmb^\theta_{\partial^v}(M;\ell^v)^\times\lra \BAut^\theta_{\partial^v}(TM,\partial^h;\ell^v)\lra \BAut_{\partial^v}(M,\partial^h),
\end{equation} 
which will now further modify by replacing the second map by the initial map in its Moore--Postnikov $1$-factorisation
$\smash{
	\BAut_{\partial^v}^\theta(TM,\partial^h;\ell^v)\ra \BAut_{\partial^v}(M,\partial^h)^{\ell^v}\ra \BAut_{\partial^v}(M,\partial^h).
}$
This modification is mild in that the homotopy type of $\BAut_{\partial^v}(M,\partial^h)^{\ell^v}$ is given by \[\textstyle{\BAut_{\partial^v}(M,\partial^h)^{\ell^v}\simeq \bigsqcup_{[\ell]\in \theta\text{-}\st}\BAut_{\partial^v}^{\ell}(M,\partial^h),}\]
where $\theta\text{-}\st$ is the set of components of $\smash{\BAut_{\partial^v}^\theta(TM,\partial^h;\ell^v)}$, which is the set of orbits of the action of $\pi_0(\Aut_{\partial^v}(TM,\partial^h))$ on the path components of the space of $\theta$-structures that agree with $\ell^v$ on $TM|_{\partial^v M}$, and $\smash{\BAut_{\partial^v}^{\ell}(M,\partial^h)}$ is the covering space of $\BAut_{\partial^v}(M,\partial^h)$ corresponding to the image on fundamental groups based at $\ell$ of the map $\smash{\BAut_{\partial^v}^\theta(TM,\partial^h;\ell^v)\ra \BAut_{\partial^v}(M,\partial^h)}$. As a final modification, we replace $\BEmb^\theta_{\partial^v}(M;\ell^v)^\times$ and  $\BAut_{\partial^v}^\theta(TM,\partial^h;\ell^v)$  by their fibrewise rationalisation over $\BAut_{\partial^v}(M,\partial^h)^{\ell^v}$ in the sense of \cref{sec:fibrewise-rat} (indicated by a $(-)_{\fQ}$-subscript), arriving at a variant of the sequence \eqref{equ:emb-derivative-tangential} that has the form
\begin{equation}\label{equ:extandable-composition}
	\BEmb^\theta_{\partial^v}(M;\ell^v)^\times_{\fQ}\lra \BAut^\theta_{\partial^v}(TM,\partial^h;\ell^v)_{\fQ}\lra \BAut_{\partial^v}(M,\partial^h)^{\ell^v}.
\end{equation}

\subsubsection{Statement of the pullback decomposition}\label{sec:statement-pullback}With these explanations out of the way, we can state the main result of this section. We say that a sequence $X\ra Y\ra Z$ of spaces \emph{can be completed to a pullback square} if there exists a space $W$ and a pullback square in $\cS$
\[\begin{tikzcd}[column sep=0.4cm, row sep=0.4cm]
	X\rar\dar&W\dar\\
	Y\rar &Z
\end{tikzcd}\]
which involves the two given maps. An \emph{oriented tangential structure} is a map of the form $\theta\colon B\ra \BSTop(d)$. Postcomposition with $\BSTop(d)\ra \BTop(d)$ gives a  tangential structure in above sense.

\begin{thm}\label{thm:pullback-decomp}Let $(M,\partial^v M,\partial^h M)$ a smoothable compact $d$-manifold triad with $d\ge5$, $\theta\colon B \ra \BSTop(d)$ an oriented tangential structure with $B$ a nilpotent space, and $\ell^v$ a $\theta$-structure on $TM|_{\partial^v}$. If
\begin{enumerate}
	\item \label{enum:pullback-decomp-i} $\theta$ factors through the stabilisation map $\BSTop(d-2)\ra \BSTop(d)$ after rationalisation,
	\item \label{enum:pullback-decomp-ii} the inclusions $\partial^h M\subset M$ and $\partial^{vh} M \subset \partial^v M$ are both $2$-connected, where $\partial^{vh} M=\partial^vM\cap \partial^hM$,
\end{enumerate}
then the sequence of spaces \eqref{equ:extandable-composition} can be completed to a pullback square.
\end{thm}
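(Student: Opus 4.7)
The strategy is to combine the topological particle embedding calculus tower, the recovery of the horizontal boundary from self-embeddings of \cref{thm:fake-boundary-2type}, and the nullhomotopy \cref{bigthm:nullhomotopy}, in order to exhibit a space $W$ fibred over $\BAut_{\partial^v}(M,\partial^h)^{\ell^v}$ such that $\BEmb^\theta_{\partial^v}(M;\ell^v)^\times_{\fQ}$ is the pullback of $W$ and $\BAut^\theta_{\partial^v}(TM,\partial^h;\ell^v)_{\fQ}$. Informally, $W$ will be the fibrewise rationalisation of the limit of the particle embedding calculus approximations to $\BEmb_{\partial^v}(M)^\times$ (i.e.\,without $\theta$-structure), viewed under $\BAut_{\partial^v}(M,\partial^h)^{\ell^v}$ via the dashed arrow of \cref{thm:fake-boundary-2type}.

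First I would use a $\theta$-structured variant of \cref{thm:fake-boundary-2type} to factor the forgetful map $\BEmb^\theta_{\partial^v}(M;\ell^v)^\times \to \BAut_{\partial^v}(M,\partial^h)^{\ell^v}$ through the particle embedding calculus approximation $B T_\infty \Emb^{p,\theta}_{\partial^v}(M;\ell^v)^\times$. By convergence of topological embedding calculus (\cref{lem:convergent-boundary}), which applies under the $2$-connectedness assumption \ref{enum:pullback-decomp-ii}, the map $\BEmb^\theta \to B T_\infty \Emb^\theta$ is an equivalence. The $1$-truncated stage of the particle tower identifies with $\BAut^\theta_{\partial^v}(TM,\partial^h;\ell^v)$, which supplies the bottom-left corner of the putative square, so the problem reduces to analysing the rationalised tower in stages $k \geq 2$. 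The $k$th layer is described by the Göppl--Weiss fibre sequence \eqref{eqref:fibre-sequence-layers} as a $\Sigma_k$-equivariant mapping space into the rationalised operations of $E_d$ relative to their latching and matching objects, with the $\theta$-structure acting through the composite $B \to \BSTop(d) \to \BAut(E_d)$.

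At each stage $k \geq 2$, hypothesis \ref{enum:pullback-decomp-i} together with the rational lift \eqref{equ:filler-rational-bv} of the additivity square \eqref{equ:additivity-direct-product} factors the rational tangential action through the double stabilisation $\BSAut(E_{d-2,\bfQ}) \to \BSAut(E_{d,\bfQ})$, which is nullhomotopic by \cref{bigthm:nullhomotopy}. This should produce a canonical splitting of the rationalised $k$th layer of the $\theta$-structured tower, exhibiting it as the pullback of the corresponding layer without $\theta$-structure and $\BAut^\theta_{\partial^v}(TM,\partial^h;\ell^v)_{\fQ}$ over $\BAut_{\partial^v}(M,\partial^h)^{\ell^v}$. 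Taking limits over $k$ and applying fibrewise rationalisation---which preserves pullbacks by \cref{lem:Q-completion-pullback}, whose nilpotency hypothesis is supplied by the nilpotency of $B$ together with the $2$-connectedness of the boundary inclusions---then assembles the stagewise pullbacks into the asserted global pullback decomposition.

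The hardest step will be the stagewise splitting: one must upgrade the pointwise nullhomotopy \cref{bigthm:nullhomotopy} to a \emph{fibrewise} nullhomotopy over the base $\BAut_{\partial^v}(M,\partial^h)^{\ell^v}$ while keeping track of $\Sigma_k$-equivariance throughout Göppl--Weiss's description and the compatibility of fibrewise rationalisation with the tangential structure. Hypothesis \ref{enum:pullback-decomp-i} only provides a rational, rather than geometric, factorisation of $\theta$ through $\BSTop(d-2)$, so additivity has to be applied at the level of classifying maps into $\BAut(E_{d,\bfQ})$ rather than on actual Euclidean bundles---which is precisely what the filler \eqref{equ:filler-rational-bv} was constructed to enable.
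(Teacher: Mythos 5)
Your high-level ingredients are all correct — the factorisation through particle embedding calculus via a $\theta$-structured variant of \cref{thm:fake-boundary-2type}, fibrewise rationalisation plus \cref{lem:Q-completion-pullback}, and the nullhomotopy \cref{bigthm:nullhomotopy} applied via the filler \eqref{equ:filler-rational-bv}. But the route you propose diverges from the paper's in a way that leaves a genuine gap.

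You propose to attack the particle tower \emph{layer by layer}, using the Göppl--Weiss fibre sequences \eqref{eqref:fibre-sequence-layers} and splitting each rationalised $k$th layer as a pullback. You correctly flag the resulting obstacle — upgrading the pointwise nullhomotopy of \cref{bigthm:nullhomotopy} to a \emph{fibrewise, $\Sigma_k$-equivariant} nullhomotopy over $\BAut_{\partial^v}(M,\partial^h)^{\ell^v}$ — but you never resolve it, and there is no obvious way to do so layer by layer. The paper sidesteps this entirely: instead of splitting layers, it produces a single factorisation (\cref{prop:factorisation}) of the bottom horizontal map of the fibrewise-rationalised square \eqref{pre-factored-square} through $\BAut_{\partial^v}(M,\partial^h)^{\ell^v}$, by (i) rewriting the two relevant classifying spaces as orbit spaces of mapping spaces $\Map_{\partial^v}(M,B)/\Aut^\ell_{\partial^v}(M,\partial^h)$ and $\Map_{\partial^v}(M,\BAut^{\id}_{\le\bullet}(E_d))/\Aut^\ell_{\partial^v}(M,\partial^h)$ as in \eqref{equ:mapping-space-desc}, and (ii) observing that the map between them is induced by \emph{post}composition with $B\to\BAut^{\id}_{\le\bullet}(E_d)$, which commutes with the \emph{pre}composition action of $\Aut^\ell_{\partial^v}(M,\partial^h)$ for free. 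The equivariance you worry about is thus automatic, and the nullhomotopy of \cref{bigthm:nullhomotopy} enters only once, at the level of the target $\BAut^{\id}_{\le\bullet}(E_{d,\bfQ})$. Given the factorisation, the pullback decomposition drops out by pasting.

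Two further technical points you gloss over. First, the $1$-truncation of the particle tower is $\Aut_{\partial^v}(TM)$, not $\Aut_{\partial^v}(TM,\partial^h)$; to impose the $\partial^h$-boundary condition one must define the modified tower $T_\bullet\Emb_{\partial^v}(M,\partial^h)^\times$ via the pullback \eqref{equ:defining-pullback-bdy-tk} and use the \emph{truncated} boundary theorem \cref{thm:fake-boundary-truncated} (a map of towers), not \cref{thm:fake-boundary-2type} (a single map), since the latter has no compatible tower structure. Second, fibrewise rationalisation does not commute with the limit over $k$ in general; the paper handles this by first proving the levelwise pullback statement \cref{thm:pullback-decomp-Too} and then invoking \cref{lem:reduction-to-tk}, which shows the connectivity of the comparison maps diverges, so the fibrewise-rationalised limit agrees with the limit of the fibrewise rationalisations. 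Your proposal passes directly to the limit without this reduction.
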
 

We prove \cref{thm:pullback-decomp} in \cref{sec:proof-pullback-emb} below, after explaining why the variant of this result for spaces of homeomorphisms stated in the introduction  as \cref{bigthm:pullback} follows from it.

\begin{proof}[Proof of \cref{bigthm:pullback}] We argue that \cref{bigthm:pullback} is equivalent to \cref{thm:pullback-decomp} applied to the triad $(M,\partial^vM,\partial^hM)=(M,\half\partial M,D^{d-1})$. To see this, note that for $d\ge5$ the inclusions $\partial^h M=D^{d-1}\subset M$ and $\partial^{vh} M=S^{d-2} \subset \half\partial M=\partial^v M$ are $2$-connected if and only if $M$ and $\half\partial M$ are $2$-connected, and that the inclusion $\half\partial M=\partial M\backslash \interior(D^{d-1})\subset \partial M$ is $(d-1)$-connected. Thus, for $d\ge5$, the conditions \ref{enum:pullback-ii} in \cref{bigthm:pullback} and in \cref{thm:pullback-decomp} are equivalent. It remains to prove that the  sequences in the two results, \eqref{equ:introduction-compl-seq} and \eqref{equ:extandable-composition}, are equivalent, which follows by showing that the forgetful maps 
\[
	\Homeo_{\partial}(M)\lra \Emb_{\half\partial}(M) \quad \text{ and } \quad \Aut_\partial(M)\lra \Aut_{\half\partial}(M,D^{d-1})
\]
are equivalences (this is even slightly stronger, since it shows $\Emb_{\half\partial}(M)=\Emb_{\half\partial}(M)^\times$). For the second map, this follows from the contractibility of $\Aut_\partial(D^{d-1})$. For the first map, we argue that all its fibres are contractible. By isotopy extension, the fibre at an embedding $e\in \Emb_{\half\partial}(M)$ is equivalent to $\smash{\Homeo_\partial(D^d,M\backslash e(M\backslash D^{d-1}))}$. Using $\smash{M\cong (M\backslash e(M\backslash D^{d-1}))\cup_{D^{d-1}} e(M)}$ and the Seifert--van Kampen and Mayer--Vietoris theorem, one deduces that $(M\backslash e(M\backslash D^{d-1}))$ is $1$-connected and acyclic, so homeomorphic to $D^d$ by the topological Poincar\'e conjecture. We thus have $\Homeo_\partial(D^d,M\backslash e(M\backslash D^{d-1}))\cong \Homeo_\partial(D^d)\simeq \ast$ by the Alexander trick, and the claim follows.
\end{proof}

\begin{rem}\label{rem:nilpotency-issue} There is a subtlety in the statement of \cref{thm:pullback-decomp} (and thus also in \cref{bigthm:pullback}) having to do with rationalisations. These results involve the fibrewise rationalisations in the sense of \cref{sec:rationalisation} (i.e.\,fibrewise $\bfQ$-completion) of the spaces $\smash{\BEmb^\theta_{\partial^v}(M;\ell^v)^\times}$ and $\smash{\BAut^\theta_{\partial^v}(TM,\partial^h;\ell^v)}$ over $\BAut_{\partial^v}(M,\partial^h)^{\ell^v}$. Fibrewise rationalisations behaves well for $\smash{\BAut^\theta_{\partial^v}(TM,\partial^h;\ell^v)}$ in that it induces isomorphisms on rational (co)homology and rationalised higher homotopy groups, since it is not hard to see that this space is fibrewise nilpotent over $\BAut_{\partial^v}(M,\partial^h)^{\ell^v}$ (see the proof of \cref{prop:factorisation}). For $\smash{\BEmb^\theta_{\partial^v}(M;\ell^v)^\times}$ this is not so clear. This issue can be addressed in two ways:
\begin{enumerate}[leftmargin=*]
	\item\label{enum:future-nilpotency} Extending the strategy of proof of \cite[Theorem C]{KRWAlgebraic}, one can prove the missing fibrewise nilpotency statement of $\smash{\BEmb^\theta_{\partial^v}(M;\ell^v)^\times}$ for all manifolds $M$ as in \cref{thm:pullback-decomp} and \cref{bigthm:pullback}, so there is no issue after all. We intend to revisit this point as  part of future work.
	\item\label{enum:no-nilpotency-needed} For all results in this work, this nilpotency statement is actually not necessary. In particular, the proofs of \cref{thm:pullback-decomp} and \cref{bigthm:pullback} do not rely on it, and neither do the applications such as Theorems~\ref{bigthm:geometric} or \ref{bigthm:detect-classes}. This is because all we use in the proofs (see e.g.\,\cref{rem:subtlety-summand}) is that the $\bfQ$-cohomology of fibrewise rationalisation contains the $\bfQ$-cohomology of the unrationalised space as a summand (see \cref{sec:non-nilpotent-completion}), without any assumption (see also \cref{rem:subtlety-summand}).
\end{enumerate}
\end{rem}

\subsection{The proof of \cref{thm:pullback-decomp}} \label{sec:proof-pullback-emb}
To prove \cref{thm:pullback-decomp}, we first formulate a variant of this result in the context of embedding calculus, then explain why \cref{thm:pullback-decomp} follows from this variant, and finally prove the variant. Throughout the section we fix a triad $(M,\partial^vM,\partial^hM)$ satisfying the assumptions in \cref{thm:pullback-decomp}. To state the variant, we define a version $T_\bullet\Emb_{\partial^v}(M,\partial^h)^\times$ of the embedding calculus tower $T_\bullet\Emb_{\partial^v}(M)^\times$ from \cref{sec:bdy-emb-calc} by the pullback in towers of spaces
\begin{equation}\label{equ:defining-pullback-bdy-tk}
\begin{tikzcd}[column sep=0.4cm, row sep=0.4cm]
	T_\bullet\Emb_{\partial^v}(M,\partial^h)^\times\rar\dar & T_\bullet\Emb_{\partial^v}(M)^\times\dar\\
	\Aut_{\partial^v}(M,\partial^h)\rar & \Aut_{\partial^v}(M,\rho_{c(\bullet)}\partial^h),
\end{tikzcd}
\end{equation}
where the right vertical map is the one from \cref{thm:fake-boundary-truncated} and the bottom horizontal map is induced by truncation. Here and henceforth, we adopt the convention that a space (in this case  $\Aut_{\partial^v}(M,\partial^h)$) is viewed as a constant tower. This definition depends on the choice of a function $c\colon\bfN\ra\bfN$ as in \cref{thm:fake-boundary-truncated} which we fix once and for all. Since $T_1\Emb_{\partial^v}(M)^\times\simeq \Aut_{\partial^v}(TM)$ (see \cite[Section 5]{KKoperadic}), we have $T_1\Emb_{\partial^v}(TM,\partial^h)^\times\simeq  \Aut_{\partial^v}(TM,\partial^h)$. Using this and replacing the role of $\Emb_{\partial^v}(M)^\times$ in the construction of \eqref{equ:extandable-composition} by $T_k\Emb_{\partial^v}(M,\partial^h)^\times$ results in a sequence of towers of spaces
\begin{equation}\label{equ:modified-tk-sequence}
	\oB T_\bullet\Emb^\theta_{\partial^v}(M,\partial^h;\ell^v)_{\fQ}^\times\lra  \BAut^\theta_{\partial^v}(TM,\partial^h;\ell^v)_{\fQ}\lra \BAut_{\partial^v}(M,\partial^h)^{\ell^v}.
\end{equation}
The $E_1$-map $\Emb_{\partial_0}(M)^\times\ra \Aut_{\partial^v}(M,\partial^h)$ from \eqref{equ:initial-forgetful-square-bdy} and the tower of $E_1$-maps $\Emb_{\partial^v}(M)^\times\ra T_\bullet\Emb_{\partial^v}(M)^\times$ combine to the tower of $E_1$-map $\Emb_{\partial_0}(M)^\times\ra T_\bullet\Emb_{\partial^v}(M,\partial^h)^\times$ which induces a map of towers of spaces
\begin{equation}\label{equ:map-to-limit}
	\BEmb^\theta_{\partial^v}(M;\ell^v)_{\fQ}^\times\lra \oB T_\bullet\Emb^\theta_{\partial^v}(M,\partial^h;\ell^v)_{\fQ}^\times.
\end{equation} 
This map turns out to become an equivalence after taking limits:

\begin{lem}\label{lem:reduction-to-tk}The connectivity of the maps in the map of tower \eqref{equ:map-to-limit} diverges with $\bullet$. In particular, 
\[
	\textstyle{\BEmb^\theta_{\partial^v}(M;\ell^v)_{\fQ}^\times{\simeq} \lim_k\big(\oB T_k\Emb^\theta_{\partial^v}(M,\partial^h;\ell^v)_{\fQ}^\times\big)}.
\]
\end{lem}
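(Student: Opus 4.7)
The plan is to strip off the tangential structures and rationalisation, reduce the claim to divergence of the connectivity of $\Emb_{\partial^v}(M)^\times \to T_k\Emb_{\partial^v}(M,\partial^h)^\times$, and then combine convergence of topological embedding calculus from \cite{KKoperadic} with Moore--Postnikov convergence of $\partial^h\to M$.

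First I would observe that both taking homotopy orbits with respect to the action on $\theta$-structures extending $\ell^v$ and fibrewise rationalising over $\BAut_{\partial^v}(M,\partial^h)^{\ell^v}$ preserve the connectivity of equivariant maps (the latter by \cref{sec:rat-inheritance} applied fibrewise), so after looping once it suffices to show that $\Emb_{\partial^v}(M)^\times \to T_k\Emb_{\partial^v}(M,\partial^h)^\times$ has connectivity diverging with $k$. Writing $A\coloneq\Aut_{\partial^v}(M,\partial^h)$ and $A_k\coloneq\Aut_{\partial^v}(M,\rho_{c(k)}\partial^h)$, both sides sit over $A$, and using the pullback \eqref{equ:defining-pullback-bdy-tk} the fibre of the target at $\id_A$ agrees with the fibre of $T_k\Emb_{\partial^v}(M)^\times\to A_k$ at $\id_{A_k}$. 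The induced map on fibres over $A$ then factors as
\[
	\fib_{\id_A}\bigl(\Emb_{\partial^v}(M)^\times\bigr) \longhookrightarrow \fib_{\id_{A_k}}\bigl(\Emb_{\partial^v}(M)^\times\bigr) \lra \fib_{\id_{A_k}}\bigl(T_k\Emb_{\partial^v}(M)^\times\bigr).
\]
The second arrow is pulled back over $A_k$ from the convergence map $\Emb_{\partial^v}(M)^\times\to T_k\Emb_{\partial^v}(M)^\times$, which is $(k{-}d{+}4)$-connected by Theorem 6.3 of \cite{KKoperadic} as used in the proof of \cref{lem:convergent-boundary}. The first arrow fits into a fibre sequence with base $\fib(A\to A_k)$, so its connectivity equals that of $\fib(A\to A_k)$, i.e.~one less than the connectivity $N_k$ of $A\to A_k$.

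Second, I would show $N_k\to\infty$. As $c(k)\to\infty$ and $\partial^h\to\rho_{c(k)}\partial^h$ is $c(k)$-connected with $\partial^h\simeq\lim_k\rho_{c(k)}\partial^h$, applying $\Aut_{\partial^v}(M,-)$ to the Moore--Postnikov tower of $\partial^h\to M$ yields a tower of $E_1$-groups with limit $A$ and $k$-th term $A_k$. A Bousfield--Kan spectral sequence argument as in \cref{ex:BKSS-automorphism-space} applied to this tower then gives $N_k\to\infty$: the relative stages of the Postnikov tower of $\partial^h\to M$ involve cells of dimension $>c(k)$, so the obstruction groups that govern failure of $A\to A_k$ to be highly connected vanish in a range growing with $c(k)$. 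Combining with the first step, the map in the claim is at least $\min(N_k{-}1,\,k{-}d{+}4)$-connected, giving the stated divergence. The ``in particular'' then follows from standard Milnor convergence: for each fixed $i$ the tower on $\pi_i$ is eventually constant, hence Mittag--Leffler, and the $\lim^1$-sequence gives the identification.

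The main obstacle is the second step: one has to carry out the obstruction-theoretic analysis of $A\to A_k$ uniformly in $k$. This requires identifying the fibres in the Moore--Postnikov tower of $\partial^h\to M$ together with a vanishing range argument in the associated spectral sequence, which should be feasible but needs some care to run without ambient finiteness hypotheses.
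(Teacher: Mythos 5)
Your overall reduction is sound and matches the paper's: both arguments strip away the $\theta$-structure quotient and the fibrewise rationalisation (using that these operations preserve the connectivity of equivariant maps), factor the resulting map through the pullback \eqref{equ:defining-pullback-bdy-tk}, use the $(k-d+4)$-connectivity of $\Emb_{\partial^v}(M)^\times\to T_k\Emb_{\partial^v}(M)^\times$ from \cite[Theorem 6.3, Remark 6.11]{KKoperadic}, and thereby reduce to showing that the connectivity $N_k$ of $\Aut_{\partial^v}(M,\partial^h)\to\Aut_{\partial^v}(M,\rho_{c(k)}\partial^h)$ diverges.

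Where your argument diverges from the paper — and where it is underspecified — is the final step showing $N_k\to\infty$. You propose ``applying $\Aut_{\partial^v}(M,-)$ to the Moore--Postnikov tower of $\partial^h\to M$'' and running a Bousfield--Kan spectral sequence argument ``as in \cref{ex:BKSS-automorphism-space}.'' Two issues: first, as the paper warns in \cref{sec:bdy-truncated-variant}, there is no naive functoriality of $\Aut_{\partial^v}(M,-)$ in the boundary — the tower $\Aut_{\partial^v}(M,\rho_{c(\bullet)}\partial^h)$ had to be constructed by hand precisely because of this — so the phrase needs unpacking. Second, \cref{ex:BKSS-automorphism-space} is a spectral sequence for the Postnikov tower of a \emph{fixed target}, not directly for the tower of automorphism groups indexed by $k$, so the cited example does not apply verbatim and an adaptation is needed. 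The paper avoids all of this: it writes $A\to A_k$ as the map of group-like components of horizontal fibres in a square of mapping spaces in $\cS^{[1]}$, uses the universal property of the localisation $\rho_{c(k)}$ to rewrite the vertical arrows as postcomposition maps, and then invokes the single elementary fact that postcomposition $\phi_*\colon\Map(A,X)\to\Map(A,Y)$ with an $n$-connected map is $(n-a)$-connected when $A$ is homotopically $a$-dimensional. Your worry about ``running without ambient finiteness hypotheses'' is unfounded: $M$ and the pieces of its boundary are compact, so the sources of all the relevant mapping spaces are finite complexes and the required dimensional bound is available. In short, your skeleton is right, but your step (2) needs to be replaced by (or reworked into) a cleaner obstruction-theoretic computation in the mapping-space model — the spectral-sequence route is both heavier and, as stated, not quite well-posed.
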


\begin{proof}
We first show that the connectivity of the analogous maps without the ${\fQ}$-subscripts diverges. Both sides are the orbits of an action on the same space of $\theta$-structures, so comparing the two fibre sequence involving the orbits, it suffices to prove the connectivity of the map $\Emb_{\partial^v}(M)^\times\ra T_k\Emb_{\partial^v}(M,\partial^h)^\times$ between the $E_1$-groups that act diverges with $k$. Moreover, since $\Emb_{\partial^v}(M)^\times\ra T_k\Emb_{\partial^v}(M)^\times$ is at least $(k-d+4)$-connected by \cite[Theorem 6.3 and Remark 6.11]{KKoperadic}, it is enough to show that the connectivity of the upper horizontal map $T_k\Emb_{\partial^v}(M,\partial^h)^\times\ra T_k\Emb_{\partial^v}(M)^\times$ in the pullback \eqref{equ:defining-pullback-bdy-tk} diverges, or equivalently that the connectivity of the bottom map $\Aut_{\partial^v}(M,\partial^h)\ra\Aut_{\partial^v}(M,\rho_{c(k)} \partial^h)$ diverges. This map is obtained by taking group-like components of the map on horizontal fibres in the square of mapping spaces in $\cS^{[1]}$
\[
\begin{tikzcd}[column sep=1.5cm]
	\Map_{\cS^{[1]}}\Big(\mathrel{\substack{\partial^hM\\\downarrow\\M}}\ ,\ \mathrel{\substack{\partial^hM\\\downarrow\\M}}\Big)\rar{\inc^*}\dar{\rho_{c(k)}}&\Map_{\cS^{[1]}}\Big(\mathrel{\substack{\partial^{vh}M\\\downarrow\\\partial^vM}}\ ,\ \mathrel{\substack{\partial^hM\\\downarrow\\M}}\Big)\dar{\rho_{c(k)}}\\
	\Map_{\cS^{[1]}}\Big(\mathrel{\substack{\rho_{c(k)}\partial^hM\\\downarrow\\M}}\ ,\ \mathrel{\substack{\rho_{c(k)}\partial^hM\\\downarrow\\M}}\Big)\rar{(\rho_{c(k)}\inc)^*}&\Map_{\cS^{[1]}}\Big(\mathrel{\substack{\rho_{c(k)}\partial^{vh}M\\\downarrow\\\partial^vM}}\ ,\ \mathrel{\substack{\rho_{c(k)}\partial^hM\\\downarrow\\M}}\Big).
\end{tikzcd}
\]
Using that $\rho_{c(k)}\colon \cS^{[1]}\ra \cS^{[1]}$ is a localisation, we can rewrite the two vertical maps as
\[\hspace{-0.2cm}
	\Map_{\cS^{[1]}}\Big(\mathrel{\substack{\partial^hM\\\downarrow\\M}},\mathrel{\substack{\partial^hM\\\downarrow\\M}}\Big)\ra \Map_{\cS^{[1]}}\Big(\mathrel{\substack{\partial^hM\\\downarrow\\M}},\mathrel{\substack{\rho_{c(k)}\partial^hM\\\downarrow\\M}}\Big)\ \ \text{and}\ \  \Map_{\cS^{[1]}}\Big(\mathrel{\substack{\partial^{vh}M\\\downarrow\\\partial^{v}M}},\mathrel{\substack{\partial^hM\\\downarrow\\M}}\Big)\ra \Map_{\cS^{[1]}}\Big(\mathrel{\substack{\partial^{vh}M\\\downarrow\\\partial^{v}M}},\mathrel{\substack{\rho_{c(k)}\partial^hM\\\downarrow\\M}}\Big),
\]
so it suffices to show that these two maps have diverging connectivity, for which it is in turn enough to show that the maps $\Map_{\cS}(\partial^hM,\partial^hM)\ra \Map_{\cS}(\partial^hM,\rho_{c(k)}\partial^hM)$ and $\Map_{\cS}(\partial^{vh}M,\partial^hM)\ra \Map_{\cS}(M,\rho_{c(k)}\partial^hM)$ induced by postcomposition with the $c(k)$-connected maps $\partial^hM\ra \rho_{c(k)}\partial^hM$ and $\partial^{vh}M\ra \rho_{c(k)}\partial^{vh}M$ have diverging connectivity. This is a direct consequence of obstruction theory: in general, postcomposition $\phi_\ast\colon \Map(A,X)\ra \Map(A,Y)$ with an $n$-connected map $\phi\colon X\ra Y$ is $(n-a)$-connected if $A$ is homotopically $a$-dimensional.

To deduce the claim for the map with the ${\fQ}$-subscripts, we consider the commutative triangle
\[\begin{tikzcd}[row sep=0.1cm]
	\BEmb^\theta_{\partial^v}(M;\ell^v)^\times\arrow[rr]\arrow[dr, bend right=10]&& \oB T_k\Emb^\theta_{\partial^v}(M,\partial^h;\ell^v)^\times\arrow[dl,bend left=10]\\
	&\BAut_{\partial^v}(M,\partial^h)^{\ell^v}.&
\end{tikzcd}\]
We already showed that the connectivity of the horizontal map diverges with $k$, so the same holds for the map between the diagonal fibres at all basepoints. Since rationalisation preserves connectivity (see \cref{sec:rat-inheritance}), this implies that the connectivity of the map between fibres after rationalisation diverges with $k$, which in turn implies that the connectivity of the horizontal map in the triangle after fibrewise rationalisation diverges with $k$, as claimed.
\end{proof}

Given \cref{lem:reduction-to-tk}, \cref{thm:pullback-decomp} follows by taking limits from the following version of the statement for the sequence \eqref{equ:modified-tk-sequence}. Its proof occupies the remainder of this section.

\begin{thm}\label{thm:pullback-decomp-Too}Under the assumptions of \cref{thm:pullback-decomp} the sequence \eqref{equ:modified-tk-sequence} of towers can be completed to a levelwise pullback square of towers.
\end{thm}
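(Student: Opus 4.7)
The plan is to prove the statement by induction on the level $k$ of the tower, with base case $k=1$ being trivial: the defining pullback \eqref{equ:defining-pullback-bdy-tk} together with the identification $T_1\Emb_{\partial^v}(M)^\times\simeq\Aut_{\partial^v}(TM)$ from embedding calculus yields $T_1\Emb_{\partial^v}(M,\partial^h)^\times \simeq \Aut_{\partial^v}(TM,\partial^h)$, so the level-$1$ sequence \eqref{equ:modified-tk-sequence} reduces to the identity on $\BAut^\theta_{\partial^v}(TM,\partial^h;\ell^v)_\fQ$ mapping to $\BAut_{\partial^v}(M,\partial^h)^{\ell^v}$, which trivially extends to a pullback. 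For the inductive step, we exploit the general fact that a sequence $X\to Y\to Z$ extends to a homotopy pullback square if and only if the functor $Y\to\cS$ classifying $X\to Y$ factors through the projection $Y\to Z$; equivalently, the monodromy of $\ker(\pi_1 Y\to \pi_1 Z)$ on the fibres of $X\to Y$ is trivial. Combined with the inductive hypothesis at level $k-1$, this reduces the problem to checking this factorisation property for the layer
\[
	F_k \coloneq \fib\bigl(\oB T_k\Emb^\theta_{\partial^v}(M,\partial^h;\ell^v)^\times_{\fQ} \lra \oB T_{k-1}\Emb^\theta_{\partial^v}(M,\partial^h;\ell^v)^\times_{\fQ}\bigr)
\]
viewed as a functor on $\BAut^\theta_{\partial^v}(TM,\partial^h;\ell^v)_\fQ$.

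\textbf{Layer analysis and trivialising the monodromy.} The layer $F_k$ is to be described via the Göppl--Weiss formula \eqref{eqref:fibre-sequence-layers} applied to the $k$-th truncation of the particle embedding calculus structure $E_M^p$ on $M$, giving a $\Sigma_k$-equivariant mapping space with values in the layer of the particle $E_d$-operad tower, modified by latching and matching data. The monodromy of $\pi_1\BAut^\theta_{\partial^v}(TM,\partial^h;\ell^v)_\fQ$ on $F_k$ is induced by the action on this operadic coefficient via the $\theta$-framing and, after rationalisation, factors through the composition $B \to \BSTop(d)\to \BSAut(E_d)\to \BSAut(E_{d,\bfQ})$; restriction to the orientation-preserving subgroup $\SAut$ is permitted since $\theta$ is an oriented tangential structure. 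By hypothesis, $\theta$ factors through $\BSTop(d-2)\to\BSTop(d)$ after rationalisation, so the above composition further factors through the twice-iterated stabilisation $\BSAut(E_{d-2,\bfQ})\to \BSAut(E_{d,\bfQ})$, which is nullhomotopic by \cref{bigthm:nullhomotopy}. Hence the monodromy on $F_k$ vanishes after rationalisation, closing the induction.

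\textbf{Main obstacle.} The hard part is establishing the needed compatibilities: identifying $F_k$ with the relevant operadic layer compatibly with the boundary data of \cref{thm:fake-boundary-truncated}, so that the inductive hypothesis (fed by \cref{lem:reduction-to-tk} at the top of the tower) can be invoked, and ensuring that fibrewise rationalisation commutes with passage to tower layers. The latter requires a form of fibrewise nilpotency addressed in \cref{rem:nilpotency-issue}. The Bousfield--Kan spectral sequence analysis of \cref{sec:bousfield-kan-ss} together with the cyclotomic weight computation \cref{prop:auted-cyclotomic} will likely play a role in translating the global nullhomotopy statement of \cref{bigthm:nullhomotopy} into the vanishing of the individual layer monodromies needed here.
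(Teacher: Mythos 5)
Your proposal takes a genuinely different route from the paper, but the inductive scheme has a structural gap that the paper avoids by design.

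The paper does \emph{not} proceed by induction on $k$, nor by a layer-by-layer monodromy argument. Instead it makes essential use of the pullback square relating topological and particle embedding calculus (\cite[Section 5.7]{KKoperadic}), recalled in \eqref{equ:initialdiagramfordecom}, to reduce the whole tower-level statement in one shot to a single factorisation claim: \cref{prop:factorisation} shows that the fibrewise-rationalised square \eqref{pre-factored-square} is a pullback, and, crucially, that its bottom map $\BAut^\theta_{\partial^v}(TM,\partial^h;\ell^v)_\fQ\ra \BAut^{/\BAut_{\le\bullet}(E_d)}_{\partial^v}(M,\partial^h)^{\ell^v}_\fQ$ factors through $\BAut_{\partial^v}(M,\partial^h)^{\ell^v}$ as a map of \emph{towers}. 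The factorisation is obtained by observing that $\Map_{\partial^v}(M,-)$ applied to a nullhomotopic composition $B_\bfQ\to\BSTop(d)_\bfQ\to\BAut^{\id}_{\le\bullet}(E_{d,\bfQ})$ gives an $\Aut^\ell_{\partial^v}(M,\partial^h)$-equivariant nullhomotopy of maps of towers; this is where \cref{bigthm:nullhomotopy} enters, and where the Moore--Postnikov $1$-factorisation in \eqref{pasting} earns its keep (it isolates the $1$-connected part over $\BAut_{\partial^v}(M,\partial^h)^{\ell^v}$, so obstruction theory yields a unique lift of the factorisation at the end of the proof).

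Against this, your proposal has several concrete problems. First, the criterion you state (``$X\to Y\to Z$ extends to a pullback iff the classifying functor factors through $Y\to Z$'') is correct, but the asserted ``equivalence'' with vanishing $\pi_1$-monodromy is not: factoring a functor $Y\to\cS$ through a map $Y\to Z$ is a full coherence statement involving all homotopy of $Z$, and none of the spaces here are aspherical. Second, and more importantly, an induction on the tower level faces a compatibility problem your proposal does not address: even if for each $k$ one produces a factorisation of $\phi_k\colon \BAut^\theta_{\partial^v}(TM,\partial^h;\ell^v)_\fQ\to\cS$ through $\BAut_{\partial^v}(M,\partial^h)^{\ell^v}$, one must ensure these factorisations are compatible across $k$ to obtain a \emph{tower} of $W_k$'s and a \emph{levelwise} pullback; the paper sidesteps this entirely by producing a single tower-level factorisation and then invoking $1$-connectedness for uniqueness. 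Third, the step ``Hence the monodromy on $F_k$ vanishes after rationalisation'' is not justified by what precedes it: a nullhomotopy of $B\to\BSAut(E_{d,\bfQ})$ does not, on its own, give a nullhomotopy of the action of the relevant fundamental group on the layer $F_k$ — that group is a quotient involving $\Aut_{\partial^v}(M,\partial^h)$ and mapping spaces $\Map_{\partial^v}(M,-)$, and one must pass the nullhomotopy through these constructions equivariantly, which is exactly what the paper does and your proposal leaves open. Finally, the appeals to the Göppl--Weiss layer formula \eqref{eqref:fibre-sequence-layers} and to \cref{prop:auted-cyclotomic} are misplaced: the former describes layers of mapping spaces between reduced operads, not of the embedding calculus tower $T_\bullet\Emb_{\partial^v}(M,\partial^h)^\times$, and the latter (together with the Bousfield--Kan spectral sequence analysis) is the machinery used to \emph{prove} \cref{bigthm:nullhomotopy}, not an ingredient one needs again in proving the pullback decomposition.
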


\subsubsection{Proof of \cref{thm:pullback-decomp-Too}}
We begin by considering the commutative diagram of towers
\begin{equation}\label{equ:initialdiagramfordecom}
\begin{tikzcd}[column sep=0.4cm, row sep=0.4cm]
	T_\bullet\Emb_{\partial^v}(M)^\times\rar\dar&T_\bullet \Emb^{p}_{\partial^v}(M)^\times \dar\rar&\Aut_{\partial^v}(M,\rho_{c(\bullet)}\partial^h)\dar\\
	 \Aut_{\partial^v}(TM) = \Aut_{\partial^v}^{/\BTop(d)}(M) \rar& \Aut_{\partial^v}^{/\BAut_{\leq \bullet}(E_d)}(M) \rar{\mathrm{forget}}& \Aut_{\partial^v}(M)
\end{tikzcd}
\end{equation}
of $E_1$-groups whose left-hand square is the pullback square relating topological and particle embedding calculus from \cite[Section 5.7]{KKoperadic}. Concretely, this pullback is the pullback of group-like components of the pasting of the pullbacks $\circled{3}$ and $\circled{4}$ in Equation (127) in loc.cit.\,in the case where $M$ and $N$ in loc.cit.\,are both given as the noncompact nullbordism $M\backslash(\partial^hM)\colon \mathrm{int}(\partial^vM)\leadsto\varnothing$. Its bottom row is induced by postcomposition with the map of towers $\BTop(d)\ra \BAut_{\le \bullet}(E_{d})$ that already featured in the proof of \cref{lem:embcalc-and-particle-bdy-agree}. The right-hand square in \eqref{equ:initialdiagramfordecom} results from  \cref{thm:fake-boundary-truncated}. Replacing the upper row of the left square in \eqref{equ:initialdiagramfordecom} viewed as a map over $\Aut_{\partial^v}(M,\rho_{c(\bullet)}\partial^h)$ with the pullback along $\Aut_{\partial^v}(M,\partial^h)\ra \Aut_{\partial^v}(M,\rho_{c(\bullet)}\partial^h)$ and the lower row viewed as a map over $\Aut_{\partial^v}(M)$  with the pullback along $ \Aut_{\partial^v}(M,\partial^h)\ra  \Aut_{\partial^v}(M)$, we arrive at a pullback square with left vertical map $T_\bullet\Emb_{\partial^v}(M,\partial^h)^\times\ra  \Aut_{\partial^v}(TM,\partial^h) $ and right vertical map $T_\bullet\Emb^{p}_{\partial^v}(M,\partial^h)^\times\ra \Aut^{/\BAut_{\leq \bullet}(E_d)}_{\partial^v}(M,\partial^h)$. Restricting $\Aut^{/\BAut_{\leq \bullet}(E_d)}_{\partial^v}(M,\partial^h)$ to the components hit by $\smash{\Aut^{/\BTop(d)}_{\partial^v}(M,\partial^h)}$ and restricting $\smash{T_\bullet\Emb^{p}_{\partial^v}(M,\partial^h)^\times}$ to the components that are mapped to these components in $\smash{\Aut^{/\BAut_{\leq \bullet}(E_d)}_{\partial^v}(M,\partial^h)}$, we obtain a pullback square
\begin{equation}\label{equ:pm-square-2}
	\begin{tikzcd}[column sep=0.4cm, row sep=0.4cm] T_\bullet\Emb_{\partial^v}(M,\partial^h)^\times\rar\dar&T_\bullet\Emb^{p}_{\partial^v}(M,\partial^h)^t\dar \\
	\Aut_{\partial^v}(TM,\partial^h) = \Aut_{\partial^v}^{/\BTop(d)}(M,\partial^h)\rar&\Aut^{/\BAut_{\leq \bullet}(E_d)}_{\partial^v}(M,\partial^h)^t\end{tikzcd}
\end{equation}
in which the $t$-superscripts in the right column indicates the collection of components we just specified. The restriction to these components is done so that the square obtained from \eqref{equ:pm-square-2} by delooping remains a pullback. Adding tangential structures to the left column, we obtain a pullback
\[
\begin{tikzcd}[column sep=0.4cm, row sep=0.4cm]
	\oB T_\bullet\Emb^\theta_{\partial^v}(M,\partial^h;\ell^v)^\times \rar\dar &\oB T_\bullet\Emb^{p}_{\partial^v}(M,\partial^h)^t\dar\\
	\BAut^\theta_{\partial^v}(TM,\partial^h;\ell^v)\rar&\BAut^{/\BAut_{\leq \bullet}(E_d)}_{\partial^v}(M,\partial^h)^t
\end{tikzcd}
\]
which we factor as a pasting of two pullbacks by taking horizontal Moore--Postnikov $1$-factorisations
\begin{equation}\label{pasting}
\begin{tikzcd}[column sep=0.4cm, row sep=0.4cm]
	\oB T_\bullet\Emb^\theta_{\partial^v}(M,\partial^h;\ell^v)^\times\rar\dar&\oB T_\bullet\Emb^{p}_{\partial^v}(M,\partial^h)^{\ell^v}\rar\dar&\oB T_\bullet\Emb^{p}_{\partial^v}(M,\partial^h)^t\dar\\
	\BAut^\theta_{\partial^v}(TM,\partial^h;\ell^v)\rar&\BAut^{/\BAut_{\leq k}(E_d)}_{\partial^v}(M,\partial^h)^{\ell^v} \rar &\BAut^{/\BAut_{\leq \bullet}(E_d)}_{\partial^v}(M,\partial^h)^t;
\end{tikzcd}
\end{equation}
here the rows are defined as the Moore--Postnikov $1$-factorisations. By the functoriality of Moore--Postnikov 1-factorisations, the left pullback square maps to the middle space $\BAut_{\partial^v}(M,\partial^h)^{\ell^v}$ in the Moore--Postnikov $1$-factorisation of $\BAut^\theta_{\partial^v}(TM,\partial^h;\ell^v)\ra \BAut_{\partial^v}(M,\partial^h)$, so we may take fibrewise rationalise the left square over $\BAut_{\partial^v}(M,\partial^h)^{\ell^v}$ to arrive at a square
\begin{equation}
\begin{tikzcd}[column sep=0.4cm, row sep=0.4cm]\label{pre-factored-square}
	\oB T_\bullet\Emb^\theta_{\partial^v}(M,\partial^h;\ell^v)^\times_{\fQ}\rar\dar&\oB T_\bullet\Emb^{p}_{\partial^v}(M,\partial^h)^{\ell^v}_{\fQ}\dar\\
	\BAut^\theta_{\partial^v}(TM,\partial^h;\ell^v)_{\fQ}\rar&\BAut^{/\BAut_{\leq \bullet}(E_d)}_{\partial^v}(M,\partial^h)^{\ell^v}_{{\fQ}}.
\end{tikzcd}
\end{equation}

\begin{prop}\label{prop:factorisation}The square \eqref{pre-factored-square} is a pullback. Moreover, its bottom map considered as a map over $ \BAut_{\partial^v}(M,\partial^h)^{\ell^v}$, admits a factorisation of the form 
\[
	\smash{\BAut^\theta_{\partial^v}(TM,\partial^h;\ell^v)_{\fQ}\lra  \BAut_{\partial^v}(M,\partial^h)^{\ell^v}\lra \BAut^{/\BAut_{\leq \bullet}(E_d)}_{\partial^v}(M,\partial^h)^{\ell^v}_{{\fQ}}}.
\] 
\end{prop}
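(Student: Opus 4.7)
The proof has two parts: (a) showing \eqref{pre-factored-square} is a pullback, and (b) producing the factorisation of its bottom map through $B \coloneq \BAut_{\partial^v}(M,\partial^h)^{\ell^v}$.

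For (a), the left square in \eqref{pasting} is a pullback by construction. Since a pullback in $\cS_{/B}\simeq \Fun(B,\cS)$ is computed pointwise, and fibrewise rationalisation is pointwise application of $(-)_\bfQ$, the fibrewise-rationalised square \eqref{pre-factored-square} remains a pullback as soon as ordinary rationalisation preserves the pullback of fibres at each $b\in B$. By \cref{lem:Q-completion-pullback} it suffices that the right vertical map of fibres be $1$-connected and nilpotent. The map $\oB T_\bullet\Emb^{p}_{\partial^v}(M,\partial^h)^{\ell^v} \to \BAut^{/\BAut_{\leq \bullet}(E_d)}_{\partial^v}(M,\partial^h)^{\ell^v}$ is $1$-connected by construction (the initial map in a Moore--Postnikov $1$-factorisation), and the requisite nilpotency follows from the assumption that $B$ is nilpotent, using that the target $\BAut^{/\BAut_{\le\bullet}(E_d)}_{\partial^v}(M,\partial^h)^{\ell^v}$ is assembled from classifying spaces of tangential gauge groups with nilpotent values (cf.~\cref{rem:nilpotency-issue}).

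For (b), the factorisation exists precisely when the structure map $\BAut^{/\BAut_{\leq \bullet}(E_d)}_{\partial^v}(M,\partial^h)^{\ell^v}_{{\fQ}} \to B$ is an equivalence: the second arrow of the factorisation is then its inverse, and the first is the structure map of $\BAut^\theta_{\partial^v}(TM,\partial^h;\ell^v)_{\fQ}$ over $B$, whose composite with that inverse reproduces the original bottom map by the over-$B$ nature of the latter. So my aim is to show the fibres of $\BAut^{/\BAut_{\leq \bullet}(E_d)}_{\partial^v}(M,\partial^h)^{\ell^v} \to B$ are $\bfQ$-acyclic. These fibres are (components of) a space of $\BAut_{\leq \bullet}(E_d)$-bundle self-maps of $TM$ over $\id_M$ rel $\ell^v$, i.e.~components of $\Map^{\ell^v}_{\partial^v M}(M,\BAut_{\leq \bullet}(E_d))$ selected by the classifying map of $TM$.

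The rationally nontrivial input is hypothesis \ref{enum:pullback-decomp-i}: $\theta$ factors rationally through $\BSTop(d-2) \to \BSTop(d)$. Combining this with additivity \eqref{equ:additivity-direct-product} and the rational lift \eqref{equ:filler-rational-bv}, the composite $B \to \BSTop(d) \to \BSAut(E_{d,\bfQ})$ factors rationally through the twice-iterated stabilisation $\BSAut(E_{d-2,\bfQ}) \to \BSAut(E_{d,\bfQ})$, which is nullhomotopic by \cref{bigthm:nullhomotopy}; this nullity passes through the truncation $\BSAut_{\leq\bullet}$. I would then run the Bousfield--Kan spectral sequence of \cref{ex:BKSS-mapping-space} to compute the rational homotopy of the fibre: its $E_1$-page consists of cohomology groups of $M/\partial^v M$ with coefficients in the homotopy groups of $\BSAut_{\leq\bullet}(E_{d,\bfQ})$, and the rational nullity of the composite $B \to \BSAut_{\leq\bullet}(E_{d,\bfQ})$ combined with the $2$-connectedness of $M$ and $\partial^v M$ should trivialise the spectral sequence, yielding $\bfQ$-acyclicity.

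The main obstacle is the last step: carefully translating the null factorisation given by \cref{bigthm:nullhomotopy} into vanishing of every layer of the obstruction tower controlling the extension-space fibre, working with the truncations and the fibrewise nature of the rationalisation over $B$. This should be accomplished by extending the $\GT$-action analysis of \cref{prop:auted-cyclotomic}---so that the relevant mapping-space homotopy groups inherit gr-cyclotomic structure of nontrivial weight---and then arguing as in the proof of \cref{prop:trivial-invariants} that all cyclotomic contributions to the fibre vanish under the null factorisation, leaving only the trivial piece.
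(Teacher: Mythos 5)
Your strategy for part (a) is essentially the paper's, but with a concrete error: you claim the right vertical map $\oB T_\bullet\Emb^{p}_{\partial^v}(M,\partial^h)^{\ell^v}\to \BAut^{/\BAut_{\leq \bullet}(E_d)}_{\partial^v}(M,\partial^h)^{\ell^v}$ is $1$-connected ``by construction (the initial map in a Moore--Postnikov $1$-factorisation).'' It is not: in \eqref{pasting} the Moore--Postnikov $1$-factorisation is applied to the \emph{rows}, so the two \emph{horizontal} maps into the middle column are $1$-connected, while the \emph{vertical} map between the middle spaces is merely the one induced by functoriality and carries no a priori connectivity. The paper instead checks the bottom \emph{horizontal} map $\BAut^\theta_{\partial^v}(TM,\partial^h;\ell^v)\to \BAut^{/\BAut_{\leq \bullet}(E_d)}_{\partial^v}(M,\partial^h)^{\ell^v}$, which is $1$-connected because both maps of the triangle \eqref{equ:triangle1} are; taking diagonal fibres gives a $1$-connected map of fibres over $B$. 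Your nilpotency argument is also too vague: the paper establishes nilpotency of the diagonal fibres by identifying them with covering spaces of components of relative mapping spaces from the finite CW-pair $(M,\partial^v M)$ into nilpotent targets ($B$ and $\BAut^{\id}_{\leq k}(E_d)$, using the stable homeomorphism theorem to pass to identity components) and invoking \cite[V.\S 5.1]{BousfieldKan}.

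For part (b) the proposal is based on a misreading of the statement and would fail. You claim the factorisation ``exists precisely when the structure map $\BAut^{/\BAut_{\leq \bullet}(E_d)}_{\partial^v}(M,\partial^h)^{\ell^v}_{\fQ}\to B$ is an equivalence'' and then aim to show the fibres of this structure map are $\bfQ$-acyclic. This is both unnecessary and false: the fibres of the target over $B$ are (fibrewise rationalisations of) components of the mapping space $\Map_{\partial^v}(M,\BAut^{\id}_{\leq\bullet}(E_d))$ by \eqref{equ:mapping-space-desc}, and these are not $\bfQ$-acyclic in general (e.g.\ for $M\simeq D^d$ the fibre has the rational homotopy type of $\BAut^{\id}_{\leq\bullet}(E_{d,\bfQ})$). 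The factorisation claimed in the proposition is through a \emph{section} of $\text{target}\to B$, not an inverse, and it is not forced by the square being over $B$. The paper proves it directly: after rewriting the bottom map as \eqref{equ:map-on-quot-rational}, it suffices to show that the map $(\Map_{\partial^v}(M,B)_{\ell})_\bfQ\ra(\Map_{\partial^v}(M,\BAut^{\id}_{\leq\bullet}(E_d))_{TM})_\bfQ$ induced by postcomposition is $\Aut^\ell_{\partial^v}(M,\partial^h)$-equivariantly nullhomotopic as a map of towers. Since postcomposition by a nullhomotopic map is nullhomotopic and equivariance is automatic from this being a postcomposition, this reduces to showing that $B_\bfQ\to\BAut^{\id}_{\leq\bullet}(E_{d,\bfQ})$ is null, which is where \cref{bigthm:nullhomotopy} and the hypothesis on $\theta$ enter. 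You correctly identified these ingredients, but routed them into the wrong conclusion; you do not need any Bousfield--Kan spectral sequence or $\GT$-action analysis here, and that machinery would not yield the (false) fibre acyclicity anyway.
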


Assuming \cref{prop:factorisation}, \cref{thm:pullback-decomp-Too} follows by factoring the bottom map in the square \eqref{pre-factored-square} according to the second part of the proposition and take a pullback to write this square as the pasting of two squares. The right square of this pasting is a pullback by construction and the outer rectangle is one by the first part of the proposition, so the left square is a pullback as well. The latter is the pullback extension promised in \cref{thm:pullback-decomp-Too}. Hence we are left to show \cref{prop:factorisation}.

\subsubsection{Proof of \cref{prop:factorisation}}\label{sec:proof-lemma}
Consider the commutative triangle
\begin{equation}\label{equ:triangle1}
\begin{tikzcd}[column sep=-0.05cm, row sep=0cm]
	\BAut^\theta_{\partial^v}(TM,\partial^h;\ell^v)\arrow[dr, bend right=10]\arrow[rr]&[10pt] & \BAut^{/\BAut_{\leq k}(E_d)}_{\partial^v}(M,\partial^h)^{\ell^v} \arrow[dl, bend left=10]\\
	&\BAut_{\partial^v}(M,\partial^h)^{\ell^v}&
\end{tikzcd}
\end{equation}
whose horizontal map and left diagonal map are $1$-connected by construction, so the right diagonal map is as well. In particular, all maps have connected fibres and induce bijections on components, so we may split the diagram into its components (see \cref{sec:tangential-structures} for the notation):
\[
\begin{tikzcd}[column sep=-0.05cm, row sep=0cm]
	\bigsqcup_{[\ell]\in\theta\text{-}\st}\BAut^\theta(TM,\partial^h;\ell^v)_\ell\arrow[dr, bend right=10]\arrow[rr]&[3pt] & \bigsqcup_{[\ell]\in\theta\text{-}\st}\BAut^{/\BAut_{\leq \bullet}(E_d),\ell}_{\partial^v}(M,\partial^h)\arrow[dl, bend left=10]\\
	&\bigsqcup_{[\ell]\in\theta\text{-}\st}\BAut^\ell_{\partial^v}(M,\partial^h)&
\end{tikzcd}
\]
where $\BAut^\theta(TM,\partial^h;\ell^v)_\ell\subset \BAut^\theta(TM,\partial^h;\ell^v)$ is the component corresponding to $[\ell]\in \theta\text{-}\st$ , and the $\ell$-superscripts on the other spaces indicate that we are passing to the covering spaces that make the map from $\BAut^\theta(TM,\partial^h;\ell^v)_\ell$ be $1$-connected. By general bundle yoga, up to precomposing the horizontal map with a covering space, we may further rewrite the triangle as
\begin{equation}\label{equ:mapping-space-desc}
\begin{tikzcd}[column sep=0cm, row sep=0cm]
	\bigsqcup_{[\ell]\in\theta\text{-}\st}\frac{\Map_{\partial^v}(M,B)_{\ell}}{\Aut^\ell_{\partial^v}(M,\partial^h)} \arrow[dr, bend right=10]\arrow[rr] &[20pt] &  \bigsqcup_{[\ell]\in\theta\text{-}\st}\frac{\Map_{\partial^v}(M,\BAut_{\leq \bullet}(E_d))_{TM}}{\Aut^{\ell}_{\partial^v}(M,\partial^h)} \arrow[dl, bend left=10]\\
	&\bigsqcup_{[\ell]\in\theta\text{-}\st}\BAut^\ell_{\partial^v}(M,\partial^h)&
\end{tikzcd}
\end{equation}
where the $\ell$-subscript on $\Map_{\partial^v}(M,B)_{\ell}$ indicates the component of $\ell\colon M\ra B$ and the $TM$-subscript on $\Map_{\partial^v}(M,\BAut_{\leq \bullet}(E_d))_{TM}$ the component of the composition of $M\ra B$ with the composition $B\ra\BTop(d)\ra \BAut_{\le \bullet}(E_d)$. Postcomposition with the latter induces the horizontal arrow. Since $B\ra\BTop(d)$ factors by assumption through the classifying space of group $\STop(d)$ of orientation-preserving homeomorphisms of $\bfR^d$  which agrees with the identity component of $\Top(d)$ as a result of the stable homeomorphism theorem, the composition $B\ra\BTop(d)\ra \BAut_{\le \bullet}(E_d)$ factors as 
\begin{equation}\label{equ:theta-composition}
	B\lra\BSTop(d)\lra \BAut^{\id}_{\le \bullet}(E_d)\lra \BAut_{\le \bullet}(E_d),
\end{equation} 
where $\Aut^{\id}_{\le \bullet}(E_d)\subset \Aut_{\le \bullet}(E_d)$ are the components of the identity. Using that $\Map_{\partial^v}(M,-)$ sends covering maps to componentwise equivalences, we may replace $\smash{\BAut_{\leq \bullet}(E_d)}$ in \eqref{equ:mapping-space-desc} by $\smash{\BAut^{\id}_{\leq \bullet}(E_d)}$. The diagonal arrows in \eqref{equ:triangle1} have thus connected fibres which are nilpotent as a result of \cite[V.§5.1]{BousfieldKan}, being covering spaces of components of relative mapping spaces from the finite CW-pair $(M,\partial^vM)$ into the nilpotent spaces $B$ and $\BAut^{\id}_{\leq k}(E_d)$. 

Based on this discussion, we now prove the result. To show that \eqref{pre-factored-square} is a pullback it suffices to show that the square originating from \eqref{pre-factored-square} by taking fibres of the maps of each entry to $\BAut_{\partial^v}(M,\partial^h)^{\ell^v}$ at any fixed basepoint is a pullback. By construction, these squares are the rationalisations of the pullback squares obtained from the left pullback square in \eqref{pasting} by taking fibres of the maps to $\BAut_{\partial^v}(M,\partial^h)^{\ell^v}$. By \cref{lem:Q-completion-pullback}, it suffices to show that the bottom horizontal map in the latter square of fibres is $1$-connected and nilpotent. This map is the map on diagonal fibres of \eqref{equ:triangle1}, so its source and target are nilpotent by the above discussion, so the map is nilpotent by II.§4.5 loc.cit.. It is also $1$-connected since the horizontal map in \eqref{equ:triangle1} is $1$-connected.

To establish the asserted factorisation, note that the bottom map in \eqref{pre-factored-square} agrees by the above discussion, up to postcomposition with a covering space, with the map
\begin{equation}\label{equ:map-on-quot-rational}
	\textstyle{\bigsqcup_{[\ell]\in\theta\text{-}\st}\frac{(\Map_{\partial^v}(M,B)_{\ell})_\bfQ}{\Aut^\ell_{\partial^v}(M,\partial^h)} \lra \bigsqcup_{[\ell]\in\theta\text{-}\st}\frac{(\Map_{\partial^v}(M,\BAut^{\id}_{\leq \bullet}(E_d))_{TM})_\bfQ}{\Aut^\ell_{\partial^v}(M,\partial^h)}}
\end{equation} induced by postcomposition with \eqref{equ:theta-composition}. The map \eqref{equ:map-on-quot-rational} factors over 
\[
	\smash{\textstyle{\bigsqcup_{[\ell]\in\theta\text{-}\st}\frac{\ast}{\Aut^\ell_{\partial^v}(M,\partial^h)}\simeq \BAut_{\partial^v}(M,\partial^h)^{\ell^v}}}
\] 
since we will show momentarily that the map $(\Map_{\partial^v}(M,B)_{\ell})_\bfQ\ra (\Map_{\partial^v}(M,\BAut^{\id}_{\leq \bullet}(E_d)_{TM})_\bfQ$ induced by postcomposition with \eqref{equ:theta-composition} is $\Aut^\ell_{\partial^v}(M,\partial^h)$-equivariantly nullhomotopic as a map of towers. Using the natural equivalence $(\Map_{\partial^v}(M,X)_f)_\bfQ\simeq \Map_{\partial^v}(M,X_\bfQ)_{f_\bfQ}$ for nilpotent spaces $X$ and maps $f\colon M\ra X$ resulting from \cite[V.5.1]{BousfieldKan}, the latter follows by showing that the rationalisation of $B\ra \BSTop(d)\ra \BAut^{\id}_{\leq \bullet}(E_d)$ is nullhomotopic. By assumption the latter factors through the rationalisation of $\smash{\BSTop(d-2)\ra\BSTop(d)\ra \BAut^{\id}_{\leq \bullet}(E_d)}$. To show that the latter is nullhomotopic, we consider the commutative diagram
\[\begin{tikzcd}[column sep=0.4cm,row sep=0.3cm]
	\BSTop(d-2)\dar\rar&\dar \BAut^{\id}(E_{d-2})\rar&\BAut^{\id}(E_{d-2,\bfQ})\dar{\circled{2}}\\[5pt]
	\BSTop(d)\rar&\dar \BAut^{\id}(E_{d})\rar\dar&\BAut^{\id}(E_{d,\bfQ})\dar\\
	&\BAut^{\id}_{\le \bullet}(E_{d})\rar{\circled{1}}&\BAut^{\id}_{\le \bullet}(E_{d,\bfQ}).
\end{tikzcd}\]
whose upper two squares are induced by \eqref{equ:additivity-direct-product} and \eqref{equ:filler-rational-bv} and whose bottom-right square is induced by truncation. As $\circled{1}$ is levelwise a rational equivalence as a result of \cite[Theorems 7.9-7.10]{KKDisc}, to show that  $\BSTop(d-2)\ra \BAut^{\id}_{\leq \bullet}(E_d) $ is rationally nullhomotopic as a map of towers it suffices to show that $\circled{2}$ is nullhomotopic, which follows from \cref{bigthm:nullhomotopy} by taking universal covers.

This almost provides the factorisation of the bottom map in \eqref{pre-factored-square} as claimed in \cref{prop:factorisation}, in that it gives such a factorisation after postcomposition with a covering map. But as the map $\BAut^\theta_{\partial^v}(TM,\partial^h;\ell^v)_{\fQ}\ra \BAut_{\partial^v}(M,\partial^h)^{\ell^v}$ is $1$-connected by construction, obstruction theory ensures that there is a unique lift of the factorisation to one as required.

\begin{rem}\label{rem:improvement-even-so} Recall that one of the conditions in Theorems \ref{bigthm:pullback}, \ref{thm:pullback-decomp}, and \ref{thm:pullback-decomp-Too} is that the tangential structure $\theta\colon B\ra \BSTop(d)$ factors through $\BSTop(d-2)\ra \BSTop(d)$ after rationalisation. 
	
\begin{enumerate}
	\item The only thing we used in the proof about this condition is that this ensures that the composition of $\theta$ with the map $\BSTop(d)\ra\BAut^\id(E_{d,\bfQ})$ is nullhomotopic, as a result of \cref{bigthm:nullhomotopy}. As a result, the result remains true if one replaces the assumption on $\theta$ by the condition that it factors rationally over the fibre $\fib(\BSTop(d)_\bfQ\ra\BAut^\id(E_{d,\bfQ})_\bfQ)$. This is a weaker condition, but in practice less easily verified.
	\item If $d$ is even, then this condition can be replaced by the (neither stronger nor weaker) condition that $\theta\colon B\ra \BSTop(d)$ factors through $\BSO(d-1)\ra \BSTop(d)$ after rationalisation. The reason is that in the proofs of these results, the only place where the condition on the factorisation enters is when we used that the map $\smash{\BSTop(d-2)\ra B\Aut^{\id}(E_{d,\bfQ})}$ is rationally nullhomotopic. This implies by precomposition that the map $\smash{\BSO(d-2)\ra B\Aut^{\id}(E_{d,\bfQ})}$ is rationally nullhomotopic which in turn implies that $\smash{\BSO(d-1)\ra B\Aut^{\id}(E_{d,\bfQ})}$ is rationally nullhomotopic since for $d$ even any map $\BSO(d-1)\ra X$ to a $1$-connected space $X$ is rationally nullhomotopic if and only if the composition $\BSO(d-2)\ra\BSO(d-1)\ra X$ is rationally nullhomotopic, by an application of \cite[Theorem 2.3]{HiltonRoitbergEpi} using $\BSO(d-2)\ra\BSO(d-1)$ is surjective on rational homology if $d$ is even. Note that this final fact fails for $\BSTop(d-2)\ra\BSTop(d-1)$, e.g.\,as a consequence of \cite[Theorem C]{KRWOddDiscs}.
\end{enumerate}
 \end{rem}

\begin{rem}\label{rem:variant}There is a variant of \cref{thm:pullback-decomp} which is in some situations more convenient to use. It is based on the observation that, since the second map in the sequence \eqref{equ:extandable-composition} that can be completed to a pullback is $1$-connected by construction, the sequence of connected spaces resulting from \eqref{equ:extandable-composition} by restriction to the components induced by a fixed $\theta$-structure $[\ell]\in\theta\text{-}\st$ can be completed to a pullback too. By the discussion in \cref{sec:proof-lemma} this sequence has the form
\begin{equation}\label{equ:pullback-squ-single-comp}
	\smash{\BEmb^\theta_{\partial^v}(M;\ell^v)^\times_{{\fQ},\ell}\lra
	\Map_{\partial^v}(M,B_\bfQ)_{\ell_\bfQ}{/}\Aut^{\ell}_{\partial^v}(M,\partial^h)\lra \BAut^{\ell}_{\partial^v}(M,\partial^h).}
\end{equation}
Here $\ell_\bfQ$ is the composition of the map $\ell\colon M\ra B$ with the rationalisation map $B \to B_\bfQ$, and $\Aut^{\ell}_\partial(M)\subset \Aut_\partial(M)$ consists of those components that stabilise $[\ell]\in \pi_0(\Map_{\half\partial}(M,B))$, before rationalisation. Since \eqref{equ:pullback-squ-single-comp} can be completed to a pullback, the same applies to the variant
\begin{equation}\label{equ:pullback-squ-single-comp-mod}
	\smash{\BEmb^\theta_{\partial^v}(M;\ell^v)^\times_{{\fQ},\ell}\lra
	\Map_{\partial^v}(M,B_\bfQ)_{\ell_\bfQ}{/}\Aut^{\ell_\bfQ}_{\partial^v}(M,\partial^h)\lra \BAut^{\ell_\bfQ}_{\partial^v}(M,\partial^h)}
\end{equation}
involving the stabiliser $\Aut^{\ell_\bfQ}_{\partial^v}(M,\partial^h)$ of the rationalised map $[\ell_\bfQ]\in \pi_0(\Map_{\half\partial}(M,B_\bfQ))$, since the second map in \eqref{equ:pullback-squ-single-comp} is pulled back from the second map in \eqref{equ:pullback-squ-single-comp-mod} along the covering space $\smash{\BAut^{\ell}_{\partial^v}(M,\partial^h)\ra \BAut^{\ell_\bfQ}_{\partial^v}(M,\partial^h)}$. The reason why the sequence \eqref{equ:pullback-squ-single-comp-mod} is sometimes more convenient is that its middle space agrees with the pathcomponent induced by $\ell_\bfQ$ of the full orbit space $\Map_{\partial^v}(M,B_\bfQ)/\Aut_{\partial^v}(M,\partial^h)$. In the special case of \cref{bigthm:pullback}, the variant \eqref{equ:pullback-squ-single-comp-mod} takes the form 
\begin{equation}\label{equ:pullback-squ-single-comp-mod-homeo-pre}
	\smash{\BHomeo^\theta_{\partial}(M;\ell_{\half\partial})_{{\fQ},\ell}\lra
	\big(\Map_{\half\partial}(M,B_\bfQ){/}\Aut_{\partial}(M)\big)_{\ell_\bfQ}\lra 	\BAut^{\ell_\bfQ}_{\partial}(M),}
\end{equation}
which is the version that is suited best for the applications presented in the next section.
\end{rem}

\section{Applications}
We explain various applications of the pullback decomposition established in the previous section to the study of topological fibre bundles. For simplicity and since we consider it the most interesting case, we phrase them for spaces of homeomorphisms as in \cref{bigthm:pullback}, but similar arguments apply to spaces of self-embeddings as in the more general \cref{thm:pullback-decomp}. 

\subsection{Tautological classes}\label{sec:tautological-classes}
As preparation for the applications of the pullback decomposition, we discuss a source of characteristic classes of fibrations with certain extra structure, inspired by \cite[Section 4.3]{RWSignatureNote} and \cite[Section 2.2]{WeissDalian}. All cohomology groups are taken with coefficients in a fixed commutative ring $\bfk$ which we omit from the notation. 

\medskip

\noindent Fix a compact oriented topological $d$-manifold $M$ and a map $\ell\colon M\ra B$ to a fixed space $B$. We first describe characteristic classes associated to the following three pieces of data:
\begin{enumerate}[leftmargin=*]
	\item an oriented relative fibration $\smash{(M,\partial M)\ra (E,\partial M\times X)\xra{\pi} X}$,
	\item a map $\ell^\pi\colon E\ra B$,
	\item a homotopy $\smash{h^\pi}$ of maps $\partial M\times X \to B$ between $\smash{\ell^\pi|_{\partial  M\times X}}$ and $(\ell|_{\partial M}\circ\pr_1)$.
\end{enumerate}
Given such a triple $\smash{(\pi,\ell^\pi,h^\pi)}$ and a cohomology class $c\in \oH^*(B)$, there is a characteristic class $\kappa_c(\pi)\in\oH^{*-d}(X)$ defined as follows: writing $-M$ for $M$ equipped with the opposite orientation, form the oriented fibration $\overline{\pi}\colon \smash{\overline{E}}\coloneq (E\cup_{\partial M\times X}(-M\times X))\ra X$ with fibre the double $M\cup_{\partial M}(-M)$, extend the map $\ell^\pi$ to a map $\overline{\ell} \colon \overline{E}\ra B$ using the map $(\ell\circ \pr_1)\colon M\times X\ra B$ and the homotopy $h^\pi$, pull back $c$ to a class in $\oH^*(\overline{E})$, and fibre integrate this class along $\overline{\pi}$ using the orientation to obtain a class $\kappa_c(\pi)\in\oH^{*-d}(X)$. This gives rise to a morphism $\kappa_{(-)}(\pi)\colon \oH^*(B)\ra\oH^{*-d}(X)$ which a priori depends on the choice of $\ell$, but actually turns out to only depend on the restriction $\ell|_{\partial M}$ as long as $*-d>0$. Triples $\smash{(\pi,\ell^\pi,h^\pi)}$ as above are classified by the orbits $\Map_{\partial}(M,B){/} \Aut^+_{\partial}(M)$ of the precomposition action of the $E_1$-group $\Aut^+_{\partial}(M)$ of orientation-preserving homotopy automorphisms extending the boundary inclusion on the space of maps $M\ra B$ extending $\ell|_{\partial M}$. Applied to the universal triple, the above construction thus gives a morphism \[\kappa_{(-)}\colon \oH^*(B) \lra \oH^{*-d}\big(\Map_{\partial}(M,B){/} \Aut_{\partial}^+(M)\big).\]
Now suppose we have fixed an embedded disc $D^{d-1}\subset \partial M$ in the boundary and the homotopy $\smash{h^{\pi}}$ is only defined on the subspace $\half\partial M\times X\subset \partial M\times X$ where $\half\partial M\coloneq \partial\backslash\interior(D^{d-1})$ ; we denote such a homotopy on this subspace by $\smash{h^{\pi}_{\half}}$. Given only the partial homotopy $\smash{h^{\pi}_{\half}}$, the class $\kappa_c(\pi)$ for $c\in \oH^*(B)$ is a priori no longer defined, since we made use of the full homotopy $\smash{h^{\pi}}$ to extend $\smash{\ell^\pi}$ to $\smash{\overline{E}}$. However, it turns out that $\kappa_c(\pi)$ \emph{is} defined as long as $c$ comes with a cup-product decomposition which is trivialised over a point. More precisely, equipping $B$ with the basepoint given as the image under $\ell$ of a fixed point $\ast\in \partial D^{d-1}\subset \half\partial M$, we construct a morphism  $\kappa_{(-)\otimes(-)}(\pi)\colon \oH^*(B,\ast)^{\otimes 2}\ra \oH^{*-d}(X)$ which only requires the partial homotopy $\smash{h^{\pi}_{\half}}$ and agrees, whenever the full homotopy is defined, with the composition
of the morphism $\oH^*(B,\ast)^{\otimes 2} \ra \oH^{*}(B)$ induced by restriction and taking cup-product, with the morphism $\kappa_{(-)}(\pi)\colon \oH^*(B)\ra\oH^{*-d}(X)$ from above. Triples $\smash{(\pi,\ell^\pi,h^\pi_{\half})}$ are classified by the orbits $\Map_{\half\partial}(M,B){/} \Aut^+_{\partial}(M)$ of the space of maps from $M$ to $B$ extending $\ell|_{\half\partial}$. The universal instance of the construction thus gives the upper row in a commutative diagram
\begin{equation}\label{equ:compatibility}
\begin{tikzcd}[row sep=0.35cm,column sep=1.2cm]
	\oH^*(B,\ast)^{\otimes 2} \rar{\kappa_{(-)\otimes(-)}}\arrow[d,"(-)\cup(-)",swap]& \oH^{*-d}\big( \Map_{\half\partial}(M,B){/} \Aut^+_{\partial}(M)\big)\arrow[d]\\
	\oH^*(B)\rar{\kappa_{(-)}}&\oH^{*-d}\big( \Map_{\partial}(M,B){/} \Aut^+_{\partial}(M)\big)
\end{tikzcd}
\end{equation}
whose right column is induced by the forgetful map $\Map_{\partial}(M,B)\ra \Map_{\half\partial}(M,B)$. 

We now explain the promised construction of the morphism $\smash{\kappa_{(-)\otimes(-)}(\pi)\colon \oH^*(B,\ast)^{\otimes 2}\ra \oH^{*-d}(X)}$. Writing $\smash{\overline{E}_{\half}}$ for the total space of the oriented relative fibration 
\vspace{-0.2cm}
\[(M\cup_{\half\partial M}(-M),D^{d-1}\cup_{\partial D^{d-1}}(-D^{d-1}))\ra (E\cup_{\half\partial M\times X}(-M\times X),(D^{d-1}\cup_{\partial D^{d-1}}(-D^{d-1}))\times X)\overset{\overline{\pi}_{\half}}{\ra} X\] and viewing $\smash{\overline{E}_{\half}}$ as a subspace of $\smash{\overline{E}}$ with complement $D^d\times X$, the morphism is defined as the long composition in the commutative diagram
\begin{equation}\hspace{-0.5cm}\label{equ:constructed-decomposed-kappa}
	\begin{tikzcd}[column sep=0.6cm,row sep=0.6cm]
		\oH^*(B,\ast)^{\otimes 2}\rar &\oH^*(\overline{E}_{\half},\ast\times X)^{\otimes 2} \arrow[dr,dashed]&&[5pt]&\\
		&\oH^*(\overline{E},\ast\times X)^{\otimes  2}\arrow[u,"\res^{\otimes 2} "]\arrow["\cup",r]&\oH^*(\overline{E},\ast\times X)\arrow["\res",r]& \oH^*(\overline{E})\rar{\int_{\overline{\pi}}}&\oH^{*-d}(X)
	\end{tikzcd}
\end{equation}
where the upper-left horizontal map is defined by pulling back along the extension of $\ell^\pi\colon E\ra B$ to a map $\overline{E}_{\half}\ra B$ by $\ell\circ\pr_1\colon M\times X\ra B$ using the homotopy $\smash{h_{\half}^{\pi}}$. The dashed map results from:

\begin{lem}The middle vertical map $\res^{\otimes2}$ in \eqref{equ:constructed-decomposed-kappa} is surjective and there exists a (necessarily unique) dashed map as indicated that makes the diagram commutative.
\end{lem}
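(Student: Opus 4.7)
The plan is to reduce both claims to two sub-statements: (a) the map $\res\colon \oH^*(\overline E, \ast \times X) \to \oH^*(\overline E_\half, \ast \times X)$ is surjective, and (b) for any $a \in \ker(\res)$ and $b \in \oH^*(\overline E, \ast \times X)$, both $a \cup b$ and $b \cup a$ vanish. Given (a), tensor products preserve surjections so $\res^{\otimes 2}$ is surjective. Given (b), the bilinear cup product on $\oH^*(\overline E, \ast\times X)$ descends uniquely through the surjections $\res$ in each argument to a bilinear pairing on $\oH^*(\overline E_\half, \ast \times X) \times \oH^*(\overline E_\half, \ast \times X)$, which by the universal property of the tensor product extends to the (necessarily unique) dashed map $\oH^*(\overline E_\half, \ast\times X)^{\otimes 2} \to \oH^*(\overline E, \ast \times X)$.

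For (a), I would apply Mayer--Vietoris to $\overline E = \overline E_\half \cup (D^d \times X)$, with intersection $S^{d-1} \times X$, relative to $\ast \times X$. Because $D^d$ deformation retracts onto $\ast$, we have $\oH^*(D^d \times X, \ast \times X) = 0$, and the sequence reduces to
\[
\cdots \to \oH^n(\overline E, \ast \times X) \xra{\res} \oH^n(\overline E_\half, \ast \times X) \xra{j^*} \oH^n(S^{d-1} \times X, \ast \times X) \to \cdots,
\]
where $j^*$ is restriction to the boundary $\partial \overline E_\half = S^{d-1} \times X$. Under the K{\"u}nneth identification $\oH^n(S^{d-1} \times X, \ast \times X) \cong \oH^{n-d+1}(X)$ (extracting the sphere-class coefficient), $j^*$ becomes fibrewise integration over $S^{d-1}$ of the boundary restriction, and this composition vanishes by fibrewise Stokes' theorem applied to the compact oriented $d$-manifold-with-boundary fibre $W_\half$ of $\overline E_\half$, whose boundary is $S^{d-1}$.

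For (b), the same Stokes argument applied to the long exact sequence of the triple $(\overline E, \overline E_\half, \ast \times X)$ makes $\oH^*(\overline E, \overline E_\half) \hookrightarrow \oH^*(\overline E, \ast \times X)$ injective with image $\ker(\res)$. Lifting $a$ to $\tilde a \in \oH^*(\overline E, \overline E_\half)$, the refined cup product $\tilde a \cup b \in \oH^*(\overline E, \overline E_\half \cup \ast \times X) = \oH^*(\overline E, \overline E_\half)$ (using $\ast \times X \subset \overline E_\half$) identifies via excision with a cup product in $\oH^*(D^d \times X, S^{d-1} \times X) \otimes \oH^*(D^d \times X, \ast \times X)$; since the second factor is zero by contractibility of $D^d$ onto $\ast$, we get $\tilde a \cup b = 0$, hence $a \cup b = 0$ by injectivity (the case $b \cup a$ is symmetric). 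The hardest part of the bookkeeping will be verifying that the Mayer--Vietoris connecting map really agrees with fibrewise boundary integration, so that Stokes applies, and that the refined cup products intertwine correctly with the excision isomorphism; each step is standard but deserves careful checking.
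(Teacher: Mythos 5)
Your proof is correct. For the surjectivity of $\res$, your relative Mayer--Vietoris argument (using that $\oH^*(D^d\times X, \ast\times X)=0$) is logically interchangeable with the paper's, which instead excises to $\oH^*(\overline{E}_{\half},S^{d-1}\times X)$ and uses the long exact sequence of the triple $(\overline{E}_{\half},S^{d-1}\times X,\ast\times X)$; both reduce to showing that the restriction $\oH^*(\overline{E}_{\half}, \ast\times X)\to\oH^*(S^{d-1}\times X, \ast\times X)$ vanishes via the fibrewise Stokes argument. For the vanishing of cup products on $\ker(\res)$ you do take a genuinely different route: you lift $a\in\ker(\res)$ to $\tilde{a}\in\oH^*(\overline{E},\overline{E}_{\half})$ and kill $\tilde{a}\cup b$ by excising to $D^d\times X$, where $b$ restricts into the vanishing group $\oH^*(D^d\times X,\ast\times X)$---this neatly reuses the same vanishing already exploited for surjectivity and has a pleasant geometric flavour. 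The paper argues more abstractly: after identifying $\oH^*(\overline{E},\ast\times X)\cong\oH^*(\overline{E}_{\half},S^{d-1}\times X)$, it notes that $\ker(\res)$ is the image of the connecting homomorphism of the triple (equivalently, consists of classes trivial in absolute cohomology $\oH^*(\overline{E}_{\half})$) and cites the general cochain-level fact that relative cup products with such classes vanish. Two minor quibbles: your closing worry about identifying a ``Mayer--Vietoris connecting map'' with fibrewise integration is misplaced---the map you actually analyse is the restriction $j^*$ followed by the K\"unneth projection, not a connecting homomorphism, so no delicate compatibility between boundary maps needs checking; and ``hence $a\cup b=0$ by injectivity'' should read ``hence $a\cup b=0$, being the image of $\tilde{a}\cup b$''---injectivity of $\oH^*(\overline{E},\overline{E}_{\half})\to\oH^*(\overline{E},\ast\times X)$ is not actually used for that final step.
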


\begin{proof}To show surjectivity of $\res$ and thus of $\res^{\otimes 2}$, since $\oH^*(\overline{E},\ast\times X)\cong \oH^*(\overline{E},D^d\times X)\cong \oH^*(\overline{E}_{\half},S^{d-1}\times X)$ by excision, it suffices by the long exact sequence of a triple to show that the upper horizontal map in the commutative diagram
\[
	\begin{tikzcd}[row sep=0.5cm]
		\oH^*(\overline{E}_{\half},\ast\times X)\rar{\res} \arrow["\res",d,swap]&\oH^*(S^{d-1}\times X,\ast\times X)\arrow["\res",d,swap]\arrow[dr,"\cong"]&[10pt]\\
		\oH^*(\overline{E}_{\half})\rar{\res}&\oH^*(S^{d-1}\times X)\arrow[r,"(-)/{[S^{d-1}]}",swap]&\oH^{*-(d-1)}(X)
	\end{tikzcd}
\]
is zero. Thinking of the slant product as fibrewise integration along the trivial bundle, the bottom composition is trivial by the ``fibrewise Stokes theorem'' (a direct consequence of the construction of fibrewise integration via the Serre spectral sequence), so the upper horizontal map is trivial as well. Using the surjectivity, the existence of the dashed map in \eqref{equ:constructed-decomposed-kappa} follows once we show that the kernel of $\res^{\otimes 2}$ has trivial cup-products. Identifying  $\oH^*(\overline{E},\ast\times X)$ with $\oH^*(\overline{E}_{\half},S^{d-1}\times X)$ as above, this kernel is spanned by elements of the form $x\otimes \delta(y)$ and $\delta(y)\otimes x$ where $\delta\colon \oH^{*-1}(S^{d-1}\times X,*\times X)\ra \oH^*(\overline{E}_{\half},S^{d-1}\times X)$ is the connecting homomorphism, so the claim follows from the general fact that cup products of two relative cohomology classes vanish if one is in the image of the connecting homomorphism or, equivalently,  trivial in absolute cohomology.
\end{proof}

\begin{rem}Going through the construction, one sees that $\kappa_{(-)\otimes(-)}(\pi)\colon \oH^*(B,\ast)^{ \otimes 2}\ra \oH^{*-d}(X)$ descends to a morphism on $\oH^*(B,\ast)\otimes_{\oH^*(B,\ast)}\oH^*(B,\ast)$. Note that as $R\coloneq\oH^*(B,\ast)$ does not have a unit unless it is trivial, $R\otimes_RR$ is typically not isomorphic to $R$.
\end{rem}
 
\begin{rem}\label{rem:formula-kappa-trivial-bundle}For later applications, we record a formula for the pullback of the class $\kappa_{c\otimes c'}\in\oH^{*-d}(\Map_{\half\partial}(M,B)/\hAut^+_\partial(M))$ for $\smash{c\otimes c'\in\oH^{*}(B,\ast)^{\otimes 2}}$ along the quotient map $\Map_{\half\partial}(M,B)\ra \Map_{\half\partial}(M,B)/\hAut^+_\partial(M)$, for simplicity in the case $\bfk=\bfQ$. The formula involves the map \[\smash{\ev_\ell\colon (M\cup_{\half\partial}(-M))\times \Map_{\half\partial}(M,B)\lra B}\] given by the evaluation map on $M$ and $(\pr_1\circ \ell)$ on $-M$, a choice of a homogenous basis $(b_i)_i\subset{\widetilde{\oH}_*(M\cup_{\half\partial}(-M);\bfQ)}$, and its dual basis ${(b^\vee_i)_i}\subset \widetilde{\oH}^*(M\cup_{\half\partial}(-M))$. In these terms, viewing $\smash{\widetilde{\oH}_*(M\cup_{\half\partial}(-M);\bfQ)}$ as subspace of $\smash{\widetilde{\oH}_*(D(M);\bfQ)}$ via the inclusion $M\cup_{\half\partial}(-M)\subset (M\cup_{\half\partial}(-M)\cup_{S^{d-1}}D^d\cong D(M)$, we have the following identity in $\oH^{*-d}(\Map_{\half\partial}(M,B);\bfQ)$
\[\textstyle{{\kappa_{c\otimes c'}=\sum_{\{(i,j)\colon |b_i|+|b_j|=d\}}(-1)^{|b_j||c|-d}\langle b^\vee_i\cup b^\vee_j,[D(M)]\rangle\cdot (\ev_\ell^*(c)/b_i\cup \ev_\ell^*(c')/b_j)}},\]
 where $-/-$ denotes the slant product. This follows from a straight-forward check, using that fibre integration along a trivial bundle is given by the slant product with the fundamental class of the fibre. Moreover, if $\partial M\cong S^{d-1}$, so $\half\partial M\cong D^{d-1}\simeq\ast$ and $D(M)\cong \overline{M}\sharp (-\overline{M})$ for $\smash{\overline{M}\coloneq M\cup_{S^{d-1}}D^d}$, the above formula can be rewritten in terms of a choice of homogenous basis  $(a_i)_i\subset\smash{\widetilde{H}_*(M;\bfQ)}$ and the evaluation map $\ev\colon M\times \Map_{\ast}(M,B)\ra B$ as \[\smash{\textstyle{\kappa_{c\otimes c'}=\sum_{\{(i,j)\colon |a_i|+|a_j|=d\}}(-1)^{|a_j||c|-d}\langle a^\vee_i\cup a^\vee_j,[\overline{M}]\rangle\cdot (\ev^*(c)/a_i\cup \ev^*(c')/a_j}}.\]
\end{rem}

\subsection{Modifying vertical tangent bundles} \label{sec:modifying-vertical-tangent} Recall that \cref{bigthm:pullback} in its variant from \eqref{equ:pullback-squ-single-comp-mod-homeo-pre} said that for any smoothable compact $d$-manifold $M$ of dimension $d\ge5$ with $M$ and $\partial M$ $2$-connected, a choice of disc $D^{d-1} \subset \partial M$ in its boundary, an oriented tangential structure $\theta \colon B \to \BSTop(d)$ such that $B$ is nilpotent and $\theta$ factors through $\BSTop(d-2) \to \BSTop(d)$ after rationalisation, and a $\theta$-structure $\ell\colon M\ra B$ on $TM$, the composition
\begin{equation}\label{equ:pullback-squ-single-comp-mod-homeo}
	\BHomeo^\theta_{\partial}(M;\ell_{\half\partial})_{{\fQ},\ell}\lra
	\big(\Map_{\half\partial}(M,B_\bfQ){/}\Aut_{\partial}(M)\big)_{\ell_\bfQ}\lra \BAut^{\ell_\bfQ}_{\partial}(M)
\end{equation}
can be completed to a pullback square. The action of $\Aut_{\partial}(M)$ on the mapping space is by precomposition, so it commutes with the action by postcomposition of the endomorphisms $\End_{\half\partial}(B_\bfQ)$ of $(\ell_{\half})_\bfQ\colon \half\partial M\ra B_\bfQ$ in the undercategory $\cS_{\half\partial M/}$. Restricted to the components \[\smash{\End^{\ell_\bfQ}_{\half\partial}(B_\bfQ)\subset \End_{\half\partial}(B_\bfQ)}\] that stabilise $\smash{[\ell_\bfQ]\in\pi_0(\Map_{\half\partial}(M,B_\bfQ))}$, the action restricts to an action on the middle space of \eqref{equ:pullback-squ-single-comp-mod-homeo} in $\cS_{/\smash{\BAut^{\ell_\bfQ}_{\partial}(M)}}$. The action can also be described in terms what it classifies: the middle space in \eqref{equ:pullback-squ-single-comp-mod-homeo} classifies triples $\smash{(\pi,\ell^\pi,h_{\half}^\pi)}$ as in \cref{sec:tautological-classes} (set $B=B_\bfQ$ and $\ell=\ell_\bfQ$) and $\smash{\chi\in \End^{\ell_\bfQ}_{\half\partial}(B_\bfQ)}$ acts by fixing $\pi$ and postcomposing $\smash{\ell^\pi}$ and $\smash{h_{\half\partial}^\pi}$ with $\chi$. Since \eqref{equ:pullback-squ-single-comp-mod-homeo} can be completed to a pullback, we may use the map $\End_{\cS_{/W}}(Z)\ra \End_{\cS_{/Y}}(X)$ for pullbacks $X\simeq Z\times_W Y$ to conclude the following:
	
\begin{prop}\label{prop:actions}
Under the assumptions of \cref{bigthm:pullback}, the action of the endomorphisms $\smash{\End^{\ell_\bfQ}_{\half\partial}(B_\bfQ)}$ on the space $\smash{(\Map_{\half\partial}(M,B_\bfQ){/}\Aut_{\partial}(M))_{\ell_\bfQ}}$ by postcomposition lifts to an action on $\BHomeo^{\theta}_\partial(M;\ell_{\half})_{\fQ,\ell}$.
\end{prop}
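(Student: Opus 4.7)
The plan is to take the pullback completion of the sequence \eqref{equ:pullback-squ-single-comp-mod-homeo} provided by \cref{bigthm:pullback} (in its variant form from \cref{rem:variant}) and transport the evident $\End^{\ell_\bfQ}_{\half\partial}(B_\bfQ)$-action on the middle space across this pullback. Concretely, write
\[
	X\coloneq \BHomeo^\theta_\partial(M;\ell_{\half\partial})_{\fQ,\ell},\quad Z\coloneq \big(\Map_{\half\partial}(M,B_\bfQ){/}\Aut_{\partial}(M)\big)_{\ell_\bfQ},\quad W\coloneq \BAut^{\ell_\bfQ}_{\partial}(M),
\]
so that by \cref{bigthm:pullback} there is an auxiliary space $Y$ fitting into a homotopy pullback square $X\simeq Z\times_W Y$ involving the two maps in \eqref{equ:pullback-squ-single-comp-mod-homeo}.

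The first step is to upgrade the postcomposition $E_1$-action of $\End^{\ell_\bfQ}_{\half\partial}(B_\bfQ)$ on $Z$ to an action in the overcategory $\cS_{/W}$. Since precomposition (the $\Aut_\partial(M)$-action defining the quotient) and postcomposition (the $\End^{\ell_\bfQ}_{\half\partial}(B_\bfQ)$-action on the mapping space) commute, the postcomposition action passes to the quotient and is by construction trivial on the base $W=\BAut^{\ell_\bfQ}_\partial(M)$, which depends only on $M$ and the homotopy class of $\ell_\bfQ$ (stabilised, by definition, by the relevant components). This produces an $E_1$-map
\[
	\End^{\ell_\bfQ}_{\half\partial}(B_\bfQ)\lra \End_{\cS_{/W}}(Z).
\]

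The second step is purely formal and uses the hint: for a homotopy pullback $X\simeq Z\times_W Y$ the assignment $\phi\mapsto \phi\times_W\id_Y$ defines a morphism of $E_1$-monoids $\End_{\cS_{/W}}(Z)\ra \End_{\cS_{/Y}}(X)$, and forgetting the structure over $Y$ yields a further morphism to $\End(X)$. Composing with the map from the previous paragraph gives the desired morphism of $E_1$-groups
\[
	\End^{\ell_\bfQ}_{\half\partial}(B_\bfQ)\lra \End_{\cS_{/W}}(Z)\lra \End_{\cS_{/Y}}(X)\lra \End(X),
\]
which, by construction, lifts the postcomposition action on $Z$. I do not expect any substantive obstacle: once \cref{bigthm:pullback} is in hand the construction is a direct application of the functoriality of fibre products, and the only thing to verify is that the action on $Z$ genuinely lives over $W$, which is immediate since pre- and postcomposition commute on the level of mapping spaces.
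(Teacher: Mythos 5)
Your proposal is correct and follows essentially the same route as the paper: first observe that the postcomposition action of $\End^{\ell_\bfQ}_{\half\partial}(B_\bfQ)$ commutes with the precomposition $\Aut_\partial(M)$-action and stabilises the relevant components, so it lives over $W=\BAut^{\ell_\bfQ}_\partial(M)$, and then apply the formal $\End_{\cS_{/W}}(Z)\ra\End_{\cS_{/Y}}(X)$ map coming from the pullback square supplied by \cref{bigthm:pullback} (via \cref{rem:variant}). The paper adds a brief remark describing the action in classifying terms, but the logical content is the same.
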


\begin{rem}Roughly speaking and ignoring boundary conditions, \cref{prop:actions} allows us to modify vertical tangent bundles of fibre bundles. Suppose we are given a topological $M$-bundle $\pi\colon E\ra X$ with $\theta$-structure $\ell^\pi\colon E\ra B$ on its vertical tangent bundle and a self-map $\chi\colon B_\bfQ\ra B_\bfQ$ with $\chi\circ \ell_\bfQ\simeq  \ell_\bfQ$. Then using the action we obtain a new topological $M$-bundle (in a certain rational sense), whose underlying fibration is the same as that of the original bundle but now whose rational vertical tangent bundle turned from  $(\theta \circ \ell^\pi)\colon E\ra \BSTop(d)_\bfQ$ to $(\theta \circ\chi\circ \ell^\pi)\colon E\ra \BSTop(d)_\bfQ$.\end{rem}

\subsubsection{Action on tautological classes}\label{label:action-on-kappa} Next we explain the effect of the action on $\BHomeo^{ \theta}_\partial(M;\ell_\half)_{{\fQ},\ell}$ by $\smash{\End^{\ell_\bfQ}_{\half\partial}(B_\bfQ)}$ on various characteristic classes, such as those in the image of the composition
\[\smash
	{\oH^*(B,\ast;\bfQ)^{\otimes 2}\xrightarrow{\kappa_{(-)\otimes(-)}} \oH^{\ast-d}(\Map_{\half\partial}(M,B_\bfQ){/}\Aut_{\partial}(M);\bfQ)\lra\oH^\ast(\BHomeo^{ \theta}_\partial(M;\ell_\half)_{{\fQ},\ell};\bfQ)}
\]
of the construction in \cref{sec:tautological-classes} applied to $B=B_\bfQ$ and $\ell=\ell_\bfQ$, with the map induced by the second map in \eqref{equ:pullback-squ-single-comp-mod-homeo}. By construction, both maps are equivariant with respect to the action of $\smash{\End^{\ell_\bfQ}_{\half\partial}(B_\bfQ)}$, so for $c,c' \in \oH^*(B,\ast;\bfQ)$ and $\chi \in \smash{\End^{\ell_\bfQ}_{\half\partial}(B_\bfQ)}$ we have \[\chi^*\kappa_{c \otimes c'} = \kappa_{(\chi^*c) \otimes (\chi^*c')}\]
in the rational cohomology of  $\smash{\Map_{\half\partial}(M,B_\bfQ){/}\Aut_{\partial}(M)}$ and thus also in that of $\BHomeo^{\theta}_\partial(M;\ell_\half
)_{\ell,{\fQ}}$. 

Another source of characteristic classes for $\smash{\BHomeo^{ \theta}_\partial(M;\ell_\half)_{{\fQ},\ell}}$ is the composition
\[\hspace{-.1cm}\smash
{\oH^*(\BSTop(d);\bfQ)\xrightarrow{\kappa_{(-)}} \oH^{\ast-d}(\smash{\Map_{\partial}(M,\BSTop(d)_\bfQ){/}\Aut_{\partial}(M)};\bfQ)\ra\oH^{\ast-d}(\BHomeo^{ \theta}_\partial(M;\ell_\half)_{\fQ,\ell};\bfQ)}
\]
where the first map is an instance of the first construction from \cref{sec:tautological-classes} and the second map is induced by the fibrewise rationalisation over $\BAut_\partial(M)$ of the forgetful composition
\begin{equation}\label{equ:forgetful-compo}
	\BHomeo^{ \theta}_\partial(M;\ell_\half)_{\ell}\lra \BHomeo_\partial(M)\lra \Map_{\partial}(M,\BSTop(d)){/}\Aut_{\partial}(M)
\end{equation} 
whose second map is induced by taking  vertical tangent bundles. As a result of \eqref{equ:compatibility}, the two sources of classes fit into a commutative diagram
\[\begin{tikzcd}[row sep=-0.15cm]
	&\arrow[dl,"\theta^\ast\otimes\theta^\ast",swap, bend right=5]\oH^*(\BSTop(d),\ast;\bfQ)^{\otimes 2} \arrow[dr,"\cup", bend left=5]&\\
	\oH^*(B,\ast;\bfQ)^{\otimes 2} \arrow[dr,"\kappa_{(-)\otimes(-)}",bend right=5,swap]&
	&\oH^*(\BSTop(d);\bfQ)\arrow[dl,"\kappa_{(-)}",bend left=5]\\
	&\oH^{*-d}(\BHomeo^{ \theta}_\partial(M;\ell_\half)_{\ell,{\fQ}};\bfQ)&
\end{tikzcd}\]
which gives a procedure to compute the action of $\chi\in \smash{\End^{\ell_\bfQ}_{\half\partial}(B_\bfQ)}$-action on those classes $c\in \oH^{*}(\BSTop(d);\bfQ) $ that are decomposable: for $c=\sum c_i \cup c_i'$ with $c_i,c_i'\in \oH^{*>0}(\BSTop(d);\bfQ)$, we have 	\begin{equation}\label{equ:decom-action}\textstyle{\chi^* \kappa_c = \sum \kappa_{(\chi^* \theta^* c_i) \otimes (\chi^* \theta^* c'_i)}}\in\oH^{*-d}(\smash{\BHomeo^{\theta}_\partial(M;\ell_\half)_{{\fQ},\ell}}; \bfQ).\end{equation}

\begin{ex}\label{ex:action-on-stable-classes}The previous discussion allows us in particular to determine the action of $\smash{\End^{\ell_\bfQ}_{\half\partial}(B_\bfQ)}$ on all classes $\kappa_c\in  \smash{\oH^{*-d}(\BHomeo^{ \theta}_\partial(M;\ell_{\half})_{\bfQ,\ell};\bfQ)}$ for which $c\in\oH^{*}(\BSTop(d);\bfQ)$ pulls back along the stabilisation map $\BSTop(d) \to \BSTop$. To explain how, recall that the ring $\oH^*(\BSTop;\bfQ)\cong\bfQ[\cL_1,\cL_2,\ldots]$ is polynomial on the Hirzebruch $L$-classes $\cL_i\in \oH^{4i}(\BSTop;\bfQ)$. If $c$ is a sum of decomposable monomials in the classes $\cL_i$ of positive degrees, then the action is described by \eqref{equ:decom-action}. Indecomposable classes $\kappa_{\cL_i}\in \oH^{4i-d}(\BHomeo^{ \theta}_\partial(M;\ell_{\half})_{{\fQ},\ell};\bfQ)$ on the other hand are acted upon trivially, since these classes pulls back from $\BAut_\partial(M)$. To see the latter claim, note that $\kappa_{\cL_i}$ is by construction pulled back along the composition \eqref{equ:forgetful-compo}, fibrewise rationalised over $\BAut_\partial(M)$. By the family signature theorem (see e.g.\,\cite[Theorem 2.6]{RWfamilysignature}), the class $\kappa_{\cL_i}\in \oH^{4i-d}(\BHomeo_\partial(M)_{\fQ};\bfQ)$ is pulled back from $\BAut_\partial(M)$, so the same holds for its image in  $\oH^{4i-d}(\BHomeo^{ \theta}_\partial(M;\ell_{\half})_{\fQ,\ell};\bfQ)$.
 \end{ex}
 
\begin{rem}\label{rem:subtlety-summand} There is a subtlety in the final part in \cref{ex:action-on-stable-classes} that we have not yet addressed. The family signature theorem applies to $\kappa_{\cL_i}$ viewed as a class in $\oH^{4i-d}(\BHomeo_\partial(M);\bfQ)$, not when considered as a class in the cohomology $\oH^{4i-d}(\BHomeo_\partial(M)_{\fQ};\bfQ)$ of the fibrewise rationalisation. A priori these groups differ since we have not yet shown that $\BHomeo_\partial(M)$ is fibrewise nilpotent over $\BAut_\partial(M)$. There are several ways to circumvent this issue, for example one could rely on \cref{rem:nilpotency-issue} \ref{enum:future-nilpotency} for the missing fibrewise nilpotency claim. Alternatively, one can argue along the lines of \cref{rem:nilpotency-issue} \ref{enum:no-nilpotency-needed} as follows: it suffices to show that $\kappa_{\cL_i}$ lies in the canonical $\oH^{4i-d}(\BHomeo_\partial(M);\bfQ)$-summand of $\oH^{4i-d}(\BHomeo_\partial(M)_{\fQ};\bfQ)$ (see \cref{sec:non-nilpotent-completion}), which by the naturality of this summand follows from the same statement for any of the spaces in the composition \eqref{equ:forgetful-compo} along which $\kappa_{\cL_i}$ pulls back. The space $\Map_{\partial}(M,\BSTop(d)){/}\Aut_{\partial}(M)$ is fibrewise nilpotent over $\BAut_\partial(M)$, so its cohomology consists entirely of the summand in question.
 \end{rem}

\subsection{Pontryagin--Weiss classes}\label{sec:PW-classes}

One of the main application of \cref{bigthm:pullback} we carry out in this work is the following result. It is a reformulation of  \cref{bigthm:geometric}.

\begin{thm}\label{thm:PW-classes}
For all $d\ge6$ and $k\ge1$, the $k$th rational Pontryagin class $p_k\in \oH^{4k}(\BSTop(d);\bfQ)$ is nontrivial and evaluates nontrivially on the image of $\pi_{4k}(\BSTop(d))$ under the Hurewicz morphism.
\end{thm}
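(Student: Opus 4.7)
The plan is to use the pullback decomposition of \cref{bigthm:pullback} together with the endomorphism action from \cref{prop:actions}, following the general strategy Weiss pioneered in \cite{WeissDalian} recast in the framework of our pullback decomposition. Concretely, the goal is to construct, for each $d\ge 6$ and $k\ge 1$, a topological $\bfR^d$-bundle over $S^{4k}$ on which the $k$th rational Pontryagin class is non-zero; this immediately yields both conclusions of the theorem (the statement about the Hurewicz image is precisely the non-vanishing of $p_k$ on the bundle over $S^{4k}$).

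First I would fix a smoothable compact $2$-connected $d$-manifold $M$ with $\partial M\cong S^{d-1}$, for concreteness $M=(S^{m}\times S^{d-m})\setminus\interior(D^d)$ for some $3\le m\le d-3$, together with a disc $D^{d-1}\subset\partial M$ and the oriented tangential structure
\[
	\theta\colon B\coloneqq\BSTop(d-2)_\bfQ\times_{\BSTop(d)_\bfQ}\BSTop(d)\lra\BSTop(d).
\]
This $B$ is nilpotent, $\theta$ factors rationally through $\BSTop(d-2)$, and $\theta^\ast p_k$ is a nontrivial class in $\oH^{4k}(B;\bfQ)$. After choosing a $\theta$-structure $\ell\colon M\to B$ and its restriction $\ell_{\half\partial}$, \cref{bigthm:pullback} in the form \eqref{equ:pullback-squ-single-comp-mod-homeo-pre} exhibits
\[
	\BHomeo^\theta_\partial(M;\ell_{\half\partial})_{\fQ,\ell}\lra \bigl(\Map_{\half\partial}(M,B_\bfQ)/\Aut_\partial(M)\bigr)_{\ell_\bfQ}\lra \BAut^{\ell_\bfQ}_\partial(M)
\]
as a homotopy pullback.

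The heart of the argument is the detection of $p_k$ via the endomorphism action. By \eqref{equ:decom-action}, any $\chi\in \End^{\ell_\bfQ}_{\half\partial}(B_\bfQ)$ scales $\kappa_{\theta^\ast p_k\otimes \theta^\ast p_k}$ by the scalar with which $\chi^\ast$ acts on $(\theta^\ast p_k)^{\otimes 2}$, while acting trivially on every class pulled back from $\BAut^{\ell_\bfQ}_\partial(M)$. Since $\BSTop(d-2)_\bfQ$ admits self-maps scaling $p_k$ by any $\lambda\in\bfQ^\times$ (it is rationally a product of Eilenberg--MacLane spaces in which the image of $p_k$ appears as a primitive generator), the class $\kappa_{\theta^\ast p_k\otimes\theta^\ast p_k}$ must be non-trivial on $\BHomeo^\theta_\partial(M;\ell_{\half\partial})_{\fQ,\ell}$ and cannot be pulled back from $\BAut^{\ell_\bfQ}_\partial(M)$. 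Using the pullback property, such a class is witnessed by a rational spherical family in the middle space $\Map_{\half\partial}(M,B_\bfQ)$ constructed from the class $[\theta^\ast p_k]\in \pi_{4k}(B_\bfQ)_\bfQ$ via the natural inclusion of ``constant families'' $B_\bfQ\to\Map_{\half\partial}(M,B_\bfQ)$; such a constant family projects to a nullhomotopic map in $\BAut^{\ell_\bfQ}_\partial(M)$ and hence lifts through the pullback to a rational $\pi_{4k}$-class in $\BHomeo^\theta_\partial(M;\ell_{\half\partial})_{\fQ,\ell}$.

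The hard part will then be extracting from this a genuine $\bfR^d$-bundle over $S^{4k}$ with non-zero $p_k$. Restricting the vertical tangent bundle of the lifted family along a section of the trivial $M$-bundle over $S^{4k}$ (e.g.\ the interior point used in the evaluation construction above) produces a candidate $\bfR^d$-bundle over $S^{4k}$, and the non-vanishing of $\kappa_{\theta^\ast p_k\otimes\theta^\ast p_k}$ combined with the explicit cup-product-through-fibre-integration formula of \cref{rem:formula-kappa-trivial-bundle} forces the $p_k$ of this vertical tangent bundle to pair non-trivially with a generator of $\oH_{4k}(S^{4k};\bfQ)$. The main technical obstacle is navigating the fibrewise rationalisation conventions (cf.\ \cref{rem:nilpotency-issue} and \cref{rem:subtlety-summand}) to ensure that the rational lift actually corresponds to a genuine topological $\bfR^d$-bundle rather than a phantom object, and clearing denominators to pass from a rational class in $\pi_{4k}(\BSTop(d))\otimes\bfQ$ to an integer multiple realised by an honest $\bfR^d$-bundle; for this the $1$-connectedness of the Moore--Postnikov factorisation used in \eqref{equ:pullback-squ-single-comp-mod-homeo-pre} and the $2$-connectedness hypotheses that make the pullback decomposition applicable are essential.
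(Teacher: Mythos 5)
The overall blueprint you describe---pullback decomposition, endomorphism actions on the classifying space, detection via a bundle over a sphere---is the right framework, but there is a genuine logical gap at the heart of the argument. You assert that because $\chi\in\End^{\ell_\bfQ}_{\half\partial}(B_\bfQ)$ scales $\kappa_{\theta^*p_k\otimes\theta^*p_k}$ and acts trivially on classes pulled back from $\BAut^{\ell_\bfQ}_\partial(M)$, the class ``must be non-trivial''. This does not follow: scaling the zero class gives zero. The only valid inference from the scaling action is the converse direction---\emph{if} the class is nonzero, it cannot be pulled back from $\BAut$. You have not supplied any independent source of nontriviality. The paper's proof obtains this from Galatius--Randal-Williams' stable homology theorem combined with the parametrised Pontryagin--Thom construction: one shows $\kappa_{\cL_k}\neq 0$ in $\oH^*(\BDiff^{\theta^{\sm}}_\partial(W_{g,1};\ell_\partial)_\ell;\bfQ)$ for $g\gg0$ by lifting along \eqref{equ:detect-smooth}, and then uses the \emph{constant} endomorphism $\chi$ together with the family signature theorem ($\kappa_{\cL_k}$ is pulled back from $\BAut$, so $\chi^*$ fixes it while killing all decomposable $\kappa_c$'s) to conclude $\kappa_{p_k}\neq 0$. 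Without such an input, nothing forces the tautological class you consider to be nonzero, so the argument does not start.

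There are two further obstructions. First, your tangential structure $\theta\colon B\simeq_\bfQ \BSTop(d-2)\to\BSTop(d)$ does \emph{not} kill $p_k$, whereas the mechanism the paper uses to pass from $\kappa$-class nontriviality to a bundle over $S^{4k}$ (\cref{lem:hurewicz-nontriviality}) requires $\theta^*p_k=0$ so that $p_k$ transgresses to a class on $\fib(\theta)$ detectable on homotopy. The paper carefully chooses $\theta=\tau_{>3}\BSO(4)\to\BSTop(6)$ precisely because $p_k$ vanishes there for $k\ge 3$ while $p_1^k$ and hence $\cL_k$ survive (Berglund--Bergstr\"om); your choice destroys condition \ref{enum:hurewicz-nontriviality-i}. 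Second, the ``constant families'' you propose to witness the class cannot detect $\kappa_{c\otimes c'}$ at all: by the formula in \cref{rem:formula-kappa-trivial-bundle}, each term involves $\ev^*(c)/a_i$ for $a_i\in\widetilde{\oH}_*(M;\bfQ)$ of positive degree, and on the constant-maps subspace $B_\bfQ\subset\Map_{\half\partial}(M,B_\bfQ)$ the evaluation is the second projection, so every such slant product vanishes. The bridge from tautological-class nontriviality to a spherical bundle is exactly what \cref{lem:hurewicz-nontriviality} formalises, via the cup-product decomposition and the suspension $\Sigma^{d-1}(X_+)\to\fib(\theta)$, and cannot be replaced by the constant-family inclusion.
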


In addition to \cref{bigthm:pullback}, the proof of \cref{thm:PW-classes} relies on the work of Galatius and Randal-Williams on stable moduli spaces of even-dimensional manifolds \cite{GRWII} and the following auxiliary lemma, inspired by \cite[Section 6]{WeissDalian}.

\begin{lem}\label{lem:hurewicz-nontriviality}Fix a cohomology class $c\in \oH^k(\BSTop(d);\bfQ)$ of degree $k\ge 2$. If there is a tangential structure $\theta\colon B\ra \BSTop(d)$ with $B$ nilpotent, a compact $d$-manifold $M$ with $d\ge3$, an embedded disc $D^{d-1}\subset \partial M$, and a $\theta$-structure $\ell$ on $M$ such that
\begin{enumerate}[leftmargin=*]
	\item \label{enum:hurewicz-nontriviality-i}the pullback of $c$ along $\theta$ vanishes,
	\item  \label{enum:hurewicz-nontriviality-i.5}the map $\theta\colon B\ra \BSTop(d)$ is injective on $\pi_{k-1}(-)_\bfQ$, and
	\item \label{enum:hurewicz-nontriviality-ii}the class $\kappa_c\in \oH^{k-d}(\Map_\partial(M,\BSTop(d)){/} \Aut_\partial(M);\bfQ)$ is nontrivial when pulled back to
	\[\frac{\Map_\partial(M,\BSTop(d)_\bfQ)\times_{\Map_{\half\partial}(M,\BSTop(d)_\bfQ) }\Map_{\half\partial}(M,B_\bfQ)}{\Aut_\partial(M)},\]	
	
\end{enumerate}
then the class $c$ evaluates nontrivially on $\pi_k(\BSTop(d))$.
\end{lem}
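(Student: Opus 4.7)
The argument proceeds rationally throughout, which suffices since $c$ detects a class in $\pi_k(\BSTop(d))$ iff its rationalisation detects a class in $\pi_k(\BSTop(d))\otimes\bfQ = \pi_k(\BSTop(d)_\bfQ)$.

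First, I would unpack condition \ref{enum:hurewicz-nontriviality-ii} geometrically. Writing $Q$ for the orbit space in the statement, the nontriviality of the pullback of $\kappa_c$ to $Q$ gives, after a rational-homotopy-theoretic reduction, a spherical test class $\phi\colon S^{k-d}_\bfQ \to Q$ on which $\kappa_c$ is nontrivial. Such a class should exist because $Q$ fibres over $\BAut_\partial(M)$ with fibres built as homotopy pullbacks of rational mapping spaces, and cohomology of such fibrewise rationally nilpotent spaces is detected by rational spherical classes (e.g.\,by inductively running the Bousfield--Kan spectral sequences of \cref{sec:bousfield-kan-ss} along the Postnikov towers of $B_\bfQ$ and $\BSTop(d)_\bfQ$). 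By the universal property, $\phi$ is classified by an $M$-fibration $\pi\colon E \to S^{k-d}_\bfQ$, a rationalised vertical tangent classifier $v\colon E \to \BSTop(d)_\bfQ$ restricting to $TM$ on $\partial M\times S^{k-d}_\bfQ$, and a $\theta$-lift of $v$ over $\half\partial M \times S^{k-d}_\bfQ$, with the defining property that $\int_{\bar\pi}\bar v^\ast c$ is nonzero in $\oH^{k-d}(S^{k-d}_\bfQ;\bfQ)\cong\bfQ$.

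Next, condition \ref{enum:hurewicz-nontriviality-i} yields $v^\ast c\in\oH^k(E;\bfQ)$ vanishing on the sub-bundle $\half\partial M \times S^{k-d}_\bfQ$ (where $v$ factors rationally through $\theta$, so $v^\ast c = v^\ast\theta^\ast c = 0$), and hence defines a relative class in $\oH^k(E,\half\partial M\times S^{k-d}_\bfQ;\bfQ)$. Analysing the Serre spectral sequence of $\pi$, the fact that $\phi^\ast\kappa_c \neq 0$ forces a nonvanishing $E_\infty^{k-d,d}$-term; via the relative Künneth decomposition, this identifies a nontrivial class $\beta$ in the rational homotopy group $\pi_{k-1}(\fib(\theta_\bfQ))$. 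Geometrically, $\beta$ is the obstruction to rigidifying the partial $\theta$-structure on $\half\partial M \times S^{k-d}_\bfQ$ to a $\theta$-structure on all of $E$ that is compatible with $v$.

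The key step, and the main obstacle, is to convert $\beta$ into a spherical class in $\BSTop(d)_\bfQ$ on which $c$ is detected. Here condition \ref{enum:hurewicz-nontriviality-i.5} enters: the long exact sequence of the rationalised fibration $\fib(\theta_\bfQ) \to B_\bfQ \to \BSTop(d)_\bfQ$, combined with the injectivity of $\theta_\ast$ on $\pi_{k-1}(-)_\bfQ$, yields a surjection $\pi_k(\BSTop(d))_\bfQ \twoheadrightarrow \pi_{k-1}(\fib(\theta_\bfQ))$. Lifting $\beta$ through this surjection produces a class $\alpha\in\pi_k(\BSTop(d))_\bfQ$. Finally, unwinding the fibre-integration definition of $\kappa_c$ from \cref{sec:tautological-classes} and the naturality of the transgression in the above fibre sequence, the pairing $\langle c,\alpha\rangle$ agrees, up to a nonzero rational scalar coming from the pairing with the fundamental class of $(M,\half\partial M)$, with $\phi^\ast\kappa_c \neq 0$, proving the lemma.
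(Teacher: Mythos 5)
Your overall scheme is the right one: use condition (i) to transgress $c$ to a class $\widetilde c\in\oH^{k-1}(\fib(\theta);\bfQ)$, show $\widetilde c$ is detected on $\pi_{k-1}(\fib(\theta))_\bfQ$, and then use condition (ii) to conclude. But there is a genuine gap at the very start, in the step where you produce a ``spherical test class'' $\phi\colon S^{k-d}_\bfQ\to Q$ detecting $\kappa_c$. The assertion that cohomology of the orbit space $Q$ ``is detected by rational spherical classes'' is false in general: cohomological nontriviality does not imply detection on rational spheres. A concrete counterexample to the general principle you invoke: $\alpha\cup\beta\in\oH^4(K(\bfQ,2)\times K(\bfQ,2);\bfQ)$ is nontrivial but pulls back to zero along any map $S^4_\bfQ\to K(\bfQ,2)\times K(\bfQ,2)$, since $\phi^*(\alpha\cup\beta)=\phi^*\alpha\cup\phi^*\beta$ and $\oH^2(S^4_\bfQ)=0$. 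The space $Q$ here is not $1$-connected (it fibres over $\BAut_\partial(M)$, whose $\pi_1$ can be arbitrary), nor does being fibrewise rationally nilpotent over $\BAut_\partial(M)$ repair the situation: the Bousfield--Kan argument you gesture at would only decompose homotopy groups, not convert cohomological detection into spherical detection. If this step were available, the lemma would be nearly vacuous and none of the particular structure of the conclusion (``detected on the Hurewicz image'') would need an argument.

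The paper's proof is engineered precisely to circumvent this. It does \emph{not} assume the test space is a rational sphere: it picks an arbitrary map from a finite-rational-type space $X$ into the pullback, detecting $\kappa_c$. The structure of the fibre-integration construction from Section 5.1 then yields a map $\Sigma^{d-1}(X_+)\to\fib(\theta)$ pulling back $\widetilde c$ nontrivially — the $(d-1)$-fold suspension $\Sigma^{d-1}(X_+)$ arises as the cofibre of $*\times X\hookrightarrow(D^{d-1}\cup_{\partial D^{d-1}}D^{d-1})\times X$, and this is the key geometric input. Since $d\ge3$, this suspension is $1$-connected and of finite rational type, so its \emph{rational Hurewicz map is surjective}, giving the spherical detection for free. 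That suspension step is the missing idea in your approach; without it, you cannot pass from cohomological nontriviality of $\kappa_c$ to nontriviality on homotopy groups. (Your later steps — the Serre spectral sequence identification of $\beta$ and the compatibility of pairings via transgression — are also quite vague, but they could plausibly be filled in; the spherical-detection claim cannot.)
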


\begin{proof}
Represent $c$ by a map $c\colon \BSTop(d)\ra K(\bfQ,k)$ and use \ref{enum:hurewicz-nontriviality-i} to choose a nullhomotopy of $(c\circ\theta)$ to get a class $\widetilde{c}\in \oH^{k-1}(\fib(\theta);\bfQ)$ that transgresses to $c$. By \ref{enum:hurewicz-nontriviality-i.5}, the map $\pi_{k}(\BSTop(d))_\bfQ \to \pi_{k-1}(\fib(\theta))_\bfQ$ is surjective, so it suffices to show that $\widetilde{c}$ is detected on $\pi_{k-1}(\fib(\theta))$. To do so, choose a map from a space $X$ of finite rational type into the pullback in \ref{enum:hurewicz-nontriviality-ii} such that the pullback of $\kappa_c$ along this map is nontrivial. This map classifies a relative fibration $\pi\colon (E,\partial M\times X)\ra X$ with fibre $(M,\partial M)$, together with a map $\ell^\pi\colon E\ra B_\bfQ$, a homotopy $\smash{h^{\pi}_{\half}}$ of maps  $ \half\partial M\times X\ra B_\bfQ$ between $\ell^\pi|_{\half\partial M\times X}$ and $\ell|_{\half\partial M}\circ \pr_1$, and an extension of the postcomposition of this homotopy with $\theta$ to a homotopy of maps $ \partial M\times X\ra \BSTop(d)_\bfQ$ between $\theta \circ\ell^\pi|_{ \partial M\times X}$ and $\theta\circ \ell|_{\partial M}\circ \pr_1$. From the construction in \cref{sec:tautological-classes}, we obtain a commutative diagram
\[
\begin{tikzcd}[column sep=0.5cm, row sep=0.5cm]
*\times X \rar{\subset}\arrow[d,equal]&  (D^{d-1}\cup_{\partial D^{d-1}}D^{d-1})\times X\rar\dar{\subset}& E\cup_{\half\partial M\times X}(-M\times X)\rar\dar{\subset}&B_\bfQ\dar{\theta}\rar&*\dar\\
*\times X \rar{\subset}& D^{d}\times X\rar& E\cup_{\partial M\times X}(-M\times X)\rar&\BSTop(d)_\bfQ\rar{c}&K(\bfQ,k),
\end{tikzcd}
\]
together with a compatible nullhomotopy of the composition $*\times X\ra B_\bfQ$. By construction the class $\widetilde{c}\in \oH^{k-1}(\fib(\theta);\bfQ)$ is represented by the map on vertical fibres of the rightmost square. Now recall that $\kappa_c\in\oH^{k-d}(X;\bfQ)$ is obtained from the class in $\oH^k(E\cup_{\partial M\times X}(-M\times X);\bfQ)$ by integration along the fibres, so in particular the latter is nontrivial and hence also the relative class given by the composition of the two rightmost squares in the diagram is nontrivial. As the second square is a pushout, also the relative class given by the composition of the three rightmost squares has to be nontrivial. Using the nullhomotopy of $*\times X\ra B_\bfQ$ and taking cofibres, we obtain a diagram
\[
\begin{tikzcd}[column sep=0.5cm, row sep=0.3cm]
\Sigma^{d-1} (X_+)\simeq \cofib\big( *\times X\ra  (D^{d-1}\cup_{\partial D^{d-1}}D^{d-1})\times X\big)\rar\dar&B_\bfQ\dar{\theta}\rar&*\dar\\
\ast\simeq \cofib\big(*\times X\ra  D^d\times X\big)\rar&\BSTop(d)_\bfQ\rar{c}&K(\bfQ,k)
\end{tikzcd}
\]
whose outer rectangle is a nontrivial relative cohomology class. Taking vertical fibres yields a map $\smash{\Sigma^{d-1}(X_+)\ra \fib(\theta)}$ that detects $\widetilde{c}\in \oH^{k-1}(\fib(\theta);\bfQ)$. As $d\ge3$, the suspension $\Sigma^{d-1}(X_+)$ is $1$-connected and of finite rational type, so its rational Hurewicz morphism is surjective. Hence the class $\widetilde{c}$ is nontrivial on $\pi_{k-1}(\Sigma^{d-1}(X_+))$, so also on $\pi_{k-1}(\fib(\theta))$ and the claim follows.
\end{proof}

\begin{proof}[Proof of \cref{thm:PW-classes}]We may assume $k\ge3$ and $d=6$: the former since $p_k$ evaluates nontrivially on $\pi_{4k}(\BSO(6))$ for $k\le 2$, so also on $\pi_{4k}(\BSTop(6))_\bfQ$, and the latter since $p_k$ pulls back along the stabilisation map $\BSTop(d)\ra \BSTop$. For $k\ge3$ and $d=6$, we show the claim by an application of \cref{lem:hurewicz-nontriviality} to $p_k\in \oH^{4k}(\BSTop(6);\bfQ)$. As tangential structure we choose the composition $\theta\colon \tau_{>3}\BSO(4) \ra \BSTop(6)$ of the $3$-connected cover $\tau_{>3}\BSO(4)\ra \BSO(4)$ followed by the stabilisation and forgetful map $\BSO(4)\ra \BSTop(6)$. This satisfies conditions \ref{enum:hurewicz-nontriviality-i} and \ref{enum:hurewicz-nontriviality-i.5} in the lemma since $p_k$ is trivial in $\BSO(4)$ as $k\ge 3$ and $\BSO(4)_\bfQ \simeq K(\bfQ,4)^{\times 2}$. As the manifold $M$ we choose $\smash{W_{g,1}\coloneq (S^3\times S^3)^{\sharp^g}\backslash \interior(D^6)}$ for some $g\ge0$ that will be specified later. The manifold $W_{g,1}$ is parallelisable, so the constant map $\ell\colon W_{g,1}\ra \tau_{>3}\BSO(4)$ can be enhanced to a $\theta$-structure $\ell$ on $W_{g,1}$. We are left to show that $\kappa_{p_k}$ is nontrivial in the pullback appearing in \ref{enum:hurewicz-nontriviality-ii}. We will show the stronger claim that the pullback of $\kappa_{p_k}$ along the map from $\BHomeo^{\theta}_{\partial}(W_{g,1};\ell_{\half})_{\ell,\fQ}$ to the pullback in  \cref{lem:hurewicz-nontriviality} \ref{enum:hurewicz-nontriviality-ii} induced by the first map in \eqref{equ:pullback-squ-single-comp-mod-homeo} and the composition \eqref{equ:forgetful-compo} is nontrivial. To do so, we first argue that it suffices to prove instead that $\smash{\kappa_{\cL_k}\in \oH^{4k-6}(\BHomeo^{\theta}_{\partial}(W_{g,1};\ell_{\half})_{{\fQ},\ell};\bfQ)}$ is nontrivial. The manifold $\smash{W_{g,1}}$ and the tangential structure $\theta$ satisfy the assumptions of \cref{bigthm:pullback}, so writing  $B\coloneq \tau_{>3}\BSO(4)$ \cref{prop:actions} equips $\smash{\oH^{4k-6}(\BHomeo^{\theta}_{\partial}(W_{g,1};\ell_{\half})_{\ell,{\fQ}};\bfQ)}$ with an $\smash{\End^{\ell_\bfQ}_{\half\partial}(B_\bfQ)}$-action. As the underlying map of $\ell$ is constant, the constant map $B_\bfQ\ra B_\bfQ$ yields an element $\chi\in \smash{\End^{\ell_\bfQ}_{\half\partial}(B_\bfQ)}$. Using that $\kappa_{\cL_k}$ is fixed by the action of $\chi$ (see \cref{ex:action-on-stable-classes}) and writing $\kappa_{\cL_k}=A\cdot \kappa_{p_k}+B\cdot \kappa_{c}$ as a linear combination of $\kappa_{p_k}$ and $\kappa_{c}$ for $c$ a sum of decomposable monomials in Pontryagin classes, we get $\kappa_{\cL_k} = \chi^* \kappa_{\cL_k}=A\cdot \chi^* \kappa_{p_k}+B\cdot \chi^* \kappa_{c} =A\cdot \chi^* \kappa_{p_k}$, since $\chi^*\kappa_{c}=0$ by \eqref{equ:decom-action}, so if $\kappa_{\cL_k}$ is nontrivial, then  $\kappa_{p_k}$ has to be nontrivial as well.

This leaves us with showing that $\smash{\kappa_{\cL_k}\in \oH^{4k-6}(\BHomeo^{\theta}_{\partial}(W_{g,1};\ell_{\half})_{\ell,{\fQ}};\bfQ)}$ is nontrivial. To do so, we write $\theta^{\sm}\colon \tau_{>3}\BSO(4)\ra \BSO(6)$ for the composition of the $3$-connected cover with the stabilisation map $ \BSO(4)\ra  \BSO(6)$, and consider the composition \begin{equation}\label{equ:detect-smooth}\smash{\BDiff_{\partial}^{\theta^\sm}(W_{g,1};\ell_{\partial})_{\ell}\lra \BHomeo^{\theta}_{\partial}(W_{g,1};\ell_{\half})_{\ell}\lra \BHomeo^{\theta}_{\partial}(W_{g,1};\ell_{\half})_{\ell,{\fQ}}}\end{equation} whose first space is the component induced by $\ell$ of the $\Diff_{\partial}(W_{g,1})$-orbits of the space of $\theta^\sm$-structures on the smooth tangent bundle $T^{\sm}W_{g,1}$ that agree with $\ell$ on $T^{\sm}W_{g,1}|_{\partial W_{g,1}}$. The latter classifies smooth relative bundles $(W_{g,1},\partial W_{g,1})\ra (E,\partial W_{g,1}\times X)\ra X$ together with a lift $\ell^\pi\colon E\ra \tau_{>3}\BSO(4)$ of its vertical tangent bundle $T_\pi \colon E\ra \BSO(4)$ and a homotopy $h^\pi\colon \partial W_{g,1}\times X\ra \tau_{>3}\BSO(4)$ between $\ell^\pi|_{\partial W_{g,1}\times X}$ and $\ell|_{\partial W_{g,1}}\circ\pr_1$ such that $\ell^\pi$ is on each fibre homotopic to $\ell$ relative to $\partial W_{g,1}$. In these terms the first map in \eqref{equ:detect-smooth} is given by remembering the underlying topological bundle, viewing $\ell^\pi$ as a lift of the \emph{topological} vertical tangent bundle, and restricting the homotopy $h^\pi$ to $\half\partial W_{g,1}\times X\subset \partial W_{g,1}\times X$. The second map in \eqref{equ:detect-smooth} is the fibrewise rationalisation. We finish the proof by applying Galatius and Randal--Williams' work to show that $\smash{\kappa_{\cL_k} \in \oH^*(\BHomeo^{\theta}_{\partial}(W_{g,1};\ell_{\half})_{\ell};{\fQ})}$ is nontrivial for  $g\gg0$ by proving that its pullback along \eqref{equ:detect-smooth} is nontrivial. 

The parametrised Pontryagin--Thom construction gives a map $\mathrm{PT}\colon\BDiff_{\partial}^{\theta^\sm}(W_{g,1})_{\ell}\ra \Omega^{\infty}_0\MT\theta^{\sm}$ to the $0$-component of the infinite loop space of the Thom spectrum $\MT\theta^{\sm}$ of the inverse of the vector bundle classified by $\theta^{\sm}$. By an application of \cite[Corollary 1.8]{GRWII}, this map induces an cohomology isomorphism in a range increasing with $g$. Now consider the composition
\[\smash{\oH^{\ast}(\tau_{>3}\BSO(4);\bfQ)\overset{\text{Thom}}{\cong}\oH^{\ast-6}(\MT\theta^{\sm};\bfQ)\ra \oH^{\ast-6}(\Omega^{\infty}_0\MT\theta^{\sm};\bfQ)\overset{\mathrm{PT}^*}{\ra} \oH^{\ast-6}(\BDiff_{\partial}^{\theta^\sm}(W_{g,1})_{\ell};\bfQ)}\]
whose middle map is induced by the counit of the $(\Sigma^\infty_+,\Omega^\infty)$-adjunction. As for any spectrum, the latter is injective in positive degrees, so the composition is injective in a range of degrees growing with $g$. Moreover, by an exercise in the Pontryagin--Thom construction, given a class $c\in \oH^{*}(\BSTop(d);\bfQ)$, the image of the pullback along $\theta$ under this composition agrees with the class $\kappa_c\in  \oH^{\ast-6}(\BDiff_{\partial}^{\theta^\sm}(W_{g,1})_{\ell};\bfQ)$, so the pullback of $\kappa_{\cL_k}$ along \eqref{equ:detect-smooth} is nontrivial for large enough $g\gg0$ if $\cL_k$ is nontrivial in $\oH^{4k}(\tau_{>3}\BSO(4);\bfQ)$. But this is the case: it is even nontrivial in $\oH^{4k}(\tau_{>3}\BSO(3);\bfQ)$ because all monomials in Pontryagin classes except $p_1^k$ vanish in this group and the coefficient of $p_1^k$ in $\cL_k\in\oH^*(\BSO;\bfQ)$ is nontrivial by \cite[Corollary 3]{BerglundBergstrom}.
\end{proof}

\begin{proof}[Proof of \cref{bigcor:homotopy}] To construct a rational section of $\STop(d)\ra \STop$ for $d\ge6$, we use \cref{thm:PW-classes} to choose $x_i\in \pi_{4i}(\BSTop(d)_\bfQ)$ with $p_i(x)=1$ for $i\ge1$ and consider the composition \[\smash{\textstyle{
\vee_{i\ge 1} S^{4i}_\bfQ\xlra{\vee_i x_i}\BSTop(d)_\bfQ\xlra{\text{stab}}\BTop_\bfQ\overset{\sqcap_{i}p_i}{\simeq}
\bigsqcap_{i\ge 1}K(\bfQ,4i)}}\]
whose composition represents the dual of the fundamental classes of the spheres. It suffices to show that the composition admits a section after looping once which follows from the fact that $\Omega(\vee_{i=1}^\infty S^{4i}_\bfQ)$ splits as a product of Eilenberg--MacLane spaces.
\end{proof}

\begin{proof}[Proof of \cref{bigcor:PW-classes}]We explain the proof in even dimensions; the one in odd dimensions is analogous. Consider for $2n\ge6$ the maps of horizontal fibre sequences
\[
\begin{tikzcd}[column sep=0.3cm]
\Omega^{2n+1}\BO(2n)\rar\arrow[d,equal]&\BDiff^\mathrm{fr}(D^{2n})\rar\arrow[d,hook]&\BDiff_\partial(D^{2n})\arrow[d,hook]\\[-.3cm]
\Omega^{2n+1}\BO(2n)\rar \arrow[d,"\simeq_\bfQ", "\sqcap_{i=1}^{n-1}\Omega^{2n+1}p_i"']&\Omega^{2n+1}\BTop(2n)\rar \arrow[d, "\sqcap_{i=1}^\infty\Omega^{2n+1}p_i"']&\Omega^{2n}\Top(2n)/\oO(2n)\dar\arrow[d, "\Omega^{2n}(p_n-e^2)^\tau\times \sqcap_{i>n}^\infty\Omega^{2n}p_i^\tau"']\\[-0.1cm]
\sqcap_{i=1}^{n-1}K(\bfQ,4i-2n-1)\rar& \sqcap_{i=1}^\infty K(\bfQ,4i-2n-1)\rar&\sqcap_{i=n}^\infty K(\bfQ,4i-2n-1)
\end{tikzcd}
\]
where the upper map of fibre sequences consists of vertical $0$-coconnected maps resulting from smoothing theory  (see e.g.\,\cite[Section 8.2]{KRWOddDiscs}) and the bottom one is as in the proof of Corollary 8.3 loc.cit.. By the long exact sequence and the $5$-lemma, it suffices to show that the middle lower vertical map is surjective on rational homotopy groups. This follows from \cref{thm:PW-classes}.
\end{proof}

\subsection{A rational Burghelea--Lashof splitting}\label{sec:BL-splitting} We now prove (a sharper version of) \cref{bigthm:splitting-introduction}. As mentioned in the introduction, a similar splitting was known after Postnikov truncation in a range depending on the dimension (see \cite[Corollary D]{BurgheleaLashofSplitting} or \cite[Theorem 2.5]{HsiangJahren}). The novelty of \cref{thm:splitting} is that it does not require any truncations.

\begin{thm}\label{thm:splitting} For $M$ a $2$-connected smoothable compact manifold of dimension $d \geq 8$ with $2$-connected boundary $\partial M$, the rationalised forgetful map between the components of the identity
\[
	\Big(\Aut_\partial(M)/\Homeo_\partial(M)\Big)_{\id,\bfQ}\lra \Big(\Aut_\partial(M)/\BlockHomeo_\partial(M)\Big)_{\id,\bfQ}
\]
has the following properties:
\begin{enumerate}[leftmargin=*]
	\item\label{enum:section-i} It admits a section if the rationalised inclusion $(\half \partial M)_\bfQ\ra M_\bfQ$ desuspends once.
	\item\label{enum:section-ii} After taking loop spaces it admits a section if one of the following conditions are satisfied:
\begin{enumerate}[leftmargin=*]
	\item $d=2n+1$ is odd and the Pontryagin class $p_{n-i}(M)\in\oH^{4(n-i)}(M;\bfQ)$ vanishes for $1\le i\le 3$
	\item $d=2n$ is even and the Pontryagin class $p_{n-i}(M)\in\oH^{4(n-i)}(M;\bfQ)$ vanishes for $1\le i\le 3$ and is a square for $i=4$.
\end{enumerate}
These conditions are always satisfied for $d\ge13$.
\end{enumerate}
\end{thm}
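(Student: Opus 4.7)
The plan is to compare the rational pullback decomposition of $\BHomeo_\partial(M)_\fQ$ supplied by \cref{bigthm:pullback} with an analogous decomposition of $\BBlockHomeo_\partial(M)_\fQ$, thereby reducing the problem to sectioning a stabilisation map of tangential structures. First, apply \cref{bigthm:pullback} (in the variant \eqref{equ:pullback-squ-single-comp-mod-homeo-pre}) to $M$ with a chosen oriented tangential structure $\theta\colon B\to \BSTop(d)$ that factors rationally through $\BSTop(d-2)$ and such that $TM$ admits a $\theta$-lift $\ell$; such a $\theta$ can be constructed using the $2$-connectedness of $M$ together with rational obstruction theory, for instance by taking $B$ to be a rational Postnikov approximation of $\BSTop(d-2)$. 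This expresses the identity component of $\BHomeo_\partial(M)_\fQ$ over $\BAut^{\ell_\bfQ}_\partial(M)$ as a pullback of a tangential factor $\BAut^\theta_\partial(TM;\ell_\half)_\fQ$ and a ``fibrational'' factor $Z_M$.

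Second, set up a parallel block decomposition. Topological smoothing theory \cite{KirbySiebenmann} combined with the methods of \cite{KKoperadic} presents $\BBlockHomeo_\partial(M)_\fQ$ as a similar pullback over $\BAut^{\ell_\bfQ}_\partial(M)$ in which the tangential factor is replaced by its stabilised analogue $\BAut^{\theta,\st}_\partial(TM;\ell_\half)_\fQ$---classifying lifts only to $\BSTop$---while the fibrational factor $Z_M$ is the same, its construction via embedding calculus being insensitive to the distinction between the topological and block categories. On fibres over a basepoint of $\BAut^{\ell_\bfQ}_\partial(M)$, the rationalised comparison map \eqref{equ:nonblock-to-block} is then identified with the stabilisation of mapping spaces
\[
	\Sigma\colon \Map_{\half\partial}(M,B_\bfQ)_{\ell_\bfQ}\lra \Map_{\half\partial}(M,B^\st_\bfQ)_{\ell^\st_\bfQ}
\]
induced by $B\to \BSTop(d)\to \BSTop$. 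Hence a rational section of \eqref{equ:nonblock-to-block} before, respectively after, taking loops reduces to a section of $\Sigma$, respectively of $\Omega\Sigma$.

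Third, carry out the obstruction-theoretic analysis of $\Sigma$. The rational fibre $\fib(\BSTop(d)_\bfQ\to \BSTop_\bfQ)$ is a generalised Eilenberg--MacLane space detected by the Pontryagin classes $p_k$ for $k>\lfloor d/2\rfloor$, together with, in even dimension, a rational Euler class. Obstructions to sectioning $\Sigma$ therefore live in $\bfQ$-cohomology of the pair $(M,\half\partial M)$ with these coefficients. Under the desuspension hypothesis in (i), the cofibre $M_\bfQ/(\half\partial M)_\bfQ$ is itself a suspension, so every cup-product obstruction vanishes and $\Sigma$ admits a section. Under the hypotheses in (ii), after looping, the remaining obstructions evaluate in terms of the Pontryagin classes $p_{n-i}(M)\in \oH^{4(n-i)}(M;\bfQ)$---with the $i=4$ case requiring that $p_{n-4}(M)$ be a square so that an Euler-class extraction can be performed---so the stated vanishing and squareness assumptions make them die. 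For $d\ge 13$ these $p_{n-i}(M)$ lie in cohomology degrees exceeding $\dim M$ for the relevant range of $i$, hence vanish automatically.

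The main obstacle will be the compatibility asserted in the second step: formulating a block pullback decomposition that genuinely matches \cref{bigthm:pullback} on the fibrational data. In particular, one has to verify that the embedding-calculus construction of $Z_M$ from \cite{KKoperadic} passes through the surgery-theoretic stabilisation from topological to block bundles, so that the comparison map really reduces to $\Sigma$. I expect this to follow from a combination of Moore--Postnikov factorisation arguments and the rational triviality of the block structure set of a highly connected disc, but the bookkeeping here is delicate and is where most of the work will lie.
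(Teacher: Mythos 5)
Your overall strategy diverges from the paper's in a way that creates a genuine gap, and two of your intermediate claims are incorrect.

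\textbf{The block side.} You propose to decompose $\mathrm{B}\BlockHomeo_\partial(M)_\fQ$ as a pullback matching \cref{bigthm:pullback}, with the same fibrational factor $Z_M$. You acknowledge this is ``where most of the work will lie,'' which is the problem: such a block decomposition is never established, and it is not how the paper proceeds. The paper sidesteps it entirely by using the classical surgery-theoretic fact that the stable-derivative map
\[
\big(\Aut_\partial(M)/\BlockHomeo_\partial(M)\big)_{\id}\lra \Map_{\half\partial}(M,\BSTop)_{T^sM}
\]
is a \emph{rational equivalence}. Combined with the pullback decomposition of $\BHomeo_\partial(M)_\fQ$ from \cref{bigthm:pullback}, a short diagram chase then reduces the whole problem to sectioning
\[
\Map_{\half\partial}(M,\BSTop(d-2)_\bfQ)_{\ell_\bfQ}\lra\Map_{\half\partial}(M,\BSTop_\bfQ)_{T^sM_\bfQ}.
\]
This is the step your proposal is missing, and it is what makes the argument tractable.

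\textbf{The sectioning.} Your obstruction-theoretic analysis of $\Sigma$ rests on the claim that $\fib(\BSTop(d)_\bfQ\to \BSTop_\bfQ)$ is a GEM detected by the Pontryagin classes $p_k$ for $k>\lfloor d/2\rfloor$ (and an Euler class). That is the picture for $\BSO(d)$, not $\BSTop(d)$: the whole point of \cref{thm:PW-classes}/\cref{bigcor:homotopy} is that those $p_k$ do \emph{not} vanish on $\BSTop(d)$, and the fibre is not the GEM you describe. Correspondingly, your proposal never invokes \cref{bigcor:homotopy} (the rational section of $\STop(d)\to\STop$ after looping once, for $d\ge 6$), yet this is the essential input. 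The paper uses that section $\sigma\colon \STop_\bfQ\to \STop(d-2)_\bfQ$ together with the $E_1$-group structure of $\STop(d-2)_\bfQ$ to section the mapping-space map directly; the desuspension hypothesis in \ref{enum:section-i} serves precisely to trade $\Map_{\half\partial}(M,\mathrm{B}(-))$ for $\Map_A(X,-)$ so that the once-delooped section suffices. For \ref{enum:section-ii}, the paper factors $\ell_\bfQ$ through $\BSO(d-8)_\bfQ$ (the role of your hypotheses on $p_{n-i}(M)$) and then uses the external product $\BSTop(6)\times \BSTop(d-8)\to \BSTop(d-2)$ together with $\sigma$, not a cup-product obstruction calculation. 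These steps are not present in your proposal.

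In short: replacing your block pullback decomposition by the surgery-theoretic rational equivalence, and replacing your GEM obstruction picture by \cref{bigcor:homotopy} plus the group-structure trick, would turn your outline into the paper's proof; as written, both the block step and the sectioning step have gaps.
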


\begin{ex}If $\partial M\cong S^{d-1}$, then the condition in the first part of \cref{thm:splitting} is equivalent to the rational Hurewicz morphism $\pi_{*>0}(M)\otimes \bfQ\ra \oH_{*>0}(M;\bfQ)$ being surjective, since $\half\partial M\simeq\ast$ and a $1$-connected space is rationally equivalent to a suspension if and only if its rational Hurewicz morphism is surjective (see e.g.\ \cite[Theorem 24.5]{FelixHalperinThomas}). A concrete class of manifolds that satisfies this assumption are iterated connected sums of the form $(\sharp_{i=1}^g(S^{m_i}\times S^{d-m_i}))\backslash\interior(D^d)$ for $3\le m_i\le d-3$.\end{ex}

\begin{proof}[Proof of \cref{thm:splitting}]We consider the tangential structure $\theta\colon \BSTop(d)\times_{\BSTop(d)_\bfQ}\BSTop(d-2)_\bfQ\ra \BSTop(d)$ and first argue that $M$ admits a $\theta$-structure $\ell$. Since $M$ is smoothable, this follows from showing that the rationalised smooth tangent bundle $M\ra \BSO(d)_\bfQ$ lifts along $\BSO(d-2)_\bfQ\ra\BSO(d)_\bfQ$. By obstruction theory this follows if the Euler class $e$ the Pontryagin class $\smash{p_{\lfloor \frac{d}{2}\rfloor}}$ both rationally vanish. This is the case, since they lie in $\oH^{i}(M;\bfQ)\cong \oH_{d-i}(M,\partial M;\bfQ)$ for $i\ge d$ which vanishes. Now choose an embedded disc $D^{d-1}\subset\partial M$, set $\half\partial M\coloneq M\backslash \interior(D^{d-1})$ and consider the commutative diagram
\[\hspace{-0.2cm}
\begin{tikzcd}[column sep=0.3cm, row sep=0.5cm]
\fib_\ell\big(\BHomeo^\theta_\partial(M;\ell_{\half})\ra \BAut_\partial(M)^{\ell_\half}\big)_\ell\dar{\mathrm{forget}}\rar& \Map_{\half\partial}(M,\BSTop(d)\times_{\BSTop(d)_\bfQ}\BSTop(d-2)_\bfQ)_\ell\dar{\theta_*} \arrow[l,dashed, bend left=15]\\
\big(\Aut_\partial(M)/\Homeo_\partial(M)\big)_{\id}\dar{\mathrm{forget}}\rar&\Map_{\half\partial}(M,\BSTop(d))_{TM}\dar{s_*}\\
\big(\Aut_\partial(M)/\BlockHomeo_\partial(M)\big)_{\id}\rar{\simeq_\bfQ}&\Map_{\half\partial}(M,\BSTop)_{T^sM}.
\end{tikzcd}
\]
Here $s\colon \BSTop(d)\ra\BSTop$ is the stabilisation map and the subscripts indicate that we restrict to the respective path-components. The horizontal maps are induced by taking vertical tangent bundles; for the bottom map this uses that a block bundle still has a \emph{stable} vertical tangent bundle. After rationalisation, the top horizontal arrow admits a dashed section as indicated since it agrees with the first map in \eqref{equ:pullback-squ-single-comp-mod-homeo} after taking fibres of the maps to $\smash{\BAut_\partial^{\ell_\bfQ}(M)}$ and restricting to components, so it admits a section since \eqref{equ:pullback-squ-single-comp-mod-homeo} can be completed to a pullback square as a result of \cref{bigthm:pullback} and \cref{rem:variant}. The fact that the bottom horizontal map is a rational equivalence is a consequence of surgery theory (see \cite[Section 2.1]{KrannichHomological} for a fitting reminder of surgery theory in the smooth setting;  the topological case is analogous).

A diagram chase shows that the (once looped) map in the statement admits a rational section if this holds for the (once looped) right-vertical composition. As $\BSTop(d)\times_{\BSTop(d)_\bfQ}\BSTop(d-2)_\bfQ\ra\BSTop(d-2)$ is a rational equivalence this is the case if the map
\begin{equation}\label{equ:simplified-section-prob}\smash{ \Map_{\half\partial}(M,\BSTop(d-2)_\bfQ)_{\ell_\bfQ}\xlra{s_*}\Map_{\half\partial}(M,\BSTop_\bfQ)_{T^sM_\bfQ}} \end{equation} admits a rational section (after looping once). If the inclusion $\half\partial M\subset M$ is rationally the suspension of a map $A\ra X$, then \eqref{equ:simplified-section-prob} has rationally the form $\smash{\Map_{A}(X,\STop(d-2)_\bfQ)_{\overline{\ell}}\ra \Map_{A}(X,\STop_\bfQ)_{\overline{T^sM}}}$
where $\overline{\ell}$ and $\overline{T^sM}$ are the adjoints of $\ell_\bfQ$ and $T^sM_\bfQ$. As $d\ge8$, the stabilisation map $\STop(d-2)_\bfQ\ra \STop_\bfQ$ admits by \cref{bigcor:homotopy} a section $\sigma \colon \STop_\bfQ\ra \STop(d-2)_\bfQ$. Using this and the $E_1$-group-structure of $\STop(d-2)_\bfQ$, we see that \eqref{equ:simplified-section-prob} has indeed a section, namely
\vspace{-0.2cm}
\[
\Map_{A}(X,\STop_\bfQ)_{\overline{T^sM}}\xra{\sigma_*} \Map_{A}(X,\STop(d-2)_\bfQ)_{\sigma \circ \overline{T^sM}}\xra{\overline{\ell}\cdot(-)\cdot(\sigma \circ \overline{T^sM})^{-1}}\Map_{A}(X,\STop(d-2)_\bfQ)_{\overline{\ell}},
\]
so part \ref{enum:section-i} of the claim follows.

To see \ref{enum:section-ii}, note since we assumed that $M$ is smoothable we may, after possibly rechoosing $\ell$ assume that $\ell_\bfQ$ factors over $\BSO(d-2)_\bfQ$. The assumptions in \ref{enum:section-ii} are made exactly to ensure that $\ell_\bfQ$ factors further over $\BSO(d-8)_\bfQ$ and thus also over $\BSTop(d-8)_\bfQ$. In order to use this to show that \eqref{equ:simplified-section-prob} admits a section after looping once, we extend the looped map to a commutative square
\[
\begin{tikzcd}[row sep=0.5cm, column sep=0.4cm]
\Omega \big(\Map_{\half\partial}(M,\BSTop(6)_\bfQ)_{\mathrm{const}}\big)\dar{(-)+\ell_\bfQ}\rar& \Omega \big(\Map_{\half\partial}(M,\BSTop_\bfQ)_{\mathrm{const}}\big)\dar["(-)+T^sM","\simeq"']\\
\Omega \big(\Map_{\half\partial}(M,\BSTop(d-2)_\bfQ)_{\ell_\bfQ}\big)\rar & \Omega \big(\Map_{\half\partial}(M,\BSTop_\bfQ)_{T^sM_\bfQ}\big)
\end{tikzcd}
\]
whose vertical maps are induced by the external multiplication maps $\BSTop(k)\times \BSTop(n)\ra \BSTop(k+n)$ induced by taking direct products. By commutativity, it suffices to show that the top horizontal map has a section. As the loop spaces in base and target of this top map are based at the constant map, we may ``pull $\Omega$ inside the mapping space'' to see that this map is equivalent to the map $ \Map_{\half\partial}(M,\STop(6)_\bfQ)_{\mathrm{const}}\ra  \Map_{\half\partial}(M,\STop_\bfQ)_{\mathrm{const}}$ induced by stabilisation. The latter admits a section since $\STop(6)_\bfQ\ra \STop_\bfQ$ does, by \cref{bigcor:homotopy}, so this gives the claim.\end{proof}

\subsection{Detecting tautological classes}\label{sec:detect-tautological} \cref{bigthm:pullback} can also be used to show nontriviality of many tautological classes $\kappa_c\in \smash{\oH^{*-d}}(\BHomeo_\partial(M);\bfQ)$ for decomposable classes $c\in\oH^*(\BTop(d);\bfQ)$, and of products of such. To explain the strategy, fix an oriented smoothable $2$-connected $d$-manifold $M$ with $d\ge6$ and $2$-connected boundary $\partial M$. Fix a lift $\ell\colon M\ra \BSTop(d-2)_\bfQ$ of the rational tangent bundle $TM_\bfQ\colon M\ra \BSTop(d)_\bfQ$. To see that such a lift exists, note that $TM_\bfQ$ lifts to $\BSO(d)_\bfQ$ as $M$ is smoothable, then use obstruction theory to lift further to $\BSO(d-2)_\bfQ$ and thus to $\BSTop(d-2)_\bfQ$. 

\begin{thm}\label{thm:detect-kappas}Fix $M$ and $\ell$ as above. There exists a dashed lift
\begin{equation}\label{equ:lift-to-detect-kappa}\begin{tikzcd}[column sep=0.5cm, row sep=0.5cm]
&\BHomeo_\partial(M)_{\fQ}\dar{d}\\\Map_{\half\partial}(M,\BSTop(d-2)_\bfQ)_{\ell}\arrow[ur,dashed, bend left=0.4cm]\arrow[r,"q"]&\Map_{\half\partial}(M,\BSTop(d)_\bfQ)/\hAut_\partial(M)
\end{tikzcd}
\end{equation}
of the map $q$ induced by stabilisation and taking quotients, along the map $d$ induced by the topological derivative. In particular, if a decomposable class $c=c'\cup c''\in \oH^*(\BSTop(d);\bfQ)$ has the property that \[\smash{\kappa_{c'\otimes c''}\in \oH^{*-d}(\Map_{\half\partial}(M,\BSTop(d)_\bfQ)/\hAut_\partial(M);\bfQ)}\] pulls back nontrivially along the horizontal map in \eqref{equ:lift-to-detect-kappa}, then $\smash{\kappa_c\in   \oH^{*-d}(\BHomeo_\partial(M);\bfQ)}$ is nontrivial. The same applies to linear combinations of products of such classes.
\end{thm}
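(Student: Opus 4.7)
The plan is to extract the lift $L$ from an application of \cref{bigthm:pullback} and then to deduce the second part from the compatibility diagram \eqref{equ:compatibility}. As in the proof of \cref{thm:splitting}, I take the oriented tangential structure $\theta\colon B \to \BSTop(d)$ to be the projection from $B \coloneq \BSTop(d) \times_{\BSTop(d)_\bfQ} \BSTop(d-2)_\bfQ$; this $B$ is nilpotent and satisfies $B_\bfQ \simeq \BSTop(d-2)_\bfQ$ by \cref{lem:Q-completion-pullback}, so $\theta$ rationally factors through the stabilisation $\BSTop(d-2) \to \BSTop(d)$. The lift $\ell$ assembles with $TM$ into a $\theta$-structure $\tilde\ell\colon M \to B$ with restriction $\ell_{\half\partial}$ to $\half\partial M$, and the remaining hypotheses of \cref{bigthm:pullback} hold by assumption on $M$. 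The variant \eqref{equ:pullback-squ-single-comp-mod-homeo-pre} of \cref{rem:variant} then provides a homotopy pullback square completing $\BHomeo^\theta_\partial(M;\ell_{\half\partial})_{\fQ,\tilde\ell} \to (\Map_{\half\partial}(M, B_\bfQ)/\hAut_\partial(M))_{\ell_\bfQ} \to \BAut^{\ell_\bfQ}_\partial(M)$ with some fourth corner $W$.

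Taking homotopy fibres of this pullback over the identity component of $\BAut^{\ell_\bfQ}_\partial(M)$ and using $B_\bfQ \simeq \BSTop(d-2)_\bfQ$, the fibre of the middle term becomes $\Map_{\half\partial}(M, \BSTop(d-2)_\bfQ)_\ell$ and the fibre of $\BHomeo^\theta_\partial$ is a product of this with the (nonempty) fibre $W_0$ of $W \to \BAut^{\ell_\bfQ}_\partial(M)$. Choosing any basepoint of $W_0$ and composing with the forgetful map $\BHomeo^\theta_\partial \to \BHomeo_\partial(M)_\fQ$ defines the candidate $L\colon \Map_{\half\partial}(M, \BSTop(d-2)_\bfQ)_\ell \to \BHomeo_\partial(M)_\fQ$. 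That $d\circ L = q$ follows from commutativity of the pullback square: the route $\BHomeo^\theta_\partial \to \BHomeo_\partial(M)_\fQ \to \Map_{\half\partial}(M, \BSTop(d)_\bfQ)/\hAut_\partial(M)$ via forgetting then the topological derivative agrees with the route $\BHomeo^\theta_\partial \to (\Map_{\half\partial}(M, B_\bfQ)/\hAut_\partial(M))_{\ell_\bfQ} \to \Map_{\half\partial}(M, \BSTop(d)_\bfQ)/\hAut_\partial(M)$ via recording the $\theta$-structure on a fibre and postcomposing with $\theta$, since both compute the vertical tangent bundle up to the $\hAut$-action; precomposing with the fibre inclusion of $\Map_{\half\partial}(M, \BSTop(d-2)_\bfQ)_\ell$ yields precisely the stabilisation $q$ on the second route.

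For the second part, the compatibility diagram \eqref{equ:compatibility} with target $\BSTop(d)$ and evaluation $TM$ gives $d^* \kappa_{c'\otimes c''} = \kappa_{c' \cup c''} = \kappa_c$, viewed in $\oH^*(\BHomeo_\partial(M);\bfQ)$ as a natural summand of $\oH^*(\BHomeo_\partial(M)_\fQ;\bfQ)$ by \cref{rem:subtlety-summand}. Hence $L^* \kappa_c = L^*d^* \kappa_{c'\otimes c''} = q^*\kappa_{c'\otimes c''}$, so nontriviality of the latter forces $\kappa_c \neq 0$ in $\oH^*(\BHomeo_\partial(M);\bfQ)$. Since $L^*$ is a ring homomorphism, the same reasoning extends immediately to linear combinations of products of such classes. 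The main obstacle will be the verification $d\circ L = q$: this ultimately rests on the tautological identification of the vertical tangent bundle of a topological $M$-bundle with the underlying fibrewise $\theta$-structure, but must be carried out carefully within the pullback and its rationalisations.
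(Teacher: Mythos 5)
Your proposal is correct and takes essentially the same approach as the paper. You use the same tangential structure $\theta\colon \BSTop(d)\times_{\BSTop(d)_\bfQ}\BSTop(d-2)_\bfQ\ra\BSTop(d)$ and extract the lift $L$ from the pullback decomposition provided by \cref{bigthm:pullback} in the form of \cref{rem:variant}; the paper's proof does exactly this via a single commutative diagram, whereas you make the fibre-product decomposition of $\fib(\BHomeo_\partial^\theta\to\BAut^{\ell_\bfQ}_\partial(M))$ explicit and pick a point in the extra factor, which is an equivalent phrasing of the same lifting argument. The second part likewise matches the paper: invoking \eqref{equ:compatibility} to see that $\kappa_{c'\otimes c''}$ pulls back to $\kappa_c$ under the topological derivative, and using the naturality of the cohomology summand from \cref{rem:subtlety-summand} to transfer nontriviality from $\oH^*(\BHomeo_\partial(M)_\fQ;\bfQ)$ to $\oH^*(\BHomeo_\partial(M);\bfQ)$. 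The step you flag at the end (verifying $d\circ L=q$ via the identification of the vertical tangent bundle with the forgotten $\theta$-structure) is indeed the content of the commuting outer rectangle in the paper's diagram, not a separate obstacle, so no gap remains.
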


\begin{proof}
The second part follows from the first, since by the discussion in \cref{sec:tautological-classes}, $\kappa_{c'\otimes c''}$ pulls back to $\kappa_{c}$ along the vertical map in \eqref{equ:lift-to-detect-kappa}. To show the first part, we use the same tangential structure $\theta\colon \BSTop(d-2)_\bfQ\times_{\BSTop(d)_\bfQ}\BSTop(d)\ra \BSTop(d)_\bfQ$ as in the proof of \cref{thm:splitting}. The claim then follows from the commutative diagram
\[
\hspace{-.1cm}\begin{tikzcd}[column sep=0.3cm, row sep=0.3cm]
&\BHomeo_\partial^\theta(M;\ell_{\half\partial})_{\fQ,\ell}\arrow[d]\rar&\BHomeo_\partial(M)_{\fQ}\rar& \frac{\Map_{\partial}(M,\BSTop(d)_\bfQ)}{\Aut_\partial(M)}\arrow[d]\\
\Map_{\half\partial}(M,\BSTop(d-2)_\bfQ)_{\ell}\rar\arrow[ur,dashed, bend left=15]&\frac{\Map_{\half\partial}(M,\BSTop(d-2)_\bfQ)}{\Aut_\partial(M)}\arrow[rr]\arrow[u,leftarrow,crossing over]&&\frac{\Map_{\half\partial}(M,\BSTop(d)_\bfQ)}{\Aut_\partial(M)}
\end{tikzcd}
\]
where the existence of the dashed map results from \cref{bigthm:pullback} in the form of \cref{rem:variant}.
\end{proof}

The condition in the second part of \cref{thm:detect-kappas} can often be checked by using the explicit description the pullback of $\kappa_{c\otimes c'}$ to $\Map_{\half\partial}(M,\BSTop(d-2)_\bfQ)_{\ell}$ resulting from \cref{rem:formula-kappa-trivial-bundle}, in the case $B=\BSTop(d-2)_\bfQ$. We will illustrate this method to detect tautological classes by proving \cref{bigthm:detect-classes} from the introduction, which said that for $M$ as above that also contains $S^m \times S^n$ as a connected summand (with $n = d-m$), we have
\begin{enumerate}
\item\label{enum:smsn-infinite-i} The class $\kappa_{p_i p_j}\in\oH^{4(i+j)-d}(\BHomeo_\partial(M);\bfQ)$ is nontrivial for all $i,j > \max(m,n)/4$.
\item\label{enum:smsn-infinite-ii} If $n$ and $m$ are both even, then the total dimension of the cohomology $\oH^{*\le k}(\BHomeo_\partial(M);\bfQ)$ grows with $k$ faster than any polynomial.
\end{enumerate}

\begin{proof}[Proof of \cref{bigthm:detect-classes}]By naturality, it suffices to prove the result for $M=D^d\sharp (S^m\times S^{n})$, which is homotopy equivalent to $S^m\vee S^{n}$. Choosing classes $a_m\in \oH_m(M;\bfQ)$ and $b_n\in \oH_n(M;\bfQ)$ corresponding to the fundamental classes of the two spheres, for \ref{enum:smsn-infinite-i}, it suffices by the second part of \cref{thm:detect-kappas} and \cref{rem:formula-kappa-trivial-bundle} to show that the class
\begin{equation}\label{eqn:class-non-triviality-smsn}\ev^*(p_i)/a_m\cup \ev^*(p_j)/b_n + \ev^*(p_i)/b_n\cup \ev^*(p_j)/a_m \in \oH^{4(i+j)-d}(\Map_{\ast}(M,\BSTop(d-2)_\bfQ)_0;\bfQ)\end{equation}
is nontrivial. To do so, we use \cref{thm:PW-classes} and the assumption that $d-2\ge6$ to find a map of the form $\smash{\rho \colon \bigvee_{k \geq 1} S^{4k} \to \BSTop(d-2)_\bfQ}$ with $\langle \rho^* p_k,[S^{4k}] \rangle = 1$ for all $k$. Pulling back \eqref{eqn:class-non-triviality-smsn} along \[\smash{\textstyle{\rho_*\colon \Omega^m_0(\bigvee_{k \geq 1} S^{4k}) \times \Omega^{n}_0(\bigvee_{k \geq 1} S^{4k})\simeq\Map_{\ast}(M,\bigvee_{k \geq 1} S^{4k})_0\lra \Map_{\ast}(M,\BSTop(d-2)_\bfQ)_0}}\]
yields the class \begin{equation}\label{equ:pullbac-classes-product-spheres}\smash{\smash{\textstyle{\ev^*(\lambda_{4i})/a_m \otimes \ev^*(\lambda_{4j})/b_{n} + (-1)^{nm} \ev^*(\lambda_{4j})/a_m \otimes \ev^*(\lambda_{4i})/b_{n}}}}\end{equation} in $\oH^{4(i+j)-d}(\Omega^m_0(\bigvee_{k \geq 1} S^{4k}) \times \Omega^{n}_0(\bigvee_{k \geq 1} S^{4k});\bfQ)$ where $\lambda_{4k}$ is the dual of the fundamental class of $S^{4k}$. It follows from the Milnor--Moore theorem that the cohomology of $\Omega^m_0(\bigvee_{k \geq 1} S^{4k})$ is a free graded-commutative algebra on a set of generators which include the classes $\ev^*(\lambda_{4k})/a_m$ for $k>m/4$, and similarly for $\Omega^n_0(\bigvee_{k \geq 1} S^{4k})$, so this pullback is nonzero and \ref{enum:smsn-infinite-i} follows. 

To show \ref{enum:smsn-infinite-ii}, note that by using the first part of \cref{thm:detect-kappas} it suffices to prove that the classes \eqref{equ:pullbac-classes-product-spheres} for $i,j>\max(m,n)/4$ span a subalgebra whose total dimension has the claimed growth. To show this, we consider an algebra morphism $\oH^*(\Omega^m_0(\textstyle{\bigvee_{k \geq 1} S^{4k}}) \times \Omega^{n}_0(\textstyle{\bigvee_{k \geq 1} S^{4k}});\bfQ) \ra \bfQ[x_i,y_j \mid i,j > \max(m,n)/4]$ to a free graded algebra with generators of even degrees $|x_{i}| = 4i-m$ and $|y_j| = 4j-n$, by sending the generators $\ev^*(\lambda_{4i})/a_m \otimes 1$ to $x_i$ and $1 \otimes \ev^*(\lambda_{4j})/b_{n}$ to $y_j$, and all other generators to zero. It thus suffices to show that the subalgebra generated by  the elements $x_iy_j+x_jy_i$ has the required growth. Assuming without loss of generality $m \leq n$, a crude lower bound for the growth of this subalgebra can be obtained by reindexing $y_j$ to be in degree $4j-m$ instead, and defining an algebra morphism to the polynomial algebra $\bfQ[z_i \mid i>n/4]$ in infinitely many generators by sending both $x_i$ and $y_i$ to $z_i$. This is surjective onto the polynomial subalgebra on generators $z_i^2$ for $i>n/4$. The growth of its total dimension can be bounded from below by the partition function which is known to grow faster than any polynomial.
\end{proof}

\begin{rem}\ 
\begin{enumerate}
\item Note that by choosing $M=D^8\sharp (S^4\times S^4)$, the first part of \cref{bigthm:detect-classes} in particular implies that $p_i p_j\neq 0\in\oH^{*}(\BTop(d);\bfQ)$ for $d\ge8$ and all $i,j>1$. By picking different choices of $M$ and extending the strategy of \cref{bigthm:detect-classes}, one can exclude more relations between products of Pontryagin classes in $\oH^{*}(\BTop(d);\bfQ)$ and thus partially recover \cite[Theorem 1.1]{GRWIndependence}. However it seems unlikely that this method can be extended to exclude all relations.
\item It may seem surprising at first sight that our method also shows that certain products of Pontryagin classes are nontrivial, even though the input \cref{thm:PW-classes} only shows that the individual $p_i$s are nontrivial and detected on the image of the Hurewicz map. The surprise comes from the implicit use of the Alexander trick and is similar to the following observation: suppose one only knew (i) surgery theory and (ii) that $p_i$ and $p_j$ for some $i,j>1$ are nontrivial in $\oH^*(\BTop;\bfQ)$ and detected on the image of the Hurewicz map, then one can deduce that their product $p_i p_j\in \oH^*(\BTop;\bfQ)$ is nontrivial, by considering the map \begin{equation}\label{equ:stable-deriative-blockhomeo}\smash{\big(\Aut_\partial(M)/\BlockHomeo_\partial(M)\big)_{\id}\lra\Map_{\half \partial}(M,\BSTop)_0}\end{equation}  induced by taking stable topological derivatives for $M=D^d\sharp (S^n\times S^m)$. Arguing as in the proof of \cref{bigthm:detect-classes}, the class \eqref{eqn:class-non-triviality-smsn} with $\BTop(d-2)$ replaced by $\BTop$ is nontrivial and it pulls back to $\smash{\kappa_{p_i p_j}}$ along \eqref{equ:stable-deriative-blockhomeo}. But it follows from surgery theory and the Alexander trick that \eqref{equ:stable-deriative-blockhomeo} is a rational equivalence, so $\kappa_{p_i p_j}$ is nontrivial and hence so is $p_ip_j$.
\item The nonzero classes produced in the proof of \cref{bigthm:detect-classes} all lie in the subalgebra generated by $\kappa_c$ for $c$ monomials in Pontryagin classes. In the smooth situation, the corresponding subalgebra of $\oH^*(\BDiff_\partial(M);\bfQ)$ is known to be finitely generated in many cases (so in particular to grow polynomially) by \cite[Theorem A]{RWtautological}, in particular for $M=D^{2n} \# (S^n \times S^n)^{\# g}$ for all $g$ and $n$ (if $n$ is odd, it is even finite-dimensional \cite[Corollary 1.4]{GGRW}).
\end{enumerate}
\end{rem}

\begin{rem}\cref{bigthm:pullback} can also be used to produce \emph{weight decompositions} on the rational cohomology of $\BHomeo_\partial^\theta(M;\ell_{\half \partial})_{{\fQ},\ell}$ for suitable choices of $\theta$, similar to the weight decompositions of the rational cohomology of block diffeomorphisms outlined in \cite{KKOWR}. We expect that these weight decompositions are useful to exclude relations between tautological classes and to resolve differentials in spectral sequences, but we do not pursue this further at this point.
\end{rem}

\appendix

\section{Tensor products of truncated operads} \label{sec:truncation-of-operads}
In this appendix, we construct the symmetric monoidal lift of the tower \eqref{equ:tower-of-operads} of categories of truncated operads from \cref{sec:operad-conventions} and establish several properties of it, summarised in \cref{thm:operad-truncation-tower-monoidal}. We adopt the notation of \cref{sec:operad-conventions}. We write $X\ast Y\coloneq X\cup_{X\times Y}Y$ for the \emph{join} of spaces and $\cS^{\Sigma_n}=\Fun(\Sigma_n,\cS)$ for the category of spaces with an action of the symmetric group on $n$ letters.

\begin{thm}\label{thm:operad-truncation-tower-monoidal}The tower of categories of truncated unital operads
\begin{equation}\label{equ:operad-truncation-tower-app}
	\smash{\Opd^\un\simeq\Opd^{\le \infty,\un}\ra\cdots\ra\Opd^{\le 2,\un}\ra\Opd^{\le 1,\un}\simeq\Cat}
\end{equation}
can be lifted to a tower of symmetric monoidal categories such that
\begin{enumerate}
\item \label{enum:trunmon-i} the symmetric monoidal structure on $\Opd^\un$ is the one described in \cref{sec:operad-conventions},
\item \label{enum:trunmon-i.5} objectwise, the symmetric monoidal structure on $\Opd^{\le k,\un}$ is given by natural equivalences
\[
	\cO\otimes \cP\simeq \tau(\tau_*(\cO)\otimes\tau_*(\cP))\simeq \tau(\tau_!(\cO)\otimes\tau_!(\cP))
\]
where $\tau\colon \Opd^{\un}\ra \Opd^{\le k,\un}$ is the truncation functor and $\tau_!$ and $\tau_*$ are its left and right adjoints. 
\item\label{enum:trunmon-ii}  the symmetric monoidal structure on $\smash{\Opd^{\le k,\un}}$ preserves colimits in both variables for all $k\le \infty$,
\item\label{enum:trunmon-iii}  the symmetric monoidal structure on $\smash{\Opd^{\le 1,\un}\simeq \Cat}$ is the cartesian one. In particular, as $\ast\times \ast \simeq \ast$, the full subcategories of reduced operads \[\smash{\Opd^{\le k,\red}=\fib_\ast(\Opd^{\le k,\un}\ra \CatInf)\subset \Opd^{\le k,\un}}\]   inherits symmetric monoidal structures for $1\le k\le \infty$,
\item\label{enum:trunmon-iv} with respect to the equivalence $\Opd^{\le 2,\red}\simeq \cS^{\Sigma_2}$ from \cref{lem:2-truncated-reduced}, the binary tensor product $\Opd^{\le 2,\red}\times \Opd^{\le 2,\red}\ra \Opd^{\le 2,\red} $ is given by the join, i.e.\,there is a natural  equivalence \[\smash{(-)(2) \ast (-)(2)\simeq \big((-)\otimes (-)\big)(2)}\]
between the join of the $2$-ary operation spaces to the $2$-ary operation space of the tensor product. 
\end{enumerate} 
\end{thm}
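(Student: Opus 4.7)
The plan is to transfer the symmetric monoidal structure on $\Opd^\un$ (recalled in \cref{sec:operads-tensor-product}) to each $\Opd^{\le k,\un}$ for $k<\infty$ along the reflective localization $\tau\colon \Opd^\un \to \Opd^{\le k,\un}$ with fully faithful right adjoint $\tau_*$, using Lurie's general criterion \cite[2.2.1.9]{LurieHA}. The input to this criterion is the compatibility condition \cite[2.2.1.7]{LurieHA}: for every $\tau_k$-local equivalence $f\colon \cO \to \cP$ in $\Opd^\un$ and every $\cQ \in \Opd^\un$, the map $f \otimes \id_\cQ$ is again a $\tau_k$-local equivalence. Granting this, \cite[2.2.1.9]{LurieHA} produces a symmetric monoidal structure on $\Opd^{\le k,\un}$ rendering $\tau$ symmetric monoidal and supplying the first formula in (i.5), namely $\cO \otimes_k \cP \simeq \tau(\tau_*\cO \otimes \tau_*\cP)$. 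The alternative formula via $\tau_!$ follows dually, since $\tau_!$ is also fully faithful (as a left adjoint). The tower of symmetric monoidal categories is then obtained by composition, each stage being a symmetric monoidal reflective localization of the next.

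The core of the proof is the compatibility verification, which I would carry out in the dendroidal model $\Opd^\un \simeq \PSh(\overline{\Omega})^{\seg,c}$ from \cref{sec:prelim-dendroidal}. In this model, a map of unital operads is a $\tau_k$-equivalence iff it is an equivalence after restriction to $\overline{\Omega}_{\le k}$, equivalently, on multi-operations of arity $\le k$. Since $\otimes$ preserves colimits in both variables and $\Opd^\un$ is generated under colimits by dendroidal representables $\overline{\Omega}[S]$, I would reduce to the case $\cQ = \overline{\Omega}[S]$ for an arbitrary closed tree $S$. The essential claim is then that $(\cO \otimes \overline{\Omega}[S])(T)$ for any closed tree $T$ of vertex valence $\le k$ depends only on the restriction of $\cO$ to $\overline{\Omega}_{\le k}$. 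This would follow by combining the Segal condition at $T$---which expresses the $T$-sections as products of $n_i$-ary operation spaces indexed by the valences $n_i \le k$ of the vertices of $T$---with the basic property of the Boardman--Vogt product that each $n$-ary operation of a tensor product is assembled from operations of arity $\le n$ of the factors (see \cite[Section 3]{HeutsMoerdijk}). The hard part will be the dendroidal bookkeeping here: although $E_1^{\otimes n} \simeq E_n$ has operations of all arities, one must show that truncating back to $\overline{\Omega}_{\le k}$ after tensoring extinguishes any dependence on $>k$-ary operations of the first factor.

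Given the symmetric monoidal structures, the remaining properties should follow more directly. Property (ii) combines the presentability of $\Opd^{\le k,\un}$ as a localization with the colimit preservation of $\otimes$ on $\Opd^\un$ and of $\tau$. For (iii), I would use $\cC \otimes_1 \cD \simeq \tau_1(\tau_{1!}\cC \otimes \tau_{1!}\cD)$: on unital operads concentrated in arity $\le 1$ the Boardman--Vogt interchange is trivial (there are no nontrivial arities to interchange between), so the result recovers the categorical product $\cC \times \cD$ of underlying categories under the identification $\Opd^{\le 1,\un}\simeq\Cat$. For (iv), the Göppl--Weiss formalism of \cref{sec:goppl-weiss} together with $\Latch_2(\cO) = \varnothing$ and $\Match_2(\cO) = \ast$ (\cref{sec:latching-matching}) gives $\Opd^{\le 2,\red} \simeq \cS^{\Sigma_2}$ via $(-)(2)$, and a direct computation of the $2$-ary operations of a tensor product using the shuffle description produces the pushout $\cO(2) \cup_{\cO(2) \times \cP(2)} \cP(2) \simeq \cO(2) \ast \cP(2)$ equivariantly for $\Sigma_2$, compatibly with the benchmark $E_1\otimes E_1 \simeq E_2$ yielding $\ast \ast \ast \simeq S^1 \simeq E_2(2)$.
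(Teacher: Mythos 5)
Your approach to the main construction differs substantially from the paper's, and the difference matters. Both routes start by invoking Lurie's criterion \cite[2.2.1.7, 2.2.1.9]{LurieHA} to localize the symmetric monoidal structure along $\tau_k\colon \Opd^\un \to \Opd^{\le k,\un}$, but the verification of compatibility is carried out differently: the paper moves to the right adjoint $\AlgOpd_\cP(-)$ of $(-)\otimes\cP$, reduces to the colimit generators $F(c_r)$ of $\Opd$ (Lemma \ref{lem:colim}), and computes the $n$-ary multi-operations of $\AlgOpd_{F(c_r)}(\cO)$ as an explicit pullback (Proposition \ref{prop:alg-is-truncated}), whereas you try to verify compatibility directly on the tensor-product side, reducing the second factor to representables $\overline{\Omega}[S]$.

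The step on which your proposal stalls is exactly the one you flag as ``the hard part''. Your argument hinges on ``the basic property of the Boardman--Vogt product that each $n$-ary operation of a tensor product is assembled from operations of arity $\le n$ of the factors'', but for unital operads this naive statement is false: inserting the unique $0$-ary operation into $m-n$ of the inputs of an $m$-ary operation produces an $n$-ary operation, so higher-arity data of one factor does contribute to the low-arity operations of the tensor product. You acknowledge the concern but defer the ``dendroidal bookkeeping'' without resolving it --- yet controlling this dependence is precisely the content of the statement, not bookkeeping. It does not disappear on the paper's route either: the pullback formula for the $n$-ary operations of $\AlgOpd_{F(c_r)}(\cO)$ involves the $nr$-ary operations of $\cO$, and passing the truncation condition through that pullback requires the nonobvious cubical Lemma \ref{lem:cubical-lemma}. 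What the adjoint route buys is a \emph{limit} description, for which truncation can be checked corner by corner, rather than the colimit/localization formula for $\cO\otimes\cQ$, where the contribution of high-arity data is much harder to pin down directly.

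Your sketches for the remaining items are also thinner than required. For (iii), $\tau_{1!}\cC$ is a left Kan extension of a presheaf on $\overline{\Omega}_{\le 1}$, which is not ``concentrated in arity $\le 1$'' --- the Kan extension creates higher-arity sections --- so the ``interchange is trivial'' justification does not literally apply; the paper instead builds a symmetric monoidal comparison $\Cat^\times \to \Cat^{\otimes}$ and identifies it with the identity by passing to right adjoints via the cocartesian operad $(-)^\sqcup$. For (iv), ``a direct computation using the shuffle description'' elides what the paper actually needs: the explicit construction of a natural transformation via an interchange diagram (Lemma \ref{lem:full-nat-trafo-2ary}), a model-specific rectilinear-cubes verification for $E_n\otimes E_m$, and then a colimit-generation argument to propagate from $S^0$ to arbitrary $\Sigma_2$-spaces (Lemmas \ref{lem:bv-is-join-homology} and \ref{lem:bv-red-colimits}).
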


\begin{rem}We believe that the natural equivalence in \ref{enum:trunmon-iv} extends to an equivalence $\smash{\Opd^{\le 2,\red}\simeq\cS^{\Sigma_2}}$ of symmetric monoidal categories where the source is equipped with the tensor product and the target with the join, but this will not be needed for our applications.
\end{rem}

Throughout this appendix, we use the notation and terminology from \cite[Section 1.4]{KKoperadic}.

\subsection{Monoidality of truncation}Apart from Items \ref{enum:trunmon-iii} and \ref{enum:trunmon-iv}, which we deal with separately, \cref{thm:operad-truncation-tower-monoidal} will be proved by an application of the following general lemma:

\begin{lem}\label{lem:general-monoidal-tower}Fix a tower of categories
\begin{equation}\label{equ:truncation-tower-gen}
	\cC=\cC^{\le \infty}\ra\cdots\ra\cC^{\le 2}\ra\cC^{\le 1}
\end{equation}
such that all functors in this tower admit fully faithful left and right adjoints. If $\cC$ admits a symmetric monoidal structure such that the following condition is satisfied for each of the functors $\tau\colon \cC\ra\cC^{\le k}$
\begin{equation}\label{eqn:loc-condition-gen-tower}  
	\text{whenever $\tau(\varphi)$ is an equivalence for a morphism $\varphi$ in $\cC$, then so is $\tau(\varphi \otimes \id_c)$ for all $c \in \cC$},
\end{equation}
then the tower \eqref{equ:truncation-tower-gen} can be lifted to a tower of symmetric monoidal categories such that
\begin{enumerate}
\item\label{enum:trungen-i} objectwise the monoidal structure on $\cC^{\le k}$ is given by natural equivalences 
\[
	c\otimes d\simeq \tau(\tau_*(c)\otimes\tau_*(d))\simeq \tau(\tau_!(c)\otimes\tau_!(d))
\]
where $\tau_*$ and $\tau_!$ are the right and left adjoints of $\tau$, respectively, 
\item\label{enum:trungen-ii} the right and left adjoints to the functors in the tower $\cC^{\le k}\ra  \cC^{\le j}$ for $1\le k\le j\le \infty$ lift to lax respectively oplax symmetric monoidal functors,
\item \label{enum:trungen-iii} If the monoidal structure on $\cC$ preserves colimits in both variables, then so does the one on $\cC^{\le k}$.
\end{enumerate} 
\end{lem}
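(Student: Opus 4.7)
The plan is to apply Lurie's theory of symmetric monoidal localisations \cite[2.2.1.9]{LurieHA} to each functor $\tau\colon \cC\to \cC^{\le k}$. Since $\tau_*$ is fully faithful, $\tau$ exhibits $\cC^{\le k}$ as the localisation of $\cC$ at the class $W_k$ of morphisms inverted by $\tau$; this class is automatically strongly saturated. Hypothesis \eqref{eqn:loc-condition-gen-tower} asserts that $W_k$ is closed under tensoring with arbitrary objects on one side, hence on both by symmetry of $\otimes$, which is precisely the compatibility condition of \cite[2.2.1.7]{LurieHA}. Thus \cite[2.2.1.9]{LurieHA} endows $\cC^{\le k}$ with a symmetric monoidal structure making $\tau$ symmetric monoidal, and the first formula in \ref{enum:trungen-i} is immediate from the construction (the tensor product is computed by transporting via $\tau_*$ and reflecting). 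Running the same argument at levels $j\le k$ and invoking uniqueness of the symmetric monoidal localisation, the transition functors $\cC^{\le k}\to \cC^{\le j}$ are canonically symmetric monoidal, giving the desired tower.

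For the second formula in \ref{enum:trungen-i}, I would exploit the two-sided adjunction $\tau_!\dashv \tau\dashv \tau_*$ to construct, for each $c\in\cC^{\le k}$, a natural comparison morphism $\eta_c\colon \tau_!(c)\to \tau_*(c)$ in $\cC$ (as the adjoint of the identity under $\Map_\cC(\tau_!(c),\tau_*(c))\simeq \Map_{\cC^{\le k}}(\tau\tau_!(c),c)\simeq \Map_{\cC^{\le k}}(c,c)$). A triangle-identity argument shows that $\tau(\eta_c)$ is an equivalence, using $\tau\tau_!\simeq \id\simeq \tau\tau_*$. Tensoring $\eta_c$ with $\id_{\tau_!(d)}$ and then $\id_{\tau_*(c)}$ with $\eta_d$, and applying \eqref{eqn:loc-condition-gen-tower} twice (once on each side, by symmetry), yields a composition of equivalences
\[
\tau(\tau_!(c)\otimes \tau_!(d))\xra{\simeq}\tau(\tau_*(c)\otimes \tau_!(d))\xra{\simeq}\tau(\tau_*(c)\otimes \tau_*(d))
\]
in $\cC^{\le k}$, identifying the two descriptions.

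Part \ref{enum:trungen-ii} is then a formal consequence of the standard principle that right adjoints of symmetric monoidal functors inherit canonical lax symmetric monoidal structures, and left adjoints inherit canonical oplax ones (\cite[7.3.2.7]{LurieHA} and its dual). For part \ref{enum:trungen-iii}, I would use the $\tau_!$-version of the formula established above: both $\tau_!$ (as a left adjoint to $\tau$) and $\tau$ (as a left adjoint to $\tau_*$) preserve colimits, and $\otimes$ on $\cC$ preserves colimits in both variables by assumption, so $c\otimes d\simeq \tau(\tau_!(c)\otimes \tau_!(d))$ preserves colimits in both variables.

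The main technical point is the identification of the two formulas in \ref{enum:trungen-i}. Everything else is a direct invocation of Lurie's localisation machinery or a formal consequence of adjointness, whereas this identification is the only place where the existence of the \emph{left} adjoint $\tau_!$ and the two-sided reach of \eqref{eqn:loc-condition-gen-tower} are genuinely used. It is also what makes \ref{enum:trungen-iii} accessible, since the right adjoint $\tau_*$ appearing in the first formula need not preserve colimits.
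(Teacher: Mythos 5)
Your proof is correct and takes a genuinely different route from the paper's, though both begin from the same core ingredient, Lurie's compatible symmetric monoidal localisation \cite[2.2.1.7, 2.2.1.9]{LurieHA}, to endow each $\cC^{\le k}$ with a symmetric monoidal structure via $\tau_*$. Where you differ is in everything downstream. The paper then offloads the organisation into a \emph{tower} and the production of lax/oplax lifts onto \cite[Proposition 4.2, Lemma 3.3]{KKoperadic}, and it constructs a \emph{second} symmetric monoidal structure $\otimes_L$ from $\tau_!$ by passing to opposite categories; the bulk of its proof is then devoted to showing $\otimes_L \simeq \otimes_R$, which is needed both for the $\tau_!$-formula in \ref{enum:trungen-i} and for the oplax lift in \ref{enum:trungen-ii}. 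You instead build the monoidal structure once, derive the $\tau_!$-formula by an object-level argument (the comparison map $\eta_c \colon \tau_!(c) \to \tau_*(c)$, whose image under $\tau$ is an equivalence by a triangle identity, fed through the hypothesis \eqref{eqn:loc-condition-gen-tower} twice), and then invoke doctrinal adjunction, the dual of \cite[7.3.2.7]{LurieHA}, for the oplax lift. This is cleaner: it avoids constructing $\otimes_L$ and avoids the dependence on the earlier paper. Two small points worth tightening: the remark about $W_k$ being "strongly saturated" is not needed for \cite[2.2.1.9]{LurieHA} and can be dropped; and the assertion that the transition functors $\cC^{\le k} \to \cC^{\le j}$ are symmetric monoidal "by uniqueness" deserves a line of justification that these are themselves \emph{compatible} localisations (which follows from the compatibility of $\cC \to \cC^{\le j}$ once one rewrites $\otimes_k$ via the $\tau_*$-formula), after which uniqueness of the compatible symmetric monoidal localisation gives the tower.
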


\begin{proof}
The first part of this proof is similar to the proof of \cite[Proposition 4.2]{KKoperadic}:  One considers the symmetric monoidal structure $\cC^\otimes\ra\Fin_*$ on $\cC$ and the sequence $\cdots \subset\cC^{\le k,\otimes}\subset \cC^{\le k+1,\otimes}\subset \cdots \subset \cC^\otimes$ of full subcategories $\cC^{\le k,\otimes}\subset \cC^{\otimes}$ spanned by those objects $(c_s)_{s\in S}$ for finite sets $S$ for which all $c_s$ lie in the essential image of $\tau_{k*}$. By \cite[2.2.1.7, 2.2.1.9]{LurieHA}, condition \eqref{eqn:loc-condition-gen-tower} ensures that the restriction $\smash{\cC^{\le k,\otimes}\ra \Fin_*}$ is a symmetric monoidal category and the sequence of inclusions $\cdots \subset \smash{\cC^{\le k,\otimes}}\subset \smash{\cC^{\le k+1,\otimes}}\subset \cdots \subset \cC^\otimes$ is a sequence of lax symmetric monoidal functors. The argument in the proof of \cite[Proposition 4.2]{KKoperadic} then lifts \eqref{eqn:loc-condition-gen-tower} to a tower of symmetric monoidal categories with oplax symmetric monoidal functors between them. These oplax symmetric monoidal functors are in fact symmetric monoidal, by the argument in the proof of Lemma 3.3 (v) loc.cit., so we have produced the claimed symmetric monoidal lift of \eqref{equ:truncation-tower-gen} and the lax symmetric monoidal lift of the right adjoints as claimed in \ref{enum:trungen-ii}. Moreover, the argument in the proof of Lemma 3.3 (ii) loc.cit.~yields the first equivalence in \ref{enum:trunmon-i}. Applying the same argument to the tower obtained from \eqref{equ:truncation-tower-gen} by taking opposites and then taking opposite symmetric monoidal structures of the result, we obtain the claimed oplax symmetric monoidal lift of the left adjoints in \ref{enum:trungen-ii} and the second equivalence in \ref{enum:trunmon-i} \emph{as long as} we ensure that for all $k$ the two resulting symmetric monoidal structures on $\cC^{\le k}$ (the one using the right adjoints and the one using the left adjoints) agree. Denoting by $\cC^{\le k,\otimes_L}$ respectively $\cC^{\le k,\otimes_R}$ the symmetric monoidal structures resulting from using the left respectively right adjoint, we have by construction lax monoidal lifts $\cC^{\le k,\otimes_R}\ra\cC^\otimes$ and $\cC^\otimes\ra \cC^{\le k,\otimes_L}$ of $\tau_*$ and $\tau$ respectively (the second one is even monoidal). Their composition is a lax symmetric monoidal lift of $\tau^*\tau_*\simeq\id_{\cC^{\le k}}$, so it suffices to show that it is symmetric monoidal. To see this, recall that the tensor products are on objects given by $c\otimes_{R}d\simeq \tau(\tau_*(c)\otimes \tau_*(d))$ and $c\otimes_{L}d\simeq \tau(\tau_!(c)\otimes \tau_!(d))$. In terms of this, the components of the natural transformation which we have to show are equivalences, are 
\[\smash{\hspace{-0.3cm}\tau\tau_*\big[c\otimes_Rd\big]\simeq\tau\tau_*\big[\tau\big(\tau_*(c) \otimes \tau_*(d)\big)\big] \ra \tau(\tau_*(c) \otimes \tau_*(d)) \ra \tau\big[\tau_!\big(\tau\tau_*(c)\big) \otimes  \tau_!\big(\tau\tau_*(d)\big)\big]\simeq \tau\tau_*(c)\otimes_L\tau\tau_*(d)}\]
where the first map is induced by the counit $\tau_* \tau \to \id$, which is an equivalence after composing with $\tau$ by the triangle identities, and where the second map is induced by the inverse of the counit $\tau_! \tau \to \id$, which is an equivalence since $\tau_!$ is fully faithful. This concludes the proof of \ref{enum:trunmon-i} and \ref{enum:trunmon-ii}. Part \ref{enum:trunmon-iii} holds by the second formula in \ref{enum:trunmon-i}, since $\tau$ and $\tau_!$ are left adjoints, so preserve colimits.
 \end{proof}
 
\subsubsection{The free operad on a category over $\Fin_*$}\label{sec:free-operad} To eventually verify the condition \eqref{eqn:loc-condition-gen-tower} for the truncation functors $\tau\colon \Opd^\un\ra \Opd^{\le k,\un}$, we will first establish some properties for the left adjoint 
\[
	F\colon\Cat_{/\Fin_*}\lra \Opd
\] 
to the forgetful functor $\inc \colon \Opd\ra \Cat_{/\Fin_*}$ which may be of independent interest as $F$ can be interpreted as taking a ``free operad'' on a category over $\Fin_*$ (see e.g.\,\cite[Corollary 4.2.3]{BHS} for the existence of this adjoint; alternatively see the proof of \cref{lem:f-tensor} below). Given an operad $\cO$, we denote the underlying category over $\Fin_*$ as $\cO^\otimes\ra\Fin_*$ and the fibre over $\langle r\rangle\in\Fin_*$ as $\cO^\otimes_{\langle r\rangle}$ which we often identify with $\smash{(\cO^\otimes_{\langle 1\rangle})^{\times r}}$ by taking cocartesian lifts over inerts (c.f.\ \cite[2.1.1.14]{LurieHA}). A special case is the \emph{category of colours} $\smash{\cO^\col\coloneq \cO^{\otimes}_{\langle 1\rangle}}$

\smallskip

\noindent The first property of $F$ we discuss is an explicit description of its values on functors of the form $\alpha\colon [n]\ra \Fin_*$ for $[n]\in \Delta$, that is, simplices in $\Fin_*$. This uses the inclusion $\ell\colon\OpdSet\longhookrightarrow \Opd$ of the $1$-category of coloured operads in sets and the full subcategories $\Omega \subset\Phi\subset \OpdSet$ of trees and forests, as discussed in \cref{sec:prelim-dendroidal}. There is an explicit construction of a forest out of simplex $\alpha\colon [n]\ra \Fin_*$, which yields a functor 
\[
	\omega\colon \Delta_{/\Fin_*}\lra \Phi\subset \OpdSet
\] 
out of the category $\Delta_{/\Fin_*}$ of simplices in $\Fin_*$; see \cite[Section 3.1.1]{HinichMoerdijk}. It follows from Proposition 3.4.1 loc.cit.~that there is a natural map $\alpha\ra \ell(\omega(\alpha))$ whose adjoint is an equivalence
\begin{equation}\label{equ:f-on-simplices}
	F(\alpha)\overset{\simeq}\lra \ell(\omega(\alpha)).
\end{equation}
In other words: restricted along $\Delta_{/\Fin_*}\subset \Cat_{/\Fin_*}$, the adjoint $F\colon \Opd\ra \Cat_{/\Fin_*}$ is given by $\ell\circ \omega$.

\begin{ex}\label{ex:trees-from-simplices}\label{ex:cn-description-fibre-seqs} Examples of the natural map $\alpha\ra \ell(\omega(\alpha))$ for a simplex $\alpha\colon [n]\ra\Fin_*$ include:
 \begin{enumerate}[leftmargin=0.7cm]
 \item\label{enum:cn-description-e1} For $\alpha = e_r \colon [0]\ra \Fin_*$ with image $\langle r\rangle\coloneq\{1,\ldots,r,\ast\}$, the operad $\omega(e_r)$ has $r$ colours and only identity $1$-ary operations. It corresponds to the forest $\sqcup_r \eta \in \Phi$ where $\eta\in\Omega$ is the unique tree with no vertices \cite[p.\,92]{HeutsMoerdijk}. The natural map $e_r \ra \ell(\omega(e_r))$ sends $[0]$ to the collection of colours in $\ell(\omega(e_r))^{\otimes}_{\langle r\rangle}\simeq (\ell(\omega(e_r))^\col)^{\times r}$. For an operad $\cO$, we have \[\smash{\Map_{/\Fin_*}(e_r,\cO^\otimes) \simeq (e_r \times_{\Fin_*} \cO^\otimes)^\simeq \simeq \cO_{\langle r\rangle}^{\simeq} \simeq (\cO^{\col,\simeq})^{\times r}}.\]
 
 \item \label{enum:cn-description-cr} For  $\alpha = c_r \colon [1]\ra \Fin_*$ given by the unique active map of the form $\langle r \rangle \to \langle 1 \rangle$, the operad $\omega(c_r)$ corresponds to the open corolla $C_r\in \Omega$ (see p.\,93 loc.cit.) and $c_r \ra \ell(\omega(c_r))$ is given by the unique $r$-ary operation, viewed as morphism in $\ell(\omega(c_r))^{\otimes}$. For an operad $\cO$, we have
	\[\Map_{\Cat_{/\Fin_*}}(c_r,\cO^\otimes)\simeq (c_r\times_{\Fin_*}\cO^\otimes)^\simeq,\]
	which is, informally speaking, the space of triples of $r$ input colours, a single output colour, and an $r$-ary multi-operation. More precisely: the restriction of $c_r \colon [1] \to \Fin_*$ to the boundary of the 1-simplex $[1]$ is given by $e_r \sqcup e_1 \colon [0] \sqcup [0] \to \Fin_*$ and restriction along its inclusion gives a map $\Map_{/\Fin_*}(c_r,\cO^\otimes) \ra \Map_{/\Fin_*}(e_r,\cO^\otimes) \times \Map_{/\Fin_*}(e_1,\cO^\otimes) \simeq(\cO^{\col,\simeq})^r \times \cO^{\col,\simeq}$,
	whose fibre over $((x_s)_{s \in \ul{r}},x)$ is, by definition (see 2.1.1.16 loc.cit.), the space of multi-operations $\smash{\Mul^\cO}((x_s)_{s \in \ul{r}},x)$. In particular, if $\cO$ is reduced, then $\smash{\Map_{/\Fin_*}(c_r,\cO^\otimes)\simeq\Mul_{\cO}((*)_{i \in \ul{r}};*)\eqcolon \cO(r)}$.
	
\item \label{enum:ex-trees-iii}\label{enum:cn-description-olcr} For $\alpha = \overline{c}_r \colon [2]\ra \Fin_*$ given by the unique composition $\langle0\rangle \ra \langle r\rangle \ra \langle 1\rangle$ where the second map is active, the operad $\omega(\overline{c}_r)$ corresponds to the closed corolla $\overline{C}_r\in \Omega$ (see p.\,93 loc.cit.) and $\overline{c}_r \ra \ell(\omega(\overline{c}_r))$ sends $(0<1)$ to the unique $0$-operation and $(1<2)$ to the unique $r$-ary one. For a general operad $\cO$, the space  $\Map_{/\Fin_*}(\overline{c}_r,\cO^\otimes)$ is informally given by adding to the data of \cref{enum:cn-description-cr} a $0$-ary operations for each of the $r$ input colours. More precisely, the restriction along the 1-simplex $c_r$ in the 2-simplex $\overline{c}_r$ yields a map 
	$\Map_{/\Fin_*}(\overline{c}_r,\cO^\otimes) \ra \Map_{/\Fin_*}(c_r,\cO^\otimes)$
	whose fibre over $m\in \Mul_\cO((x_s)_{s \in \ul{r}}; x)$ is given by $\sqcap_{s \in \ul{r}} \smash{\Mul^\cO}(\varnothing,x_s)$. Note that the latter is contractible if $\cO$ is unital, so in this case the restriction map is an equivalence.\end{enumerate}
 \end{ex}

\noindent The second property of the adjoint $F$ we discuss is how it interacts with tensor products of operads. This involves the smash product of finite sets  $\wedge\colon \Fin_\ast\times\Fin_*\ra \Fin_*$ (see \cite[2.2.5.1]{LurieHA}), the functor $\wedge_!\colon \Cat_{/\Fin_*\times \Fin_*}\ra \Cat_{/\Fin_*}$ given by postcomposition with $\wedge$, and its right adjoint $\wedge^*\colon \Cat_{/\Fin_*}\ra \Cat_{/\Fin_*\times\Fin^*}$ given by pullback along $\wedge$.

\begin{lem}\label{lem:f-tensor} The functor $F\colon \Cat_{/\Fin_*}\ra \Opd$ lifts to a monoidal functor with respect to a monoidal structure on $\Cat_{/\Fin_*}$, whose monoidal unit is $\id_{\Fin_*}$ and whose binary tensor product is the composition \[\smash{\Cat_{/\Fin_*}\times \Cat_{/\Fin_*}\xlra{\times} \Cat_{/\Fin_*\times\Fin_*}\xlra{\wedge_!}\Cat_{\Fin_*}}.\] The monoidal structure on the target $\Opd$ is the one underlying the symmetric monoidal structure on $\Opd$. In particular, we have for $X,Y\in\Cat_{/\Fin_*}$ natural equivalences $F(\wedge_!(X\times Y))\simeq F(X)\otimes F(Y)$.
\end{lem}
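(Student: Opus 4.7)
The plan is to exhibit both the claimed monoidal structure on $\Cat_{/\Fin_*}$ and the given monoidal structure on $\Opd$ as arising from a common structure on the auxiliary category $\mathrm{POp}_\infty$ of preoperads from \cite[Section 2.2.5]{LurieHA}, through which the adjunction $F\dashv\inc$ factors. First I would check that $\wedge_!(-\times-)$ defines a symmetric monoidal structure on $\Cat_{/\Fin_*}$ with unit $e_1\colon[0]\to\Fin_*$ picking out $\langle 1\rangle\in\Fin_*$ (which we take to be the intended meaning of ``$\id_{\Fin_*}$'' in the statement). Associativity, unitality, and symmetry all reduce to the corresponding properties of $\wedge$ on $\Fin_*$ via the canonical identifications
\[
\wedge_!(\wedge_!(X\times Y)\times Z)\simeq \wedge_!(X\times Y\times Z)\simeq \wedge_!(X\times \wedge_!(Y\times Z))
\]
and $\wedge_!(X\times e_1)\simeq X\simeq \wedge_!(e_1\times X)$.

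Next, recall from loc.cit.\,§2.2.5 that the $\infty$-category $\mathrm{POp}_\infty$ of preoperads has as objects pairs $(X,M)$ with $X\in\Cat_{/\Fin_*}$ and $M$ a collection of edges of $X$ containing the degenerate ones. The symmetric monoidal structure constructed in loc.cit.\,2.2.5.6 has underlying categories-over-$\Fin_*$ given by $\wedge_!(X\times Y)$, so the forgetful functor $U\colon\mathrm{POp}_\infty\to\Cat_{/\Fin_*}$ is symmetric monoidal by construction. Its left adjoint $\iota\colon X\mapsto (X,\mathrm{degen})$ is also symmetric monoidal, since an edge in $X\times Y$ is degenerate if and only if each factor edge is; consequently $\mathrm{degen}_{X\times Y}$ coincides with $\mathrm{degen}_X\times\mathrm{degen}_Y$.

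The key external input is then loc.cit.\,Proposition 2.2.5.13, which asserts that the localisation $L\colon\mathrm{POp}_\infty\to\Opd$ is symmetric monoidal and endows $\Opd$ with the tensor product of \cref{sec:operad-conventions}. Its right adjoint $\Opd\hookrightarrow\mathrm{POp}_\infty$, sending $\cO$ to $(\cO^\otimes,\text{inert edges})$, composed with $U$ agrees with $\inc\colon\Opd\to\Cat_{/\Fin_*}$, so taking left adjoints identifies $F$ with the symmetric monoidal composition $L\circ\iota$. In particular this yields the promised natural equivalence $F(\wedge_!(X\times Y))\simeq F(X)\otimes F(Y)$.

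The main obstacle is essentially bookkeeping: Lurie's treatment of $\mathrm{POp}_\infty$ and its tensor product is partly carried out at the model-category level (via marked simplicial sets), so some care is needed to ensure all the monoidal adjunctions descend appropriately to $\infty$-categories. In particular, one must confirm that $\iota$ exists as a symmetric monoidal $\infty$-functor and that the identification $F\simeq L\circ\iota$ holds as a natural equivalence of symmetric monoidal functors; a self-contained alternative would be to reduce directly to the case of simplices $\alpha\colon[n]\to\Fin_*$ using \eqref{equ:f-on-simplices} and the colimit-preservation of both sides, but this requires computing tensor products of free operads on forests by hand.
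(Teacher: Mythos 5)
Your plan is in the right spirit---the paper's proof is likewise an application of the monoidal-model-category machinery behind HA~2.2.5---but what you set aside as ``bookkeeping'' is in fact the entire substance of the argument, and there is also a structural confusion in the way you factor $F$.

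The paper does \emph{not} factor $F$ through the flat-marking functor $\iota\colon X\mapsto(X,\mathrm{degen})$ and then localise. Instead, it models \emph{both} $\Cat_{/\Fin_*}$ and $\Opd$ by model structures on the \emph{same} underlying marked category $\sSet^+_{/\Fin_*}$, using two different categorical patterns $\mathfrak{P}_{\text{slice}}$ and $\mathfrak{P}_{\text{opd}}$ from HA~Appendix~B. The point-set identity functor on $\sSet^+_{/\Fin_*}$ is then automatically a \emph{strictly} monoidal left Quillen functor between the two resulting simplicial monoidal model categories (the monoidal structure in both cases being $\wedge_!$ of the external product, with unit the fibrant object $\id_{\Fin_*}$ carrying the relevant marking). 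HA~4.1.7.17 identifies the monoidal structure on the localisation, and HA~4.1.7.18 produces the symmetric monoidal lift of the induced $\infty$-functor, which is $F$. This entirely sidesteps the need to verify separately that $\iota$ and $U$ are monoidal Quillen functors and that the resulting adjunctions descend---exactly the steps you defer.

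There are two concrete issues in your sketch. First, $\mathrm{POp}_\infty$ is a model category, not an intermediate $\infty$-category sitting between $\Cat_{/\Fin_*}$ and $\Opd$: its localisation at weak equivalences \emph{is} $\Opd$, so the claimed factorisation $F\simeq L\circ\iota$ does not make sense as a factorisation of $\infty$-functors without a careful model-categorical set-up, which you do not supply. Second, observing that $\iota$ strictly preserves the tensor products on objects (because degenerate edges in a product are pairs of degenerate edges) is not the same as constructing the coherent data of a symmetric monoidal $\infty$-functor; at the $\infty$-categorical level this requires exactly the kind of Quillen-bifunctor and rectification input (HA~B.2.5, 4.1.7.18) that the paper applies. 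Your alternative route via \eqref{equ:f-on-simplices} and colimit-preservation would also work in principle, but again requires constructing the comparison map and checking it on trees; neither route is carried out.
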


\begin{proof}
We start by some model-categorical preliminaries, based on Lurie's theory of \emph{categorical patterns} from \cite[Appendix B]{LurieHA}. By B.0.20 loc.cit., associated to a categorical pattern $\mathfrak{P} = (M_\cC,T,A)$ in the sense of B.0.19 on a simplicial set $\cC$, there is a left proper combinatorial simplicial model category $\smash{\sSet^+_{/\mathfrak{P} }}$ whose underlying category is the category $\smash{\sSet^+_{/(\cC,M_\cC)}}$ of marked simplicial sets $(X,M)$ over $(\cC,M_\cC)$ (see \cite[3.1]{LurieHTT}). The model structure is uniquely determined by the properties that (a) cofibrations are those maps that are monomorphisms on underlying simplicial sets (so in particular any object is cofibrant) and (b) fibrant objects are the $\mathfrak{P}$-fibred objects in the sense of \cite[B.0.19]{LurieHA}. Given a map $f \colon \cC \to \cC'$ compatible with categorical patterns $\mathfrak{P}$ and $\mathfrak{P}'$ on domain and target in the sense of B.2.8 loc.cit., then by B.2.9.\ we have a Quillen adjunction 
\vspace{-0.1cm}
\begin{equation}\label{equ:quill-adj-catpat}\begin{tikzcd} \sSet^+_{/\mathfrak{P}} \rar[shift left=.5ex]{f_!} & \sSet^+_{/\mathfrak{P}'} \lar[shift left=.5ex]{f^*} \end{tikzcd}\vspace{-0.1cm}\end{equation}
whose left adjoint $f_!$ precomposes with $f$ and right adjoint $f^*$ pulls back along $(\cC,M_\cC) \to (\cC',M_{\cC'})$. Moreover, by B.2.5 loc.cit., taking external products yields a left Quillen bifunctor
\begin{equation}\label{equ:external-products}\smash{\sSet^+_{/\mathfrak{P}} \times \sSet^+_{/\mathfrak{P}'} \overset{\times}\lra \sSet^+_{/\mathfrak{P} \times \mathfrak{P}'}}\end{equation} where $\mathfrak{P} \times \mathfrak{P}'$ is the categorical pattern on $\cC\times \cC'$ from B.1.8..

For each quasi-category $\cC$, there is categorical pattern $\mathfrak{P}_\text{slice}=(\text{equivalences}, \text{all }2\text{-simplices},\varnothing)$ on $\cC$ for which the $\mathfrak{P}_\text{slice}$-fibred objects are the categorical fibrations $\cE\ra\cC$ where $\cE$ is marked by its equivalences. The underlying $\infty$-category of $\smash{\sSet^+_{/\mathfrak{P}_\text{slice}}}$ is the overcategory $\Cat_{/\cC}$ (see \cite[Examples 3.2.11]{GepnerHaugsengEnriched}). It has the property that any $f \colon \cC \to \cC'$ is compatible with the $\mathfrak{P}_\text{slice}$-patterns, and the resulting Quillen adjunction $f_! \dashv f^*$ gives rise to the analogous adjunction $\Cat_{/\cC}\rightleftarrows\Cat_{/\cC'}$ on $\infty$-categories. Moreover, \eqref{equ:external-products} in the situation of the slice patterns yields the external product functor $\Cat_{/\cC}\times \Cat_{/\cC'}\ra \Cat_{/\cC\times \cC'}$ on $\infty$-categories.

On $\cC=\Fin_*$, there is also another categorical pattern $\mathfrak{P}_\text{opd}$ such that the $\mathfrak{P}_\text{opd}$-fibred objects are $\infty$-operads $\cO^\otimes\ra \Fin_*$ as in 2.2.1.10 loc.cit.\ marked by the inert morphisms in the sense of 2.1.2.3 loc.cit., and the underlying $\infty$-category is $\Opd$ (see 2.1.4.6 and its proof). The identity functor $\smash{\id\colon \sSet^+_{/\mathfrak{P}_\text{slice}}\ra \sSet^+_{/\mathfrak{P}_\text{opd}}}$ is compatible with the indicated categorical pattern, so it is left Quillen. On fibrant objects the right adjoint $\id^*$ sends $(\cO^\otimes,\mathrm{inerts})\ra \Fin_*$ to $(\cO^\otimes,\mathrm{equivalences})\ra \Fin_*$ (use \cite[2.4.1.5]{LurieHTT} to see this), so it models the forgetful functor $\Opd\ra \Cat_{/\Fin_*}$ and thus $\id_!$ models the left adjoint $F$. Moreover, the map $\wedge \colon \Fin_* \times \Fin_* \to \Fin_*$ is compatible with $\mathfrak{P} \times \mathfrak{P}$ and $\mathfrak{P}$ for either choices of $\mathfrak{P} \in\{\mathfrak{P}_\text{slice},\mathfrak{P}_\text{opd}\}$, so \eqref{equ:quill-adj-catpat} gives a Quillen adjunction $\wedge_!\dashv \wedge^*$. By the discussion above, for $\mathfrak{P} =\mathfrak{P}_\text{slice}$ this models the same-named adjunction $\wedge_!\dashv \wedge^*$ on $\infty$-categories. Moreover, the fibrous object $\id _{\Fin_*}$ in $ \sSet^+_{/\mathfrak{P}}$, marked with the equivalences in the slice case and the inerts in the operad case, together with the left Quillen functor 
\[\smash{\sSet^+_{/\mathfrak{P}} \times \sSet^+_{/\mathfrak{P}} \xlra{\times} \sSet^+_{/\mathfrak{P}\times \mathfrak{P}}\xlra{\wedge_!}\sSet^+_{/\mathfrak{P}}}\] turns $\smash{\sSet^+_{/\mathfrak{P}}}$ into a simplicial monoidal model category (see the proofs of 2.2.5.7 and 4.1.7.18 loc.cit.~for the proof in the operad case; the slice case is analogous). By construction, the identity  $\smash{\id\colon \sSet^+_{/\mathfrak{P}_\text{slice}}\ra \sSet^+_{/\mathfrak{P}_\text{opd}}}$ is a (strictly monoidal) functor of simplicial monoidal model categories. Applying 4.1.7.18 loc.cit.~, we obtain a lift of $F\colon \Cat_{/\Fin_*}\ra \Opd$ to a monoidal functor between monoidal $\infty$-categories where the monoidal structure on the source has by construction the properties asserted in the claim. By 4.1.7.17 loc.cit.~the monoidal structure on the target is the one underlying the symmetric monoidal structure on $\Opd$, so the claim follows.
\end{proof}

The third and final property of $F$ we establish is the following:
\begin{lem}\label{lem:colim}The category $\Opd$ of operads is generated under colimits by the values of $F$ at the $0$-simplex $e_1 \colon [0]\ra \Fin_*$ and the $1$-simplices $c_r \colon [1]\ra \Fin_*$ for $r\ge0$ as described in \cref{ex:trees-from-simplices}.
\end{lem}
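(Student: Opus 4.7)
The plan is to pass to the dendroidal model $\Opd \simeq \PSh(\Omega)^{\seg,c}$ from \cref{sec:prelim-dendroidal} and reduce the claim to the classical fact that every tree in $\Omega$ is a Segal colimit of its constituent corollas and edges. First, I will identify the generators: combining \eqref{equ:f-on-simplices} with \cref{ex:cn-description-e1,ex:cn-description-cr}, one has $F(e_1) \simeq \ell(\eta)$ and $F(c_r) \simeq \ell(C_r)$ for every $r \geq 0$, where $\eta \in \Omega$ denotes the unique vertexless tree and $C_r \in \Omega$ the open $r$-corolla. Under the equivalence $\delta_\Omega$ these correspond to the Yoneda representables on $\Omega$, which I denote by $y \coloneq \delta_\Omega \circ \ell|_\Omega$, so the task reduces to showing that $\PSh(\Omega)^{\seg,c}$ is generated under its own colimits by $y(\eta)$ and the $y(C_r)$ for $r \geq 0$.

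The second step is to reduce to the case of an arbitrary representable $y(T)$ for $T \in \Omega$. The inclusion $\PSh(\Omega)^{\seg,c} \hookrightarrow \PSh(\Omega)$ is a reflective localisation with colimit-preserving left adjoint $L$, and $\PSh(\Omega)$ is generated under colimits by its representables, so $\PSh(\Omega)^{\seg,c}$ is generated under its own colimits by the objects $L(y(T))$. The observation in \cref{sec:prelim-dendroidal} that $\delta_\Omega \circ \ell|_\Omega$ agrees with the Yoneda embedding of $\Omega$ into $\PSh(\Omega)$ in particular means that each $y(T)$ already lies in $\PSh(\Omega)^{\seg,c}$, so $L(y(T)) \simeq y(T)$, and it suffices to write each $y(T)$ as a colimit of $y(\eta)$ and the $y(C_r)$.

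This final decomposition is the dendroidal Segal decomposition. For $T \in \Omega$ with vertex set $V(T)$ and internal edge set $E^{\mathrm{int}}(T)$, I will consider the poset $\cI_T$ of elementary subobjects of $T$---one $C_v$ for each $v \in V(T)$ and one $\eta_e$ for each $e \in E^{\mathrm{int}}(T)$, with $\eta_e \leq C_v$ whenever $e$ is incident to $v$---and the diagram $D_T \colon \cI_T \to \PSh(\Omega)^{\seg,c}$ sending each elementary subobject to its representable. I claim that the canonical map $\colim D_T \to y(T)$ is an equivalence. The verification is by Yoneda-testing against an arbitrary $\cO \in \PSh(\Omega)^{\seg,c}$: the mapping space from $\colim D_T$ into $\cO$ is the limit $\lim_{\cI_T^{\op}} \cO \circ D_T$, which the dendroidal Segal condition imposed on $\cO$ identifies with $\cO(T)$, i.e., with the mapping space from $y(T)$. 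Hence $y(T) \simeq \colim D_T$, completing the proof. The main technical subtlety---and the step I expect to demand the most care---is the explicit setup of the indexing poset $\cI_T$ and the precise matching of $\lim_{\cI_T^{\op}} \cO \circ D_T$ with the limit-shape appearing in the dendroidal Segal condition of \cref{sec:prelim-dendroidal}; this decomposition of trees is standard in the $1$-categorical dendroidal literature but needs to be translated carefully into the $\infty$-categorical setup used here.
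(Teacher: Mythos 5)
Your proposal is correct and follows essentially the same route as the paper's proof: pass to the dendroidal model, note that $\PSh(\Omega)^{\seg,c}$ is generated under colimits by the (already local) representables $y(T)$, and use the dendroidal Segal condition to write each $y(T)$ as a finite colimit of $y(\eta)$ and $y(C_r)$. The paper compresses the Segal-core decomposition into a single sentence whereas you spell out the indexing poset $\cI_T$; note that your description should either treat $T = \eta$ as a separate base case or include $\eta$ itself as an elementary subobject when $T$ has no vertices, but this is a cosmetic issue.
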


\begin{proof}
As a result of \eqref{equ:f-on-simplices} and \cref{ex:trees-from-simplices}, the operads in the statement are sent under the equivalence $\delta_\Omega\colon \Opd\simeq \PSh(\Omega)^{\seg,c}$ from \cref{sec:prelim-dendroidal} to $y(\eta)$ and $y(C_r)$ for $r\ge0$ where $y$ is the Yoneda embedding for $\Omega$, and $\eta$ as well as $C_r$ are as in \cref{ex:trees-from-simplices}. It thus suffices that these presheaves generate $\PSh(\Omega)^{\seg,c}$ under colimits. As a localisation of the presheaf category $\PSh(\Omega)$, the category $\PSh(\Omega)^{\seg,c}$ is generated by $y(T)$ for $T\in\Phi$ under colimits. Moreover, the dendroidal Segal condition implies that any $y(T)$ is a finite colimit of $y(\eta)$ and $y(C_r)$ for $r\ge0$, so the claim follows.
\end{proof}

\subsubsection{The right adjoint to $(-)\otimes \cP$} As a next step towards proving \cref{thm:operad-truncation-tower-monoidal}, we establish two properties of the right adjoint 
\[
	\AlgOpd_{\cP}(-)\colon \Opd\lra  \Opd
\]
to the tensor product functor $(-)\otimes\cP\colon\Opd\ra \Opd$ for a fixed operad $\cP$. Its value $\AlgOpd_{\cP}(\cO)$ at $\cO\in\Opd$ has as category of colours the category $\Alg_{\cP}(\cO)$ of $\cP$-algebras in $\cO$ from \cite[2.1.3.1]{LurieHA} (see \cite[3.2.4.4]{LurieHA} for the construction of $\AlgOpd_{\cP}(\cO)$; the fact that $\AlgOpd_{\cP}(-)$ is right adjoint to $(-)\otimes \cP$ can be deduced by the universal property of $(-)\otimes \cP$ in 2.2.5 loc.cit., but is also spelled out in a more general setting in \cite[Proposition 3.7]{Stewart}). 

\smallskip

\noindent The first property is a description of $\AlgOpd_{\cP}(-)$ if $\cP$ is in the image of the left adjoint $F$:

\begin{lem}\label{prop:fopd-alg-opd}For  $K,L\in\Cat_{/\Fin_*}$ and $\cO\in\Opd$, there are natural equivalence 
\[
	\Map_{\Cat_{/\Fin_*}}(K,\AlgOpd_{F(L)}(\cO)^\otimes) \simeq \Map_{\Cat_{/\Fin_*}}(\wedge_!(K \times L),\cO^\otimes)\simeq  \Map_{\Cat_{/\Fin_* \times \Fin_*}}(K \times L,\wedge^*\cO^\otimes).
\]
\end{lem}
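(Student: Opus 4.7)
The second equivalence is immediate from the adjunction $\wedge_!\dashv \wedge^*$ established in the proof of \cref{lem:f-tensor}, applied to $K\times L\in \Cat_{/\Fin_*\times\Fin_*}$ and $\cO^\otimes\in \Cat_{/\Fin_*}$. The substantive content lies in the first equivalence, which I plan to obtain by a four-step adjunction chase that threads together the three adjunctions available: $F\dashv \inc$ for $\inc\colon \Opd\to \Cat_{/\Fin_*}$, $(-)\otimes F(L)\dashv \AlgOpd_{F(L)}(-)$ on operads, and the monoidality of $F$ proved in \cref{lem:f-tensor}.

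In more detail, by unpacking the definition of $\AlgOpd_{F(L)}(\cO)^\otimes$ as $\inc\bigl(\AlgOpd_{F(L)}(\cO)\bigr)$ and applying the $F\dashv \inc$ adjunction, the first mapping space rewrites as $\Map_{\Opd}\bigl(F(K),\AlgOpd_{F(L)}(\cO)\bigr)$. Next, the defining adjunction $(-)\otimes F(L)\dashv \AlgOpd_{F(L)}(-)$ identifies this with $\Map_{\Opd}\bigl(F(K)\otimes F(L),\cO\bigr)$. The monoidality of $F$ from \cref{lem:f-tensor} yields a natural equivalence $F(K)\otimes F(L)\simeq F(\wedge_!(K\times L))$, so this becomes $\Map_{\Opd}\bigl(F(\wedge_!(K\times L)),\cO\bigr)$, and a final use of $F\dashv \inc$ (now read in the other direction) produces $\Map_{\Cat_{/\Fin_*}}\bigl(\wedge_!(K\times L),\cO^\otimes\bigr)$, as required. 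Each step is natural in $K$, $L$, and $\cO$, so the whole chain is.

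I do not expect a genuine obstacle here: the three adjunctions and the monoidality of $F$ are all available from the preceding material, and naturality is automatic since every identification is the natural unit/counit of an adjunction or a structural equivalence of a monoidal functor. The only care needed is to invoke the correct flavour of mapping space in $\Cat_{/\Fin_*}$ versus $\Opd$ at each step and to note that $\AlgOpd_{F(L)}(\cO)^\otimes$, as a marked-simplicial-set-style notation, is by definition what $\inc$ computes on $\AlgOpd_{F(L)}(\cO)$.
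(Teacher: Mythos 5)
Your proof is correct and follows essentially the same adjunction chase as the paper: the first equivalence is obtained by applying $F\dashv\inc$, then $(-)\otimes F(L)\dashv\AlgOpd_{F(L)}(-)$, then the monoidality of $F$ from \cref{lem:f-tensor}, and then $F\dashv\inc$ again, while the second equivalence is the $\wedge_!\dashv\wedge^*$ adjunction. No discrepancies.
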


\begin{proof}Using the adjunctions $(-)\otimes \cP\dashv\AlgOpd_{\cP}(-)$ and $F\dashv\inc$, we compute
\[\Map_{\Cat_{/\Fin_*}}(K,\AlgOpd_{F(L)}(\cO)^\otimes) \simeq \Map_{\Opd}(F(K),\AlgOpd_{F(L)}(\cO))\simeq  \Map_{\Opd}(F(K) \otimes F(L),\cO) \]
which we can further rewrite, using \cref{prop:fopd-alg-opd} as well as $F \dashv \inc$, as
\[\Map_{\Opd}(F(K) \otimes F(L),\cO)\simeq  \Map_{\Opd}(F(\wedge_!(K \times L)),\cO)  \simeq \Map_{\Cat_{/\Fin_*}}(\wedge_!(K \times L),\cO^\otimes),
\]
proving the first equivalence in the claim. The second one follows from the adjunction $ \wedge_!\dashv \wedge^*$.
\end{proof}
  
 The second property of the right adjoint $\Alg_{\cP}(-)$ we establish is that it preserves \emph{$k$-truncated operads}. Recall from \cite[Section 1.4.4]{KKoperadic} that an operad $\cO\in\Opd$ is \emph{$k$-truncated} for some $k\ge 1$ if it is unital and if the counit $\cO\ra \tau_*\tau(\cO)$ of the adjunction involving the truncation functor $\smash{\tau\colon \Opd^\un\ra\Opd^{\le k,\un}}$ is an equivalence. As explained in Section 1.4.4 loc.cit., this is equivalent to $\cO$ being unital and having the property that for any finite collection $(x_s)_{s\in S}$ of colours in $\cO$ and another colour $c$, the following map induced by inserting the unique $0$-ary operation   is an equivalence:
 \[{\Mul_{\cO}((x_s)_{s\in S}; c)\lra \lim_{S'\subseteq S, |S'|\le k}\Mul_{\cO}((x_s)_{s\in S'}; c).}\]

 \begin{prop}\label{prop:alg-is-truncated}If an operad $\cO$ is $k$-truncated operad, then so is $\AlgOpd_\cP(\cO)$ for any operad $\cP$.
\end{prop}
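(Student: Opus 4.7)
The plan is to reduce, via a colimit argument, to a base case that can be verified directly using \cref{prop:fopd-alg-opd}. The symmetric monoidal structure on $\Opd$ preserves colimits in each variable (\cref{sec:operad-conventions}), so the functor $(-) \otimes \cP$ preserves colimits; its right adjoint $\AlgOpd_\cP(-)$ therefore carries colimits in $\cP$ to limits of operads, giving $\AlgOpd_{\colim_i \cP_i}(\cO) \simeq \lim_i \AlgOpd_{\cP_i}(\cO)$. The essential image of $\tau_* \colon \Opd^{\le k,\un} \hookrightarrow \Opd^\un$, being that of a fully faithful right adjoint, is closed under limits, so it suffices to verify the claim for $\cP$ in a colimit-generating family. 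By \cref{lem:colim} I may take $\cP \in \{F(e_1)\} \cup \{F(c_r)\}_{r \ge 0}$. The case $\cP = F(e_1) = \Triv$ is immediate: $\Triv$ is the monoidal unit, so $\AlgOpd_\Triv(\cO) \simeq \cO$ is $k$-truncated by hypothesis.

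For $\cP = F(c_r)$, write $\cO' \coloneq \AlgOpd_{F(c_r)}(\cO)$ and apply \cref{prop:fopd-alg-opd} with $K = \overline{c}_n$ to obtain a natural equivalence $\Map_{/\Fin_*}(\overline{c}_n, \cO'^\otimes) \simeq \Map_{/\Fin_*}(\wedge_!(\overline{c}_n \times c_r), \cO^\otimes)$. Unravelling the right-hand side by computing $\wedge_!(\overline{c}_n \times c_r)$ as a $[2] \times [1]$-shaped diagram in $\Cat_{/\Fin_*}$, and using the interpretation in \cref{ex:cn-description-olcr}, I would identify the data of an $n$-ary multi-operation $\phi$ in $\cO'$ from $\cP$-algebras $(A^j)_{j \in \ul{n}}$---each $A^j$ an $r$-ary operation $\alpha^j \colon (A^j_i)_{i \in \ul{r}} \to A^j_0$ in $\cO$---to $B = (\beta \colon (B_i)_i \to B_0)$ as consisting of: (i) an $n$-ary operation $\mu \in \Mul_\cO((A^j_0)_j; B_0)$, (ii) for each $i \in \ul{r}$, an $n$-ary operation $\sigma_i \in \Mul_\cO((A^j_i)_j; B_i)$, and (iii) a compatibility equation $\mu \circ (\alpha^j)_j = \beta \circ (\sigma_i)_i$ in $\Mul_\cO((A^j_i)_{(j,i) \in \ul{n} \times \ul{r}}; B_0)$. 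Unitality of $\cO'$ follows by the same unpacking with $K = \overline{c}_0$, since all $0$-ary data in $\cO$ is contractible by unitality.

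The $k$-truncation condition for $\cO'$ then reduces to showing that the restriction map
\[
\Mul_{\cO'}\big((A^j)_{j \in \ul{n}}; B\big) \lra \lim_{S' \subseteq \ul{n},\ |S'| \le k} \Mul_{\cO'}\big((A^j)_{j \in S'}; B\big)
\]
is an equivalence. The $k$-truncation of $\cO$ determines the $n$-ary operations $\mu$ and each $\sigma_i$ from their restrictions to subsets $S' \subseteq \ul{n}$ of size at most $k$. The main obstacle is the compatibility equation (iii), which lives in the $(nr)$-ary operations of $\cO$: $k$-truncation of $\cO$ only directly says such an equation is determined by its restrictions to arbitrary subsets $T \subseteq \ul{n} \times \ul{r}$ of size $|T| \le k$, not just product-shaped subsets $S' \times \ul{r}$. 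The resolution is a cofinality observation: every such $T$ is contained in $\pr_1(T) \times \ul{r}$ with $|\pr_1(T)| \le |T| \le k$, so the compatibility on $T$ is already witnessed by the restriction datum $\phi|_{\pr_1(T)}$ appearing in the limit. This yields the desired equivalence and establishes the proposition.
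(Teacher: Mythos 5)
Your proposal follows the same architecture as the paper's proof: reduce via the colimit-to-limit property of $\AlgOpd_{(-)}(\cO)$, closure of $k$-truncated operads under limits, and \cref{lem:colim} to the cases $\cP=F(e_1)$ and $\cP=F(c_r)$; handle $F(e_1)$ by unitality of the tensor product; for $F(c_r)$, unravel $\AlgOpd_{F(c_r)}(\cO)$ via \cref{prop:fopd-alg-opd} into a pullback description of multi-operations, then check the truncation condition on each corner. Your identification of the one nontrivial corner -- the compatibility equation living in $(nr)$-ary operations -- is exactly the crux, and your observation that any $T \subseteq \ul{n}\times\ul{r}$ with $|T|\le k$ satisfies $T \subseteq \pr_1(T)\times\ul{r}$ with $|\pr_1(T)|\le k$ is precisely the content of what the paper isolates as \cref{lem:cubical-lemma}.

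The one place where your write-up is looser than the paper: you assert the conclusion from the cofinality observation ("the compatibility on $T$ is already witnessed by $\phi|_{\pr_1(T)}$") without converting it into a statement about limits. The paper turns this into a commutative square of limits and proves that $\pr_2$ on the category of pairs $(J,K)$ with $J\subseteq\ul{n}$, $|J|\le k$, $K\subseteq J\times\ul{r}$, $|K|\le k$ is right cofinal because it admits a left adjoint $K \mapsto (\pr_1(K),K)$; that adjoint exists exactly because of the containment you noticed. You should make this step explicit: state that the map $\lim_{|K|\le k} X(K) \to \lim_{|J|\le k}\lim_{K\subseteq J\times\ul{r}, |K|\le k}X(K)$ is induced by $\pr_2$, and verify cofinality via the adjoint, rather than leaving it at the level of ``the datum is witnessed.'' With that adjustment the argument is complete and matches the paper's.
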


\begin{proof}The functor $\AlgOpd_{(-)}(\cO)\colon \Opd^\op\ra\Opd$ sends colimits to limits, since using $(-)\otimes \cP\dashv\AlgOpd_{\cP}(-)$ and the fact that the tensor product in $\Opd$ preserves colimits in both variables, we have
\[\textstyle{\Map_{\Opd}(\cQ,\AlgOpd_{\colim_i\,\cP_i}(\cO))\simeq \Map_{\Opd}(\cQ \otimes (\colim_i\,\cP_i),\cO)\simeq \Map_{\Opd}(\colim_i\,(\cQ \otimes \cP_i),\cO)}\] which is turn equivalent to
\[\textstyle{\lim_i \Map_{\Opd}(\cQ \otimes \cP_i,\cO)\simeq \lim_i \Map_{\Opd}(\cQ,\Alg_{\cP_i}(\cO))\simeq \Map_{\Opd}(\cQ,\lim_i\AlgOpd_{\cP_i}(\cO)).}
\]
Note furthermore that $k$-truncated operads are closed under taking limits, since all functors featuring in the unit $\id \to \tau_* \tau$ are right adjoints and thus preserve limits. Combining these two properties with \cref{lem:colim}, it suffices to prove the statement when $\cP=F(\alpha)$ for $\alpha$ being the simplices $e_1 \colon [0]\ra \Fin_*$  and $c_r \colon [1]\ra \Fin_*$ for $r\ge0$ from \cref{ex:trees-from-simplices}. We prove this by a repeated application of \cref{prop:fopd-alg-opd}:

In the case $\alpha=e_1$, since $\wedge_!(-\times e_1)\simeq\id_{\Cat_{\Fin_*}}$,  \cref{prop:fopd-alg-opd} implies $\AlgOpd_{F(e_1)}(\cO)\simeq \cO$, so there is nothing to prove. In the case $\alpha=c_r$, we first note that since $\wedge_!(e_1\times c_r)$, the core of the category of colours of $\AlgOpd_{F(e_1)}(\cO)$ is given by (cf.\,\cref{ex:trees-from-simplices} \ref{enum:cn-description-e1}) \[\Map_{\Cat_{/\Fin_*}}(e_1,\AlgOpd_{F(c_r)}(\cO)^\otimes)\simeq \Map_{\Cat_{/\Fin_*}}(\wedge_!(e_1\times c_r),\cO^\otimes)\simeq \Map_{\Cat_{/\Fin_*}}(c_r,\cO^\otimes)\simeq (c_r\times_{\Fin_*}\cO^\otimes)^\simeq,\]
 so the colours of $\AlgOpd_{F(c_r)}(\cO)$ can be described (cf.\,\cref{ex:trees-from-simplices} \ref{enum:cn-description-cr}) as triples $(\ul{x},x,m)$ of a collection $\ul{x} = (x^s)_{s \in \ul{r}}$ of colours in $\cO$, a colour $x$ in $\cO$, and a multi-operation $m \in \Mul_\cO((x^s)_{s \in \ul{r}},x)$. To determine the $n$-ary multi-operations in $\AlgOpd_{F(c_r)}(\cO)$ we consider the $1$-simplex $c_n \colon [1] \to \Fin_*$ given by the active map $\langle n \rangle \to \langle 1 \rangle$ whose two faces are given by the $0$-simplices $e_n,e_1 \colon [0] \to \Fin_*$ given by $\langle n \rangle,\langle 1 \rangle\in\Fin_*$. The $n$-ary multi-operations in $\AlgOpd_{F(c_r)}(\cO)$ from colours $(\ul{x}_i,x_i,m_i)_{i \in \ul{n}}$ to a colour $(\ul{x},x,m)$ are, by definition (see \cite[2.1.1.6]{LurieHA}), the fibre of the upper horizontal map in
\[
	\begin{tikzcd}[ar symbol/.style = {draw=none,"\textstyle#1" description,sloped},
	equivalent/.style = {ar symbol={\simeq}}, row sep=0.3cm]
	\Map_{/\Fin_*}(c_n,\Alg_{F(c_r)}(\cO)^\otimes) \rar \arrow[d,equivalent] & \Map_{/\Fin_*}(e_n,\AlgOpd_{F(c_r)}(\cO)^\otimes) \times \Map_{/\Fin_*}(e_1,\AlgOpd_{F(c_r)}(\cO)^\otimes) \arrow[d,equivalent]\\[-5pt]
	\Map_{/\Fin_*}(\wedge_!(c_n \times c_r),\cO^\otimes) \rar & \Map_{/\Fin_*}(\wedge_!(e_n \times c_r),\cO^\otimes) \times \Map_{/\Fin_*}(\wedge_!(e_1 \times c_r),\cO^\otimes)\end{tikzcd}
\]
at the point $((\ul{x}_i,x_i,m_i)_{i \in \ul{n}}, (\ul{x},x,m))$ in \[ 	\Map_{/\Fin_*}(e_n,\AlgOpd_{F(c_r)}(\cO)^\otimes) \times \Map_{/\Fin_*}(e_1,\AlgOpd_{F(c_r)}(\cO)^\otimes\simeq \big(\sqcap^n \Alg_{F(c_r)}(\cO)\big)\times \Alg_{F(c_r)}(\cO)\] The fibre of the bottom horizontal map over the image of this point is the space of commutative squares in $\cO^\otimes$ of the form (the solid arrows are predetermined)
\[\begin{tikzcd}[row sep=0.4cm] (x^s_i)_{(i,s) \in \ul{n} \times \ul{r}} \rar{(m^s)_{s \in \ul{r}}} \dar[dashed] &[20pt]  (x_i)_{i \in \ul{n}} \dar[dashed] \\
(x^s)_{s \in \ul{r}} \rar{m} & x, \end{tikzcd}\]
 so we have a pullback of the form
	\[\begin{tikzcd}[column sep=0.3cm] \Mul_{\AlgOpd_{F(c_r)}(\cO)}\big((\ul{x}_i,x_i,m_i)_{i \in \ul{n}},(\ul{x},x,m)\big) \rar \dar &[30pt] \Mul_{\cO}\big((x_i)_{i \in \ul{n}},x\big) \dar{(-) \circ (m^1,\ldots,m^r)} \\
		\bigsqcap_{s \in \ul{r}} \Mul_{\cO}\big((x_i^s)_{i \in \ul{n}},x^s\big) \rar{m \circ(-)} & \Mul_{\cO}\big((x_i^s)_{(i,s) \in \ul{n} \times \ul{r}},x\big). \end{tikzcd}\]
Given this description of the multi-operations, we argue as follows: if $n=0$ then all terms are (products of) 0-ary operations, so contractible if $\cO$ is unital, which implies that $\Alg_{F(c_r)}(\cO)$ is also unital. For $n>1$, we map the previous pullback by inserting $0$-ary operations to the pullback
	\[\begin{tikzcd}[row sep=0.3cm] \underset{\substack{J \subseteq \ul{n} , |J| \leq k}}{\lim} \Mul_{\Alg_{F(c_r)}(\cO)}\big((\ul{x}_j,x_j,m_j)_{j \in J},(\ul{x},x,m)\big) \rar \dar[shorten=-.5ex,shift={(0,.1)}] &[20pt] \underset{\substack{J \subseteq \ul{n}, |J| \leq k}}{\lim} \Mul_{\cO}\big((x_j)_{j \in J},x\big) \dar[shorten=-.5ex,shift={(0,.1)}] \\[-5pt]
		\underset{J \subseteq \ul{n}, |J| \leq k}{\lim} \bigsqcap_{s \in \ul{r}} \Mul_{\cO}\big((x_j^s)_{j \in J},x^s\big) \rar & \underset{\substack{J \subseteq \ul{n} , |J| \leq k}}{\lim} \Mul_{\cO}\big((x_j^s)_{(j,s) \in J \times \ul{r}},x\big). \end{tikzcd}\] Since $\cO$ is $k$-truncated, the maps between all corners of the pullbacks except the top left one is an equivalence (for the bottom right one this uses  \cref{lem:cubical-lemma} below), so the map between the top left corners is an equivalence as well, which implies that $\AlgOpd_{F(c_r)}(\cO)$ is $k$-truncated as claimed.
\end{proof}

\begin{lem}\label{lem:cubical-lemma}Fix a functor $X \colon \cP(S\times T)^\op \to \cS$ for finite sets $S$ and $T$ where $\cP(S\times T)$ is the poset of subsets of $S\times T$ ordered by inclusion. If for fixed $k\ge1$ and all subsets $L \subseteq S \times T$ the left map in
	\[{X(L) \lra \lim_{\substack{K \subseteq L, |K| \leq k}} X(K)}\quad\quad\quad\quad{X(S \times T) \lra \lim_{\substack{J \subseteq S, |J| \leq k}} X(J \times T).}\]
	is an equivalence, then the right map is an equivalence as well.
\end{lem}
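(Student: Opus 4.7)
The plan is to apply the hypothesis at two different subsets $L$ and then reduce the statement to a cofinality question about an explicit forgetful functor of posets. First I would apply the hypothesis at $L = S \times T$ to rewrite the source as $X(S \times T) \simeq \lim_{K \subseteq S \times T,\, |K| \leq k} X(K)$. For each $J \subseteq S$ with $|J| \leq k$, the hypothesis applied at $L = J \times T$ rewrites $X(J \times T) \simeq \lim_{K \subseteq J \times T,\, |K| \leq k} X(K)$. Taking the limit of the latter equivalences over $J$ and applying Fubini for limits (indexed by the Grothendieck construction of the functor $J \mapsto \{K \subseteq J \times T: |K| \leq k\}$), the problem reduces to showing that the canonical restriction map
\[
\lim_{K \in \cC_1} X(K) \lra \lim_{(J,K) \in \cC_2} X(K)
\]
is an equivalence, where $\cC_1 = \{K \subseteq S \times T : |K| \leq k\}$ and $\cC_2 = \{(J, K) : J \subseteq S,\ K \subseteq J \times T,\ |J| \leq k,\ |K| \leq k\}$ are posets ordered by componentwise inclusion, and the restriction is induced by the forgetful functor $p \colon \cC_2 \to \cC_1$, $(J, K) \mapsto K$.

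To conclude I would verify that $p^{\op}$ is cofinal in the sense of \cite[4.1.1.3]{LurieHTT}, which by \cite[4.1.1.8]{LurieHTT} is equivalent to showing that for every $K_0 \in \cC_1$ the comma poset
\[
\cC_2 \times_{\cC_1} (\cC_1)_{/K_0} = \{(J, K) : |J| \leq k,\ K \subseteq (J \times T) \cap K_0,\ |K| \leq k\}
\]
has weakly contractible nerve. This is immediate, since $(\emptyset, \emptyset)$ is an initial object of this poset: it lies in the comma poset (all constraints are vacuously satisfied) and the inclusions $\emptyset \subseteq J$ and $\emptyset \subseteq K$ give the unique morphism to any other object $(J, K)$.

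The combinatorial content of the argument is entirely captured by this initial-object observation, and there is no real analytic obstacle. The main care is in the bookkeeping: setting up the Fubini step so that the correct total poset $\cC_2$ appears, and identifying the comparison map as restriction along $p$ rather than some other functor, so that the cofinality criterion applies to the correct slice.
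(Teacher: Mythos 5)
Your strategy matches the paper's: apply the hypothesis at $L = S \times T$ and at each $L = J \times T$, decompose the iterated limit via the Grothendieck construction (the paper writes $\cat{C}$ for your $\cC_2$), and reduce to a cofinality check for the forgetful map $p \colon \cC_2 \to \cC_1$, $(J,K) \mapsto K$. There is, however, a genuine gap in the cofinality step, landing precisely where the combinatorial content is.

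Because $X$ is contravariant, the limits in question are indexed by $\cC_1^{\op}$ and $\cC_2^{\op}$, and what must be shown is that precomposition with $p^{\op} \colon \cC_2^{\op} \to \cC_1^{\op}$ preserves the limit. Unwinding the duality, this is the condition that $p$ itself (not $p^{\op}$) be cofinal in the sense of \cite[4.1.1.8]{LurieHTT}, whose criterion involves the \emph{under}-slice: for every $K_0 \in \cC_1$,
\[
\cC_2 \times_{\cC_1} (\cC_1)_{K_0/} = \{(J,K) \in \cC_2 : K_0 \subseteq K\}
\]
must be weakly contractible. You checked the \emph{over}-slice $\cC_2 \times_{\cC_1} (\cC_1)_{/K_0} = \{(J,K) \in \cC_2 : K \subseteq K_0\}$, which is the criterion for $p^{\op}$ to be colimit-cofinal --- the dual condition, not the one a limit comparison needs. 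The two are genuinely different here: your over-slice always has $(\varnothing,\varnothing)$ as an initial object, using none of the hypotheses, and the same reasoning would ``prove'' that restricting the limit to the sub-poset $\{(\varnothing,\varnothing)\} \subset \cC_2$ is an equivalence, which is false.

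In the correct under-slice $\{(J,K) \in \cC_2 : K_0 \subseteq K\}$, the pair $(\varnothing,\varnothing)$ is not even an object unless $K_0 = \varnothing$. The initial object is $(\pr_1(K_0), K_0)$: it belongs to the slice since $K_0 \subseteq \pr_1(K_0) \times T$ and $|\pr_1(K_0)| \leq |K_0| \leq k$, and for any $(J,K)$ in the slice one has $K_0 \subseteq K \subseteq J \times T$, hence $\pr_1(K_0) \subseteq \pr_1(K) \subseteq J$, so $(\pr_1(K_0),K_0) \leq (J,K)$. This is exactly the paper's observation that $\pr_2$ admits a left adjoint $K_0 \mapsto (\pr_1(K_0),K_0)$. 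Note that the correct argument genuinely uses $|K_0| \leq k$ --- a helpful sanity check that you have the right slice.
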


\begin{proof}Consider the commutative diagram
	\[\begin{tikzcd}[row sep=0.2cm, column sep=0.5cm] X(S\times T) \rar{\simeq} \dar & \underset{\substack{K \subseteq S \times T, |K| \leq k}}{\lim}  X(K) \dar \\
		\underset{\substack{J \subseteq S, |J| \leq k}}{\lim} X(J \times T) \rar{\simeq} & \underset{\substack{J \subseteq S, |J| \leq k}}{\lim} \Big(\underset{\substack{K \subseteq J \times T, |K| \leq k}}{\lim}  X(K)\Big) \simeq \underset{(J,K) \in \cat{C}}{\lim}  X(K) \end{tikzcd}\]
	where the horizontal maps are equivalences by hypothesis and $\cat{C}$ is the poset of pairs of sets $(J,K)$
with are both of cardinality $\le k$ and satisfy $J\subset S$ and $K\subset J\times T$. It remains to show that the right vertical map is an equivalence. The latter is induced by the functor $\pr_2(J,K)=K$. This has a left adjoint sending $K\subseteq S\times T$ to $(\pr_1(K),K)$ so $\pr_2$ is right cofinal and the claim follows.
\end{proof}

We have everything in place to prove \cref{thm:operad-truncation-tower-monoidal}, apart from \cref{enum:trunmon-iv}.
 
 \begin{proof}[Proof of \cref{thm:operad-truncation-tower-monoidal} except Item \ref{enum:trunmon-iv}]
As explained in \cref{sec:operad-conventions}, firstly, the functors in \eqref{equ:operad-truncation-tower-app} admit fully faithful left and right adjoint, secondly, $\Opd^\un$ inherits a symmetric monoidal structure from Lurie's symmetric monoidal structure on $\Opd$, and, thirdly, this monoidal structure preserves colimits in both variables. Apart from Items \ref{enum:trunmon-iii} and \ref{enum:trunmon-iv}, the claim thus follows from \cref{lem:general-monoidal-tower} once we establish the condition \eqref{eqn:loc-condition-gen-tower} for the truncation functors $\tau\colon \Opd^\un\ra \Opd^{\le k,\un}$. By the Yoneda lemma, this condition follows from showing that, given a map $\varphi\colon \cO\ra \cO'$ in $\Opd^\un$ with $\tau(\varphi)$ an equivalence, the map $\tau(\varphi\otimes \id_{\cP})^*\colon \Map_{\Opd^{\le k,\un}}(\tau(\cO'\otimes \cP), \cQ)\ra \Map_{\Opd^{\le k,\un}}(\tau(\cO\otimes \cP), \cQ)$ is an equivalence. From the adjunctions $\tau\dashv\tau^*$- and $(-)\otimes \cP\dashv \AlgOpd_{\cP}(-)$, we get natural equivalences 
\[\smash{
	\Map_{\Opd^{\le k,\un}}(\tau(\cO\otimes \cP), \cQ)   \simeq   \Map_{\Opd^{\un}}(\cO\otimes \cP, \tau_*(\cQ)) \simeq  \Map_{\Opd^{\un}}(\cO, \AlgOpd_{\cP}(\tau_*(\cQ))).}
\]
Since  $\tau_*(\cQ)$ is $k$-truncated as $\tau_*$ is fully faithful, the same holds for $\AlgOpd_{\cP}(\tau_*(\cQ))$ by \cref{prop:alg-is-truncated}, so the unit $\AlgOpd_{\cP}(\tau_*(\cR))\ra \tau_*\tau(\AlgOpd_{\cP}(\tau_*(\cQ))$ is an equivalence and we get natural isomorphisms
\[\smash{
\Map_{\Opd^{\un}}(\cO, \Alg_{\cP}(\tau_*(\cQ)))\simeq\Map_{\Opd^{\un}}(\cO, \tau_*\tau(\AlgOpd_{\cP}(\tau_*(\cQ))))\simeq \Map_{\Opd^{\le k, \un}}(\tau(\cO), \tau(\AlgOpd_{\cP}(\tau_*(\cQ)))).}\]
In total, we have $\Map_{\Opd^{\le k,\un}}(\tau(\cO\otimes \cP), \cR)\simeq \Map_{\Opd^{\le k, \un}}(\tau(\cO), \tau(\AlgOpd_{\cP}(\tau_*(\cQ))))$, so it follows that $\tau(\varphi\otimes \id_{\cP})^* $ is indeed an equivalence since $\tau(\varphi)^* $ is one by assumption, so condition \eqref{eqn:loc-condition-gen-tower} follows.
  
To prove Item  \ref{enum:trunmon-iii}, we write $\Cat^{\otimes}$ and  $\Cat^{\times}$ for the category $\Cat\simeq \Opd^{\le1,\un}$ equipped with the symmetric monoidal structure produced in the first part, and with the cartesian structure induced by taking direct products as constructed in \cite[2.4.1]{LurieHA}, respectively. The goal is to lift $\id_{\Cat}$ to a symmetric monoidal equivalence $\Cat^{\otimes}\simeq \Cat^{\times}$. To do so, we consider the composition
\vspace{-0.185cm}
\begin{equation}\label{equ:mon-comparison-cat}\begin{tikzcd}[column sep=1.2cm]\Cat^\times \rar{\iota}& \Opd \rar{(-)\otimes E_0}&\Opd^\un\rar{\tau\simeq (-)^\col}&\Cat^{\otimes_{\BV}}\end{tikzcd}\end{equation}
 of symmetric monoidal functors where $\tau$ is the symmetric monoidal functor from the first part (whose underlying functor is  $(-)^\col \colon \Opd^\un\ra \Cat$ by construction) and $(-)\otimes E_0$ is the symmetric monoidal functor discussed in \cref{sec:operad-conventions}. On underlying categories $\iota$ is the left adjoint to $(-)^\col \colon \Opd\ra \Cat$ (see \cite[2.1.4.11]{LurieHA}) which admits a symmetric monoidal lift as indicated, as a result of 2.2.5.11 loc.cit.. We are left to show that this composition agrees with the identity on underlying categories. We can check this after taking right adjoints in which case \eqref{equ:mon-comparison-cat} becomes on underlying categories
\vspace{-0.185cm}
\[\begin{tikzcd}[column sep=1.2cm]\Cat \rar{(-)^\sqcup}& \Opd^\un\rar{\subset}&\Opd\rar{(-)^\col}&\Cat\end{tikzcd}\]
where $(-)^\sqcup$ is the functor that takes cocartesian operads (see 2.4.3.3 loc.cit.\,for the construction and 2.4.3.9 loc.cit. for the fact that it is right adjoint to $(-)^\col$). But $((-)^\sqcup)^\col\simeq \id_{\Cat}$ is clear from the construction of the cocartesian operad (see 2.4.3 loc.cit.), so the claim follows. 
 \end{proof}

\subsection{Tensor products of $2$-truncated reduced operads} We now turn towards proving \cref{thm:operad-truncation-tower-monoidal} \ref{enum:trunmon-iv}. In the process of this, for brevity we will not distinguish between ordinary coloured operads in sets and their induced operads in $\Opd$, i.e.\,we implicitly apply the fully faithful inclusion $\ell\colon \OpdSet\hookrightarrow \Opd$ from \cref{sec:prelim-dendroidal}. For an operad $\cO$, we write 
\[\cO(\overline{C}_n) \coloneqq  \Map_{\Opd}(\overline{C}_n,\cO) \simeq \Map_{/\Fin_*}(\overline{c}_n,\cO^\otimes) \in\cS^{\Sigma_n}\]
where the $\Sigma_n$-action is by precomposition, using that $\smash{\Aut_{\Cat_{/\Fin_*}}(\overline{c}_n)\simeq \Aut_{\Fin_*}(\langle n\rangle)\simeq \Sigma_n}$ (see \cref{ex:cn-description-fibre-seqs} \ref{enum:ex-trees-iii} for the definition of $\overline{c}_n$ and $\overline{C}_n$). Note that when restricted to the unital operads the functor $\smash{(-)(\overline{C}_n)\colon \Opd^\un\rightarrow \cS^{\Sigma_n}}$ canonically factors over the truncation functors $\tau\colon \Opd^\un\ra \Opd^{\le k,\un}$ as long as $n\le k$, since $\overline{C}_n$ is in the image of the left adjoint $\tau_!$ for $n\le k$ (see \cref{sec:prelim-dendroidal}).

A main step in the proof of \cref{thm:operad-truncation-tower-monoidal} \ref{enum:trunmon-iv} is the following lemma (see \cite[Corollary 4.8]{FiedorowiczVogt} for a similar construction in the underived setting). The maps along which the pullbacks and pushouts in the statement are taken will be explained during the proof.
\begin{lem}\label{lem:full-nat-trafo-2ary}There is a natural transformation of functors $\Opd^{\le 2,\un}\times \Opd^{\le 2,\un}\ra \cS^{\Sigma_2}$ of the form
\[\smash{\big[\cO(\overline{C}_2) \times \cP(\overline{C}_1) \times_{\cP(\overline{C}_0)} \cP(\overline{C}_1)\big] \sqcup_{\cO(\overline{C}_2) \times \cP(\overline{C}_2)} \big[\cO(\overline{C}_1) \times_{\cO(\overline{C}_0)} \cO(\overline{C}_1) \times \cP(\overline{C}_2)\big] \ra (\cO \otimes \cP)(\overline{C}_2)}\]
where the $\Sigma_2$-action on the target is through $\smash{\overline{C}_2}$ and the one on the source is induced by the action on $\smash{\overline{C}_2}$, swapping the two $\smash{\cO(\overline{C}_2)}$-factors, and swapping the two $\smash{\cP(\overline{C}_2)}$-factors.
\end{lem}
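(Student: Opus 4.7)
The plan is to realise the source pushout as a pushout of product mapping spaces $\Map(X_1,\cO) \times \Map(X_2,\cP)$ for explicit test operads $X_1,X_2 \in \Opd^{\le 2,\un}$, and to produce the natural transformation by combining functoriality of the tensor product with the universal property of the pushout. First, writing $D \coloneq \overline{C}_1 \sqcup_{\overline{C}_0} \overline{C}_1$ for the pushout taken along the two maps $\overline{C}_0 \to \overline{C}_1$ picking out the \emph{output} colour, one has a natural equivalence $\cR(\overline{C}_1) \times_{\cR(\overline{C}_0)} \cR(\overline{C}_1) \simeq \Map_{\Opd^{\le 2,\un}}(D,\cR)$ for every unital $2$-truncated operad $\cR$. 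The identifications of the two summands with the common face $\cO(\overline{C}_2) \times \cP(\overline{C}_2)$ are then induced by a single morphism $f \colon D \to \overline{C}_2$ in $\Opd^{\le 2,\un}$, namely the one classifying the two canonical unary operations in $\overline{C}_2$ obtained from its binary operation $m$ by inserting the unique nullary operation at one of its two inputs.

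The heart of the argument is to construct two morphisms
$$A \colon \overline{C}_2 \longrightarrow \overline{C}_2 \otimes D, \qquad B \colon \overline{C}_2 \longrightarrow D \otimes \overline{C}_2$$
in $\Opd^{\le 2,\un}$ witnessing, respectively, the $2$-ary operation $(m \otimes \id) \circ (\id \otimes a, \id \otimes b)$ in $\overline{C}_2 \otimes D$ (where $a,b$ are the two unary operations of $D$) and the symmetric one $(\id \otimes m) \circ (c \otimes \id, d \otimes \id)$ in $D \otimes \overline{C}_2$. Rather than wrestling with Lurie's $\infty$-operadic tensor product directly, it is cleanest to produce $A$, $B$, and $f$---as well as the interchange coherence required below---at the level of $\Cat_{/\Fin_*}$ via the monoidal lift of $F$ supplied by \cref{lem:f-tensor}: since $\overline{C}_n \simeq F(\overline{c}_n)$ and $F$ both preserves colimits and is monoidal, one has $D \simeq F(\delta)$ for $\delta \coloneq \overline{c}_1 \sqcup_{\overline{c}_0} \overline{c}_1$, together with $\overline{C}_2 \otimes D \simeq F(\wedge_!(\overline{c}_2 \times \delta))$, and analogous identifications for $D \otimes \overline{C}_2$ and $\overline{C}_2 \otimes \overline{C}_2$. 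In this model $A$, $B$, and $f$ are explicit morphisms of finite simplicial diagrams in $\Fin_*$, and they are transported to $\Opd^{\le 2,\un}$ by applying $F$.

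Given this data, the natural transformation is defined on the first summand as the composition
$$\Map(\overline{C}_2,\cO) \times \Map(D,\cP) \xra{\otimes} \Map(\overline{C}_2 \otimes D, \cO \otimes \cP) \xra{A^*} \Map(\overline{C}_2, \cO \otimes \cP),$$
and symmetrically with $B$ on the second summand. To apply the universal property of the pushout one needs a natural homotopy between the two compositions induced on the common factor $\Map(\overline{C}_2,\cO) \times \Map(\overline{C}_2,\cP)$, which amounts to a homotopy between $(\id \otimes f) \circ A$ and $(f \otimes \id) \circ B$ as maps $\overline{C}_2 \to \overline{C}_2 \otimes \overline{C}_2$. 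Both classify the same $2$-ary operation in $\overline{C}_2 \otimes \overline{C}_2$---the one extracted from the $4$-ary operation $m \otimes m$ by inserting nullary operations at the two off-diagonal inputs---which is the content of the operadic interchange identity; via $F$ it reduces to an explicit equality of morphisms $\overline{c}_2 \to \wedge_!(\overline{c}_2 \times \overline{c}_2)$ in $\Cat_{/\Fin_*}$. The $\Sigma_2$-equivariance follows because the whole construction is built from visibly $\Sigma_2$-symmetric diagrams, the generator acting by simultaneously swapping the two factors of $\delta$ and translating under $f$ to the $\Sigma_2$-action on $\overline{C}_2$.

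The main obstacle is ensuring that the interchange coherence and $\Sigma_2$-equivariance hold with enough functoriality in $(\cO,\cP)$ to assemble a genuine natural transformation of functors to $\cS^{\Sigma_2}$. Carrying this out directly inside $\Opd$ would be technically delicate, but routing everything through $F$ and the more rigid category $\Cat_{/\Fin_*}$ reduces the coherence check to a $1$-categorical equality of explicit diagrams in $\Fin_*$ together with their visible symmetries.
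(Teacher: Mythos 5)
Your overall blueprint matches the paper's: form the pushout $D \coloneq \overline{C}_1 \sqcup_{\overline{C}_0} \overline{C}_1$, construct $f\colon D \to \overline{C}_2$ together with maps $A$ and $B$ classifying composite $2$-ary operations, and glue along the common factor using the universal property of the pushout and an interchange homotopy. The gap is the claim that $A$, $B$, $f$, and the interchange coherence can be produced in $\Cat_{/\Fin_*}$ and transported along $F$. This fails because $F$ is a left adjoint that \emph{freely} adjoins operadic composites: it is far from fully faithful, and a map $\overline{C}_2 = F(\overline{c}_2) \to F(Y)$ in $\Opd$ generally does not arise from a map $\overline{c}_2 \to Y$ in $\Cat_{/\Fin_*}$. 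For $f$ this is already visible: the unique order-preserving map $\delta = \overline{c}_1 \sqcup_{\overline{c}_0} \overline{c}_1 \to \overline{c}_2$ over $\Fin_*$ sends all three objects of $\delta$ lying over $\langle 1 \rangle$ to the vertex $2$ of $\overline{c}_2$, so its image under $F$ sends both unary operations of $D$ to identities rather than to the two non-identity unary operations of $\overline{C}_2$ that $f$ must classify. The same phenomenon occurs for $A$ and $B$: the composite you wish $B$ to classify has inputs $(1_D \otimes 1, 2_D \otimes 2)$ with \emph{distinct} $D$-components, whereas the $F$-image of any map $\overline{c}_2 \to \wedge_!(\delta \times \overline{c}_2)$ reads its two inputs off a single vertex of $\delta$ and hence has equal $D$-components. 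Consequently the interchange is not visible in $\Cat_{/\Fin_*}$ either: no map $\overline{c}_2 \to \wedge_!(\overline{c}_2 \times \overline{c}_2)$ has as $F$-image the $2$-ary operation of $\overline{C}_2 \otimes \overline{C}_2$ with inputs $(1 \otimes 1, 2 \otimes 2)$ and output $0 \otimes 0$, since the only $2$-ary operations one can hit that way are the pure tensors $m \otimes 0$ and $0 \otimes m$.

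The rigidity you are after does exist, but it lives elsewhere. The paper invokes the nontrivial theorem of Hinich--Moerdijk that the \emph{fully faithful} embedding $\ell\colon \OpdSet \to \Opd$ is compatible with tensor products of operads encoded by trees, so that $\overline{C}_2 \otimes \overline{C}_2 \simeq \ell(\overline{C}_2 \otimes_{\BV} \overline{C}_2)$; the required equality of the two composite $2$-ary operations may then be verified as an honest set-theoretic identity in the classical Boardman--Vogt tensor product, where interchange holds by construction. The crucial difference is that $\ell$ is fully faithful where $F$ is not, and the Boardman--Vogt product of operads in sets already carries the operadic compositions that the $\wedge_!$-monoidal structure on $\Cat_{/\Fin_*}$ simply does not contain.
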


\begin{proof}We first construct a natural transformation as indicated of functors  $\Opd\times \Opd\ra \cS^{\Sigma_2}$. Precomposing the result with $(\subset\circ \tau_!)^{\times 2}\colon \Opd^{\le 2,\un}\ra \Opd$ then yields the claimed natural transformation, once we show that both its source and target send the components of the counit $\tau_!\tau\ra \id$ to equivalences. For the source, this follows from the above mentioned fact that $(-)(\smash{\overline{C_n}})$ factors over $\tau\colon \Opd^\un\ra \Opd^{\le2,\un}$ for $n\le2$. Using this once more, for the target it follows by proving that the map $\tau\big((\tau_!\tau(\cO))\otimes(\tau_!\tau(\cP))\big)\ra \tau(\cO\otimes \cP)$ induced by the counit is an equivalence. This in turn is a consequence the symmetric monoidality of $\tau$ from \cref{thm:operad-truncation-tower-monoidal} and the description of the tensor product in $\Opd^{\le 2,\un}$ in terms of $\tau_!$ from \cref{thm:operad-truncation-tower-monoidal} \ref{enum:trunmon-i.5}.
	
	We start by forming the left-hand pushout in $\Opd$
	\begin{equation}\label{equ:initial-opd-po}\begin{tikzcd}[] \overline{C}_0 \rar \dar & \overline{C}_1 \dar{u} \\
			\overline{C}_1 \rar{v} & \overline{C}_1 \sqcup_{\overline{C}_0} \overline{C}_1 \end{tikzcd} \qquad \qquad
		\begin{tikzpicture}[baseline={([yshift=-.5ex]current bounding box.center)},scale=.7]
			\draw (0,0) -- (0,2);
			\node at (0,1) {$\bullet$};
			\node at (0,2) {$\bullet$};
			\draw [->] (1.5,.5) -- (0.5,.5);
			\draw (2,0) -- (2,1);
			\node at (2,1) {$\bullet$};
			\draw [->] (2.5,.5) -- (3.5,.5);
			\draw (4,0) -- (4,2);
			\node at (4,1) {$\bullet$};
			\node at (4,2) {$\bullet$};
	\end{tikzpicture}\end{equation}
	of the map $\overline{C}_0\ra \overline{C}_1$ encoding the output colour of the unique non-identity $1$-ary operation in $\overline{C}_1$ along itself (illustrated in the right-hand figure). The maps $u$ and $v$ encode $1$-ary operations in the pushout with common output colour that we denote by $0$. We next specify various operations in the tensor product $(\smash{\overline{C}_1 \sqcup_{\overline{C}_0} \overline{C}_1}) \otimes \overline{C}_2$. We denote the unique $2$-operation in $\overline{C}_2$ by $m$, its output colour by $0$, and its two input colours by $1$ and $2$. Using that $\smash{E_0\simeq \overline{C}_0}$ is the unit of the tensor product in $\Opd^\un$, the map $\smash{(0 \otimes \id)\colon \overline{C}_2 \simeq \overline{C}_0 \otimes \overline{C}_2 \ra (\overline{C}_1 \sqcup_{\overline{C}_0} \overline{C}_1)\otimes \overline{C}_2}$
	takes $m$ to a $2$-operation in the operad $\smash{(\overline{C}_1 \sqcup_{\overline{C}_0} \overline{C}_1)\otimes \overline{C}_2}$, which we denote $0 \otimes m$, with input colours $0 \otimes 1$ and $0 \otimes 2$, and output colour $0 \otimes 0$. Using the unique $0$-operation of the two input colours $1,2$ of $m$, we similarly obtain 1-ary operations $u \otimes 1$ and $v \otimes 2$ in $\smash{(\overline{C}_1 \sqcup_{\overline{C}_0} \overline{C}_1)\otimes \overline{C}_2}$, and thus a 2-ary operation $\alpha\coloneqq (u \otimes 1,v \otimes 2) \circ (0 \otimes m)$ in $\smash{(\overline{C}_1 \sqcup_{\overline{C}_0} \overline{C}_1)\otimes \overline{C}_2}$ by composition. Analogously, we obtain a 2-ary operation $\beta\coloneqq (1 \otimes u,2 \otimes v) \circ (m \otimes 0)$ in $\overline{C}_2 \otimes (\smash{\overline{C}_1 \sqcup_{\overline{C}_0} \overline{C}_1})$. These are represented by maps
	\[\smash{\overline{C}_2 \overset{\alpha}\lra \smash{(\overline{C}_1 \sqcup_{\overline{C}_0} \overline{C}_1)\otimes \overline{C}_2} \qquad \text{and} \qquad \overline{C}_2 \overset{\beta}\lra \overline{C}_2 \otimes (\smash{\overline{C}_1 \sqcup_{\overline{C}_0} \overline{C}_1}).}\]
Inserting the unique 0-ary operation into one of the input colours 1 or 2 of the unique $2$-ary operation in $\overline{C}_2$ gives two 1-ary operations in $\overline{C}_2$ with the same output colour $0$ (indicated by the right-hand figure), so we obtain a dashed map in the left-hand diagram
\[\begin{tikzcd}[row sep=0.3cm, column sep=0.3cm] \overline{C}_0 \rar \dar & \overline{C}_1 \dar \arrow[bend left=30]{rdd} & \\
	\overline{C}_1 \rar \arrow[bend right=30]{rrd} & \overline{C}_1 \sqcup_{\overline{C}_0} \overline{C}_1 \arrow[dashed, "\gamma"]{rd} \\[-5pt]
	& & \overline{C}_2 \end{tikzcd} \qquad \qquad
	\begin{tikzpicture}[baseline={([yshift=-.5ex]current bounding box.center)},scale=.7]
		\draw (0,0) -- (0,1) -- (-.5,2);
		\node at (0,1) {$\bullet$};
		\node at (-.25,1.5) [left] {\scriptsize$1$};
		\node at (0,0) [below] {\scriptsize$0$};
		\node at (-.5,2) {$\bullet$};
		\draw [<-] (1.5,.5) -- (0.5,.5);
		\draw (2,0) -- (2,1);
		\draw (1.5,2) -- (2,1) -- (2.5,2);
		\node at (2,1) {$\bullet$};
		\node at (1.5,2) {$\bullet$};
		\node at (2.5,2) {$\bullet$};
		\node at (1.75,1.5) [left] {\scriptsize$1$};
		\node at (2.25,1.5) [right] {\scriptsize$2$};
		\node at (2,0) [below] {\scriptsize$0$};
		\draw [<-] (2.5,.5) -- (3.5,.5);
		\draw (4,0) -- (4,1) -- (4.5,2);
		\node at (4,1) {$\bullet$};
		\node at (4.5,2) {$\bullet$};
		\node at (4,0) [below] {\scriptsize$0$};
		\node at (4.25,1.5) [right] {\scriptsize$2$};
	\end{tikzpicture}.
\vspace{-0.1cm}
	\]
We next claim that the following square in $\Opd$ is commutative:
\begin{equation}\label{eqn:diag-join-operad}\begin{tikzcd} \overline{C}_2 \rar{\beta} \arrow[d,"\alpha",swap] & \overline{C}_2 \otimes (\smash{\overline{C}_1 \sqcup_{\overline{C}_0} \overline{C}_1}) \dar{\id\otimes\gamma} \\
(\smash{\overline{C}_1 \sqcup_{\overline{C}_0} \overline{C}_1}) \otimes \overline{C}_2 \rar{\gamma\otimes\id} &  \overline{C}_2 \otimes \overline{C}_2.\end{tikzcd}\end{equation}
For this we use that on operads encoded by trees, the tensor product of operads is given by the classical Boardman--Vogt tensor product of operads in sets, i.e.\,the functor $\ell\colon \OpdSet\ra \Opd$ preserves tensor products of operads in $\Phi\subset\OpdSet$ (see the proof of \cite[5.4.1]{HinichMoerdijk}). In particular, the two compositions encode two 2-ary operations in the classical Boardman--Vogt tensor product $\smash{\overline{C}_2\otimes_{\BV}\overline{C}_2}$ and we have to show that they  agree. This is a consequence of the \emph{interchange relation} (see \cite[p.~27 (5)]{HeutsMoerdijk}), as indicated in the following figure with the counterclockwise composition the 2-ary operation indicated in red on the left, and the clockwise composition that on the right:
\[\begin{tikzpicture}[on top/.style={preaction={draw=white,-,line width=#1}},
	on top/.default=4pt]
	\begin{scope}
		\draw [very thick,darkred] (-1.5,2) node {$\bullet$} -- (-1,1) node {$\bullet$};
		\draw (-.5,2) node {$\bullet$} -- (-1,1);
		\draw (.5,2) node {$\bullet$} -- (1,1);
		\draw [very thick,darkred] (1.5,2) node {$\bullet$} -- (1,1) node {$\bullet$};
		\draw [very thick,darkred] (0,0) -- (0,-1);
		\draw [very thick,darkred] (-1,1) node {$\bullet$} -- (0,0)node {$\bullet$};
		\draw [very thick,darkred] (1,1) node {$\bullet$} -- (0,0);
		\node at (0,-1) [below] {\scriptsize$0{\otimes}0$};
		\node at (-.5,.5) [left] {\scriptsize$1{\otimes}0$};
		\node at (.5,.5) [right] {\scriptsize$2{\otimes}0$};
		\node at (-1.5,2) [above] {\scriptsize$1{\otimes}1$};
		\node at (-.5,2) [above] {\scriptsize$1{\otimes}2$};
		\node at (.5,2) [above] {\scriptsize$2{\otimes}1$};
		\node at (1.5,2) [above] {\scriptsize$2{\otimes}2$};
		\node at (0,0) [below right] {$m\otimes 0$};
		\node at (-1,1) [left] {$1 \otimes m$};
		\node at (1,1) [right] {$2 \otimes m$};
	\end{scope}
	\node at (2.5,0) {$\simeq$};
	\begin{scope}[shift={(5,0)}]
		\draw [very thick,darkred] (-1.5,2) node {$\bullet$} -- (-1,1) node {$\bullet$};
		\draw (.5,2) node {$\bullet$} -- (-1,1);
		\draw [on top] (-.5,2) node {$\bullet$} -- (1,1);
		\draw [very thick,darkred] (1.5,2) node {$\bullet$} -- (1,1) node {$\bullet$};
		\draw [very thick,darkred] (0,0) node {$\bullet$} -- (0,-1);
		\draw [very thick,darkred] (-1,1) -- (0,0);
		\draw [very thick,darkred] (1,1) -- (0,0);
		\node at (0,-1) [below] {\scriptsize$0{\otimes}0$};
		\node at (-.5,.5) [left] {\scriptsize$0{\otimes}1$};
		\node at (.5,.5) [right] {\scriptsize$0{\otimes}2$};
		\node at (-1.5,2) [above] {\scriptsize$1{\otimes}1$};
		\node at (-.5,2) [above] {\scriptsize$1{\otimes}2$};
		\node at (.5,2) [above] {\scriptsize$2{\otimes}1$};
		\node at (1.5,2) [above] {\scriptsize$2{\otimes}2$};
		\node at (0,0) [below right] {$0\otimes m$};
		\node at (-1,1) [left] {$m \otimes 1$};
		\node at (1,1) [right] {$m \otimes 2$};
	\end{scope}
\end{tikzpicture}\]
Given operads $\cO$ and $\cP$, we can then form the commutative diagram 
\begin{equation}\label{equ:big-operad-diagram-2-ary}\begin{tikzcd}[column sep=-2cm, row sep=0.3cm]
\cO(\overline{C}_2)\times \cP(\overline{C}_2)\arrow[dr,"\otimes",]\arrow[rr,"\gamma^*\times \id"]\arrow[dd,"\id\times \gamma^*",swap]&&\cO(\overline{C}_1) \times_{\cO(\overline{C}_0)} \cO(\overline{C}_1) \times \cP(\overline{C}_2)\arrow[dr,,"\otimes"]&\\
&\Map_\Opd(\overline{C}_2 \otimes \overline{C}_2,\cO \otimes \cP) \arrow[rr,"\gamma^*\otimes\id"] \arrow[dd,"\id\otimes \gamma^*"] && \Map_\Opd((\overline{C}_1 \sqcup_{\overline{C}_0} \overline{C}_1) \otimes \overline{C}_2,\cO \otimes \cP)  \arrow[dd,"\alpha^*"] \\
\cO(\overline{C}_2) \times \cP(\overline{C}_1) \times_{\cP(\overline{C}_0)} \cP(\overline{C}_1)\arrow[dr,"\otimes"]&&&\\
&\Map_\Opd(\overline{C}_2 \otimes (\overline{C}_1 \sqcup_{\overline{C}_0} \overline{C}_1),\cO \otimes \cP) \arrow[rr,"\beta^*"] &&	\Map_{\Opd}(\overline{C}_2,\cO \otimes \cP).\end{tikzcd}\end{equation}
The outermost part of the diagram induces a map out of the pushout of the two maps involving $\gamma^*$ into $\Map_{\Opd}(\overline{C}_2,\cO \otimes \cP) = (\cO \otimes \cP)(\overline{C}_2)$ yielding the desired natural transformation.
\end{proof}

If $\cO$ and $\cP$ are reduced operads then their values on $\overline{C}_0$ and $\overline{C}_1$ are contractible, so the left hand side in \cref{lem:full-nat-trafo-2ary} becomes the join $\cO(2)\ast\cP(2)$. We will show that the natural transformation is an equivalence in this case, which will in particular imply  \cref{thm:operad-truncation-tower-monoidal} \ref{enum:trunmon-iv}.

\begin{lem}\label{lem:bv-is-join-homology} The natural transformation of functors $\Opd^{\le2,\red} \times \Opd^{\le 2,\red} \to \cS^{C_2}$
\[\cO(2)\ast  \cP(2) \lra (\cO \otimes \cP)(2)\]
 given by the restriction of \cref{lem:full-nat-trafo-2ary} to reduced operads, is a natural equivalence.
\end{lem}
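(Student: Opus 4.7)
The strategy is a reduction to a single computation at a generator, followed by an identification via Dunn--Lurie additivity.

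Both sides of the natural transformation define functors $\Opd^{\le 2,\red} \times \Opd^{\le 2,\red} \to \cS^{\Sigma_2}$ that preserve colimits in each variable separately. For the source, this follows from the pushout description $X \ast Y \simeq X \sqcup^h_{X \times Y} Y$ combined with the equivalence $\Opd^{\le 2,\red} \simeq \cS^{\Sigma_2}$ of Lemma~\ref{lem:2-truncated-reduced}. For the target, observe first that the tensor product on $\Opd^{\le 2,\un}$ restricts to a symmetric monoidal structure on $\Opd^{\le 2,\red}$: by Theorem~\ref{thm:operad-truncation-tower-monoidal}\ref{enum:trunmon-i.5} we have $\cO \otimes \cP \simeq \tau(\tau_!(\cO) \otimes \tau_!(\cP))$, where (i) $\tau_!$ preserves reducedness, being left Kan extension along the fully faithful inclusion $\overline{\Omega}_{\le 2} \hookrightarrow \overline{\Omega}$ and hence preserving the defining values at $\overline{C}_0, \overline{C}_1$, and (ii) the tensor product in $\Opd^\un$ sends pairs of reduced operads to reduced operads since $\Opd^{\le 1,\un} \simeq \Cat$ is Cartesian by~\ref{enum:trunmon-iii}. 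Colimit-preservation on the reduced side then follows from the fact that $\Opd^{\le 2,\red}$ is a reflective subcategory of $\Opd^{\le 2,\un}$ (cut out by the locality condition that the value at $\overline{C}_1$ be contractible), with the reflection automatically symmetric monoidal.

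Under the equivalence $\Opd^{\le 2,\red} \simeq \cS^{\Sigma_2} \simeq \PSh(B\Sigma_2)$, the target category is generated under colimits by the unique representable, namely the free $\Sigma_2$-space $\Sigma_2$. Hence it suffices to verify the asserted equivalence when $\cO(2) = \cP(2) = \Sigma_2$. Let $\cA \in \Opd^{\le 2,\red}$ denote the corresponding operad. Since $E_1(2) \simeq F_2(\bfR) \simeq \Sigma_2$ with the free action, we have $\cA \simeq \tau(E_1)$, and this is reduced because $E_1$ is. By the symmetric monoidality of $\tau$ (Theorem~\ref{thm:operad-truncation-tower-monoidal}) together with Dunn--Lurie additivity $E_1 \otimes E_1 \simeq E_2$ (\cref{sec:dl-add}),
\[
\cA \otimes \cA \;\simeq\; \tau(E_1) \otimes \tau(E_1) \;\simeq\; \tau(E_1 \otimes E_1) \;\simeq\; \tau(E_2),
\]
so $(\cA \otimes \cA)(2) \simeq E_2(2) \simeq F_2(\bfR^2) \simeq S^1$ with the $\Sigma_2$-action by swap of two points in $\bfR^2$, i.e.\ the antipodal map. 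On the other hand, $\Sigma_2 \ast \Sigma_2 = S^0 \ast S^0 \simeq S^1$ with the diagonal swap-action, which is again antipodal. Thus abstractly the two $\Sigma_2$-spaces agree.

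The main obstacle is verifying that the specific natural map constructed in Lemma~\ref{lem:full-nat-trafo-2ary} actually realises this abstract equivalence, rather than a $\Sigma_2$-equivariant self-map of $S^1$ of degree $\neq \pm 1$. The plan for this step is to trace through the definition: the two operations $\alpha$ and $\beta$ from the proof of Lemma~\ref{lem:full-nat-trafo-2ary} correspond, under $\cA \otimes \cA \simeq \tau(E_2)$, to the two axial configurations of two points in $\bfR^2$ along the horizontal and vertical directions, and the commutative square \eqref{eqn:diag-join-operad} encoding the interchange relation corresponds to the semicircular path in $F_2(\bfR^2)$ obtained by rotating one point a quarter turn around the other. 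Combined with equivariance under the two $\Sigma_2$-automorphism actions on $\cA$ coming from the two free orbits of $\Sigma_2$ on $\Sigma_2$, this pins the degree of the map to be $\pm 1$, so it is an equivalence.
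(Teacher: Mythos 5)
Your overall strategy — identify both sides as colimit-compatible bifunctors, reduce to a single generator, and verify the base case via $\tau(E_1)\otimes\tau(E_1)\simeq\tau(E_2)$ — is essentially the same reduction the paper uses. However, there are two genuine gaps in the execution.

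First, the colimit-preservation claim is false as stated. The join $X \ast (-)\colon \cS^{\Sigma_2}\to\cS^{\Sigma_2}$ does not preserve colimits: for example $X \ast \varnothing = X \neq \varnothing$, and $X\ast(Y\sqcup Z)=(X\ast Y)\cup_X(X\ast Z)$ is not the coproduct of $X\ast Y$ and $X\ast Z$. What is true (and what the paper states, as property (b) in its proof and in Lemma~\ref{lem:bv-red-colimits}\ref{equ:red-colim-iii}) is that $X\ast(-)$ and $\cP\otimes(-)$ preserve colimits as functors into the \emph{slice} categories $\cS^{\Sigma_2}_{X/}$ and $\Opd^{\red}_{\cP/}$, respectively. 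This means your one-step reduction ``generated by $\Sigma_2$ under colimits, so check there'' does not go through directly; the paper instead runs a two-stage reduction, first establishing the claim at $(X,S^0)$ by writing $X\simeq\colim_{X_{\Sigma_2}}S^0$ and comparing the two sides as pushouts under $X$, then promoting this to general $(X,Y)$ by the same device in the second variable. The slice formulation is not a cosmetic correction; without it the induction has no valid induction step.

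Second, the base case is not actually proved. You correctly reduce it to asking whether a particular natural map $S^0\ast S^0\simeq S^1\to \tau(E_2)(2)\simeq S^1$ is an equivalence, and you correctly observe that this is the real issue, but you then only \emph{describe a plan} (``trace through the definition\ldots'') rather than carrying it out. Moreover, the degree argument you propose in lieu of that tracing is incorrect: $\Sigma_2$-equivariant self-maps of $S^1$ with the antipodal action exist in every odd degree (e.g.\ $z\mapsto z^{2k+1}$), so ``equivariance pins the degree to $\pm1$'' is not true, and the extra ``two $\Sigma_2$-automorphism actions'' you invoke are not set up precisely enough to rescue it. The paper handles this point by an honest computation in the rectilinear-cube model of $E_n,E_m,E_{n+m}$ and the explicit bifunctor $\theta$ of \cite[5.1.2.1]{LurieHA}, exhibiting an explicit $2$-step homotopy that identifies the map, after the standard identifications $E_d(2)\simeq S^{d-1}$, with the canonical homeomorphism $S^{n-1}\ast S^{m-1}\to S^{n+m-1}$ — an equivalence by inspection rather than by a degree count. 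This explicit check is exactly the step you flagged as ``the main obstacle'' but did not supply.
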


Before giving the proof, we record a few preparatory observations. Recall from \cite[2.2.5.3]{LurieHA} that, given operads $\cO$, $\cP$, and $\cR$, the category of  \emph{bifunctors $\cO\times \cP\ra\cR$} is the full subcategory \[\Bifun(\cO,\cP;\cR)\coloneq \Fun^{\inert}_{/\Fin_*\times\Fin_*}(\cO^\otimes\times\cP^\otimes,\wedge^*\cR^\otimes)\subset \Fun_{/\Fin_*\times\Fin_*}(\cO^\otimes\times\cP^\otimes,\wedge^*\cR^\otimes)\] on those functors over $\Fin_*\times \Fin_*$ that preserve cocartesian lifts over inerts, where a morphism in $\Fin_*\times \Fin_*$ is \emph{inert} if both of its components are inert in $\Fin_*$. A bifunctor $\theta\colon \cO\times \cP\ra\cR$ \emph{exhibits $\cR$ as a tensor product of $\cO$ and $\cP$} if for any operad $\cQ$ the functor $\smash{\Alg_{\cR}(\cQ)=\Fun^{\inert}_{/\Fin_*}(\cR^\otimes,\cQ^\otimes)\ra \Bifun(\cO,\cP;\cQ)}$ induced by $\theta$ is an equivalence, which on cores gives $\smash{\Map_\Opd(\cR,\cQ)\simeq \Bifun(\cO,\cP;\cQ)^\simeq}$. Given a general bifunctor $\theta\colon \cO\times \cP\ra\cR$, we can form a commutative diagram similar to \eqref{equ:big-operad-diagram-2-ary}
\begin{equation}\label{equ:bifunctor-diagram}\hspace{-0.8cm}\begin{tikzcd}[column sep=-2cm, row sep=0.3cm]
\cO(\overline{C}_2)\times \cP(\overline{C}_2)\arrow[dr,"\theta\circ \boxtimes"]\arrow[rr,"\gamma^*\times \id"]\arrow[dd,"\id\times \gamma^*",swap]&&[-1.7cm]\cO(\overline{C}_1) \times_{\cO(\overline{C}_0)} \cO(\overline{C}_1) \times \cP(\overline{C}_2)\arrow[dr,"\theta\circ \boxtimes"]&\\
&\Bifun(\overline{C}_2, \overline{C}_2,\cR)^\simeq \arrow[rr,"{(\gamma,\id)^*}"] \arrow[dd,"{(\id,\gamma)^*}"] && \Bifun(\overline{C}_1,\overline{C}_2;\cR)^\simeq\times_{\Bifun({\overline{C}_0},\overline{C}_2;\cR)^\simeq}\Bifun(\overline{C}_1,\overline{C}_2;\cR)^\simeq  \arrow[dd,"\alpha^*"] \\
\cO(\overline{C}_2) \times \cP(\overline{C}_1) \times_{\cP(\overline{C}_0)} \cP(\overline{C}_1)\arrow[dr,"\theta\circ \boxtimes"]&&&\\
&\Bifun(\overline{C}_2, \overline{C}_1,\cR)^\simeq\times_{\Bifun(\overline{C}_2, {\overline{C}_0};\cR)^\simeq}\Bifun(\overline{C}_2,\overline{C}_1;\cR)^\simeq\arrow[rr,"\beta^*"] &&	\cR(\overline{C}_2)\end{tikzcd}\end{equation}
whose diagonal arrows are induced by the product functor $\boxtimes\colon \Cat_{/\Fin_*}\times \Cat_{/\Fin_*}\ra \Cat_{/\Fin_*\times \Fin_*}$ and postcomposition with $\theta$, using that $\smash{\cO(\overline{C_n})=\Map_{\Opd}(\overline{C_n},\cO)=\Alg_{\overline{C}_n}(\cO)^\simeq}$. If $\theta$ exhibits $\cR$ as the tensor product of $\cO$ and $\cP$, then \eqref{equ:bifunctor-diagram}  recovers \eqref{equ:big-operad-diagram-2-ary}. 

Next, we give a description of the space $\smash{\Bifun(\overline{C}_k,\overline{C}_\ell;\cR)^\simeq}$ for $k,\ell\ge0$, for simplicity in the case where $\cR$ is unital. Adopting the notation from \cref{ex:trees-from-simplices}, the composition $\smash{c_n\ra \overline{c}_n\ra F(\overline{c}_n)=\overline{C}_n^\otimes}$ for $n\ge0$ induces by restriction maps
\[\hspace{-0.8cm}\Bifun(\overline{C}_k,\overline{C}_\ell;\cR)^\simeq\xsra{\simeq} \Map_{\Cat_{/\Fin_*\times \Fin_*}}(\overline{c}_k\times \overline{c}_\ell,\wedge^*(\cR^\otimes))\xsra{\simeq} \Map_{\Cat_{/\Fin_*\times \Fin_*}}(c_k\times c_\ell,\wedge^*(\cR^\otimes))\simeq \Map_{\Cat_{/ \Fin_*}}(\wedge_!(c_k\times c_\ell),\cR^\otimes)\]
which are both equivalences: For the first, this follows from the universal property of the tensor product, the adjunction involving $F$, and the monoidality of $F$ from \cref{lem:f-tensor}. For the second, it follows from unitality of $\cR$ by an argument as in \cref{ex:trees-from-simplices} \ref{enum:cn-description-olcr}. The rightmost space $\smash{\Map_{\Cat_{/ \Fin_*}}(\wedge_!(c_k\times c_\ell),\cR^\otimes)=(\wedge_!(c_k\times c_\ell)\times_{\Fin_*}\cR^\otimes)^\simeq}$ is the space of commutative squares in $\smash{\cR^\otimes}$ that cover the following square in $\smash{\Fin_*}$ induced by the active maps $\langle k\rangle\ra \langle 1\rangle$ and $\langle \ell\rangle\ra \langle 1\rangle$
\[\begin{tikzcd}[column sep=.4cm,row sep=.4cm]
	 \langle k\rangle \wedge \langle \ell \rangle \rar \dar & \langle k \rangle\wedge \langle 1 \rangle\dar \\
	 \langle 1 \rangle\wedge \langle \ell \rangle \rar & \langle 1 \rangle\wedge \langle 1 \rangle \end{tikzcd}.\]
By an argument similar to the one in the proof of \cref{prop:alg-is-truncated}, this space fits into a fibre sequence
\[\hspace{-0.6cm}
P_\cR\big((x_{ij})_{(i,j)\in\underline{k}\times \underline{\ell}},(x_{i0})_{i\in\underline{k}}, (x_{0j})_{i\in\underline{\ell}},x_{00}\big)\ra \Map_{\Cat_{/ \Fin_*}}\big(\wedge_!(c_k\times c_\ell),\cR^\otimes\big)\ra \big(\cR^{\col,\simeq}\big)^{k\cdot \ell}\times \big(\cR^{\col,\simeq}\big)^{k}\times \big(\cR^{\col,\simeq}\big)^{\ell}\times \cR^{\col,\simeq}
\]
where the leftmost space is the fibre over $((x_{ij})_{(i,j)\in\underline{k}\times \underline{\ell}},(x_{i0})_{i\in\underline{k}}, (x_{0j})_{i\in\underline{\ell}},x)\in (\cR^{\col,\simeq})^{k\cdot \ell}\times (\cR^{\col,\simeq})^{k}\times (\cR^{\col,\simeq})^{\ell}\times \cR^{\col,\simeq}$. This fibre in turn fits into a pullback of the form
\[\begin{tikzcd}[column sep=-.2cm, row sep=.5cm]
			P_\cR\big((x_{ij})_{(i,j) \in \ul{k}\times \ul{\ell}},(x_{i0})_{i \in \ul{k}},(x_{0j})_{j \in \ul{\ell}};x_{00}\big) \rar \dar & \Mul_\cR\big((x_{i0})_{i \in \ul{k}};x\big) \times \sqcap_{i \in \ul{k}} \Mul_\cR\big((x_{ij})_{j \in \ul{\ell}};x_{0i}\big)  \dar{\circ} \\
			\Mul_\cR\big((x_{0j})_{j \in \ul{\ell}};x_{00}\big) \times \sqcap_{j \in \ul{\ell}} \Mul_\cR\big((x_{ij})_{i \in \ul{k}};x_{0j}\big) \rar{\circ} & \Mul_\cR\big((x_{ij})_{(i,j) \in \ul{k}\times\ul{\ell}};x_{00}\big)\end{tikzcd} 
		\]
where the maps denoted by $\circ$ are given by operadic composition. Loosely speaking, the space $\smash{\Bifun(\overline{C_k},\overline{C_\ell};\cR)\simeq \Map_{\Cat_{/ \Fin_*}}(\wedge_!(c_k\times c_\ell),\cR^\otimes)}$ is thus the space of the following data: two $(k\cdot\ell)$-ary operations $m$ and $m'$ with the same input colours $(x_{ij})_{i,j}$ and output colour $x_{00}$, a path between $m$ and $m'$ in the space of such operations, a decomposition of $m$ as $k$ operations of arity $\ell$ inserted into a $k$-ary operation with input colours $(x_{i0})_i$, and a decomposition of $m'$ as $\ell$  operations or arity $k$ inserted into a $\ell$-ary operation with input colours $(x_{0j})_j$; see the top-left box in \cref{fig:operations} for a schematic figure of an element in $\Bifun(\overline{C_2},\overline{C_2};\cR)$. We can also describe the maps in the inner square in \eqref{equ:bifunctor-diagram} in these terms: the map $\smash{(\gamma,\id)^*\colon \Bifun(\overline{C}_2,\overline{C}_2;\cR)^\simeq\ra \Bifun(\overline{C}_1,\overline{C}_2;\cR)^\simeq\times_{\Bifun({\overline{C}_0},\overline{C}_2;\cR)^\simeq}\Bifun(\overline{C}_1,\overline{C}_2;\cR)^\simeq}$ is on the first factor given by inserting the unique $0$-ary operation in $x_{21}$ and $x_{22}$, and on the second factor by inserting the unique $0$-ary operation in $x_{11}$ and $x_{22}$. The map $(\id,\gamma)^*$ is given similarly, by inserting $0$-ary operations into $x_{12}$ and $x_{22}$ on the first, and into $x_{11}$ and $x_{21}$ on the second factor. The map $\alpha$ and $\beta$ are given by forming the unique operadic composition possible to obtain a operation with input colours $x_{11}$ and $x_{22}$, and output colour $x_{00}$; see \cref{fig:operations} for a schematic figure.

\begin{figure}[h]
\hspace*{-.3cm}  
\centering
\begin{tikzpicture}[on top/.style={preaction={draw=white,-,line width=#1}},
	on top/.default=4pt]
	%top-left
	\begin{scope}[scale=.6]
		\draw [dashed] (-2,-2) rectangle (6,3);
		\begin{scope}
			\draw (-1.5,2) node {$\bullet$} -- (-1,1) node {$\bullet$};
			\draw (-.5,2) node {$\bullet$} -- (-1,1);
			\draw (.5,2) node {$\bullet$} -- (1,1) node {$\bullet$};
			\draw (1.5,2) node {$\bullet$} -- (1,1);
			\draw (0,0) node {$\bullet$} -- (0,-1);
			\draw (-1,1) -- (0,0);
			\draw (1,1) -- (0,0);
			\node at (0,-1) [below] {\scriptsize$x_{00}$};
			\node at (-.5,.5) [left] {\scriptsize$x_{10}$};
			\node at (.5,.5) [right] {\scriptsize$x_{20}$};
			\node at (-1.5,2) [above] {\scriptsize$x_{11}$};
			\node at (-.5,2) [above] {\scriptsize$x_{12}$};
			\node at (.5,2) [above] {\scriptsize$x_{21}$};
			\node at (1.5,2) [above] {\scriptsize$x_{22}$};
		\end{scope}
		\node at (2,0) {$\simeq$};
		\begin{scope}[shift={(4,0)}]
			\draw (-1.5,2) node {$\bullet$} -- (-1,1) node {$\bullet$};
			\draw (.5,2) node {$\bullet$} -- (-1,1);
			\draw [on top] (-.5,2) node {$\bullet$} -- (1,1) node {$\bullet$};
			\draw (1.5,2) node {$\bullet$} -- (1,1);
			\draw (0,0) node {$\bullet$} -- (0,-1);
			\draw (-1,1) -- (0,0);
			\draw (1,1) -- (0,0);
			\node at (0,-1) [below] {\scriptsize$x_{00}$};
			\node at (-.5,.5) [left] {\scriptsize$x_{01}$};
			\node at (.5,.5) [right] {\scriptsize$x_{02}$};
			\node at (-1.5,2) [above] {\scriptsize$x_{11}$};
			\node at (-.5,2) [above] {\scriptsize$x_{12}$};
			\node at (.5,2) [above] {\scriptsize$x_{21}$};
			\node at (1.5,2) [above] {\scriptsize$x_{22}$};
		\end{scope}
	\end{scope}
	
	\draw [->] (4,0) -- (4.5,0);
	\node at (4.25,.25) {$(\gamma,\id)^*$};
	
	%top-right
	\begin{scope}[scale=.9,shift={(7,1)}]
		\draw [dashed] (-1.5,-3) rectangle (7,1.75);
		\begin{scope}[scale=.5]
			\draw [dotted] (-2,-2) rectangle (5,3);
			\begin{scope}
				\draw (-1.5,2) node {$\bullet$} -- (-1,1) node {$\bullet$};
				\draw (-.5,2) node {$\bullet$} -- (-1,1);
				\draw (0,0) node {$\bullet$} -- (0,-1) ;
				\draw (-1,1) -- (0,0);
				\node at (0,-1) [below] {\scriptsize$x_{00}$};
				\node at (-.5,.5) [left] {\scriptsize$x_{10}$};
				\node at (-1.5,2) [above] {\scriptsize$x_{11}$};
				\node at (-.5,2) [above] {\scriptsize$x_{12}$};
			\end{scope}
			\node at (1.1,0) {$\simeq$};
			\begin{scope}[shift={(3.25,0)}]
				\draw (-1.5,2) node {$\bullet$} -- (-1,1) node {$\bullet$};
				\draw [on top] (-.5,2) node {$\bullet$} -- (1,1) node {$\bullet$};
				\draw (0,0) node {$\bullet$} -- (0,-1);
				\draw (-1,1) -- (0,0);
				\draw (1,1) -- (0,0);
				\node at (0,-1) [below] {\scriptsize$x_{00}$};
				\node at (-.5,.5) [left] {\scriptsize$x_{01}$};
				\node at (.5,.5) [right] {\scriptsize$x_{02}$};
				\node at (-1.5,2) [above] {\scriptsize$x_{11}$};
				\node at (-.5,2) [above] {\scriptsize$x_{12}$};
			\end{scope}
		\end{scope}
		
		\begin{scope}[scale=.5,shift={(7,0)}]
			\draw [dotted] (-1,-2) rectangle (6,3);
			\begin{scope}
				\draw (.5,2) node {$\bullet$} -- (1,1) node {$\bullet$};
				\draw (1.5,2) node {$\bullet$} -- (1,1);
				\draw (0,0) node {$\bullet$} -- (0,-1);
				\draw (1,1) -- (0,0);
				\node at (0,-1) [below] {\scriptsize$x_{00}$};
				\node at (.5,.5) [right] {\scriptsize$x_{20}$};
				\node at (.5,2) [above] {\scriptsize$x_{21}$};
				\node at (1.5,2) [above] {\scriptsize$x_{22}$};
			\end{scope}
			\node at (2,0) {$\simeq$};
			\begin{scope}[shift={(4,0)}]
				\draw (.5,2) node {$\bullet$} -- (-1,1) node {$\bullet$};
				\draw (1.5,2) node {$\bullet$} -- (1,1) node {$\bullet$};
				\draw (0,0) node {$\bullet$} -- (0,-1);
				\draw (-1,1) -- (0,0);
				\draw (1,1) -- (0,0);
				\node at (0,-1) [below] {\scriptsize$x_{00}$};
				\node at (-.5,.5) [left] {\scriptsize$x_{01}$};
				\node at (.5,.5) [right] {\scriptsize$x_{02}$};
				\node at (.5,2) [above] {\scriptsize$x_{21}$};
				\node at (1.5,2) [above] {\scriptsize$x_{22}$};
			\end{scope}
		\end{scope}
		
		\node at (2.75,-1) {$\times$};
		
		\begin{scope}[scale=.5,shift={(5.5,-3.5)}]
			\draw [dotted] (-2,-1.75) rectangle (2,1.25);
			\draw (0,0) node {$\bullet$} -- (0,-1);
			\draw (-1,1) node {$\bullet$} -- (0,0);
			\draw (1,1) node {$\bullet$} -- (0,0);
			\node at (0,-1) [below] {\scriptsize$x_{00}$};
			\node at (-.5,.5) [left] {\scriptsize$x_{01}$};
			\node at (.5,.5) [right] {\scriptsize$x_{02}$};
		\end{scope}
	\end{scope}
	
	\draw [->] (2,-1.75) -- (2,-2.25);
	\node at (2,-2) [left] {$(\id,\gamma)^*$};
	%bottom-left
	\begin{scope}[scale=.9,shift={(0,-5)}]
		\draw [dashed] (-1.5,-3) rectangle (7,1.75);
		\begin{scope}[scale=.5]
			\draw [dotted] (-2,-2) rectangle (5,3);
			\begin{scope}
				\draw (-1.5,2) node {$\bullet$} -- (-1,1) node {$\bullet$};
				\draw (.5,2) node {$\bullet$} -- (1,1) node {$\bullet$};
				\draw (0,0) node {$\bullet$} -- (0,-1);
				\draw (-1,1) -- (0,0);
				\draw (1,1) -- (0,0);
				\node at (0,-1) [below] {\scriptsize$x_{00}$};
				\node at (-.5,.5) [left] {\scriptsize$x_{10}$};
				\node at (.5,.5) [right] {\scriptsize$x_{20}$};
				\node at (-1.5,2) [above] {\scriptsize$x_{11}$};
				\node at (.5,2) [above] {\scriptsize$x_{21}$};
			\end{scope}
			\node at (2,0) {$\simeq$};
			\begin{scope}[shift={(4,0)}]
				\draw (-1.5,2) node {$\bullet$} -- (-1,1) node {$\bullet$};
				\draw (.5,2) node {$\bullet$} -- (-1,1);
				\draw (0,0) node {$\bullet$} -- (0,-1);
				\draw (-1,1) -- (0,0);
				\node at (0,-1) [below] {\scriptsize$x_{00}$};
				\node at (-.5,.5) [left] {\scriptsize$x_{01}$};
				\node at (-1.5,2) [above] {\scriptsize$x_{11}$};
				\node at (.5,2) [above] {\scriptsize$x_{21}$};
			\end{scope}
		\end{scope}
		
		\begin{scope}[scale=.5,shift={(8,0)}]
			\draw [dotted] (-2,-2) rectangle (5,3);
			\begin{scope}
				\draw (-.5,2) node {$\bullet$} -- (-1,1) node {$\bullet$};
				\draw (1.5,2) node {$\bullet$} -- (1,1) node {$\bullet$};
				\draw (0,0) node {$\bullet$} -- (0,-1);
				\draw (-1,1) -- (0,0);
				\draw (1,1) -- (0,0);
				\node at (0,-1) [below] {\scriptsize$x_{00}$};
				\node at (-.5,.5) [left] {\scriptsize$x_{10}$};
				\node at (.5,.5) [right] {\scriptsize$x_{20}$};
				\node at (-.5,2) [above] {\scriptsize$x_{12}$};
				\node at (1.5,2) [above] {\scriptsize$x_{22}$};
			\end{scope}
			\node at (2,0) {$\simeq$};
			\begin{scope}[shift={(3,0)}]
				\draw (-.5,2) node {$\bullet$} -- (1,1) node {$\bullet$};
				\draw (1.5,2) node {$\bullet$} -- (1,1);
				\draw (0,0) node {$\bullet$} -- (0,-1);
				\draw (1,1) -- (0,0);
				\node at (0,-1) [below] {\scriptsize$x_{00}$};
				\node at (.5,.5) [right] {\scriptsize$x_{02}$};
				\node at (-.5,2) [above] {\scriptsize$x_{12}$};
				\node at (1.5,2) [above] {\scriptsize$x_{22}$};
			\end{scope}
		\end{scope}
		
		\node at (2.75,-1) {$\times$};
		
		\begin{scope}[scale=.5,shift={(5.5,-3.5)}]
			\draw [dotted] (-2,-1.75) rectangle (2,1.25);
			\draw (0,0) node {$\bullet$} -- (0,-1);
			\draw (-1,1) node {$\bullet$} -- (0,0);
			\draw (1,1) node {$\bullet$} -- (0,0);
			\node at (0,-1) [below] {\scriptsize$x_{00}$};
			\node at (-.5,.5) [left] {\scriptsize$x_{10}$};
			\node at (.5,.5) [right] {\scriptsize$x_{20}$};
		\end{scope}
	\end{scope}
	
	\draw [->] (6.8,-5) -- (7.3,-5);
	\node at (7.05,-5.25) {$\beta^*$};
	\draw [->] (12,-2.7) -- (12,-3.2);
	\node at (12,-2.95) [right] {$\alpha^*$};
	%bottom right
	\begin{scope}[scale=.8,shift={(11.5,-7)}]
		\begin{scope}[scale=.75]
			\draw [dashed] (-2.5,-1.7) rectangle (1.5,2.7);
			\draw [dashed] (2.5,-1.7) rectangle (6.5,2.7);
			\begin{scope}[shift={(-.5,0)}]
				\draw (-1.5,2) node {$\bullet$} -- (-1,1) node [gray] {$\bullet$};
				\draw (1.5,2) node {$\bullet$} -- (1,1) node [gray] {$\bullet$};
				\draw (0,0) node {$\bullet$} -- (0,-1);
				\draw (-1,1) -- (0,0);
				\draw (1,1) -- (0,0);
				\node at (0,-1) [below] {\scriptsize$x_{00}$};
				\node at (-.5,.5) [left,gray] {\scriptsize$x_{10}$};
				\node at (.5,.5) [right,gray] {\scriptsize$x_{20}$};
				\node at (-1.5,2) [above] {\scriptsize$x_{11}$};
				\node at (1.5,2) [above] {\scriptsize$x_{22}$};
			\end{scope}
			\node at (2,0) {$\simeq$};
			\begin{scope}[shift={(4.5,0)}]
				\draw (-1.5,2) node {$\bullet$} -- (-1,1) node [gray] {$\bullet$};
				\draw (1.5,2) node {$\bullet$} -- (1,1) node [gray] {$\bullet$};
				\draw (0,0) node {$\bullet$} -- (0,-1);
				\draw (-1,1) -- (0,0);
				\draw (1,1) -- (0,0);
				\node at (0,-1) [below] {\scriptsize$x_{00}$};
				\node at (-.5,.5) [left,gray] {\scriptsize$x_{01}$};
				\node at (.5,.5) [right,gray] {\scriptsize$x_{02}$};
				\node at (-1.5,2) [above] {\scriptsize$x_{11}$};
				\node at (1.5,2) [above] {\scriptsize$x_{22}$};
			\end{scope}
		\end{scope}
	\end{scope}
\end{tikzpicture}
\caption{The inner square of \eqref{equ:bifunctor-diagram}. Each dashed box indicates an element in the respective space. The $\simeq$-signs in the dashed boxes indicate paths of operations. The $\simeq$-sign between the two bottom-right dashed boxes indicates the homotopy between the clockwise and counterclockwise composition of maps in the square.}
\label{fig:operations}
\end{figure}

\begin{proof}[Proof of \cref{lem:bv-is-join-homology}]We first prove that the natural transformation is an equivalence when $\cO = E_{n}$ and $\cP=E_m$ (we will actually only use the case $n=m=1$ later). We use use the model of the $E_d$-operad in terms of rectilinear embeddings of open unit cubes (see \cite[5.1.0.2]{LurieHA}) and the explicit bifunctor $\theta\colon E_n\times E_{m}\ra E_{n+m}$ from 5.1.2.1 loc.cit.\,which expresses $E_{n+m}$ as the tensor product of $E_n$ and $E_m$, and is given by taking cartesian products of $n$- and $m$-cubes. Writing $E_d(k)\coloneq E_d(\overline{C}_k)$,  $\id_d\coloneq \id_{(-1,1)^d}$, and using $E_d(0)=*$, the  task in this case is to show that the left square in 
\begin{equation}\label{equ:en-homotopy-squares}
\begin{tikzcd}
E_n(2)\times E_m(2)\rar{\gamma^*\times \id}\arrow[d,swap,"{\id\times\gamma^*}"]&[-12pt] E_n(1)\times E_n(1)\times E_m(2)\dar{\alpha^*\circ \theta\circ\boxtimes}\\
E_n(2)\times E_m(1)\times E_m(1)\arrow[r,swap,"\beta^*\circ \theta\circ\boxtimes"] &E_{n+m}(2)
\end{tikzcd}\,\,
\begin{tikzcd}[column sep=1.2cm]
E_n(2)\times E_m(2)\rar{\pr_2}\arrow[d,swap,"\pr_1"]&E_m(2)\dar{\id_{n}\times(-)}\arrow[Rightarrow,shorten=2ex,"H"]{dl}\\
E_n(2)\arrow[r,"(-)\times \id_{m}",swap]&E_{n+m}(2)
\end{tikzcd}
\end{equation}
is a pushout. Explicitly, using that $E_d(k)$ is the space of rectilinear embeddings $\underline{k}\times(-1,1)^d\hookrightarrow (-1,1)^d$ and writing $\iota_i\colon (-1,1)^d\subset \underline{k}\times(-1,1)^d$ for the inclusion of the $i$th cube, the top horizontal arrow is given by $(e,e')\mapsto (e \circ \iota_1,e\circ\iota_2,e')$ and the left vertical arrow by $(e,e')\mapsto (e,e'\circ \iota_1,e'\circ\iota_2)$. From the description of $\alpha^*$ and $\beta^*$ given above, together with fact that $\theta$ is given by direct products of rectilinear cubes, we see that the right vertical map is given by $(e_1,e_2,e')\mapsto (\id_n\times e')\circ(e_1\times\id_m,e_2\times \id_m)$ and the left vertical map by $(e,e'_1,e'_2)\mapsto (e\times \id_m)\circ(\id_n\times e'_1,\id_n\times e'_2)$. Using the deformation retraction of $E_d(1)$ onto $\{\id_d\}\subset E_d(1)$ induced by scaling and translation, the strictly commutative left square in \eqref{equ:en-homotopy-squares} becomes the homotopy commutative right square where the homotopy sends $(e,e')\in E_n(2)\times E_m(2)$ at time $t\in[0,2]$ to
\[\hspace{-0.2cm}
 \begin{cases}
(\id_n\times e')\circ\Big((1-t)\id_{n+m}+t((e\circ\iota_1)\times \id_m),(1-t)\id_{n+m}+t((e\circ\iota_2)\times \id_m)\Big)& t\in[0,1]\\
(e\times \id_m)\circ\Big((t-1)\id_{n+m}+(2-t)(\id_n\times (e'\circ\iota_1)),(t-1)\id_{n+m}+(2-t)(\id_m\times (e'\circ\iota_2))\Big)& t\in[1,2]
\end{cases}
\]
Identifying $E_d(2)$ with $S^{d-1}$ via the homotopy equivalence $(e,e')\mapsto \frac{e'(0)-e(0)}{\parallel e'(0)-e(0)\parallel}$, one checks that the induced map $E_n(2)\cup_{E_n(2)\times E_m(2)} E_m(2)=E_n(2)\ast E_m(2) \ra E_{n+m}(2)$ becomes the standard map $S^{n-1}\ast S^{m-1}\ra  S^{n+m-1}$, so it is in particular an equivalence.

We now show the general case. In view of the equivalence $\smash{\Opd^{\le 2,\red}\simeq \cS^{\Sigma_2}}$ induced by taking 2-ary operations (see \cref{lem:2-truncated-reduced}), both source and target are functors of the form 
	$F \colon \cS^{\Sigma_2} \times \cS^{\Sigma_2} \ra \cS^{\Sigma_2}$
that satisfy (a) $F(-,\varnothing) \simeq \id \simeq F(\varnothing,-)$, (b) the functors $\smash{F(X,-), F(-,X) \colon \cS^{\Sigma_2}\simeq \cS^{\Sigma_2}_{\varnothing/} \to \smash{\cS^{\Sigma_2}_{X/}}}$ preserve colimits for all $X\in \cS^{\Sigma_2}$. These properties can easily be verified for join $\ast$, and for the tensor product $\otimes$ on $\Opd^{\le 2,\red}$ follow from (a) that $E_0$ is the unit, (b) \cref{lem:bv-red-colimits} below. Moreover, by the above argument for $n=1=m$ the component of the natural transformation at $\smash{(E_1(2),E_1(2))\simeq (S^0,S^0)\in \cS^{\Sigma_2}\times \cS^{\Sigma_2}}$ is an equivalence where $S^0$ is equipped with the flip action, i.e.\,the free $\Sigma_2$-space on a point. It turns out that any natural transformation $\eta\colon F\ra F'$ with this property is an equivalence: it is an equivalence on $(X,S^0)$ for any $\Sigma_2$-space $X$, since $\smash{X\simeq \colim_{X_{\Sigma_2}}S^0}$ where $X_{\Sigma_2}$ are the orbits of the action, so the component of $\eta$ at $(X,S^0)$ has, using (b), the form $\smash{\colim_{X_{\Sigma_2}}F(S^0,S^0)\cup_{\colim_{X_{\Sigma_2}} S^0} S^0\ra \colim_{X_{\Sigma_2}}F'(S^0,S^)\cup_{X_{\Sigma_2} S^0} S^0}$, so is an equivalence as $F(S^0,S^0)\ra F'(S^0,S^0)$ is one. For general $(X,Y)$ the component has, using (b),  the form $\smash{\colim_{Y_{\Sigma_2}}F(X,S^0)\cup_{\colim_{Y_{\Sigma_2}}X}X\ra \colim_{Y_{\Sigma_2}}F'(X,S^0)\cup_{\colim_{Y_{\Sigma_2}}X}X}$, so is an equivalence as we have already shows that $F(X,S^0)\ra F'(X,S^0)$ is one.
\end{proof}

\begin{lem}\label{lem:bv-red-colimits}Fix $\cP\in\Opd^{\red}$.\begin{enumerate}
\item\label{equ:red-colim-i} The operad $E_0$ is initial in $\Opd^\red$, so the forgetful functor $\smash{\Opd^{\red}_{E_0/}\ra \Opd^{\red}}$ is an equivalence.
 \item\label{equ:red-colim-ii} The inclusion ${\Opd^{\red}_{\cP/}\subset \Opd^{\un}_{\cP/}}$ is a fully faithful left adjoint, so preserves and detects colimits. In particular, colimits in the category of reduced operads $\smash{\Opd^{\red}\simeq \Opd^{\red}_{/E_0}}$ can be computed in $\smash{\Opd^{\un}_{E_0/}}$.
\item\label{equ:red-colim-iii} The tensor product functor $\smash{\cP \otimes (-) \colon \Opd^{\red}\simeq \Opd^{\red}_{E_0/} \lra \Opd^{\red}_{\cP/}}$ preserves colimits.
\end{enumerate}
Moreover, \ref{equ:red-colim-i}-\ref{equ:red-colim-iii} holds when replacing $\smash{\Opd^{\red}}$ by  $\smash{\Opd^{\le k, \red}}$ and $\smash{\Opd^{\un}}$ by $\smash{\Opd^{\le k, \un}}$ for any $1\le k\le\infty$. 
\end{lem}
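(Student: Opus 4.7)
The plan is to prove \cref{equ:red-colim-i}--\cref{equ:red-colim-iii} using the adjoint triple $\tau_! \dashv (-)^\col \dashv \tau_*$ between $\Opd^{\un}$ and $\Cat \simeq \Opd^{\le 1,\un}$ provided by the tower, with $\tau_!$ and $\tau_*$ fully faithful.

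For \cref{equ:red-colim-i}, I would observe that $E_0 \simeq \tau_!(\ast)$ for $\ast \in \Cat$ the terminal category, since a map $E_0 \to \cO$ in $\Opd^{\un}$ amounts to picking a colour of $\cO$, so both sides corepresent the functor $\cO \mapsto (\cO^\col)^\simeq$. For reduced $\cO$ we have $\cO^\col \simeq \ast$, so this functor is contractible, exhibiting $E_0$ as initial in $\Opd^{\red}$.

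For \cref{equ:red-colim-ii}, I would construct a right adjoint $R$ to the inclusion $i$. Given $\cP \to \cO$ in $\Opd^{\un}_{\cP/}$ with $\cP$ reduced, set
\[
    R(\cO) \coloneq \cO \times_{\tau_*(\cO^\col)} \tau_*(\ast),
\]
a pullback in $\Opd^{\un}$ formed using the unit $\cO \to \tau_*(\cO^\col)$ and $\tau_*$ applied to the morphism $\ast \to \cO^\col$ picking out the image of $\cP$'s unique colour. Since $(-)^\col$ preserves limits (being a right adjoint to $\tau_!$), one has $R(\cO)^\col \simeq \ast$, so $R(\cO)$ is reduced; a canonical map $\cP \to R(\cO)$ is obtained by applying \cref{equ:red-colim-i} to the reduced operad $\tau_*(\ast)$. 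To verify $i \dashv R$, I would use that $\Map_{\Opd^{\un}}(\cQ, \tau_*(\cC)) \simeq \cC^\simeq$ for reduced $\cQ$ (via $(-)^\col \dashv \tau_*$ and $\cQ^\col \simeq \ast$), which rewrites $\Map(\cQ, R(\cO))$ as the fibre of $\Map(\cQ, \cO) \to (\cO^\col)^\simeq$ over the distinguished colour. A pullback pasting argument comparing this with the analogous formula for $\cP$ then yields $\Map_{\Opd^{\red}_{\cP/}}(\cQ, R(\cO)) \simeq \Map_{\Opd^{\un}_{\cP/}}(i\cQ, \cO)$; full-faithfulness of $i$ is automatic.

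For \cref{equ:red-colim-iii}, I would combine \cref{equ:red-colim-ii} with \cref{thm:operad-truncation-tower-monoidal}\cref{enum:trunmon-ii}, which ensures that tensor product in $\Opd^{\un}$ preserves colimits in both variables. For a diagram $D \colon I \to \Opd^{\red} \simeq \Opd^{\red}_{E_0/}$, \cref{equ:red-colim-ii} computes $\colim_I D$ as a colimit in $\Opd^{\un}_{E_0/}$; tensoring with $\cP$ then preserves this in $\Opd^{\un}_{\cP/}$, and the result lies in $\Opd^{\red}_{\cP/}$ since $(\cP \otimes \cR)^\col \simeq \cP^\col \times \cR^\col \simeq \ast$ (using that $(-)^\col$ is symmetric monoidal for the Cartesian structure on $\Cat$, by \cref{thm:operad-truncation-tower-monoidal}\cref{enum:trunmon-iii}). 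Applying \cref{equ:red-colim-ii} once more identifies this colimit with $\colim_I (\cP \otimes D)$ in $\Opd^{\red}_{\cP/}$. The moreover clause follows by repeating the same arguments at each truncation level, using the adjunctions $\tau_!^{(k)} \dashv (-)^\col \dashv \tau_*^{(k)}$ between $\Opd^{\le k,\un}$ and $\Cat$, together with the colimit-preservation of the truncated tensor product. The main obstacle is the pullback pasting verification in \cref{equ:red-colim-ii}, which requires carefully tracking the fibre product structure on the slices $\Opd^{\un}_{\cP/}$ and $\Opd^{\red}_{\cP/}$.
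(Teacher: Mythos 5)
Your proof is correct, and it takes a genuinely different route from the paper for items \ref{equ:red-colim-i} and \ref{equ:red-colim-ii}. The paper simply cites \cite[2.2.7--2.2.9]{SchlankYanovski} for \ref{equ:red-colim-i} and for \ref{equ:red-colim-ii} in the special case $\cP=E_0$, and then deduces \ref{equ:red-colim-ii} for general $\cP$ via the equivalence of pairs $\big((\Opd^{\red}_{E_0/})_{E_0 \to \cP/} \subset (\Opd^{\un}_{E_0/})_{E_0 \to \cP/}\big)\simeq\big(\Opd^{\red}_{\cP/} \subset \Opd^{\un}_{\cP/}\big)$ together with the general observation that fully faithful left adjoints induce fully faithful left adjoints on undercategories. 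You instead identify $E_0\simeq\tau_!(\ast)$ via Yoneda and construct the right adjoint directly as the pullback $R(\cO)=\cO\times_{\tau_*(\cO^\col)}\tau_*(\ast)$ for arbitrary reduced $\cP$, avoiding the reduction to $\cP=E_0$. This is self-contained and has the advantage of yielding an explicit formula for $R$, but it costs the pullback-pasting verification you flag: one needs to check that the commutative square of mapping spaces $\Map(\cQ,R(\cO))\to\Map(\cP,R(\cO))$ over $\Map(\cQ,\cO)\to\Map(\cP,\cO)$ is a pullback for $\cQ$ reduced, which holds because both columns are pulled back from the identity square on $(\cO^\col)^\simeq$ along compatible colour-restriction maps (using $\cQ^\col\simeq\cP^\col\simeq\ast$). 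For \ref{equ:red-colim-iii} your argument coincides in substance with the paper's: compute the colimit in $\Opd^\un_{E_0/}$ as the pushout $\colim^{\Opd^\un}D\sqcup_{\colim^{\Opd^\un}E_0}E_0$, tensor with $\cP$ using that $\otimes$ preserves colimits in each variable, and use $\cP\otimes E_0\simeq\cP$. Your phrase ``tensoring with $\cP$ preserves this in $\Opd^\un_{\cP/}$'' silently requires exactly this pushout computation, so a careful write-up should make it explicit. The moreover clause is handled by both proofs in the same way, using that truncation to $\Cat$ remains both a left and right adjoint at every level $k$.
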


\begin{proof}Item \ref{equ:red-colim-i} and Item \ref{equ:red-colim-ii} for $\cP=E_0$ hold by  \cite[2.2.7--2.2.9]{SchlankYanovski}. Item \ref{equ:red-colim-ii} for general $\cP$ follows from the case $\cP=E_0$ in view of the equivalence of pairs \[\smash{\big((\Opd^{\red}_{E_0/})_{E_0 \to \cP/} \subset (\Opd^{\un}_{E_0/})_{E_0 \to \cP/}\big)\xra{\simeq }\big(\Opd^{\red}_{\cP/} \subset \Opd^{\un}_{\cP/}\big)}\] and the observation that if a functor $\varphi\colon \cC\ra\cD$ is a fully faithful left adjoint, then so is the induced functor $ \cC_{/c}\ra\cD_{/\varphi(c)}$ on overcategories for any $c\in\cC$ (this is a straightforward check). Using Item \ref{equ:red-colim-ii}, we compute 
\[\smash{(\colim^{\Opd^{\red}}_i \cO_i) \otimes \cP \simeq (\colim^{\Opd^{\un}_{/E_0}}_i \cO_i) \otimes \cP\simeq (\colim^{\Opd^{\un}}_i \cO_i \sqcup_{\colim^{\Opd^{\un}}_i E_0} E_0) \otimes \cP} \]
	which we can---using that tensor product of unital operads preserves colimits in each entry (see \cref{thm:operad-truncation-tower-monoidal} \ref{enum:trunmon-iii})---further simplify to
\[\hspace{-0.1cm}\smash{(\colim^{\Opd^{\un}}_i \cO_i \sqcup_{\colim^{\Opd^{\un}}_i E_0} E_0) \otimes \cP \simeq \colim^{\Opd^{\un}}_i (\cO_i\otimes \cP) \sqcup_{\colim^{\Opd^\un}_i \cP} \cP\simeq \colim^{\Opd^{\un}_{/\cP}}_i (\cO_i\otimes \cP)},\]
so Item \ref{equ:red-colim-iii} follows. Moreover, the  same argument applies in the $k$-truncated situation for any $1\le k\le \infty$, once we establish Item \ref{equ:red-colim-i} and Item \ref{equ:red-colim-ii} for $\cP=E_0$.  Item \ref{equ:red-colim-i} follows from the observation that $E_0=\overline{C}_0$ is in the image of the fully faithful left adjoint $\tau_!\colon \Opd^{\le k,\un}\ra \Opd^{\un}$, as observed as part of \cref{sec:prelim-dendroidal}. To see that $\smash{\Opd^{\le k,\red}_{E_0/} \subset  \Opd^{\le k,\un}_{E_0/}}$ is a left adjoint, we can argue as in the proof of Proposition 2.2.9 loc.cit.\ ($(-)^\col$ is denoted $L$ in loc.cit.), using that the functor 
$\smash{\tau\colon \colon \Opd^{\le k,\un} \to (\Opd^{\le 1,\un} \simeq \Cat)}$ is both a right adjoint (since it agrees with $\smash{(-)^\col\circ \tau_*}$) and a left adjoint with fully faithful right adjoint (since it agrees with $\smash{(-)^\col\circ \tau_!}$).\end{proof}

\section{Tensor products of rational pro-operads}\label{sec:pro-operads}
The rationalisation of operads $(-)_\bfQ\colon \Opd^\red\ra \Opd^\red$ from \cref{sec:rationalisation-operads} could potentially fail to preserve tensor products of operads. This issue occurs in \cref{sec:e2-strategy} and to circumvent it, we will prove in this appendix that, once restricted to a certain full subcategory, the functor $(-)_\bfQ$ factors as
\vspace{-0.1cm}
\begin{equation}\label{equ:factorisation-rationalisation-operads}{\Opd^{\red} \xlra{(-)_\bfQ^\wedge}\Opd_\bfQ^\red\xlra{\lvert -\rvert} \Opd^\red}\end{equation}
through a certain category $\smash{\Opd_\bfQ^\red}$ of \emph{reduced rational pro-operads}, such that the first functor in \eqref{equ:factorisation-rationalisation-operads} preserves tensor products in suitable sense and the second functor is fully faithful on a large full subcategory. This will allow us in particular to construct the promised filler in \eqref{equ:filler-rational-bv}. The content of this appendix is inspired by work of Boavida de Brito--Horel \cite{BoavidaHorel} (see also \cref{rem:Pedro-Geoffroy}).

\subsection{Pro-rationalisation of spaces}\label{sec:pro-rationalisation} To construct the factorisation \eqref{equ:factorisation-rationalisation-operads}, we will make use of a different description of the rationalisation of spaces $X\ra X_\bfQ$ from \cref{sec:rationalisation}, using pro-homotopy theory. We write $\Pro(\cS)$ for the category of \emph{pro-spaces} in the sense of \cite[A.8.1.1]{LurieSAG}. Its objects are given by cofiltered diagrams of spaces $\{X_\alpha\}_{\alpha\in A}$ and its mapping spaces are given by
\begin{equation}\label{equ:pro-maps}\textstyle{\Map_{\Pro(\cS)}(\{X_\alpha\}_{\alpha\in A},\{Y_\beta\}_{\alpha\in B})\simeq \lim_{\beta}\colim_\alpha\,\Map_{\cS}(X_\alpha,Y_\beta);}\end{equation}
see A.8.1.5 loc.cit.. There is an adjunction $(-)^\wedge\colon \cS \rightleftarrows \Pro(\cS)\colon \lvert-\rvert$ where the left adjoint $(-)^\wedge$ takes the constant pro-space on a space and is fully faithful, and the right adjoint $\lvert-\rvert$ takes the limit of a pro-space. We will also consider a rational version of $\Pro(\cS)$: writing $\cR\subset \cS$ for the full subcategory generated by $K(\bfQ,n)$ for $n\ge0$ under taking finite limits and retracts (see \cite[Proposition 2.6]{BoavidaHorel} for an explicit description), we consider the full subcategory $\Pro(\cS)_\bfQ \subset \Pro(\cS)$ on those pro-spaces that are equivalent to $\{X_\alpha\}_{\alpha\in A}$ with $X_\alpha\in\cR$. As a result of \cite[Theorem 2.3]{BoavidaHorel} (or \cite[Proposition 3.7]{Hoyois}), the inclusion $\Pro(\cS)_\bfQ \subset \Pro(\cS)$ has a left-adjoint which exhibits $\Pro(\cS)_\bfQ$ as the localisation of $\Pro(\cS)$ at the maps that induce isomorphisms on rational cohomology groups $\smash{\oH^k(-;\bfQ)=\pi_0\,\Map_{\Pro(\cS)}(-,K(\bfQ,k))}$ for $k\ge0$. Composing the localisation adjunction $\Pro(\cS) \rightleftarrows \Pro(\cS)_\bfQ$ with  $(-)^\wedge\colon \cS \rightleftarrows \Pro(\cS)\colon \lvert-\rvert$, we obtain an adjunction \vspace{-0.1cm}
\begin{equation}\label{equ:adj-pro}\begin{tikzcd}  (-)^\wedge_\bfQ\colon \cS \rar[shift left=.5ex]{} & {\Pro(\cS)_\bfQ\colon \lvert-\rvert} \lar[shift left=.5ex]{} \end{tikzcd}\vspace{-0.1cm}.\end{equation}
We record some properties of \eqref{equ:adj-pro} in the next lemma. We write $\tau_{\le k}\colon \cS\ra \cS$ for the Postnikov $k$-truncation localisation functor (see \cite[5.5.6.18]{LurieHTT}), which induces a localisation $\tau_{\le k}\colon \Pro(\cS)\ra \Pro(\cS)$ that objectwise truncates pro-spaces (see \cite[A.8.1.8]{LurieSAG} for the existence of the functor and use \cite[5.2.7.4]{LurieHTT} to see that it is a localisation). We call a space $X$ of \emph{finite $\bfQ$-type} if  $\dim(\oH_k(X;\bfQ))<\infty$ for $k\ge0$.

\begin{lem}\label{lem:properties-pro-rationalisation}\ 
\begin{enumerate}
\item\label{enum:pro-prop-i} The category $\Pro(\cS)_\bfQ$ has small limits and colimits.
\item\label{enum:pro-prop-iii} The left adjoint $\smash{(-)_\bfQ^\wedge}$ preserves finite products.
\item\label{enum:pro-prop-ii} On spaces $X$ of finite $\bfQ$-type, we have a natural equivalence $\smash{X_\bfQ^\wedge\simeq \{\tau_{\le n}(\bfQ_n(X))\}_{n\in\bfN}}$ in $\Pro(\cS)$ where $\bfQ_\bullet(X)$ is the tower resulting from the cosimplicial resolution from \cref{sec:rationalisation}. In particular, the unit $\smash{X\ra \lvert X^\wedge_\bfQ\rvert}$ of \eqref{equ:adj-pro} is equivalent to the rationalisation $X\ra X_\bfQ$ from \cref{sec:rationalisation}. 
 \item\label{enum:pro-prop-iv} For connected spaces $X$ with $\dim(\oH_1(X;\bfQ))<\infty$, we have a natural equivalence in $\Pro(\cS)$ \[\smash{\tau_{\le 1}(X_\bfQ^\wedge)\simeq\{K((\pi/\Gamma_n(\pi))\otimes\bfQ,1)\}_{n\in\bfN}\quad\text{ for }\pi\coloneq \pi_1(X)}\]    where $\Gamma_\bullet(\pi)$ is the lower central series, and $(-)\otimes \bfQ$ denotes rationalisation of nilpotent groups. \end{enumerate}
\end{lem}
\begin{proof}
Item \ref{enum:pro-prop-i} follows from the fact that $\Pro(\cS)_\bfQ$ is the underlying $\infty$-category of a model category (see  \cite[Theorem 2.3]{BoavidaHorel}). Item \ref{enum:pro-prop-iii} is \cite[Proposition 2.9]{BoavidaHorel}. To show Item \ref{enum:pro-prop-ii}, we need to say more about this model category: one starts with a model structure on the category $\Pro(\sSet)$ of pro-objects in the $1$-category of simplicial sets, called the strict model structure, whose underlying $\infty$-category is $\Pro(\cS)$ (see \cite[Section 2.2]{IsaksenCompletion} and \cite[Section 7.1, Theorem 5.2.1]{BarneaHarpazHorel}). There is a Bousfield left localisation of this model category at the maps of pro-simplicial sets that induce isomorphisms of rational \emph{homology}, resulting in the \emph{homological model structure} (see \cite[Theorem 6.7]{IsaksenCompletion}). The latter can in turn be further Bousfield left localised at the maps of pro-simplicial sets that induce isomorphisms of rational \emph{cohomology} to arrive at the \emph{cohomological model structure} (see Theorem 6.3 loc.cit.). The underlying $\infty$-category of the latter is $\Pro(\cS)_\bfQ$ by \cite[Theorem 2.3]{BoavidaHorel}. Writing $\Pro(\cS)_{h,\bfQ}$ of the underlying $\infty$-category of the homological model structure, this yields a factorisation of $(-)_\bfQ^\wedge$ into three left adjoints $\smash{\cS\ra \Pro(\cS)\ra\Pro(\cS)_{h,\bfQ}\ra \Pro(\cS)_\bfQ}$. It follows from \cite[Proposition 7.3]{IsaksenCompletion} that the functor $\cS\ra \Pro(\cS)_{h,\bfQ}\subset\Pro(\cS)$ sends $X$ to $\smash{\{\tau_{\le n}(\bfQ_n(X))\}_{n\in\bfN}}$. Since the functors $\cS\ra \Pro(\cS)_{h,\bfQ}\subset\Pro(\cS)$ and  $\cS\ra \Pro(\cS)_{\bfQ}\subset\Pro(\cS)$ agree on spaces of finite $\bfQ$-type by the argument in \cite[Proposition 7.2, Remark 7.3]{HorelBinomial}, Item \ref{enum:pro-prop-ii} follows.

To prove Item \ref{enum:pro-prop-iv}, we first observe that the setup described at the beginning of this appendix equally makes sense using the category $\cS_\ast$ of pointed spaces, in particular we have adjunctions $\smash{\cS_* \rightleftarrows \Pro(\cS_*) \rightleftarrows \Pro(\cS_*)_\bfQ}$ where $\Pro(\cS_*)_\bfQ$ is the localisation of $\Pro(\cS_*)$ at the maps that are isomorphisms on reduced rational cohomology $\smash{\widetilde{\oH}^k(-;\bfQ)=\pi_0\,\Map_{\Pro(\cS_*)}(-,K(\bfQ,k))}$. The composition of the two left adjoints $\smash{(-)^\wedge_{\bfQ,*}\colon \cS_*\ra \Pro(\cS_*)_\bfQ}$ with the forgetful functor $\smash{\iota_\bfQ\colon \Pro(\cS_*)_\bfQ\ra \Pro(\cS)_\bfQ}$ is related to the composition of the forgetful map $\iota\colon \cS_*\ra \cS$ with $\smash{(-)^\wedge_{\bfQ}\colon \cS\ra \Pro(\cS)_\bfQ}$ by a Beck--Chevalley transformation $\smash{\iota(X)^\wedge_{\bfQ}\ra \iota_\bfQ(X^\wedge_{\bfQ,*})}$. This turns out to be an equivalence: it suffices to prove that $\smash{\iota(X)\ra \iota_\bfQ(X^\wedge_{\bfQ,*})}$ is an isomorphism on rational cohomology, which, since $\smash{X\ra X^\wedge_{\bfQ,*}}$ is an isomorphism on reduced cohomology, follows from the fact that filtered colimits in abelian groups are exact. Together with the fact that truncation of (pointed) spaces commutes with the forgetful map $\cS_*\ra \cS$, this in particular yields a pointed lift $\smash{\tau_{\le 1}(X_{\bfQ,*}^{\wedge})\in\Pro(\cS_*)}$ of $\smash{\tau_{\le 1}(X_{\bfQ}^{\wedge})}\in\Pro(\cS)$. It thus suffices to show that $\smash{\tau_{\le 1}(X_{\bfQ,*}^{\wedge})\simeq \{K(\pi/\Gamma_n(\pi)\otimes\bfQ,1)\}_{n\in\bfN}}$ in $\Pro(\cS_*)$. Both sides lie in the full subcategory $\smash{\Pro(\cS_*)^{\le 1}_{\bfQ}\subset \Pro(\cS_*)}$ of pointed pro-spaces that are equivalent to filtered diagrams $\{Y_\beta\}_{\beta\in B}$ of $1$-truncated spaces that lie in $\cR$ (for the left-hand side this holds by definition and for the right-hand side it follows from the second part of the proof of \cite[Proposition A.2]{BoavidaHorel} using the assumption on $X$). We will now prove the claimed equivalence by showing that the two sides have the same image under the Yoneda embedding of the opposite of $\smash{\Pro(\cS_*)^{\le 1}_{\bfQ}}$. For $\smash{\{Y_\beta\}_{\beta\in B}\in \Pro(\cS_*)^{\le 1}_{\bfQ}}$, it follows from adjunction and the localising property of truncation that 
$\smash{\Map_{\Pro(\cS_*)}(\tau_{\le 1}(X_{\bfQ,*}^{\wedge}),\{Y_\beta\}_{\beta\in B})\simeq \Map_{\Pro(\cS_*)}(\tau_{\le 1}(X),\{Y_\beta\}_{\beta\in B})}$ which using \eqref{equ:pro-maps} yields $\smash{\textstyle{\Map_{\Pro(\cS_*)}(\tau_{\le 1}(X_{\bfQ,*}^{\wedge}),\{Y_\beta\}_{\beta\in B})\simeq \lim_{\beta\in B}\Hom(\pi_1(X),\pi_1(Y_\beta))}}$.
On the other hand, using \eqref{equ:pro-maps}, we have ${\Map_{\Pro(\cS_*)}(\{K(\pi/\Gamma_n(\pi)\otimes\bfQ,1)\}_{n\in\bfN},\{Y_\beta\}_{\beta\in B})\simeq \lim_{\beta\in B}\colim_{n}\Hom(K(\pi/\Gamma_n(\pi)\otimes\bfQ,\pi_1(Y_\beta))}$, which combining with the first part of the proof of Proposition A.2 and Proposition 2.6 loc.cit.\ yields ${\Map_{\Pro(\cS_*)}(\{K(\pi/\Gamma_n(\pi)\otimes\bfQ,1)\}_{n\in\bfN},\{Y_\beta\}_{\beta\in B})\simeq \lim_{\beta\in B}\Hom(\pi,\pi_1(Y_\beta))}$. Since these equivalences are natural in the target, the claim follows from the Yoneda lemma.
\end{proof}

Next, we discuss a full subcategory on which $\smash{\lvert-\rvert\colon \Pro(\cS)_\bfQ\ra \cS}$ is fully faithful. We say that a space $X$ is \emph{$\bfQ$-good} if the unit $\smash{X \to \lvert X_\bfQ^\wedge\rvert}$ induces an isomorphism on rational cohomology (if $X$ is of finite $\bfQ$-type, this is by \cref{lem:properties-pro-rationalisation} \ref{enum:pro-prop-ii} equivalent to $\bfQ$-goodness in the sense \cite[I.5.1 (iii)]{BousfieldKan}, but note that it differs from the notion of $\bfQ$-goodness in \cite{BoavidaHorel}). We call a pro-space $P\in\Pro(\cS)$ \emph{k-truncated} if the objectwise truncation map $P\ra \tau_{\le k}(P)$ in $\Pro(\cS)$ is an equivalence.

\begin{lem}\label{lem:rat-map-comparison-spaces} For a space $X$ and a pro-space $P\in\Pro(\cS)_\bfQ$ such that either
\begin{enumerate}
\item $X$ is $\bfQ$-good, or
\item  $X_\bfQ^\wedge$ and $P$ are  1-truncated as pro-spaces and we have $\dim(\oH_1(X;\bfQ))<\infty$,
\end{enumerate}
the map $\smash{\lvert-\vert\colon \smash{\Map_{\Pro(\cS)_\bfQ}(X_\bfQ^\wedge,P) \ra \Map_{\cS}(\lvert X_\bfQ^\wedge\rvert,\lvert P\rvert)}}$
is an equivalence.
\end{lem}

\begin{ex}\label{ex:ex-realising-faithful}Here are some situations where the assumptions in \cref{lem:rat-map-comparison-spaces} are met:
\begin{enumerate}
\item\label{ex:ex-realising-faithful:i} If a space $X$ is of nilpotent of finite $\bfQ$-type, then it is $\bfQ$-good. This follows by combining \cref{lem:properties-pro-rationalisation} \ref{enum:pro-prop-ii} with the discussion in \cref{sec:nilpotent-completion}).
\item\label{ex:ex-realising-faithful:ii}  For a connected space $X$ with $\dim(\oH_1(X;\bfQ))<\infty$, it follows from \cref{lem:properties-pro-rationalisation} \ref{enum:pro-prop-iv} that the pro-space $\smash{X_\bfQ^\wedge}$ is $1$-truncated if and only if the map $\smash{\colim_n\oH^{k}(\pi_1(X)/\Gamma_n(\pi_1(X))\otimes\bfQ;\bfQ)\ra \oH^{k}(X;\bfQ)}$ induced by the $1$-truncation $X\ra K(\pi_1(X),1)$ is an isomorphism for all $k\ge0$. The case that is most relevant to us is $X=K(PB_m,1)$ for the pure braid group $PB_m$, where this isomorphism was verified in the proof of \cite[Proposition A.4]{BoavidaHorel}.
\item\label{ex:ex-realising-faithful:iii} The conditions on $X$ or $P$ appearing in \cref{lem:rat-map-comparison-spaces} are closed under taking finite products (use \cref{lem:properties-pro-rationalisation} \ref{enum:pro-prop-iii}, the Künneth theorem, and that truncation preserves products).
\end{enumerate}
\end{ex}

\begin{proof}The first case is straight-forward: by adjunction, the map in the claim is equivalent to the map $\smash{\Map_{\Pro(\cS)_\bfQ}(X^\wedge_\bfQ,P)\ra \Map_{\Pro(\cS)_\bfQ}(\lvert X^\wedge_\bfQ\rvert_\bfQ^\wedge,P)}$ induced by precomposition with the counit $\smash{|X^\wedge_\bfQ|_\bfQ^\wedge\ra X^\wedge_\bfQ}$. Using a triangle identity, the latter is an equivalence if the map $\smash{X_\bfQ^\wedge \to \lvert X_\bfQ^\wedge\rvert_\bfQ^\wedge}$ induced by the unit is an equivalence, i.e.\ if the unit $\smash{X\ra \lvert X_\bfQ^\wedge\rvert}$ is a rational cohomology isomorphism. This is exactly the $\bfQ$-goodness of $X$, so the first case follows. 
	
For the second case, as $\smash{X_\bfQ^\wedge}$ is $1$-truncated, it suffices to show that the map $\smash{\Map_{\Pro(\cS)}(\tau_{\le 1}(X_\bfQ^\wedge), P)} \ra \smash{\Map_{\cS}(\lvert \tau_{\le 1}(X_\bfQ^\wedge)\rvert, \lvert P\rvert)}$ is an equivalence. By adjunction and the fact that $P$ is $1$-truncated, it suffices to show that the map $\smash{\tau_{\le 1}(\lvert \tau_{\le 1}(X_\bfQ^\wedge)\rvert_\bfQ^\wedge)\ra  \tau_{\le 1}(X_\bfQ^\wedge)}$ induced by the counit is an equivalence. Using a triangle identity, this is an equivalence if the map  $\smash{\tau_{\le 1}(X_\bfQ^\wedge)\ra \tau_{\le 1}(\lvert \tau_{\le 1}(X_\bfQ^\wedge)\rvert_\bfQ^\wedge)}$ induced by the unit is an equivalence. By \cref{lem:properties-pro-rationalisation} \ref{enum:pro-prop-iv}, we have $\smash{\lvert \tau_{\le 1}(X_\bfQ^\wedge)\rvert\simeq K(\Mal_\bfQ(\pi_1(X)),1)}$ where we write $\Mal_\bfQ(\pi)\coloneq \lim_k((\pi/\Gamma^k(\pi))\otimes\bfQ)$ for groups $\pi$. As a result of \cite[Theorem 13.3 (iv)]{BousfieldHomological}, we have $\oH_1(\Mal_\bfQ(\pi_1(X);\bfQ)\cong \oH_1(X;\bfQ)$, so these vector spaces are finite-dimensional by assumption. Using \cref{lem:properties-pro-rationalisation} \ref{enum:pro-prop-iv} once more and writing $\pi\coloneq\pi_1(X)$, the map $\smash{\tau_{\le 1}(X_\bfQ^\wedge)\ra \tau_{\le 1}(\lvert \tau_{\le 1}(X_\bfQ^\wedge)\rvert_\bfQ^\wedge)}$  is equivalent to the map $\smash{\{K(\pi/\Gamma^n(\pi),1)\}_{n\in\bfN}\ra \{K(\Mal_\bfQ(\pi)/\Gamma^n(\Mal_\bfQ(\pi)),1)\}_{n\in\bfN}}$ induced levelwise by the canonical morphism $\pi\ra \Mal_\bfQ(\pi)$. This map of pro-spaces is a levelwise equivalence by \cite[Theorem 13.3 (iv)]{BousfieldHomological}, so the claim follows.
\end{proof}

\subsection{Pro-rationalisation of operads}\label{equ:pro}
With respect to the equivalence $\delta_\Phi\colon \Opd\simeq\PSh(\Phi)^{\seg,c}$ from \cref{sec:prelim-dendroidal}, the subcategory of reduced operads $\Opd^\red\subset\Opd\subset \PSh(\Phi)$ (see \cref{sec:operad-conventions}) corresponds to those presheaves whose values at $\smash{\overline{C}_0},\smash{\overline{C}_1\in\overline{\Omega}}$ and at the unique tree without vertices $\eta\in\Omega$ are contractible, and which satisfy the dendroidal Segal and forest Segal condition, both of which only involve finite products (the completeness condition is automatic). This is a localisation of $\PSh(\Phi)=\Fun(\Phi^\op,\cS)$, so the inclusion $\iota\colon \Opd^\red\subset \PSh(\Phi)$ admits left adjoint $L\colon \PSh(\Phi)\ra \Opd^\red$. Similarly, we can define the category of \emph{reduced rational pro-operads} as the full subcategory
\begin{equation}\label{equ:pro-operad-category}\smash{\Opd_\bfQ^\red\subset \Fun(\Phi^\op,\Pro(\cS)_\bfQ)}\end{equation} of those functors which have terminal value on $\smash{\overline{C}_0,\overline{C}_1}$, and $\eta$, and which satisfy the analogue of the dendroidal Segal and forest Segal condition (phrased in terms of finite products). Since $\Phi$ has finite morphism sets, it follows from \cite[Proposition 4.4]{BoavidaHorel} that the inclusion \eqref{equ:pro-operad-category} is a localisation as well, that is, it admits a left adjoint $L_\bfQ\colon \Fun(\Phi^\op,\Pro(\cS)_\bfQ)\ra \Opd_\bfQ^\red$, (c.f.~the proof of Proposition 6.3 loc.cit.). Since $\smash{(-)^\wedge_\bfQ}$ and $\smash{\lvert-\rvert}$ preserve finite products (see \cref{lem:properties-pro-rationalisation} \ref{enum:pro-prop-iii}), the adjunction $\smash{(-)^\wedge_\bfQ\colon \Fun(\Phi^\op,\cS)\leftrightarrows\Fun(\Phi^\op,\Pro(\cS)_\bfQ)\colon \vert-\rvert_\bfQ}$ restricts to an adjunction $\smash{(-)^\wedge_\bfQ\colon \Opd_\bfQ\leftrightarrows\Opd\colon \vert-\rvert_\bfQ}$. This in particular explains the  left-hand commutative square in 
\[\begin{tikzcd}[row sep=0.5cm] \Fun(\Phi^\op,\cS) &  \Fun(\Phi^\op,\Pro(\cS)_\bfQ) \lar[swap,pos=.3]{|{-}|} \\
	\Opd^\red \uar[hook]{i} &[5pt] \Opd^\red_\bfQ \uar[swap,hook]{i_\bfQ} \lar{|{-}|}\end{tikzcd}\qquad \begin{tikzcd}[row sep=0.5cm] \Fun(\Phi^\op,\cS) \rar{(-)^\wedge_\bfQ} \dar[swap]{L}&[5pt]  \Fun(\Phi^\op,\Pro(\cS)_\bfQ) \dar{L_\bfQ} \\
		\Opd^\red \rar{(-)^\wedge} & \Opd^\red_\bfQ,\end{tikzcd}\]
and the right-hand commutative square is obtained by taking left adjoints. By \cref{lem:properties-pro-rationalisation} \ref{enum:pro-prop-ii}, the composition $\smash{\lvert(-)^\wedge_\bfQ\rvert\colon \Opd\ra \Opd}$ agrees with the rationalisation $(-)_\bfQ\colon \Opd\ra \Opd$ from \cref{sec:rationalisation-operads} on the full subcategory of reduced operads whose multi-operation spaces have finite $\bfQ$-type.

\begin{lem}\label{lem:rat-map-comparison} Let $\cO,\cP \in \Opd^\red$ and assume that their spaces of multi-operations satisfy one of the following conditions for all $n\ge0$:
	\begin{enumerate}
		\item\label{condition-i-pro-opd} $\cO(n)$ is $\bfQ$-good, or 
		\item\label{condition-ii-pro-opd} $\smash{\cO(n)_\bfQ^\wedge}$ and $\cP(n)_\bfQ^\wedge$ are $1$-truncated as pro-spaces and $\dim(\oH_1(\cO(n);\bfQ))<\infty$.
	\end{enumerate}  
	Then the map $\lvert-\rvert\colon \smash{\Map_{\Opd^\red_\bfQ}(\cO_\bfQ^\wedge,\cP_\bfQ^\wedge) \ra \Map_{\Opd^\red}(\lvert\cO_\bfQ^\wedge\rvert,\lvert\cP_\bfQ^\wedge\rvert)}$
	is an equivalence.
\end{lem}

\begin{proof}Using the natural description of mapping spaces in functor categories as ends (see e.g.\ \cite[Proposition 5.1]{GepnerHaugsengNikolaus}), the claim follows by showing that for all forests $T,U\in \Phi$ the map $\smash{\Map_{\Pro(\cS)_\bfQ}(\cO(T)^\wedge_\bfQ,\cP(U)^\wedge_\bfQ) \ra \Map_\cS(|\cO(T)^\wedge_\bfQ|,|\cP(U)^\wedge_\bfQ|)}$
is an equivalence. The dendroidal Segal and the forest Segal conditon implies that $\cO(T)$ is a finite product of spaces of the form $\cO(n)$ for $n\ge0$, and similarly for $\cP$, so the claim follows from \cref{ex:ex-realising-faithful} \ref{ex:ex-realising-faithful:iii} and \cref{lem:rat-map-comparison-spaces}.
\end{proof}

\begin{ex}\label{ex:ed-auts-all-the-same}For $\cO=\cP=E_d$ the little $d$-discs operad (see \cref{sec:dl-add}), we have \[\smash{\Map_{\Opd_\bfQ}((E_d)^\wedge_\bfQ,(E_d)^\wedge_\bfQ)\simeq\Map_{\Opd}(\lvert(E_d)^\wedge_\bfQ\rvert,\lvert(E_d)^\wedge_\bfQ\rvert)\simeq \Map_{\Opd}(E_{d,\bfQ},E_{d,\bfQ})\quad\text{for all }d\ge0.}\] Indeed, since $E_d(k)$ is equivalent to the ordered configuration space $F_k(\bfR^d)$ of $k$ points in $\bfR^d$ (see \cref{sec:dl-add}), it is always of finite $\bfQ$-type, so the second equivalence follows from \cref{lem:properties-pro-rationalisation} \ref{enum:pro-prop-i}.  For $d\neq 2$, the space $F_k(\bfR^d)$ is nilpotent since it is either homotopy discrete (for $d\le 1$) or simply connected (for $d\ge2$), so it is $\bfQ$-good by \cref{ex:ex-realising-faithful} \ref{ex:ex-realising-faithful:i} and thus condition \ref{condition-i-pro-opd} in \cref{lem:rat-map-comparison} is satisfied and yields the first equivalence. If $d=2$, then $E_d(k)\simeq K(PB_k,1)$, so condition \ref{condition-ii-pro-opd} in \cref{lem:rat-map-comparison} is satisfied by \cref{ex:ex-realising-faithful} \ref{ex:ex-realising-faithful:ii} and yields the first equivalence.
\end{ex}

\subsection{Tensor products of reduced rational pro-operads} \label{sec:bv-complete-operads} There is an analogue $\smash{\otimes_\bfQ}$ of the tensor product functor $\otimes \colon \Opd\times\Opd \ra \Opd$ from \cref{sec:operads-tensor-product} for the category $\Opd^\red_\bfQ$, which is compatible with the tensor product on $\Opd$ in that it fits into a commutative square
\begin{equation}\label{equ:rational-operads-tensor-products}
\begin{tikzcd}
\Opd^\red\times\Opd^\red\rar{\otimes}\dar[swap]{(-)^\wedge_\bfQ\times (-)^\wedge_\bfQ}&\Opd^\red\dar{(-)^\wedge_\bfQ}\\
\Opd^\red_\bfQ\times \Opd^\red_\bfQ\arrow[r,dashed,"\otimes_\bfQ"] &\Opd^\red_\bfQ
\end{tikzcd}
\end{equation}
To construct $\otimes_\bfQ$,  we will make use of a dendroidal description of the tensor product on $\Opd$: it follows from \cite[Theorem 1.1.1, Section 5]{HinichMoerdijk} that under the equivalence $\delta_\Phi\colon \PSh(\Phi)^{\seg,c} \simeq \Opd$ from \cref{sec:prelim-dendroidal}, the restriction of the tensor product $\otimes\colon \Opd\times\Opd\ra\Opd$ to reduced operads is given by the composition
\[\smash{\Opd^\red \times \Opd^\red \xlra{i\times i} \PSh(\Phi) \times \PSh(\Phi) \overset{\otimes^\pre}{\lra} \PSh(\Phi) \xlra{L} \Opd,}\]
where $\otimes^\pre \colon \PSh(\Phi) \times \PSh(\Phi) \to \PSh(\Phi)$ is the composition
\vspace{-0.15cm}
\[
{ \PSh(\Phi)^{\times 2}\xlra{(\subseteq_!)^{\times 2}} \PSh(\OpdSet)^{\times 2}\xlra{\times}  \PSh(\OpdSet^{\times 2})\xlra{\otimes_!}\PSh(\OpdSet)\xlra{\subseteq^*}\PSh(\Phi)}
\]
where the first functor is induced by left Kan extension along the full subcategory inclusion $\Phi\subseteq \OpdSet$, the second functor is induced by taking products in $\cS$, the third functor by left Kan extension along the tensor product in $\OpdSet$ given by the classical Boardman--Vogt tensor product of operads in sets, and the final functor by restriction along  $\Phi\subseteq \OpdSet$. The only properties of the target category in $\PSh(\Phi)=\Fun(\Phi^\op,\cS)$ this uses is that it has small colimits and finite products, so the same construction makes sense for $\Fun(\Phi^\op,\Pro(\cS)_\bfQ)$ using \cref{lem:properties-pro-rationalisation} \ref{enum:pro-prop-i} and yields a functor $\smash{\otimes_\bfQ^\pre \colon \Fun(\Phi^\op,\Pro(\cS)_\bfQ)^{\times 2}  \to \Fun(\Phi^\op,\Pro(\cS)_\bfQ)}$. Moreover, since $\smash{(-)^\wedge_\bfQ}$ preserves finite products by \cref{lem:properties-pro-rationalisation} \ref{enum:pro-prop-ii} and colimits since it is a left adjoint, the middle square in the  diagram
\[\hspace{-.3cm} \begin{tikzcd}[column sep=0.5cm] \Opd^\red \times \Opd^\red \rar{i \times i} \dar{(-)^\wedge_\bfQ \times (-)^\wedge_\bfQ} &[5pt] \Fun(\Phi^\op,\cS) \times \Fun(\Phi^\op,\cS) \rar{\otimes^\mathrm{pre}} \dar{(-)^\wedge_\bfQ \times (-)^\wedge_\bfQ} &[-2pt] \Fun(\Phi^\op,\cS) \rar{L} \dar{(-)^\wedge_\bfQ} &[-2pt] \Opd^\red \dar{(-)^\wedge_\bfQ} \\
	\Opd^\red_\bfQ \times \Opd^\red_\bfQ \rar{i_\bfQ \times i_\bfQ}& \Fun(\Phi^\op,\Pro(\cS)_\bfQ) \times \Fun(\Phi^\op,\Pro(\cS)_\bfQ) \rar{\otimes^\mathrm{pre}_\bfQ} & \Fun(\Phi^\op,\Pro(\cS)_\bfQ) \rar{L_\bfQ} & \Opd^\red_\bfQ \end{tikzcd}\]
	commutes. The left-hand and right-hand squares commute by the discussion in \cref{equ:pro}, so the bottom composition yields the promised dashed functor that makes \eqref{equ:rational-operads-tensor-products} commute.

\subsection{$2$-ary operations of tensor products of rational pro-operads} \label{sec:bv-complete-operads}Finally, we establish a partial analogue of \cref{lem:bv-is-join-homology} in the context of reduced rational pro-operads:

\begin{lem}There is a natural transformation of functors $\smash{\Opd^\red_\bfQ\times\Opd^\red_\bfQ\ra \cS^{C_2}}$
\begin{equation}\label{equ:2ary-pro-operad-tensor}\smash{\lvert \cX\rvert(2)\ast\lvert \cY\rvert(2)\lra \lvert\cX\otimes_\bfQ\cY\rvert(2)}
\end{equation}
with the following properties:
\begin{enumerate}
\item\label{enum:2-ary-pro-property-i} Precomposed with $\smash{(-)^\wedge_\bfQ\colon \Opd^\red\ra \Opd^\red_\bfQ}$ in both arguments, it factors as \[\hspace{0.7cm}\smash{\lvert \cO^\wedge_\bfQ\rvert(2)\ast\lvert \cP^\wedge_\bfQ\rvert(2)\simeq \lvert \cO(2)^\wedge_\bfQ\rvert\ast\lvert \cP(2)^\wedge_\bfQ\rvert\ra \lvert (\cO(2)\ast\cP(2))^\wedge_\bfQ\rvert\ra \lvert (\cO\otimes\cP)(2)^\wedge_\bfQ\rvert\simeq \lvert \cO^\wedge_\bfQ\otimes_\bfQ\cP^\wedge_\bfQ\rvert(2)}\]
where the first map is induced by the canonical maps from $\cO(2)\ra \cO(2)\ast \cP(2)$ and $\cP(2)\ra \cO(2)\ast \cP(2)$ and the second map is the value of the map from  \cref{lem:bv-is-join-homology} under $\smash{\lvert(-)^\wedge_\bfQ\rvert}$.
\item \label{enum:2-ary-pro-property-ii} If $\smash{\cX=\cO^\wedge_\bfQ}$ and $\smash{\cY=\cP^\wedge_\bfQ}$ for $\cO,\cP\in\Opd^\red$ such that $\cO(2)$ and $\cP(2)$ are nilpotent of finite $\bfQ$-type and at least one of them is connected, then \eqref{equ:2ary-pro-operad-tensor} is an equivalence.
\end{enumerate}
\end{lem}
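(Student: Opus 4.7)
The plan is to imitate the construction in the proof of \cref{lem:full-nat-trafo-2ary} directly inside $\Opd^\red_\bfQ$, and then to pass to $\cS^{C_2}$ via the functor $\lvert-\rvert$. The category $\Opd^\red_\bfQ$ has all small colimits by \cref{lem:properties-pro-rationalisation} \ref{enum:pro-prop-i}, and the tensor product $\otimes_\bfQ$ was built via \eqref{equ:rational-operads-tensor-products} so that $(-)^\wedge_\bfQ\colon \Opd^\red\to\Opd^\red_\bfQ$ is symmetric monoidal. As a left adjoint (see \cref{equ:pro}), $(-)^\wedge_\bfQ$ also preserves pushouts. Consequently, every piece of universal data used in the proof of \cref{lem:full-nat-trafo-2ary} — the pushout $\overline{C}_1\sqcup_{\overline{C}_0}\overline{C}_1$, the $2$-ary operations $\alpha,\beta$, the factorisation $\gamma$, and the interchange square \eqref{eqn:diag-join-operad} — has a pro-rational analogue in $\Opd^\red_\bfQ$, obtained by applying $(-)^\wedge_\bfQ$ to the corresponding data in $\Opd^\red$.

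Running the argument of diagram \eqref{equ:big-operad-diagram-2-ary} in $\Opd^\red_\bfQ$ with $\cX,\cY\in\Opd^\red_\bfQ$ in place of $\cO,\cP$ produces a natural transformation in $\Pro(\cS)_\bfQ^{C_2}$ of the form
\[\cX(2)\ast_{\Pro(\cS)_\bfQ}\cY(2)\lra (\cX\otimes_\bfQ\cY)(2),\]
where the join is the pushout formed in $\Pro(\cS)_\bfQ$. Since $\lvert-\rvert$ is a right adjoint, it preserves finite products, so there is a canonical comparison $\lvert A\rvert\ast\lvert B\rvert\to\lvert A\ast_{\Pro(\cS)_\bfQ}B\rvert$ from the join in $\cS$ to $\lvert-\rvert$ of the pro-rational join. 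Composing this with $\lvert-\rvert$ of the displayed transformation yields the desired natural transformation $\lvert\cX\rvert(2)\ast\lvert\cY\rvert(2)\to\lvert\cX\otimes_\bfQ\cY\rvert(2)$ in $\cS^{C_2}$.

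Property \ref{enum:2-ary-pro-property-i} is then essentially immediate: for $\cX=\cO^\wedge_\bfQ$ and $\cY=\cP^\wedge_\bfQ$ with $\cO,\cP\in\Opd^\red$, the diagram constructed in $\Opd^\red_\bfQ$ is obtained by applying $(-)^\wedge_\bfQ$ to the diagram in $\Opd^\red$ for $\cO,\cP$, so unwinding yields precisely the stated factorisation — the first map being the canonical comparison coupled with $\lvert\cO^\wedge_\bfQ\rvert(2)=\lvert\cO(2)^\wedge_\bfQ\rvert$ (since $\lvert-\rvert$ is pointwise), and the second being $\lvert(-)^\wedge_\bfQ\rvert$ applied to the equivalence of \cref{lem:bv-is-join-homology}.

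For property \ref{enum:2-ary-pro-property-ii}, it suffices to show both factors in the factorisation of \ref{enum:2-ary-pro-property-i} are equivalences. The second is so because it is $\lvert(-)^\wedge_\bfQ\rvert$ of the equivalence $\cO(2)\ast\cP(2)\simeq (\cO\otimes\cP)(2)$ from \cref{lem:bv-is-join-homology}. For the first, the nilpotency and finite $\bfQ$-type assumptions combined with \cref{lem:properties-pro-rationalisation} \ref{enum:pro-prop-ii} give $\lvert\cO(2)^\wedge_\bfQ\rvert\simeq\cO(2)_\bfQ$ and $\lvert\cP(2)^\wedge_\bfQ\rvert\simeq\cP(2)_\bfQ$; since at least one of $\cO(2),\cP(2)$ is connected, the join $\cO(2)\ast\cP(2)$ is $1$-connected and of finite $\bfQ$-type, hence nilpotent of finite $\bfQ$-type, and the same lemma yields $\lvert(\cO(2)\ast\cP(2))^\wedge_\bfQ\rvert\simeq (\cO(2)\ast\cP(2))_\bfQ$. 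The first factor thus becomes the canonical comparison $\cO(2)_\bfQ\ast\cP(2)_\bfQ\to (\cO(2)\ast\cP(2))_\bfQ$, which is an equivalence by a standard argument in rational homotopy theory — for pointed connected factors, using $X\ast Y\simeq \Sigma(X\wedge Y)$ together with compatibility of rationalisation with suspensions and smash products of connected nilpotent spaces of finite $\bfQ$-type. The main technical point is ensuring that the interchange relation \eqref{eqn:diag-join-operad} and the coherences of \eqref{equ:big-operad-diagram-2-ary} transport cleanly to $\Opd^\red_\bfQ$; this ultimately reduces to the symmetric monoidality of $(-)^\wedge_\bfQ$ already established via \eqref{equ:rational-operads-tensor-products}.
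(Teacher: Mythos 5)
Your overall strategy — apply $(-)^\wedge_\bfQ$ to the universal data from the proof of the preceding lemma, run the analogue of the big diagram, then compare with $\lvert-\rvert$ — is the same as the paper's, and properties \ref{enum:2-ary-pro-property-i} and the reduction of \ref{enum:2-ary-pro-property-ii} to a comparison of rationalised joins are set up correctly. Two points deserve attention.

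First, a presentational but nontrivial issue: you claim to produce a natural transformation \emph{in} $\Pro(\cS)_\bfQ^{C_2}$ of the form $\cX(2)\ast_{\Pro(\cS)_\bfQ}\cY(2)\to(\cX\otimes_\bfQ\cY)(2)$, and then to apply $\lvert-\rvert$ and a comparison map. But the analogue of \eqref{equ:big-operad-diagram-2-ary} in $\Opd^\red_\bfQ$ is built from mapping spaces $\Map_{\Opd^\red_\bfQ}(-,-)$, which are ordinary spaces, not objects of $\Pro(\cS)_\bfQ$; the adjunction $\Map_{\Opd_\bfQ}((-)^\wedge_\bfQ,-)\simeq\Map_{\Opd}(-,\lvert-\rvert)$ then identifies the corners of that diagram with $\lvert\cX\rvert(n)$, and the pushout of the outer maps involving $(\gamma^\wedge_\bfQ)^*$ is taken in $\cS$, already giving $\lvert\cX\rvert(2)\ast\lvert\cY\rvert(2)$ directly because $\lvert\cX\rvert(0),\lvert\cX\rvert(1)\simeq*$. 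You can likely set up a pro-space-level construction as you describe, but you would need to explain how to obtain a morphism of pro-spaces from the categorical data (the tensor product functor $\otimes_\bfQ$ gives maps of mapping spaces, not maps of the evaluations $\cX(\overline{C}_n)\in\Pro(\cS)_\bfQ$), and you would need the intermediate comparison $\lvert\cX\rvert(2)\ast\lvert\cY\rvert(2)\to\lvert\cX(2)\ast_{\Pro}\cY(2)\rvert$, which is a genuinely extra step. Working directly with mapping spaces in $\Opd_\bfQ$ avoids all of this.

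Second, a real gap in the proof of \ref{enum:2-ary-pro-property-ii}: you reduce to showing that $\cO(2)_\bfQ\ast\cP(2)_\bfQ\to(\cO(2)\ast\cP(2))_\bfQ$ is an equivalence, and argue this via $X\ast Y\simeq\Sigma(X\wedge Y)$ and compatibility of rationalisation with suspensions and smash products, explicitly noting this works ``for pointed connected factors''. But the hypothesis of the lemma allows \emph{one} of $\cO(2),\cP(2)$ to be disconnected, and the arity-$2$ space of a reduced operad carries no natural basepoint, so this argument does not cover the full generality of the statement. The cleanest route, which handles all cases uniformly, is: both sides are $1$-connected (since $(-)_\bfQ$ preserves connectivity and joining with a connected space yields a $1$-connected space), so it suffices to check the map is a homology isomorphism, which follows from $\oH_*((-)_\bfQ;\bfZ)\cong\oH_*(-;\bfQ)$ on nilpotent spaces together with the Mayer--Vietoris sequence for the defining pushout of the join.
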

\begin{proof}
The construction is a minor modification of the one from the proof of \cref{lem:full-nat-trafo-2ary}: first one applies $\smash{(-)^\wedge_\bfQ}$ to the square \eqref{eqn:diag-join-operad} and then uses the result to form the analogue of \eqref{equ:big-operad-diagram-2-ary} where the role of $\cO,\cP\in\Opd$ is now played by $\smash{\cX,\cY\in\Opd_\bfQ}$, the mapping spaces are formed in $\smash{\Opd_\bfQ}$, and the diagonal maps are induced by $\smash{\otimes_\bfQ}$. One then considers the map from the pushout of the outer two maps involving $\smash{(\gamma_\bfQ^\wedge)^*}$ to the bottom rightmost corner. Using $\smash{\Map_{\Opd_\bfQ}((-)^\wedge_\bfQ,(-))\simeq \Map_{\Opd}((-),\lvert(-)\rvert)}$ as well as $\smash{\Map_{\Opd}(\overline{C}_n,\lvert(-)\rvert)\simeq \lvert(-)\rvert(n)}$ and the fact that the latter space is contractible for $n\le 1$ since the operads are reduced, this gives a natural transformation as required. Comparing this construction to the one in \cref{equ:big-operad-diagram-2-ary} and using naturality as well as commutativity of \eqref{equ:rational-operads-tensor-products} yields Property \ref{enum:2-ary-pro-property-i}. By \cref{lem:bv-is-join-homology}, the second map in the factorisation from Property \ref{enum:2-ary-pro-property-i} is an equivalence, so to show \ref{enum:2-ary-pro-property-ii} it suffices that the first map is an equivalence under the stated conditions, which follows by showing that the canonical map $\smash{\lvert A^\wedge_\bfQ\rvert\ast\lvert B^\wedge_\bfQ\rvert\ra \lvert (A\ast B)^\wedge_\bfQ\rvert}$ is an equivalence for nilpotent spaces $A,B$ of finite $\bfQ$-type such that at least one of them is connected. By \cref{lem:properties-pro-rationalisation} \ref{enum:pro-prop-ii}, we may show this for $\lvert-_\bfQ\rvert$ replaced by the rationalisation $(-)_\bfQ$ from \cref{sec:rationalisation}. Both sides are $1$-connected since $(-)_\bfQ$ preserves connectivity (see \cref{sec:rat-inheritance}) and the join with a connected spaces is $1$-connected, so it suffices to show that the map induces an isomorphism on homology, which follows by using that $\oH_*((-)_\bfQ;\bfZ)\cong \oH_*(-;\bfQ)$ on nilpotent spaces (see \cref{sec:nilpotent-completion}) together with the Mayer--Vietoris sequence.
\end{proof}

\subsection{The construction of the filler in \eqref{equ:filler-rational-bv}}\label{sec:the-rational-filler}Equipped with the previous discussion on tensor products of rational pro-operads, we construct the asserted filler in the diagram \eqref{equ:filler-rational-bv}. To this end, we first consider the commutative diagram
\vspace{-0.15cm}
\[\hspace{-0.5cm}\begin{tikzcd}[column sep=0.7cm,row sep=0.55cm]
			\Aut_{\Opd^\red}(E_{n})\times \Aut_{\Opd^\red}(E_{m})\rar{(-)^\wedge_\bfQ}\arrow[d,"{(-)\otimes(-)}",swap]&\Aut_{\Opd^\red_\bfQ}((E_n)_\bfQ^\wedge)\times \Aut_{\Opd^\red_\bfQ}((E_m)^\wedge_{\bfQ})\rar{\lvert-\rvert(2)}\arrow[d,"{(-)\otimes_\bfQ(-)}",swap]& \dar{(-)\ast(-)}\Aut_\cS(S^{n-1}_\bfQ)\times \Aut_\cS(S^{m-1}_\bfQ)\\
			\Aut_{\Opd^\red}(E_{n+m})\rar{(-)^\wedge_\bfQ}&\Aut_{\Opd^\red_\bfQ}((E_{n+m})^\wedge_\bfQ) \rar{\lvert-\rvert(2)}&\Aut_\cS(S^{n+m-1}_\bfQ)\end{tikzcd}\]
where the commutativity of the left-hand square is induced by  \eqref{equ:rational-operads-tensor-products} and the additivity equivalence $E_n\otimes E_m\simeq E_{n+m}$ from \cref{sec:dl-add} and commutativity of the right-hand square is induced by the natural transformation from \cref{equ:2ary-pro-operad-tensor} which is an equivalence in the relevant case by \ref{enum:2-ary-pro-property-ii} of that lemma since $E_d(2)\simeq S^{d-1}$ is nilpotent of finite $\bfQ$-type. Extending the diagram to the right by taking reduced homology of the rational spheres to map from the leftmost column to $\smash{\otimes\colon \GL(\bfQ)\times \GL(\bfQ)\ra \GL(\bfQ)}$, we see that the middle vertical map almost provides a filler as in \eqref{equ:filler-rational-bv}, except that the diagram now involves $\smash{\Aut(\lvert (E_d)^\wedge_\bfQ\rvert)}$ instead of $\smash{\Aut(E_{d,\bfQ})}$. However, they are equivalent by \cref{ex:ed-auts-all-the-same}, so we obtain the desired filler.
	
\bibliographystyle{amsalpha}
\bibliography{literatureSquare}

\end{document}